\newtheorem{theorem}{Theorem}[section]
\newtheorem{lemma}[theorem]{Lemma}
\newtheorem{remark}[theorem]{Remark}
\newtheorem{corollary}[theorem]{Corollary}
\newtheorem{example}[theorem]{Example}
\def\a{\alpha}
\def\l{\lambda}
\def\N{\mathbb{N}}
\def\edge{\,\textrm{\textbf{-}}\, }
\numberwithin{equation}{section}
\def\TT{\mathbb{T}}
\def\MM{\mathbb{M}}
\def\mob{\text{M{\" o}b}}
\def\tor{\text{Tor}}
\def\V{\mathcal{V}}
\def\R{\mathbb{R}}
\def\CC{\mathcal{C}}
\def\G{\mathcal{G}}
\def\b{\beta}
\def\del{\partial}
\def\Z{\mathbb{Z}}
\def\K{\mathbb{K}}
\def\I{\mathcal{I}}
\def\e{\epsilon}
\def\Th{\Theta}
\def\id{\text{id}}
\def\nth{\text{nth}}
\def\sth{\text{sth}}
\def\int{\text{int}}
\def\ext{\text{ext}}
\def\SQ{\mathcal{SQ}}
\def\iso{\text{iso.}}
\def\hs{\text{h.s.}}
\def\isosim{\overset{\text{iso.}}{\sim}}
\def\hssim{\overset{\text{h.s.}}{\sim}}
\def\SS{\mathfrak{S}}
\DeclareFontFamily{U}{mathb}{}
\DeclareFontShape{U}{mathb}{m}{n}{
     <-5.5> mathb5
  <5.5-6.5> mathb6
  <6.5-7.5> mathb7
  <7.5-8.5> mathb8
  <8.5-9.5> mathb9
  <9.5-11>  mathb10
  <11->     mathb12
}{}
\DeclareSymbolFont{mathb}{U}{mathb}{m}{n}
\DeclareMathSymbol{\lefttorightarrow}{3}{mathb}{"FC}
\DeclareMathSymbol{\righttoleftarrow}{3}{mathb}{"FD}
\DeclareRobustCommand{\looparrow}[1]{%
  \mathrel{\mathpalette\looparrow@{#1}}%
}
\newcommand{\looparrow@}[2]{\reflectbox{$\m@th#1#2$}}
\newcommand{\beq}{\begin{equation}}
\newcommand{\eq}{\end{equation}}
\newtheorem{prop}[theorem]{Proposition}
\newcounter{minidef}[section]
\newcommand{\mdef}{\refstepcounter{theorem} 
\medskip \noindent ({\bf \thetheorem}) }
\def\xmapsfrom#1{ \reflectbox{$\xmapsto{\reflectbox{$#1$}}$}}
\def\xmapstoo#1{\reflectbox{$\xmapsfrom{\reflectbox{$#1$}}$}}
\def\del{\partial}
\newtheorem*{namedtheorem}{\theoremname}
\newcommand{\theoremname}{testing}
\newenvironment{named}[1]{\renewcommand{\theoremname}{#1}\begin{namedtheorem}}{\end{namedtheorem}}
\title{Temperley-Lieb Categories on Non-orientable Surfaces}
\date{\today}
\author{Dionne Ibarra}
\address{School of Mathematics, Monash University, VIC 3800, Australia}
\email{{\rm \textcolor{blue}{dionne.ibarra@monash.edu}}}
\author{Gabriel Montoya-Vega}
\address{Department of Mathematics, University of Puerto Rico-R\'io Piedras, San Juan, PR, USA}
\email{{\rm \textcolor{blue}{gabriel.montoya@upr.edu}}}
\author{Benjamin Morris}
\address{School of Mathematics, University of Leeds, LS2 9JT, United Kingdom}
\email{{\rm \textcolor{blue}{mmbajm@leeds.ac.uk}}}
\subjclass[2020]{Primary: 18M05, Secondary: 57K20, 18F99}
\keywords{Category; diagrammatic algebras; }
\begin{document}
\begin{abstract}
   In this paper we present the construction of a skeletal diagram category, which we call the square with bands category. This category extends the Temperley-Lieb (TL) category, where morphisms now include diagrams of embedded curves on (possibly) non-orientable bounded surfaces, and involves three parameters associated to simple closed curves. Such diagrams utilise handle decompositions for surfaces and are considered up to a handle slide equivalence. We define a tensor product on this category, extending the well-known tensor product on the TL category, and a full set of monoidal generators is given, which includes the TL generators, a family of orientable genus one diagrams, and a family of non-orientable diagrams.
   This document constitutes an initial draft of ongoing research with preliminary reporting of some results in the last section; a subsequent version including a detailed introduction and full proofs will follow.
\end{abstract}
\maketitle

\tableofcontents 

\section{Introduction} The main result in this paper is the construction of the square with bands (SWB) category, $\SQ$, given in Theorem \ref{SWBcat}. This is a skeletal category, whose objects are the natural numbers, and whose morphisms are constructed using combinatorial encodings of embedded curves in (once) bounded surfaces, known as SWB data, which are introduced in \S3. The latter describe diagrams obtained from a square by attaching two dimensional one-handles (or bands) to the right hand edge, such as in Example \ref{Ex:handleslide} (given below also). Such diagrams are then considered up to an equivalence, which we refer to as handle slide equivalence. The example below contains a sequence of transformations allowed under this equivalence. Such diagrams include Temperley-Lieb diagrams \cite{TemperleyLieb, JonesIndex}, that is, SWB diagrams with no handles attached. 

\begin{named}{Example \ref{Ex:handleslide}}
    \[\exaahs{0.4} \ \mapsto \ \exabhs{0.4}\overset{\iso}{\sim}\exachs{0.4}\]
\end{named}

In \S2 we present two preliminary definitions and notions used to define SWB data and their equivalences. In particular, we use the elementary (but possibly non-standard) notion of ``graph contraction" defined in \S2.1, and we use twisted chord diagrams, defined in \S2.2, to encode such realisations of bounded surfaces. In \S3 we define SWB data, an operation coined ``vertical juxtaposition" which prefigures a categorical composition, and the notion of handle slide equivalence. In the last subsection we define two operations, which add boundary adjacent curves to a diagram, and which will later (in \S4) be used to equip $\SQ$ with a tensor product. In \S4 we use definitions and results from \S3 to define the category $\SQ$, equip it with a tensor product, and provide it with a monoidal generating set.

\section{Preliminaries}
This section provides preliminary material, such as partitions, (ordered) graphs, and twisted chord diagrams, that are used to provide a combinatorial foundation for this paper.

Throughout this paper, we use the conventions
\[\underline{N}=\{1,2,\dots, N\}, \qquad \underline{(N+1/2)}=\{1/2,3/2,\dots, N+1/2\},\]
for $N$ a positive integer.

\mdef\label{notn}For $S$ any set, let $\mathcal{P}(S)$ denote the power set of $S$, that is, the set of all subsets of $S$. We denote by $\text{Perm}(S)$ the group of all permutations $f:S\to S$, and we use the short hand $\SS_N=\text{Perm}(\underline{N})$. Given a function $f:S\to S'$ for any set $S'$, we abuse notation and write $f:\mathcal{P}(S)\to \mathcal{P}(S')$ for the function $U\mapsto f(U):=\{f(u) \ | \ u\in U\}$. In particular, when $f\in \text{Perm}(S)$, this defines a map $f\in \text{Perm}(\mathcal{P}(S))$. 

Let $P_S\subset \mathcal{P}(\mathcal{P}(S))$ denote the set of all set-partitions of $S$ and let $B_S\subset P_S$ denote the set of all set-partitions of $S$ 
into subsets of order 2. We will refer to elements of $B_S$ as \textbf{pair-partitions} of $S$. For a partition of $S$, $P\in P_S$, we refer to elements $p\in P$ (which are subsets of $S$) as parts of $P$.

\mdef\label{order} Let $(S,\prec)$ be a totally ordered set. For a pair-partition of $S$, $P\in B_S$, define maps $m,M:P\to S$ by $m(p)=\min_{\prec}(p)$ and $M(p)=\max_{\prec}(p)$. For parts $p,q\in P$ we say that $p$ and $q$ \textbf{cross} or are \textbf{crossing} (with respect to $\prec$) if one of the following holds
\[m(p)\prec m(q)\prec M(p)\prec M(q), \qquad m(q)\prec m(p)\prec M(q)\prec M(p). \]
We call $P$ \textbf{crossing-less} if all its parts are pairwise non-crossing.

For a subset $S'\subset S$, let $(S',\prec)$ denote the unique total order inherited from $(S,\prec)$. Unless otherwise stated we will assume all subsets of totally ordered sets are ordered in this way. Furthermore, we will assume that $\mathbb{R}$ is given the unique total order $<$ such that $(\mathbb{R},<)$ is an ordered field.

\subsection{Graphs}

Here we collect some basic (but possibly non-standard) notions from graph theory that are used later.

 \mdef A (simple undirected) \textbf{graph} is a pair of sets $G=(V,E)$ where $E\subset \mathcal{P}(V)$ and $|e|=2$ for all $e\in E$ (\textit{i.e.} $E$ is a set of order two subsets of $V$). We refer to elements of $V$ as \textbf{vertices}, and elements of $E$ as \textbf{edges}. For a graph $G$, with unspecified vertex and edge sets we use the notation $G=(V(G),E(G))$. We often denote an edge $\{u,v\}\in E$ with the notation $u\edge v$. A graph $G$ is finite if $V(G)$ (and hence $E(G)$) is finite. We refer to the graph $(\emptyset,\emptyset)$ as the empty graph.

\medskip\noindent In what follows, let $G$ be a graph.

\mdef\label{inc} Let $v\in V(G)$ be a vertex. We say an edge $e\in E(G)$ is \textbf{incident} to $v$ if $v\in e$. For a vertex subset $U\subset V(G)$, we say that $e\in E(G)$ is incident to $U$ if $e$ is incident to some vertex $u\in U$, and we define
\[E(G)\cap U:=\{e \in E(G) \ | \ e \text{ is incident to } U\}\subset E(G).\]

\mdef\label{adj} We say that $v, u\in V(G)$ are \textbf{adjacent} (or $v$ is adjacent to $u$) if $\{u,v\}\in E$, and we say that $v$ is adjacent to a subset $U\subset V(G)$ if it is adjacent to some vertex $u\in U$. For $U\subset V(G)$ we define the \textbf{adjacency set} or \textbf{open neighbourhood}\footnote{we use the term adjacency set here to avoid confusion with the neighbourhood graph} of $U$ in $G$, $A_G(U)\subset V(G)$, as $A_G(U):=\{v \in V(G)\setminus U \ | \ v \text{ is adjacent to } U \}$.

\mdef\label{nbhd} For a graph $G'$, we say that $G'$ is a \textbf{subgraph} of $G$ (write $G'\subset G$) if $V(G')\subset V(G)$ and $E(G')\subset E(G)$. A subgraph $G'$ is said to be \textbf{full} if for any edge $\{u,v\}\in E(G)$, we have that $\{u,v\}\subset V(G')$ implies $\{u,v\}\in E(G')$. For a subset $U\subset V(G)$ we define $G[U]\subset G$ as the unique full subgraph of $G$ with vertex set $U$, and define the \textbf{neighbourhood graph} of $U$ in $G$, $N_G(U)\subset G$, as $N_G(U)=G[U\cup A_G(U)]$.

\mdef The \textbf{degree} or \textbf{valence} of a vertex $v\in V(G)$ is the integer 
\[d(v)=\left|\{ e\in E(G) \ | \ e \text{ incident to } v \}\right|.\]

\mdef We define the union and intersection of graphs as $G\cup G'=(V(G)\cup V(G') , E(G)\cup E(G'))$ and $G\cap G'=(V(G)\cap V(G'), E(G)\cap E(G'))$. We say that a union of graphs $G\cup G'$ is disjoint if $G\cap G'$ is the empty graph, and write $G\sqcup G'$.

\mdef An isomorphism of graphs $G$ and $G'$ is a bijection $f:V(G)\to V(G')$ such that $e \in E(G)$ if and only if $f(e)\in E(G')$. We often abuse notation and write $f:G\overset{\sim}{\to} G'$. Given an injective map $g:V(G)\to V'$, define $g(G)$ as the graph with vertex set $g(V(G))\subset V'$ and edge set $g(E(G))$. Note then, that $g:G\overset{\sim}{\to} g(G)$ is an isomorphism of graphs.

\mdef\label{path} A \textbf{path} is a finite graph of the form $P=\left(\{v_1,\dots, v_r \},\{ \{v_1,v_2\}, \dots, \{v_{r-1},v_r\}\}\right)$, for some $r\in \Z_{> 0}$, where the $v_i$ are pairwise distinct. For such a path $P$, we use the shorthand $P=v_1 \edge v_2 \edge \dots \edge v_r$ and we call $v_1$ and $v_r$ the \textbf{endpoints} of $P$ (if $r=1$ we use the convention $v_1\edge v_1=(\{v_1\},\emptyset)$). For $v, v'\in V(G)$, a path between $v$ and $v'$ in $G$, is a subgraph $P\subset G$ which is a path with endpoints $v$ and $v'$.

For a subset $U\subset V(G)$, and for vertices $v,v'\in A_G(U)$, a \textbf{path through} $U$ between $v$ and $v'$ is a path $P$ between $v$ and $v'$ in the subgraph $N_G(U)$, such that $V(P)\setminus \{v,v'\}\subset U$. We write $v -_{U} v'$ if there exists a path through $U$ between $v$ and $v'$.

\mdef Define a relation $\sim_G$ on $V(G)$, by $v\sim_G v'$ if there exists a path between $v$ and $v'$ in $G$. Then $\sim_G$ is an equivalence relation on $V(G)$ and we say that $G$ is \textbf{connected} if $\left|V(G)/\sim_G\right|=1$. A \textbf{component} of $G$ is a maximal connected subgraph of $G$. We denote
\[\mathcal{C}(G)=\{\text{ components } \Gamma\subset G \ \}.\]
There is a bijection $\mathcal{C}(G)\to V(G)/\sim_G$ given by $\Gamma \mapsto V(\Gamma)$. Furthermore $G=\sqcup_{\Gamma\in \mathcal{C}(G)}\Gamma$.
 
\mdef A \textbf{cycle} is a finite graph of the form $C=\left(\{v_1,\dots, v_r \},\{ \{v_1,v_2\},  \dots, \{v_{r-1},v_r\},\{v_r,v_1\}\}\right)$, for some $r\in \Z_{>2}$ where the $v_i$ are pairwise distinct. We can write cycles in path notation \textit{e.g.} $C=v_1\edge  v_2 \edge \dots \edge v_r \edge v_1$.

\begin{lemma}\label{grlemma}
    Suppose $G=(V,E)$ is a finite graph such that $d(v)\leq 2$ for all $v\in V$. Then $G$ is a disjoint union of paths and cycles. Furthermore, $V_{(d=1)}/\sim_{G}$ is a pair partition of $V_{(d=1)}$, where $V_{(d=1)}=\{v\in V \ | \ d(v)=1 \}$.
\end{lemma}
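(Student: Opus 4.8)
The plan is to first reduce to the connected case. Since $G=\sqcup_{\Gamma\in\CC(G)}\Gamma$ and every edge incident to a vertex of a component $\Gamma$ lies inside $\Gamma$, the degree of a vertex in $\Gamma$ equals its degree in $G$, so the hypothesis $d(v)\leq 2$ passes to each $\Gamma$; it therefore suffices to prove that a finite connected graph $\Gamma$ with $d(v)\leq 2$ for all $v\in V(\Gamma)$ is either a path or a cycle. If $|V(\Gamma)|=1$ we get the one-vertex path $v_1\edge v_1$, so assume $|V(\Gamma)|\geq 2$; then $\Gamma$ has an edge, hence contains a path on at least two vertices, and since $\Gamma$ is finite we may fix a path $P=v_1\edge v_2\edge\dots\edge v_r\subset\Gamma$ with $r$ maximal.

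Next I would analyse the endpoint $v_1$ (and symmetrically $v_r$). By maximality of $P$, every neighbour of $v_1$ lies in $\{v_2,\dots,v_r\}$, for otherwise a neighbour $w\notin V(P)$ would extend $P$ to the longer path $w\edge v_1\edge\dots\edge v_r$. As $\Gamma$ is simple, $v_1$ is adjacent to $v_2$ and, since $d(v_1)\leq 2$, to at most one further vertex $v_i$ with $i\geq 3$. This splits into two cases. \emph{Case 1: $v_1$ and $v_r$ both have degree $1$.} Here I claim $P=\Gamma$: every internal vertex $v_j$ ($2\leq j\leq r-1$) already has the neighbours $v_{j-1},v_{j+1}$, so $d(v_j)\leq 2$ forces it to have no other neighbour, and $d(v_1)=d(v_r)=1$ means the endpoints have no neighbour besides $v_2$, $v_{r-1}$ respectively; thus no vertex of $P$ is adjacent to a vertex outside $V(P)$, so connectedness gives $V(\Gamma)=V(P)$, and the same degree count gives $E(\Gamma)=E(P)$, i.e. $\Gamma=P$ is a path. \emph{Case 2: $v_1$ is adjacent to some $v_i$ with $i\geq 3$.} Then $C=v_1\edge v_2\edge\dots\edge v_i\edge v_1$ is a cycle subgraph of $\Gamma$ (and $i\geq 3$ meets the requirement $r\in\Z_{>2}$). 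In $C$ every vertex already has degree $2$, so by the degree bound no vertex of $C$ is adjacent to a vertex outside $V(C)$ and $C$ carries every edge among $V(C)$; connectedness then yields $V(\Gamma)=V(C)$ (so in particular $i=r$) and $E(\Gamma)=E(C)$, i.e. $\Gamma=C$ is a cycle. Assembling the components proves the first assertion.

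Finally I would read off the pair-partition statement from this classification. For each component $\Gamma$ the relation $\sim_G$ restricted to $V(\Gamma)$ is the total relation, so the equivalence classes of $\sim_G$ on $V_{(d=1)}$ are exactly the nonempty sets $V_{(d=1)}\cap V(\Gamma)$ as $\Gamma$ ranges over $\CC(G)$. A cycle component has all vertices of degree $2$ and the one-vertex path has its vertex of degree $0$, so neither contributes; a path component $v_1\edge\dots\edge v_r$ with $r\geq 2$ contributes exactly its two endpoints $\{v_1,v_r\}$, which are distinct and both of degree $1$. Hence every class of $V_{(d=1)}/\sim_G$ has exactly two elements, which is precisely the statement that it is a pair partition of $V_{(d=1)}$.

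\textbf{Main obstacle.} The only real content lies in Cases 1 and 2: showing that the degree bound together with connectedness forces the longest path to be a spanning subgraph that moreover contains every edge of the component (no stray vertices and no chords). Once that bookkeeping is in place, the reduction to components and the counting of degree-one vertices are routine; the only points demanding care are the degenerate cases $r=1$ (isolated vertex) and $r=2$ (a single edge), and the convention $r>2$ built into the definition of a cycle.
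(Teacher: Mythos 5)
The paper states Lemma \ref{grlemma} without proof, so there is no argument of the authors' to compare against; your proof is the standard one (restrict to a component, take a path of maximal length, analyse the endpoints) and it is correct, including the degenerate cases $r=1,2$ and the convention $r>2$ for cycles. The only presentational point worth tightening is the exhaustiveness of your two cases: as literally stated, ``$v_1$ and $v_r$ both of degree $1$'' versus ``$v_1$ adjacent to some $v_i$, $i\geq 3$'' omits the configuration where only $v_r$ has a non-consecutive neighbour; this is covered by the symmetry you flag with ``(and symmetrically $v_r$)'', i.e.\ a without-loss-of-generality relabelling obtained by reversing the path, but you should say so explicitly when writing it up. The reduction to components (using that a component, being a maximal connected subgraph, contains every edge between its vertices, so degrees are unchanged) and the final count of degree-one vertices per component are correct as written.
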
 

\mdef For a graph $G'$ define the \textbf{deletion} of $G'$ from $G$, $G\smallsetminus G'$, as the minimal subgraph of $G$ such that $E(G\smallsetminus G')=E(G)\smallsetminus E(G')$ and $V(G)\smallsetminus V(G')\subset V(G\smallsetminus G')$.

\mdef\label{con1} For a subset $U \subset V(G)$, define the \textbf{contraction} of $G$ on $U$, $G/U$, as the graph
\[G/U= \left(G\smallsetminus N_G(U)\right)\cup \left(\cup_{u -_{U} v } \, u- v\right) ,  \]
\textit{i.e.} the right-most union is over pairs of distinct vertices $u,v\in A_G(U)$ such that there is a path through $U$ between them. Define the \textbf{index} of $U$ in $G$, $[U:G]$, as the integer $[U:G]=|\mathcal{C}(G)|-|\mathcal{C}(G/U)|$. Note that if $U=V(\Gamma)$ for some component $\Gamma \subset G$, then $G/U=G\smallsetminus \Gamma$.

\begin{prop}\label{contpr} Let $G=(V,E)$ be a graph and suppose $U\subset V$. Then 
\begin{enumerate}
    \item\label{conconn} $V(G/U)=V\smallsetminus U$ and for $v, v'\in V\smallsetminus U$ we have $v\sim_G v'$ if and only if $v \sim_{G/U} v'$.
    \item\label{conassoc} Let $W\subset V$ be such that $A_G(U)\cap A_G(W)=\emptyset$. Then $(G/U)/W=G/(U\cup W)$ and $[U: G]+[W: G/U]=[U\cup W:G]$.
    \item\label{conmap} Let $f:V\hookrightarrow V'$ be an injective map. Then we have $f(G/U)=f(G)/f(U)$ and $[f(U):f(G)]=[U:G]$.
    \item\label{conunion} Let $G'$ be a graph such that $V(G')\cap U=\emptyset$. Then $(G\cup G')/U=(G/U)\cup G'$ and $[U:G\cup G']=[U:G]$.
\end{enumerate}
\end{prop}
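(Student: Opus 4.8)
The plan is to prove (\ref{conconn}) carefully and then deduce (\ref{conmap}), (\ref{conunion}) and (\ref{conassoc}) from it.

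For (\ref{conconn}) I would first unwind the definition of $G/U$ into an explicit description of its edges and vertices. Since $N_G(U)=G[U\cup A_G(U)]$ is a full subgraph, $G\smallsetminus N_G(U)$ retains exactly those edges of $G$ with no endpoint in $U\cup A_G(U)$ (call them \emph{external}); the union $\bigcup_{u-_Uv}u\edge v$ re-adds, for each pair $u,v\in A_G(U)$ joined in $G$ by a path with interior contained in $U$, the edge $u\edge v$ --- taking the trivial one-edge path shows in particular that no edge of $G$ between two vertices of $A_G(U)$ is genuinely lost. This gives $V(G/U)\subseteq V\smallsetminus U$; conversely each $v\in V\smallsetminus U$ is either external (kept by the deletion) or lies in $A_G(U)$ (kept as an endpoint of a re-added edge), so $V(G/U)=V\smallsetminus U$. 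For the connectivity statement I would use a compression/lifting argument: a path of $G$ between $v,v'\in V\smallsetminus U$, read as a walk, breaks at its successive visits to $V\smallsetminus U$ into segments that are each either a single external edge of $G$ (an edge of $G/U$) or a path of $G$ through $U$ between two vertices of $A_G(U)$ (whose endpoints are then joined by an edge of $G/U$); concatenating yields a walk of $G/U$ from $v$ to $v'$, so $v\sim_{G/U}v'$. Conversely every edge of $G/U$ is either an external edge of $G$ or an edge $u\edge v$ with $u-_Uv$ in $G$, so a path of $G/U$ lifts to a walk of $G$ with the same endpoints, giving the reverse implication.

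From (\ref{conconn}), together with the stated bijection $\mathcal C(\cdot)\leftrightarrow V(\cdot)/\!\sim$, one gets $|\mathcal C(G/U)|=|(V\smallsetminus U)/\!\sim_G|$, hence the reformulation $[U:G]=\#\{\Gamma\in\mathcal C(G):V(\Gamma)\subseteq U\}$ (the number of components of $G$ contained in $U$). Item (\ref{conmap}) is then formal: an injective $f$ restricts to a graph isomorphism $G\xrightarrow{\sim}f(G)$ under which every ingredient of the construction $\cdot/U$ --- adjacency, $A_G(U)$, the full subgraph $N_G(U)$, paths through $U$, deletions, unions --- is transported, so $f(G)/f(U)=f(G/U)$; and $f$ induces bijections on the components of $G$ and of $G/U$, whence $[f(U):f(G)]=[U:G]$. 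For (\ref{conunion}), the condition $V(G')\cap U=\emptyset$ forces every edge of $G\cup G'$ incident to $U$ to lie in $E(G)$, so $A_{G\cup G'}(U)=A_G(U)$ and every $(G\cup G')$-path with interior in $U$ is already a path of $G$; matching vertex and edge sets then gives $(G\cup G')/U=(G/U)\cup G'$, and since $G'$ touches no vertex of $U$, adjoining it neither creates nor destroys components contained in $U$, so $[U:G\cup G']=[U:G]$ by the reformulation.

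For (\ref{conassoc}) I would separate the two claims. The index identity is a consequence of (\ref{conconn}) alone: $[U:G]$ counts components of $G$ inside $U$, $[W:G/U]$ counts components of $G/U$ inside $W\smallsetminus U$ --- which under (\ref{conconn}) correspond to components $\Gamma$ of $G$ with $V(\Gamma)\subseteq U\cup W$ and $V(\Gamma)\not\subseteq U$ --- and these two families partition $\{\Gamma\in\mathcal C(G):V(\Gamma)\subseteq U\cup W\}$, of cardinality $[U\cup W:G]$. The graph equality $(G/U)/W=G/(U\cup W)$ is the real computation: the vertex sets agree by (\ref{conconn}) applied twice, and for the edges one compares the external edges and the re-added shortcut edges on both sides, the hypothesis $A_G(U)\cap A_G(W)=\emptyset$ being exactly what guarantees that passing to $G/U$ does not let the $U$-shortcut edges interact with the $W$-neighbourhood so as to alter the second contraction; concretely, that a $G$-path with interior in $U\cup W$ between two vertices of $A_G(U\cup W)$ is recorded as a shortcut by the two-step process precisely when it is by the one-step process. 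I expect the main obstacle to be (\ref{conconn}) --- making the compression/lifting correspondence and the vertex bookkeeping precise given that contraction is defined only indirectly, so degenerate configurations (trivial paths, isolated vertices of $A_G(U)$, direct edges inside $A_G(U)$) need care --- followed by this edge-matching step of (\ref{conassoc}); the rest is formal once (\ref{conconn}) and its component-counting reformulation are available.
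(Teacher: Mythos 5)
Your overall strategy — prove (\ref{conconn}) by compressing/lifting paths, recast $[U:G]$ as the number of components of $G$ contained in $U$, and then obtain (\ref{conmap}) by transport of structure, (\ref{conunion}) by locality, and (\ref{conassoc}) by component counting plus an edge comparison — is the right one, and the connectivity half of (\ref{conconn}) as you argue it is fine. The genuine gap is in the vertex claim $V(G/U)=V\smallsetminus U$: you justify it by saying each $v\in A_G(U)$ is ``kept as an endpoint of a re-added edge'', but the definitions do not force this. Since $G\smallsetminus N_G(U)$ only retains $V\smallsetminus(U\cup A_G(U))$ together with endpoints of edges in $E(G)\smallsetminus E(N_G(U))$, a vertex $v\in A_G(U)$ survives only if it has an edge to a vertex outside $U\cup A_G(U)$ or has some $v'\in A_G(U)$ with $v -_U v'$; if all neighbours of $v$ lie in $U$ and no path through $U$ joins $v$ to another vertex of $A_G(U)$ (e.g.\ $G$ the single edge $u\edge v$ with $U=\{u\}$), then $v$ appears in neither piece of $G/U$ and is lost. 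So the step fails as written: you must either state the convention that $V\smallsetminus U$ is always retained as the vertex set of $G/U$ (clearly the intended reading, and what the counting in Remark \ref{compmap} and the later applications use), or check in each use that every vertex of $A_G(U)$ keeps an incident edge. The same issue resurfaces in your (\ref{conunion}) (a vertex of $A_G(U)\cap V(G')$ isolated in $G'$). Separately, your description of the retained edges is off: $G\smallsetminus N_G(U)$ keeps every edge with \emph{at least one} endpoint outside $U\cup A_G(U)$, not only those with no endpoint there; edges between $A_G(U)$ and the exterior are exactly the case your taxonomy of path segments omits, though this is repairable.

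The second place needing real work is the identity $(G/U)/W=G/(U\cup W)$ in (\ref{conassoc}), which you describe but do not prove, and whose mechanism you state only vaguely. What has to be checked is a two-way edge comparison: a path through $W$ in $G/U$ may use $U$-shortcut edges, and unfolding these yields in general only a \emph{walk} in $G$ with interior in $U\cup W$ (distinct shortcuts can unfold through common $U$-vertices), so a loop-erasure step is needed before invoking $-_{U\cup W}$; conversely, a $G$-path with interior in $U\cup W$ must be compressed by replacing its maximal $U$-segments with shortcut edges and one must verify the resulting endpoints lie in $A_{G/U}(W)$ and that the compressed sequence is a path in $N_{G/U}(W)$. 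Two smaller points: your index identity in (\ref{conassoc}) never uses the hypothesis $A_G(U)\cap A_G(W)=\emptyset$ — that is acceptable (it follows from (\ref{conconn}) and the counting reformulation, or by telescoping once the graph identity is known), but you do need $W\cap U=\emptyset$, i.e.\ $W\subset V(G/U)$, for $(G/U)/W$ and $[W:G/U]$ to be defined, and this should be said; and in (\ref{conunion}) the assertion ``every $(G\cup G')$-path with interior in $U$ is already a path of $G$'' fails for one-edge paths given by an edge of $G'$ between two vertices of $A_G(U)$ — the conclusion survives because such an edge lies in $E(G')$ and is re-added on both sides, but the step as stated is false and should be repaired.
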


\begin{remark}\label{compmap} It follows from \ref{contpr} part \ref{conconn}, that the map $\{\Gamma \in \mathcal{C}(G) \ | \ V(\Gamma)\not\subset U \} \to \mathcal{C}(G/U)$, given by $\Gamma \mapsto \Gamma/(V(\Gamma)\cap U)$, is a bijection. In particular, $[U:G]\geq 0$ and if $[U:G]=0$ then this map defines a bijection $\mathcal{C}(G)\to \mathcal{C}(G/U)$.
    
\end{remark}

\mdef\label{OG} An \textbf{ordered graph} $G$, is a graph with a totally ordered vertex set $(V(G), \prec)$. An isomorphism of ordered graphs, $G$ and $G'$, is a graph isomorphism $V(G)\to V(G')$, which is also order-preserving. We will write $G\simeq_O G'$ if are isomorphic as ordered graphs. Let $G^*$ denote the ordered graph obtained from $G$ by reversing the ordering on $V(G)$. 

\medskip\noindent We conclude this section with a technical lemma that we will use later on.

\begin{lemma}\label{TLarg} Let $V$ be a finite set and suppose that $V=V_1\sqcup V_2\sqcup V_3$ for some subsets $V_i$. Now suppose that $V$ has the following totally ordered subsets
\[U_{12}:=(V_1\sqcup V_2, \prec_{12}), \qquad U_{13}:=(V_1\sqcup V_3, \prec_{13}), \qquad U_{23}:=(V_2\sqcup V_3, \prec_{23}),\]
which satisfy $(V_1,\prec_{12})=(V_1,\prec_{13})$, $(V_3,\prec_{13})=(V_3,\prec_{23})$,  $(V_2,\prec_{12})=(V_2,\succ_{23})$, and $v_i\prec_{ij} v_j$ whenever $i<j$ and $v_i\in V_i, v_j\in V_j$. Now suppose that $G=(V,E)$ is a graph such that $G=G_{12}\cup G_{23}$ for subgraphs of the form
\[G_{12}=(V_{1}\sqcup V_2,E_{12}), \qquad G_{23}=(V_2\sqcup V_3,E_{23}),\]
where $E_{12}$ and $E_{23}$ are crossing-less pair partitions of $U_{12}$ and $U_{23}$, respectively. Then the contraction $G'=G/V_2$ has the form
$G'=(V_1\sqcup V_3, E')$ where $E'$ is a crossingless pair partition of $U_{13}$.  \end{lemma}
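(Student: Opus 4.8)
The plan is to establish the two conjuncts of the conclusion in turn.

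\textbf{$E'$ is a pair partition of $U_{13}$.} In $G$ every vertex of $V_1$ (resp.\ $V_3$) is incident to a single edge, necessarily in $E_{12}$ (resp.\ $E_{23}$), while every vertex of $V_2$ is incident only to its $E_{12}$-edge and its $E_{23}$-edge; hence $d(v)\le 2$ for all $v\in V$, and by Lemma \ref{grlemma} $G$ is a disjoint union of paths and cycles. By Proposition \ref{contpr}(\ref{conconn}), $V(G')=V\smallsetminus V_2=V_1\sqcup V_3$, so it suffices to show each $v\in V_1\sqcup V_3$ has degree exactly $1$ in $G'$. Such a $v$ is incident in $G$ to a single edge $e$; inspecting Definition \ref{con1}: if $e$ is disjoint from $V_2$ then $e$ persists in $G'$ and is the only edge of $G'$ at $v$; if the other endpoint of $e$ lies in $V_2$, then the component of $G$ through $v$ is a path all of whose interior vertices have degree $2$ and hence lie in $V_2$, so it is a path through $V_2$ from $v$ to a second degree-one vertex $v'$, which must lie in $V_1\sqcup V_3$ (a degree-one vertex of $G$ inside $V_2$ would have to be an endpoint of a single-edge component contained in $V_2$, impossible here), and then $v$ is incident in $G'$ exactly to the new edge $v\edge v'$. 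So $G'$ has all vertices of degree one, i.e.\ $E'$ is a pair partition of $U_{13}$.

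\textbf{$E'$ is crossing-less for $\prec_{13}$.} Suppose not; relabelling, take $\{a,c\},\{b,d\}\in E'$ with $a\prec_{13}b\prec_{13}c\prec_{13}d$. By the description of $G'$ above, $\{a,c\}$ and $\{b,d\}$ come from vertex-disjoint path subgraphs $P$ (endpoints $a,c$) and $Q$ (endpoints $b,d$) of $G$, all of whose interior vertices lie in $V_2$. Along each such path the edge colours strictly alternate between $E_{12}$ and $E_{23}$: the edge at an endpoint in $V_1$ (resp.\ $V_3$) lies in $E_{12}$ (resp.\ $E_{23}$), and at each interior vertex $v\in V_2$, which has degree $2$, the two path-edges are the two distinct ($E_{12}$- and $E_{23}$-) edges of $v$. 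In particular the $E_{12}$-edges occurring in $P$ or $Q$ are among the parts of $E_{12}$, hence pairwise non-crossing for $\prec_{12}$, and likewise the $E_{23}$-edges occurring in $P$ or $Q$ are pairwise non-crossing for $\prec_{23}$. It remains to derive a contradiction from the coexistence of these two disjoint colour-alternating paths with endpoints interleaved as $a\prec_{13}b\prec_{13}c\prec_{13}d$, and this is the heart of the matter. The plan here is a short case analysis on the blocks ($V_1$ or $V_3$) of $a,b,c,d$: since $V_1\prec_{13}V_3$ the block pattern is one of $1111$, $1113$, $1133$, $1333$, $3333$, and the cases $1111,1113$ reduce to $3333,1333$ by the symmetry of the hypotheses that exchanges $(V_1,E_{12})\leftrightarrow(V_3,E_{23})$ and replaces each order by the reverse of the corresponding one; so only the three patterns $1111$, $1113$, $1133$ need be treated. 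In each, one lists the $V_2$-vertices of $P$ and of $Q$ in the orders in which they appear along the two paths and uses the compatibilities $\prec_{12}|_{V_1}=\prec_{13}|_{V_1}$, $\prec_{23}|_{V_3}=\prec_{13}|_{V_3}$, $\prec_{12}|_{V_2}=\succ_{23}|_{V_2}$, and $v_i\prec_{ij}v_j$ for $v_i\in V_i,\ v_j\in V_j,\ i<j$, to produce an explicit crossing pair among the $E_{12}$-edges of $P\cup Q$ (with respect to $\prec_{12}$) or among its $E_{23}$-edges (with respect to $\prec_{23}$), contradicting the crossing-less-ness of $E_{12}$ and $E_{23}$.

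Heuristically, this last step is just the familiar fact that the vertical composite of two planar Temperley--Lieb diagrams stays planar: realising $E_{12}$ and $E_{23}$ as disjoint systems of embedded arcs in two stacked rectangles and gluing along $V_2$ gives a disjoint system of arcs in one rectangle whose boundary points are $V_1\sqcup V_3$ read in the order $\prec_{13}$, and disjoint arcs in a rectangle never realise a crossing pair. I expect that carrying this out purely combinatorially — i.e.\ turning the ``disjoint arcs cannot cross'' picture into the bare-hands interleaving argument sketched above, together with its case split — is the only genuine work in the proof; the first conjunct, the degree computation, and the reduction to two disjoint alternating paths are routine bookkeeping with the definitions above.
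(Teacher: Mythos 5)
Your first half is sound: the degree count (every vertex of $V_1\sqcup V_3$ has degree $1$, every vertex of $V_2$ has degree at most $2$), the appeal to Lemma \ref{grlemma} and Prop.\ \ref{contpr}, and the identification of each part of $E'$ with either a surviving edge disjoint from $V_2$ or a path through $V_2$ between two vertices of $V_1\sqcup V_3$, are all correct, including the parenthetical ruling out degree-one $V_2$-vertices inside the relevant component. (For what it is worth, the paper itself states Lemma \ref{TLarg} without proof in this draft, so there is no argument of the authors to measure yours against.)

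The problem is the second half, which is where the entire content of the lemma lives, and which you do not actually prove. Having reduced to two vertex-disjoint colour-alternating paths $P$ (endpoints $a,c$) and $Q$ (endpoints $b,d$) with $a\prec_{13}b\prec_{13}c\prec_{13}d$, you assert that a ``short case analysis on the blocks of $a,b,c,d$'' in which ``one lists the $V_2$-vertices of $P$ and of $Q$'' will ``produce an explicit crossing pair'' among the $E_{12}$-edges or the $E_{23}$-edges of $P\cup Q$ — and you then concede that carrying this out is ``the only genuine work in the proof'' and that you merely ``expect'' it goes through. That is a genuine gap, not bookkeeping: fixing the block pattern of the four endpoints does not bound or control how the two paths interleave inside $V_2$ (they may each re-enter $V_2$ arbitrarily many times), so no finite case split on endpoint blocks can, by itself, exhibit the offending pair of same-colour crossing edges. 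A complete argument needs an additional mechanism — e.g.\ an induction on $|V_2|$ or on the total length of $P\cup Q$ (say, removing an innermost $E_{12}$- or $E_{23}$-pair contained in $V_2$ and checking non-crossingness is preserved), or an extremal/first-violation choice along the paths, or a parity/Jordan-type argument formalised through the orders $\prec_{12},\prec_{23},\prec_{13}$. Your closing appeal to ``the familiar fact that the vertical composite of two planar Temperley--Lieb diagrams stays planar'' is an appeal to precisely the statement being proven, so it cannot stand in for that missing argument. Until the interleaving-forces-a-monochromatic-crossing step is actually established (the claim itself is true, so the plan is salvageable), the proof of crossing-lessness of $E'$ is incomplete.
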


\subsection{Twisted Chord Diagrams}\ 
In this subsection we introduce twisted chord data (TCD) and their diagrams. These are well known combinatorial objects which describe handle decompositions of (possibly non-orientable) bounded surfaces \cite{Carter}. Motivated by the Morse theoretic notion of handle sliding, we define a combinatorial notion of chord sliding \cite{Kirby} on TCD. We also define here the notions of vertical juxtaposition and insertion, in order to decompose TCD, and conclude this subsection with Theorem \ref{rform}, which establishes the existence of a unique ``caravan" decomposition for TCD considered up to chord sliding, extending the orientable case from \cite{BGOnTheMelvin}.

\mdef\label{tcd} A \textbf{twisted chord datum} of rank $N\in \Z_{\geq 0}$ is a pair $(P, s)$ where $P\in B_{\underline{2N}}$ is a pair-partition of $\underline{2N}$, and $s$ is a function $s:P\to \Z_2$. We refer to parts $p\in P$ as \textbf{arcs}, and we say an arc $p$ is \textbf{twisted} if $s(p)=1$ and otherwise \textbf{untwisted}. A twisted chord datum, $(P,s)$, is said to be \textbf{orientable} if $s^{-1}\{1\}=\emptyset$.

Denote by $\mathcal{TC}_N$, the set of all twisted chord data of rank $N$, and by $\mathcal{TC}_{N +}\subset\mathcal{TC}_N$, the set of all orientable such twisted chord data. Observe then, that $|\mathcal{TC}_N|=2^N (2N-1)!!$ and $|\mathcal{TC}_{N+}|=(2N-1)!!$. 

\mdef Twisted chord data are represented by \textbf{twisted chord diagrams} constructed as follows: mark $2N$ points (labelled by integers $1,2,\dots$ increasing upwards) evenly spaced along a vertical line and then for each arc $p=\{i,j\}\in P$, join points $i$ and $j$ by an arc to the right of the line, ensuring that distinct such arcs only intersect transversely at a single point. Label the arc $p=\{i,j\}$, with its corresponding $\Z_2$ value $s(p)$.    

\begin{example}\label{example:markedchorddiagrams}
    We illustrate in Figure \ref{fig:illustrationmarkedchord} the marked chord diagrams of four pair-partitions of $\underline{6}$, $P_1 = \{ \{1, 2\}, \{3, 4\}, \{5, 6\}\}, P_2 = \{ \{1, 2\}, \{3, 5\}, \{4, 6\}\}$, $P_3 = \{ \{1, 5\}, \{2, 6\}, \{3, 4\}\}$, and $P_4 = \{ \{1, 5\}, \{2, 4\}, \{3, 6\}\} $ paired with the functions
    \begin{eqnarray*}
        s_{1}(P_1) &=& \{0\}, \qquad
        s_{2}(p) := \left\{\begin{array}{rl}
         1,    &  \text{ if } p = \{1, 2\} \\
          0,   &  \text{ else } 
        \end{array} \right.\\
        s_{3}(P_3) &=& \{1\}, \qquad
        s_{4}(p) := \left \{\begin{array}{rl}
         1,    &  \text{ if } p = \{1, 5\}\\
          0,   &  \text{ else. }
        \end{array} \right.
    \end{eqnarray*}

    \begin{figure}[ht]
        \centering
       \begin{subfigure}{.24\textwidth}
       \centering
     $$  \vcenter{\hbox{\begin{overpic}[scale=1]{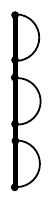}
\put(22, 76){$0$}
\put(22, 45){$0$}
\put(22, 17){$0$}
\put(-6, 5){$1$}
\put(-6, 25){$2$}
\put(-6, 38){$3$}
\put(-6, 57){$4$}
\put(-6, 68){$5$}
\put(-6, 88){$6$}
\end{overpic}}}$$
\caption{$(P_1, s_{1})$}
        \label{fig:P1ex}
       \end{subfigure}
       \begin{subfigure}{.24\textwidth}
       \centering
       $$  \vcenter{\hbox{\begin{overpic}[scale=1]{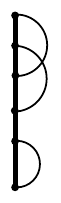}
\put(25, 75){$0$}
\put(25, 55){$0$}
\put(21, 17){$1$}
\put(-6, 5){$1$}
\put(-6, 27){$2$}
\put(-6, 42){$3$}
\put(-6, 59){$4$}
\put(-6, 72){$5$}
\put(-6, 88){$6$}
\end{overpic}}}$$
            \caption{$(P_2, s_{2})$}
        \label{fig:P2ex}
       \end{subfigure}
       \begin{subfigure}{.24\textwidth}
       \centering
       $$  \vcenter{\hbox{\begin{overpic}[scale=1]{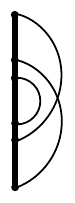}
\put(27, 76){$1$}
\put(10, 45){$1$}
\put(27, 16){$1$}
\put(-6, 5){$1$}
\put(-6, 26){$2$}
\put(-6, 39){$3$}
\put(-6, 57){$4$}
\put(-6, 68){$5$}
\put(-6, 88){$6$}
\end{overpic}}}$$
            \caption{$(P_3, s_{3})$}
        \label{fig:P3ex}
       \end{subfigure}
       \begin{subfigure}{.24\textwidth}
       \centering
       $$  \vcenter{\hbox{\begin{overpic}[scale=1]{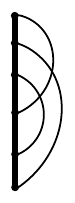}
\put(27, 75){$0$}
\put(10, 30){$0$}
\put(25, 17){$1$}
\put(-6, 5){$1$}
\put(-6, 22){$2$}
\put(-6, 41){$3$}
\put(-6, 59){$4$}
\put(-6, 74){$5$}
\put(-6, 88){$6$}
\end{overpic}}}$$
            \caption{$(P_4, s_{4})$}
        \label{fig:P4ex}
       \end{subfigure}
        \caption{Illustrations of twisted chord diagrams.}
        \label{fig:illustrationmarkedchord}
    \end{figure}
We will omit the labels $1,2,\dots$ for points in future diagrams for simplicity, and remark that a twisted chord diagram determines its corresponding datum. We use the latter fact to define data $(P,s)\in \mathcal{TC}_N$ by giving their diagrams.
\end{example}

\mdef\label{perm1} For a twisted chord datum $(P,s)\in \mathcal{TC}_N$, and a permutation, $\sigma\in \SS_{2N}$, we define $\sigma(P,s):=(\sigma(P),s\circ \sigma^{-1})$, where $\sigma^{-1}$ denotes the map $\mathcal{P}(S)\to \mathcal{P}(S)$ following \ref{notn}. Whenever it is unambiguous, we abuse notation to denote by $\sigma$ the map $\sigma:\mathcal{TC}_N\to\mathcal{TC}_N$. Henceforth, $\omega_N\in \SS_{2N}$ will denote the unique order reversing permutation, $\omega_N(i)=2N+1-i\in \SS_{2N}$, and $C_N\in \SS_{2N}$ will denote the cycle $C_N=(2N \ 2N-1 \ \dots \ 2 \ 1)$. Denote $(P,s)^*:=\omega_N(P,s)$.

\begin{example}\label{ex:perms} In Figure \ref{fig:permutationomega} we give diagrams for $(P_2, s_2)^*$ and $C_3(P_2, s_2)$, with $(P_2,s_2)$ as per  \ref{example:markedchorddiagrams}. Note that applying $\omega_N$ vertically reflects a diagrams and $C_N$ cyclically rearranges it. 
\end{example}

  \begin{figure}[ht]
        \centering
       \begin{subfigure}{.45\textwidth}
       \centering
       $$  \vcenter{\hbox{\begin{overpic}[scale=1]{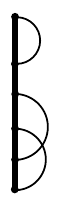}
\put(22, 75){$1$}
\put(25, 35){$0$}
\put(25, 15){$0$}
\end{overpic}}}$$
            \caption{$(P_2, s_{2})^*$}
        \label{fig:omegap2}
       \end{subfigure}
       \begin{subfigure}{.45\textwidth}
       \centering
       \begin{tikzpicture}[scale=0.55,every node/.style={shape=circle},inner sep=0pt,baseline=(current bounding box.center)  ]
      
       \draw[very thick] (0,0.5) -- (0,6.5);
\node[fill=black,label=left:{},inner sep=1pt] (1) at (0,1) {};
\node[fill=black,label=left:{},inner sep=1pt] (6) at (0,6) {};
\draw[thick] (0,6) arc (90:-90:2.5cm) node[midway,anchor=west] {\scriptsize{1}};
\node[fill=black,label=left:{},inner sep=1pt] (2) at (0,2) {};
\node[fill=black,label=left:{},inner sep=1pt] (4) at (0,4) {};
\draw[thick] (0,4) arc (90:-90:1.0cm) node[midway,anchor=west] {\scriptsize{0}};
\node[fill=black,label=left:{},inner sep=1pt] (3) at (0,3) {};
\node[fill=black,label=left:{},inner sep=1pt] (5) at (0,5) {};
\draw[thick] (0,5) arc (90:-90:1.0cm) node[midway,anchor=west] {\scriptsize{0}};

\end{tikzpicture}
            \caption{$C_3(P_2, s_{2})$}
        \label{fig:omegap4}
       \end{subfigure}
        \caption{Twisted chord diagrams under permutations.}
        \label{fig:permutationomega}
    \end{figure}

To define a chord slide operation, we require a site, $i\in \underline{2N}$, to slide, and a direction, up or down, in which to slide.
We thus define:

\mdef\label{admissibility} For a pair partition $P\in B_{\underline{2N}}$, an \textbf{admissible pair} in $P$ is a pair $(i,\epsilon)\in \underline{2N}\times \{\pm 1\}$, such that $i+\e\in \underline{2N}$ and $\{i,i+\e\}\notin P$. \label{def:chordslide} Let $(i,\e)$ be admissible in $P$, and hence let $q=\{i+\e,i'\}\in P$ for $i'\neq i$. Then for any $(P,s)\in \mathcal{TC}_N$, let $\sigma_{(i,\e,P,s)}\in \mathfrak{S}_{2N}$ denote the unique permutation which is order preserving (OP) on $\underline{2N}\smallsetminus\{i\}$, and 
\[\sigma_{(i,\e,P,s)}(i)=\sigma_{(i,\e,P,s)}(i')+\e(-1)^{s(q)},\]
(Note that the map $(-1)^{(\_)} : \Z_2 \to \{\pm 1\}$ is well defined). Explicitly, one has 
\begin{equation}\sigma=\begin{cases} \left(M(q) \ M(q)-1  \dots  i\right), &\begin{matrix} \e=1, \ i+\e =m(q), \text{ and } s(q)=0,\\
\text{or }\e=-1, \ i+\e =m(q), \text{ and } s(q)=1,\end{matrix}\\[1em]
\left( M(q)-1 \ M(q)-2 \ \dots \ i\right), & \begin{matrix}\e=1, \ i+\e =m(q), \text{ and } s(q)=1,\\
\text{or }\e=-1, \ i+\e =m(q), \text{ and } s(q)=0,\end{matrix}\\[1em]
\left( m(q)+1 \ m(q)+2  \ \dots \ i\right), & \begin{matrix}\e=1, \ i+\e =M(q), \text{ and } s(q)=0, \\
\text{or }\e=-1, \ i+\e =M(q), \text{ and } s(q)=1,\end{matrix}\\[1em]
\left( m(q) \ m(q)+1  \ \dots \ i\right), & \begin{matrix}\e=1, \ i+\e =M(q), \text{ and } s(q)=1, \\
\text{or }\e=-1, \ i+\e =M(q), \text{ and } s(q)=0,
\end{matrix} 

\end{cases}\label{perm}\end{equation}
using cycle notation.

For any $(i,\epsilon)\in \underline{2N}\times \{\pm1\}$, define a map $h_{(i,\epsilon)}:\mathcal{TC}_N\to\mathcal{TC}_N$ by
\begin{equation*}
    h_{(i,\epsilon)}(P,s)=\begin{cases}(\sigma(P), s_{(i, \epsilon)}\circ \sigma^{-1}), & (i,\e) \text{ admissible in }P,\\
(P,s), & \text{otherwise},\end{cases}
\end{equation*}
where $\sigma=\sigma_{(i,\e,P,s)} \in \mathfrak{S}_{2N}$ is as per \ref{def:chordslide}, and $s_{(i, \epsilon)}: P\to \Z_2$ is the map
\begin{equation*}
s_{(i, \epsilon)}(p)=\begin{cases}s(p)+s(q), & \text{when } i\in p,\\
s(p), & \text{otherwise}.\end{cases}    
\end{equation*}
For $(i,\pm 1)$ we call the map $h_{(i,\pm 1)}$ the \textbf{chord slide} up (+) or down (-) at position $i$.

\begin{remark} \label{rem:orient} For $(P,s)\in \mathcal{TC}_N$, $(i,\e)$ admissible in $P$, and $q$ as above, observe that if $s(q)=0$, then $s_{(i,\e)}=s$. In particular, if $(P,s)$ is orientable than any $(P',s')\in \mathcal{TC}_N$ satisfying $(P,s)\sim (P',s')$ is necessarily orientable (\textit{c.f.} Prop. \ref{S2eq}).\end{remark}

The following two lemmas describe the interplay between certain permutations and chord slides, which is used to reduce case work in further results. 

\begin{lemma}\label{cyclemma} Suppose that $(P,s)\in \mathcal{TC}_N$ and $\{i+1,i'\}\in P$ for integers $1<i<2N$ and $i'$.
Then $h_{(i,+1)}(P,s)=C_N^{-1} \circ h_{(i-1,+1)}\circ C_N(P,s)$ unless $i'=1$, in which case $h_{(i,+1)}(P,s)=C_N^{-2}\circ h_{(i-1,+1)}\circ C_N (P,s)$. 
\end{lemma}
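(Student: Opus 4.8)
The plan is to unwind both sides of the claimed identity explicitly using the cycle formulas in \eqref{perm}, and to check that the permutations and the $\Z_2$-labels agree. Since the conjugating permutation $C_N$ only cyclically relabels the marked points $1,2,\dots,2N$, applying $C_N$ to $(P,s)$ and then performing a chord slide at position $i-1$ should, after conjugating back by $C_N^{-1}$, reproduce the chord slide at position $i$ — provided no ``wrap-around'' occurs, which is exactly the exceptional case $i'=1$ that must be treated separately. So I would split the argument into two parts: (i) show $C_N$ maps the admissible pair $(i,+1)$ in $P$ to the admissible pair $(i-1,+1)$ in $C_N(P)$ (here using that $1<i<2N$ guarantees $i-1\in\underline{2N}$ and $i\in\underline{2N}$ and that $\{i,i+1\}\notin P \iff \{i-1,i\}\notin C_N(P)$), and (ii) compute the resulting slide permutation and relabel.

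First I would set $q=\{i+1,i'\}\in P$ and note $C_N$ sends $i+1\mapsto i$ and $i'\mapsto i'-1$ when $i'>1$, so $C_N(q)=\{i,i'-1\}$; this is the part $q'\in C_N(P)$ relevant to the slide $h_{(i-1,+1)}$ since $(i-1)+1=i$. The key point is that the relative order of $M(q'),m(q')$ versus $i-1$ is the same as that of $M(q),m(q)$ versus $i$ (again because $i'>1$ so nothing crosses the ``seam'' between $2N$ and $1$), and $s(q')=s(q)$ under $C_N$, so the same case of \eqref{perm} applies. Then by Proposition \ref{contpr}-type bookkeeping — more precisely, by the compatibility of the permutation construction with relabeling, which one reads off directly from the defining property ``OP on $\underline{2N}\setminus\{i\}$ and $\sigma(i)=\sigma(i')\pm\e(-1)^{s(q)}$'' in \ref{def:chordslide} — one gets $\sigma_{(i-1,+1,C_N(P,s))}=C_N\circ\sigma_{(i,+1,P,s)}\circ C_N^{-1}$. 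Combining with the label update $s_{(i,\e)}(p)=s(p)+s(q)$ exactly when $i\in p$, and tracking that $C_N$ intertwines ``$i\in p$'' with ``$i-1\in C_N(p)$'', yields $h_{(i,+1)}=C_N^{-1}\circ h_{(i-1,+1)}\circ C_N$ on $(P,s)$.

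For the exceptional case $i'=1$: now $C_N$ sends $i+1\mapsto i$ but $i'=1\mapsto 2N$, so $C_N(q)=\{i,2N\}$, and $2N=M(C_N(q))$ whereas in $P$ we had $i'=1=m(q)$. In other words the slide $h_{(i-1,+1)}$ on $C_N(P,s)$ is still well defined and still slides point $i$ toward its partner, but the partner has been moved all the way ``around'' to position $2N$, so applying $C_N$ once more is needed to restore the configuration — hence the extra factor, giving $h_{(i,+1)}(P,s)=C_N^{-2}\circ h_{(i-1,+1)}\circ C_N(P,s)$. Concretely I would verify that $C_N\circ h_{(i-1,+1)}\circ C_N$ sends the partner of $i$ to the position one above $i$ while $h_{(i,+1)}$ sends it directly next to $i$ on the correct side, and that composing with one more $C_N^{-1}$ reconciles the two; the $\Z_2$-labels are handled identically since $s$ is unaffected by the relabeling.

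The main obstacle I expect is purely bookkeeping: carefully checking that the four-case split in \eqref{perm} is preserved under conjugation by $C_N$, and in the exceptional case correctly identifying that ``one wrap'' is needed rather than ``two'' or ``zero''. A clean way to avoid a long case analysis is to argue structurally from the characterizing properties of $\sigma_{(i,\e,P,s)}$ (order-preserving off $\{i\}$, plus the single constraint on $\sigma(i)$) rather than from the explicit cycle expressions — this should collapse all four cases of \eqref{perm} into one uniform verification, with the $i'=1$ case being the unique place where $C_N$ fails to preserve the relevant $m/M$ comparison and an extra conjugation is forced.
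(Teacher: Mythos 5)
Your proposal is correct and amounts to the same argument the paper intends (its proof is stated only as ``follows by direct verification''): conjugating the explicit cycle formulas of \eqref{perm} by $C_N$, checking that the $\Z_2$-labels transform compatibly, and treating $i'=1$ separately because there $C_N(i')=2N$ flips the relevant $m/M$ comparison and forces the extra factor of $C_N^{-1}$ — indeed one checks $C_N\,\sigma_{(i,+1,P,s)}\,C_N^{-1}=\sigma_{(i-1,+1,C_N(P,s))}$ when $i'\neq 1$ and $C_N^{-2}\,\sigma_{(i-1,+1,C_N(P,s))}\,C_N=\sigma_{(i,+1,P,s)}$ when $i'=1$. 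Two minor points: in the exceptional case you write $C_N\circ h_{(i-1,+1)}\circ C_N$ where you mean $C_N^{-1}\circ h_{(i-1,+1)}\circ C_N$, and the ``structural'' shortcut via the defining property of $\sigma_{(i,\e,P,s)}$ still needs the remark that $C_N$ is order-preserving away from $1\mapsto 2N$, which is exactly what fails only when $i'=1$.
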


\begin{proof} Follows by direct verification. \end{proof}

\begin{lemma}\label{fliplemma} For any $N\in \Z_{\geq 0}$, $(i,\e)\in \underline{2N}\times \{\pm 1\}$, we have $h_{(i,\e)}=\omega_N \circ h_{(\omega_N(i),-\e)}\circ\omega_N$, where $\omega_N \in \SS_{2N}$ is as per \ref{perm1}.
\end{lemma}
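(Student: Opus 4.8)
The plan is to verify the identity directly, splitting on whether $(i,\e)$ is admissible in $P$. First I would record two elementary observations about $\omega_N$. Writing $s'=s\circ\omega_N^{-1}$ for the twist function of $\omega_N(P,s)=(\omega_N(P),s')$, note that $\omega_N^{-1}=\omega_N$ in $\SS_{2N}$, so $\omega_N$ is an involution on $\mathcal{TC}_N$; and note that $\omega_N(\{i,i+\e\})=\{\omega_N(i),\omega_N(i+\e)\}=\{\omega_N(i),\omega_N(i)-\e\}$, so $(i,\e)$ is admissible in $P$ if and only if $(\omega_N(i),-\e)$ is admissible in $\omega_N(P)$. In the non-admissible case both sides of the claimed identity are the identity map of $\mathcal{TC}_N$ — the right-hand side because $\omega_N$ squares to the identity — and there is nothing more to do.

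For the admissible case, let $q=\{i+\e,i'\}\in P$ be the arc hit by the slide. Then $q':=\omega_N(q)=\{\omega_N(i)-\e,\omega_N(i')\}$ is exactly the arc of $\omega_N(P)$ hit by $h_{(\omega_N(i),-\e)}$, and $s'(q')=s(q)$. Abbreviating $\sigma=\sigma_{(i,\e,P,s)}$ and $\tau=\sigma_{(\omega_N(i),-\e,\omega_N(P),s')}$, and using the readily checked fact that $\pi\mapsto\big((P,s)\mapsto\pi(P,s)\big)$ is a left action of $\SS_{2N}$, I would rewrite the right-hand side evaluated at $(P,s)$ as $\big(\omega_N\tau\omega_N(P),\,(s')_{(\omega_N(i),-\e)}\circ\tau^{-1}\circ\omega_N\big)$, and compare it termwise with $h_{(i,\e)}(P,s)=\big(\sigma(P),\,s_{(i,\e)}\circ\sigma^{-1}\big)$.

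The core step will be the permutation identity $\omega_N\tau\omega_N=\sigma$, which I would deduce from the uniqueness clause in Definition \ref{def:chordslide} rather than from the explicit formula \eqref{perm}: since $\tau$ is order-preserving on $\underline{2N}\smallsetminus\{\omega_N(i)\}$ and $\omega_N$ is order-reversing, the conjugate $\omega_N\tau\omega_N$ is order-preserving on $\underline{2N}\smallsetminus\{i\}$, and a one-line computation from $\tau(\omega_N(i))=\tau(\omega_N(i'))-\e(-1)^{s'(q')}$ together with $s'(q')=s(q)$ yields $(\omega_N\tau\omega_N)(i)=(\omega_N\tau\omega_N)(i')+\e(-1)^{s(q)}$; uniqueness then forces $\omega_N\tau\omega_N=\sigma$. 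Granting this, matching the twist functions is a direct chase: for $p\in P$, evaluating the left twist function on the part $\sigma(p)=\omega_N\tau\omega_N(p)$ and using $s'=s\circ\omega_N$ and $\omega_N(i)\in\omega_N(p)\iff i\in p$ produces $s(p)+s(q)$ if $i\in p$ and $s(p)$ otherwise, which is exactly $s_{(i,\e)}(p)=(s_{(i,\e)}\circ\sigma^{-1})(\sigma(p))$.

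I expect the only real obstacle to be organisational: keeping track of which pair-partition each twist function is defined on (they all live on different pair-partitions), and correctly composing the three maps $\omega_N$, $h_{(\omega_N(i),-\e)}$, $\omega_N$ — in particular not mishandling $\tau^{-1}$. The permutation part is painless once the uniqueness characterisation of the $\sigma_{(\cdots)}$ is used; grinding instead through the four cases of \eqref{perm} would also work but is strictly more tedious and I would avoid it.
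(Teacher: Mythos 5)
Your proposal is correct, and it is essentially the paper's own (unspelled-out) argument: the paper's proof is just ``direct verification,'' and your verification — the equivalence of admissibility for $(i,\e)$ in $P$ and $(\omega_N(i),-\e)$ in $\omega_N(P)$, the conjugation identity $\omega_N\,\sigma_{(\omega_N(i),-\e,\omega_N(P),s')}\,\omega_N=\sigma_{(i,\e,P,s)}$ via the uniqueness clause of \ref{def:chordslide}, and the matching of twist functions using $s'(q')=s(q)$ — all checks out, including the non-admissible case. Appealing to the uniqueness characterisation rather than the four-case formula \eqref{perm} is just a cleaner way to organise the same direct verification.
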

\begin{proof} Follows by direct verification. \end{proof}

\begin{prop}\label{S2eq}  Define a relation on $\mathcal{TC}_N$ by $(P,s)\sim (P',s')$ if $(P,s)=h (P',s')$, where $h$ is a finite (possibly empty) composite of chord slides. This is an equivalence relation.
\end{prop}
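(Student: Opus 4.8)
The plan is to dispatch reflexivity and transitivity formally and to reduce symmetry to a single ``local reversibility'' statement for chord slides. Reflexivity holds because the empty composite of chord slides is the identity, so $(P,s)=\mathrm{id}(P,s)$. Transitivity is immediate: if $(P,s)=h(P',s')$ and $(P',s')=h'(P'',s'')$ with $h,h'$ finite composites of chord slides, then $(P,s)=(h\circ h')(P'',s'')$ and $h\circ h'$ is again a finite composite of chord slides. So the content is symmetry, and for this it suffices to prove: for every $(i,\e)\in\underline{2N}\times\{\pm1\}$ and every $(P,s)\in\mathcal{TC}_N$ there is a chord slide $h_{(j,\delta)}$ (with $(j,\delta)$ depending on the data) such that $h_{(j,\delta)}\bigl(h_{(i,\e)}(P,s)\bigr)=(P,s)$. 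Granting this, given $(P,s)=h_{(i_k,\e_k)}\circ\cdots\circ h_{(i_1,\e_1)}(P',s')$ one sets $(P^{(0)},s^{(0)})=(P',s')$ and $(P^{(\ell)},s^{(\ell)})=h_{(i_\ell,\e_\ell)}(P^{(\ell-1)},s^{(\ell-1)})$, chooses a reversing slide $g_\ell$ for each step, and observes that $g_1\circ\cdots\circ g_k$ carries $(P,s)$ to $(P',s')$; hence $(P',s')\sim(P,s)$.

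To prove local reversibility, the case where $(i,\e)$ is non-admissible in $P$ is trivial: then $h_{(i,\e)}(P,s)=(P,s)$, and since $(i,\e)$ is also non-admissible in this (unchanged) datum, $h_{(i,\e)}$ reverses itself. When $(i,\e)$ is admissible, write $q=\{i+\e,i'\}\in P$, $\sigma=\sigma_{(i,\e,P,s)}$, and $(P',s')=h_{(i,\e)}(P,s)=(\sigma(P),\,s_{(i,\e)}\circ\sigma^{-1})$. The defining relation $\sigma(i)=\sigma(i')+\e(-1)^{s(q)}$ says that after the slide the moved endpoint $\sigma(i)$ is adjacent to $\sigma(i')$, so the natural candidate for the reversing slide is $(j,\delta):=\bigl(\sigma(i),\,-\e(-1)^{s(q)}\bigr)$, for which $j+\delta=\sigma(i')$. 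First one checks $(j,\delta)$ is admissible in $P'$: $\sigma(i')\in\underline{2N}$ automatically, and $\{\sigma(i),\sigma(i')\}\notin P'$ because $\sigma(i)$ and $\sigma(i')$ lie in the distinct $P'$-arcs $\sigma(p)$ and $\sigma(q)$, where $p$ is the $P$-arc through $i$ and $p\neq q$ as $i\notin q$. The crux is then to identify the permutation $\tau:=\sigma_{(j,\delta,P',s')}$ with $\sigma^{-1}$. Rather than recomputing from the cycle formula \eqref{perm}, I would note that the $P'$-arc through $j+\delta=\sigma(i')$ is $\sigma(q)$, with label $s'(\sigma(q))=s_{(i,\e)}(q)=s(q)$ (using $i\notin q$) and other endpoint $\sigma(i+\e)$; hence $\tau$ is characterised uniquely as the permutation order-preserving on $\underline{2N}\smallsetminus\{\sigma(i)\}$ with $\tau(\sigma(i))=\tau(\sigma(i+\e))+\delta(-1)^{s(q)}=\tau(\sigma(i+\e))-\e$. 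But $\sigma^{-1}$ satisfies exactly these conditions, being order-preserving on $\underline{2N}\smallsetminus\{\sigma(i)\}$ with $\sigma^{-1}(\sigma(i))=i=(i+\e)-\e=\sigma^{-1}(\sigma(i+\e))-\e$; by the uniqueness clause in the definition of $\sigma_{(\cdot)}$ we conclude $\tau=\sigma^{-1}$. Finally the $\Z_2$-labels: the reversing slide re-adds $s(q)$ to the label of the arc $\sigma(p)$ through $\sigma(i)$, giving $s'_{(j,\delta)}(\sigma(p))=s_{(i,\e)}(p)+s(q)=s(p)+2s(q)=s(p)$ and leaving the other labels untouched, so $s'_{(j,\delta)}\circ\sigma=s$. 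Hence $h_{(j,\delta)}(P',s')=\bigl(\tau(\sigma(P)),\,s'_{(j,\delta)}\circ\tau^{-1}\bigr)=(P,s)$.

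The main obstacle is purely the bookkeeping around the sign and direction conventions: the argument turns on the identity $\delta(-1)^{s'(\sigma(q))}=-\e$, which is what lets $\sigma^{-1}$ meet the uniqueness-defining relation for $\tau$, and this in turn rests on $s'(\sigma(q))=s(q)$ (valid because $i\notin q$) together with $(-1)^{s(q)}(-1)^{s(q)}=1$ in $\Z_2$. One also uses the asserted uniqueness of $\sigma_{(i,\e,P,s)}$ from its definition, which is exactly the observation that a permutation order-preserving off a single point is determined by the image of that point; matching the two characterising conditions therefore suffices. Lemmas \ref{cyclemma} and \ref{fliplemma} are not needed for this proposition.
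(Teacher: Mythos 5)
Your proof is correct and takes essentially the same route as the paper: reflexivity and transitivity are formal, and symmetry is reduced to a single reversing chord slide at $\bigl(\sigma(i),\,-\e(-1)^{s(q)}\bigr)$, which is exactly the paper's $h_{(\sigma(i),\e')}$ with $\e'=-\e$ when $s(q)=0$ and $\e'=\e$ when $s(q)=1$. The only difference is that you carry out the verification (identifying $\sigma_{(j,\delta,P',s')}$ with $\sigma^{-1}$ via the uniqueness clause and checking the $\Z_2$-labels) that the paper states as an unproved claim.
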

\begin{proof} 
The relation $\sim$ is manifestly reflexive and transitive. To check symmetry, it suffices to check that for any $(P,s)\in \mathcal{TC}_N$ and $(i,\e)$ admissible in $P$, we have $(P,s)\sim h_{(i,\e)} (P,s)$. Writing $h_{(i,\e)}(P,s)=(\sigma(P),s_{(i, \e)}\circ \sigma^{-1})$ as per \ref{def:chordslide}, and $i+\e \in q \in P$, let $\e'$ be given by 
\begin{equation*}
    \e'=\begin{cases} -\e, & s(q)=0, \\ \e, & s(q)=1.\end{cases}
\label{epdash}\end{equation*}
Then we claim that $(P,s)=h_{(\sigma(i),\e')}\left[ h_{(i,\e)}(P,s)\right]$ giving $(P,s)\sim h_{(i,\e)} (P,s)$ as desired.
\end{proof}

Below we define the boundary graph of a TCD, which describes the boundary of the surface formed from a TCD. 

\mdef\label{bdgrp} Given a twisted chord datum $(P,s)\in \mathcal{TC}_N$, define its \textbf{boundary graph} $\del (P,s)$ as the graph with vertex set $\underline{2N}\times\{\pm 1\}$, and edge set given by
\[
    \left\{\begin{array}{ccc}
         (i,+1) \edge (i+1,-1), & |  &\text{for all } i=1,\dots,2N-1,\\
    (i,+1) \edge (j,- 1 ),(i,-1) \edge (j,+ 1 ), & |  & \text{for all untwisted arcs } \{i,j\} \in P,\\
    (i', +1) \edge (j',+ 1 ), (i', -1) \edge (j',-1 ), &| & \text{for all twisted arcs } \{i',j'\} \in P,\end{array}\right\}\,
\]
(recall that $(i, \epsilon_1) \edge (j, \epsilon_2)$ denotes an edge between vertices $(i, \epsilon_1)$ and $ (j, \epsilon_2)$). Order this vertex set by $(i,\e)<(j,\e')$ whenever $2i+\e/2<2j+\e'/2$; if we place $(i,\pm)$ just above (+) and below (-) the vertex $i$ in a twisted chord diagram, then this order is compatible with the diagrammatic ``bottom to top" order and the graph $\del(P,s)$ can be drawn as a collection of 2 dimensional 1-handles attached to a line as per Figure \ref{fig:boundary_graph}. It will also be convenient to define $\overline{\del(P,s)}:=\del(P,s)\cup \left[ (0,+1)\edge (1,-1)\right]\cup \left[(2N,+1)\edge (2N+1,-1)\right]$, with the previous order extended to this vertex set by the same formula. 

  \begin{figure}[ht]
        \centering
       \begin{subfigure}{.3\textwidth}
       \centering\begin{tikzpicture}[scale=.9,every node/.style={shape=circle},inner sep=0pt,baseline=(current bounding box.center) ]
      
       \draw[very thick] (0,0.5) -- (0,4.5);
\node[fill=black,label=left:{},inner sep=1pt] (1) at (0,1) {};
\node[fill=black,label=left:{},inner sep=1pt] (2) at (0,2) {};
\node[fill=black,label=left:{},inner sep=1pt] (3) at (0,3) {};
\node[fill=black,label=left:{},inner sep=1pt] (4) at (0,4) {};
\draw[thick] (0,4) arc (90:-90:1.0cm) node[midway,anchor=west] {\scriptsize{$1$}};
\draw[thick] (0,3) arc (90:-90:1.0cm) node[midway,anchor=west] {\scriptsize{$0$}};      
\end{tikzpicture}
            \caption{$(P,s)$}
        \label{fig:twistedchorddiagramm}
       \end{subfigure}
       \begin{subfigure}{.3\textwidth}
       \centering\begin{tikzpicture}[scale=0.9,every node/.style={shape=circle},inner sep=0pt,baseline=(current bounding box.center) ]
      
       \draw[very thick,opacity=0.4] (-0.1,0.5) -- (-0.1,4.5);
\node[fill=black,label=left:{},inner sep=0pt,opacity=0.4] (1) at (0,1) {};
\node[fill=red,label=left:{\scriptsize $1^+$},inner sep=1pt,] (1+) at (0,1.25) {};
\node[fill=blue,label=left:{\scriptsize $1^-$},inner sep=1pt] (1-) at (0,0.75) {};
\node[fill=black,label=left:{},inner sep=0pt,opacity=0.4] (2) at (0,2) {};
\node[fill=red,label=left:{\scriptsize $2^+$},inner sep=1pt] (2+) at (0,2.25) {};
\node[fill=blue,label=left:{\scriptsize $2^-$},inner sep=1pt] (2-) at (0,1.75) {};
\node[fill=black,label=left:{},inner sep=0pt,opacity=0.4] (3) at (0,3) {};
\node[fill=red,label=left:{\scriptsize $3^+$},inner sep=1pt] (3+) at (0,3.25) {};
\node[fill=blue,label=left:{\scriptsize $3^-$},inner sep=1pt] (3-) at (0,2.75) {};
\node[fill=black,label=left:{},inner sep=0pt,opacity=0.4] (4) at (0,4) {};
\node[fill=red,label=left:{\scriptsize $4^+$},inner sep=1pt] (4+) at (0,4.25) {};
\node[fill=blue,label=left:{\scriptsize $4^-$},inner sep=1pt] (4-) at (0,3.75) {};

\draw[thick,opacity=0.4] (0,4) arc (90:-90:1.0cm) node[midway,anchor=west,inner sep=-1pt] {\scriptsize{$1$}};
\draw[thick,opacity=0.4] (0,3) arc (90:-90:1.0cm) node[midway,anchor=west,inner sep=-1pt] {\scriptsize{$0$}};

\draw[thick,opacity=0.4] (1) -- + (-0.1,0);
\draw[thick,opacity=0.4] (2) -- + (-0.1,0);
\draw[thick,opacity=0.4] (3) -- + (-0.1,0);
\draw[thick,opacity=0.4] (4) -- + (-0.1,0);

\draw[thick] (4+) arc (90:-90:1.0cm) node[midway,anchor=west] {};
\draw[thick] (4-) arc (90:-90:1.0cm) node[midway,anchor=west] {};     
\draw[thick] (3+) arc (90:-90:1.25cm) node[midway,anchor=west] {};
\draw[thick] (3-) arc (90:-90:0.75cm) node[midway,anchor=west] {};    
\draw[thick] (1+) -- (2-);    
\draw[thick] (2+) -- (3-);    
\draw[thick] (3+) -- (4-);

\end{tikzpicture}
            \caption{$\del(P,s)$.}
        \label{fig:boundary_graph}
       \end{subfigure}\begin{subfigure}{.3\textwidth}
       \centering\begin{tikzpicture}[scale=0.8,every node/.style={shape=circle},inner sep=0pt,baseline=(current bounding box.center) ]
      
       \draw[very thick,opacity=0.4] (-0.1,0.5) -- (-0.1,4.5);
\node[fill=red,label=left:{\scriptsize $0^+$},inner sep=1pt] (0+) at (0,0.25) {};
\node[fill=black,label=left:{},inner sep=0pt,opacity=0.4] (1) at (0,1) {};
\node[fill=red,label=left:{\scriptsize $1^+$},inner sep=1pt,] (1+) at (0,1.25) {};
\node[fill=blue,label=left:{\scriptsize $1^-$},inner sep=1pt] (1-) at (0,0.75) {};
\node[fill=black,label=left:{},inner sep=0pt,opacity=0.4] (2) at (0,2) {};
\node[fill=red,label=left:{\scriptsize $2^+$},inner sep=1pt] (2+) at (0,2.25) {};
\node[fill=blue,label=left:{\scriptsize $2^-$},inner sep=1pt] (2-) at (0,1.75) {};
\node[fill=black,label=left:{},inner sep=0pt,opacity=0.4] (3) at (0,3) {};
\node[fill=red,label=left:{\scriptsize $3^+$},inner sep=1pt] (3+) at (0,3.25) {};
\node[fill=blue,label=left:{\scriptsize $3^-$},inner sep=1pt] (3-) at (0,2.75) {};
\node[fill=black,label=left:{},inner sep=0pt,opacity=0.4] (4) at (0,4) {};
\node[fill=red,label=left:{\scriptsize $4^+$},inner sep=1pt] (4+) at (0,4.25) {};
\node[fill=blue,label=left:{\scriptsize $4^-$},inner sep=1pt] (4-) at (0,3.75) {};
\node[fill=blue,label=left:{\scriptsize $5^-$},inner sep=1pt] (5-) at (0,4.75) {};

\draw[thick,opacity=0.4] (0,4) arc (90:-90:1.0cm) node[midway,anchor=west,inner sep=-1pt] {\scriptsize{$1$}};
\draw[thick,opacity=0.4] (0,3) arc (90:-90:1.0cm) node[midway,anchor=west,inner sep=-1pt] {\scriptsize{$0$}};

\draw[thick,opacity=0.4] (1) -- + (-0.1,0);
\draw[thick,opacity=0.4] (2) -- + (-0.1,0);
\draw[thick,opacity=0.4] (3) -- + (-0.1,0);
\draw[thick,opacity=0.4] (4) -- + (-0.1,0);

\draw[thick] (4+) arc (90:-90:1.0cm) node[midway,anchor=west] {};
\draw[thick] (4-) arc (90:-90:1.0cm) node[midway,anchor=west] {};     
\draw[thick] (3+) arc (90:-90:1.25cm) node[midway,anchor=west] {};
\draw[thick] (3-) arc (90:-90:0.75cm) node[midway,anchor=west] {};    
\draw[thick] (0+) -- (1-);    
\draw[thick] (1+) -- (2-);    
\draw[thick] (2+) -- (3-);    
\draw[thick] (3+) -- (4-);    
\draw[thick] (4+) -- (5-);

\end{tikzpicture}
            \caption{$\overline{\del(P,s)}$.}
        \label{fig:boundary_graphh}
       \end{subfigure}
        \caption{Twisted chord diagram and its boundary graph, using the short hand $i^{\pm}=(i,\pm 1)$.}
        \label{fig:bdgraph}
    \end{figure}
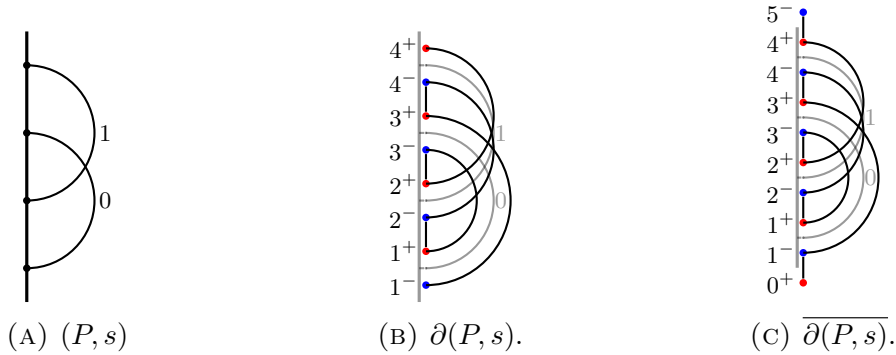

\begin{prop}\label{Prop:bondarycomponentspreserved} For any $(P,s)\in \mathcal{TC}_N$, $\del(P,s)$ contains a path between $(1,-1)$ and $(2N,+1)$. Furthermore, if $(P,s)\sim (P',s')$, then $|\CC(\del(P,S))|=|\CC(\del(P',S'))|$.
    
\end{prop}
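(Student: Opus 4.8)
The plan is to prove the two assertions separately. For the first, I would regard $\del(P,s)$ as a multigraph (so that an arc $\{i,i+1\}\in P$ produces a doubled edge rather than a loss of valence) and count valences directly: since $P$ is a pair‑partition, each vertex $(i,\e)$ lies in exactly one arc and hence is incident to exactly one arc‑edge, while $(i,+1)$ is incident to the line‑edge $(i,+1)\edge(i+1,-1)$ iff $i<2N$ and $(i,-1)$ to $(i-1,+1)\edge(i,-1)$ iff $i>1$. Thus every vertex has valence $2$ except $(1,-1)$ and $(2N,+1)$, which have valence $1$, and there are no isolated vertices. The multigraph analogue of Lemma \ref{grlemma} then forces $\del(P,s)$ to be a disjoint union of paths and cycles, and, having exactly two valence‑one vertices, it has exactly one path component, whose endpoints are $(1,-1)$ and $(2N,+1)$; in particular these lie in a common component, which is the first claim.

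For the second assertion, by Proposition \ref{S2eq} it suffices to prove that a single chord slide preserves the number of components, i.e.\ that $|\CC(\del(h_{(i,\e)}(P,s)))|=|\CC(\del(P,s))|$ for $(i,\e)$ admissible in $P$. First I would reduce to the case $\e=+1$: by Lemma \ref{fliplemma} this follows once one observes that $\omega_N$ induces a graph isomorphism $\del(P,s)\xrightarrow{\sim}\del(\omega_N(P,s))$ via the vertex bijection $(j,\delta)\mapsto(2N+1-j,-\delta)$, which is immediate from the definition in \ref{bdgrp}. It is also convenient to replace $\del(P,s)$ by $\overline{\del(P,s)}$ throughout — these have the same number of components, since $\overline{\del}$ merely lengthens the unique path component by one edge at each end — so that the virtual sites $0$ and $2N+1$ (fixed by every permutation $\sigma_{(i,\e,P,s)}$) absorb the boundary special cases.

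The core step, which I expect to be the main obstacle, is to pin down exactly how $\overline{\del}$ changes under $h_{(i,+1)}$. Writing $q=\{i+1,i'\}\in P$, $p=\{i,k\}\in P$ and $\sigma=\sigma_{(i,+1,P,s)}$ (which is the identity off the interval with endpoints $i,i'$), and letting $\phi$ be the vertex bijection $(a,\delta)\mapsto(\sigma(a),\delta)$, a direct computation organised by the four cases of \eqref{perm} should show that $\phi$ carries the arc‑edges of $\overline{\del}(P,s)$ to those of $\overline{\del}(h_{(i,+1)}(P,s))$ and carries all but three of the line‑edges to line‑edges; the three exceptional line‑edges of $\overline{\del}(h_{(i,+1)}(P,s))$, together with their three $\phi$‑partners, lie on a common set of six vertices, where the two triples form the two perfect matchings of a hexagon. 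Hence $\phi(\overline{\del}(P,s))$ and $\overline{\del}(h_{(i,+1)}(P,s))$ share a common subgraph $H$ and are obtained from $H$ by adjoining one or the other hexagon matching. To finish I would use the structural fact that after the slide the arc $q$ has become a short arc joining two of those six vertices: this contributes an edge of $H$ between them which is in fact a whole path component of $H$ (the line‑edges at those vertices being precisely among the removed ones), and it is a long diagonal of the hexagon; contracting this path component reduces $H$ together with either hexagon matching to the \emph{same} graph, so the two component counts agree. The delicate case is $s(q)=1$, where in addition the twist of $p$ is toggled and the two arc‑edges at $p$ shift, requiring the bookkeeping above to be redone with an extra local move; the conceptually cleanest way to package the whole invariance — and the way I would ultimately phrase it — is to note that $h_{(i,\e)}$ realises a handle slide, an ambient homeomorphism of the bounded surface $\Sigma(P,s)$ whose boundary is computed by $\del(P,s)$ \cite{Carter}, and that such a homeomorphism preserves the number of boundary circles, which is exactly $|\CC(\del(P,s))|$.
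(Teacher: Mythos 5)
Your treatment of the first claim is fine and is essentially the paper's argument (valence count plus Lemma \ref{grlemma}; your multigraph convention even handles the degenerate untwisted arcs $\{i,i+1\}$ more carefully than the paper's one-line remark). For the second claim, your reduction to a single slide via Prop.\ \ref{S2eq} and to $\e=+1$ via Lemma \ref{fliplemma} and the $\omega_N$-isomorphism of boundary graphs also matches the paper, and I checked that your hexagon-plus-contraction analysis does work in the untwisted case $s(q)=0$: the three mismatched line-edges on each side do form the two matchings of a hexagon, the arc-edge of the slid arc is a length-one component of the common subgraph $H$ sitting on a long diagonal, and contracting it identifies $H\cup A$ with $H\cup B$ up to equal component counts.

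The genuine gap is the case $s(q)=1$, which you explicitly defer to ``an extra local move'' that is never specified, and which is not a cosmetic variant of the untwisted case: there the twist of the arc $p\ni i$ toggles, so the vertex bijection $(a,\delta)\mapsto(\sigma(a),\delta)$ no longer carries the two arc-edges at $p$ to arc-edges of $\sigma(p)$ (they switch between the $(+1,-1)$ and $(+1,+1),(-1,-1)$ patterns), and the permutation in \eqref{perm} breaks a different triple of line-edges; the exchanged local configuration involves both line- and arc-edges and is no longer a single hexagon, so the bookkeeping genuinely has to be redone, and this is precisely the non-orientable case the proposition is about. Your fallback --- that $h_{(i,\e)}$ realises a handle slide homeomorphism of a surface whose boundary circles are counted by $|\CC(\del(P,s))|$ --- is not available inside this paper: no surface is ever constructed from a twisted chord datum, no proof is given that $\del(P,s)$ computes its boundary, and no proof that the specific permutation and sign rules of \ref{def:chordslide} are realised by an ambient homeomorphism; establishing that dictionary (beyond the motivational citation of \cite{Carter}) is essentially the verification you are trying to avoid. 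The paper's own route sidesteps the heavy case analysis differently: having proved the first claim, it may harmlessly add the closing edge $e=(1,-1)\edge(2N,+1)$ and use Lemma \ref{cyclemma} (conjugation by $C_N$) together with Lemma \ref{fliplemma} to reduce to the single site $(i,\e)=(1,+1)$, leaving one small direct check covering both $s(q)=0$ and $s(q)=1$; either adopt that reduction or carry out your twisted-case bookkeeping explicitly --- as it stands the argument is incomplete.
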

\begin{proof}
    Existence of the path between $(1,-1)$ and $(2N,+1)$ follows immediately from Lemma \ref{grlemma} since these are the only two vertices with valence 1. For the second part, it suffices to treat the case $(P',s')=h_{(i,\e)}(P,s)$ for $(i,\e)$ admissible in $P$. Using Lemmata \ref{cyclemma} and \ref{fliplemma}, we can further reduce to the case $(i,\e)=(1,+1)$ since $(i,\e)\mapsto (\omega_N(i),-\e)$ is a graph isomorphism $\del(P,s)\overset{\sim}{\to} \del((P,s)^*)$, and $(i,\e)\mapsto (C_N(i),\e)$ is a graph isomorphism 
    \begin{equation*}
    (\del(P,s)\cup e)\overset{\sim}{\to} (\del(C_N(P,s))\cup e), \quad \text{for} \quad e=(1,-1)\edge (2N,+1),
    \end{equation*}
    and it follows from the first claim that $|\CC(\del(P,s)\cup e)|=|\CC(\del(P,s))|$. This case can be directly verified.\end{proof}

Below we define the notions of vertical juxtaposition and insertion of TCD, which are useful notions of decomposition for TCD (\textit{c.f.} Theorem \ref{rform}). 

\mdef\label{vjuxt} For two pair-partitions $P_i\in B_{\underline{2N_i}}$ ($i=1,2$), define their \textbf{vertical juxtaposition} $P_2\# P_1\in B_{\underline{2(N_1+N_2)}}$ by $P_2\# P_1=P_1\cup (P_2+2N_1)$\footnote{We regard $(\_)+N$ as a function $\Z\to \Z$ so that $P_2+2N_1=\{\{i+2N_1,i'+2N_1\} \ | \ \{i,i'\}\in P_2\}$ in accordance with \ref{notn}.}. For two twisted chord data $(P_i,s_i)\in \mathcal{TC}_{N_i}$ ($i=1,2$), define their vertical juxtaposition by $(P_2,s_2)\# (P_1,s_1)=(P_2\#P_1,s_2\#s_1)\in \mathcal{TC}_{N_1+N_2}$, where $s_2\#s_1$ is uniquely defined by $s_2\#s_1 (p)=s_1(p)$ and $s_2\# s_1 (q+2N_1) =s_2(q)$ for $p\in P_1, q\in P_2$. 

\mdef\label{ins} Define the \textbf{insertion} of $P_1$ in $P_2$ at height $d =0,\dots, 2N_2$, as the pair partition 

\begin{equation*}
P_{2} \#_d P_1:= (P_1+d) \cup o_d(P_2),
\end{equation*}
where $o_d:\underline{2N_2}\to \underline{2(N_1+N_2)}\smallsetminus(\underline{2N_1}+d)$ is the unique OP map. For twisted chord data, define their insertion $(P_2,s_2)\#_d (P_1,s_1):=(P_2\#_d P_1,s_2\#_d s_1)\in \mathcal{TC}_{N_1+N_2}$, where $s_2\#_d s_1$ is uniquely defined by $s_2\#_d s_1 (p+d)=s_1(p)$ and $s_2\#_d s_1 (o_d(q)) =s_2(q)$ for $p\in P_1, q\in P_2$. In particular, observe that $(P_2,s_2)\#_0 (P_1,s_1)=(P_2,s_2)\# (P_1,s_1)$ and $(P_2,s_2)\#_{2N_2} (P_1,s_1)=(P_1,s_1)\# (P_2,s_2)$.

\begin{example} The datum $(P_2,s_2)$ from \ref{example:markedchorddiagrams} can be written as the juxtaposition $(P_2,s_2)=\tor\#\mob$ with $\tor\in \mathcal{TC}_2$, $\mob\in \mathcal{TC}_1$ as per \ref{elem}. The datum $C_3(P_2,s_2)$ (\textit{c.f.} \ref{ex:perms}), can be written as the insertion $\mob\#_1 \tor$. We can represent the insertion of a generic datum $(P,s)\in \mathcal{TC}_N$, using the following ``box" convention:
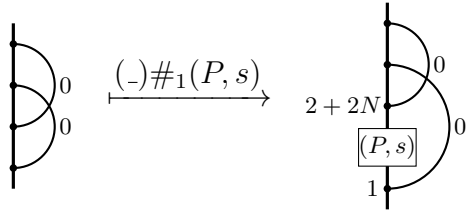
\begin{figure}[ht]
    \centering
    \[\begin{tikzpicture}[scale=0.55,every node/.style={shape=circle},inner sep=0pt,baseline=(current bounding box.center) ]
      
       \draw[very thick] (0,0.5) -- (0,4.5);
\node[fill=black,label=left:{},inner sep=1pt] (1) at (0,1) {};
\node[fill=black,label=left:{},inner sep=1pt] (2) at (0,2) {};
\node[fill=black,label=left:{},inner sep=1pt] (3) at (0,3) {};
\node[fill=black,label=left:{},inner sep=1pt] (4) at (0,4) {};
\draw[thick] (0,4) arc (90:-90:1.0cm) node[midway,anchor=west] {\scriptsize{$0$}};
\draw[thick] (0,3) arc (90:-90:1.0cm) node[midway,anchor=west] {\scriptsize{$0$}};      
\end{tikzpicture} \quad\xmapstoo{(\_)\#_1(P,s)}\quad \begin{tikzpicture}[scale=0.55,every node/.style={shape=circle},inner sep=0pt, baseline=(current bounding box.center)]
      
       \draw[very thick] (0,0.5) -- (0,5.5);

\node[fill=black,label=left:{\scriptsize $1$},inner sep=1pt] (1) at (0,1) {};
\node[fill=white,draw=black,style=rectangle,minimum height=0.5cm,opaque] (x) at (0,2) {\scriptsize $(P,s)$};
\node[fill=black,label=left:{\scriptsize $2+2N$},inner sep=1pt] (2) at (0,3) {};
\node[fill=black,label=left:{},inner sep=1pt] (3) at (0,4) {};
\node[fill=black,label=left:{},inner sep=1pt] (4) at (0,5) {};
\draw[thick] (0,5) arc (90:-90:1.0cm) node[midway,anchor=west] {\scriptsize{$0$}};
\draw[thick] (0,4) arc (90:-90:1.5cm) node[midway,anchor=west] {\scriptsize{$0$}};      
\end{tikzpicture}\]
    \caption{Box convention for insertion}
    \label{fig:box}
\end{figure}
\end{example}

\begin{prop}\label{assoc1} For $(P_i,s_i)\in \mathcal{TC}_{N_i}$ ($i=1,2,3$) and non-negative integers $d_1\leq 2N_2$ and $d_2\leq 2N_3$, and $d_1'<d_2'\leq 2N_3$ we have
\begin{align*} 
    (P_3, s_3)\#_{d_2}\left[(P_2, s_2)\#_{d_1}(P_1, s_1)\right]&=\left[(P_3, s_3)\#_{d_2} (P_2, s_2)\right]\#_{d_1+d_2}(P_1, s_1),\\
    \left[(P_3, s_3)\#_{d_2'}(P_2, s_2)\right]\#_{d_1'}(P_1, s_1)&=  \left[(P_3, s_3)\#_{d_1'}(P_1, s_1)\right]\#_{d_2'+2 N_1}(P_2, s_2),
    \\
    [(P_2,s_2)\#_d (P_1,s_1)]^*&=(P_2,s_2)^*\#_{2N-d} (P_1,s_1)^*.
    \end{align*}
In particular, vertical juxtaposition is an associative binary operation on twisted chord data, for which $(\_)^*$ is an order reversing involution.
\end{prop}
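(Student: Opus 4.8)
The plan is to prove each of the three displayed identities at the level of the underlying pair-partitions first, and then observe that the assertions for the $\Z_2$-labellings follow with no extra case analysis. To this end I would record once and for all the explicit piecewise descriptions of the maps involved: the order-preserving map $o_d\colon\underline{2N_2}\to\underline{2(N_1+N_2)}\smallsetminus(\underline{2N_1}+d)$ of \ref{ins} is $o_d(k)=k$ for $k\le d$ and $o_d(k)=k+2N_1$ for $k>d$; the reversal is $\omega_N(i)=2N+1-i$; and translation by $d$ is $i\mapsto i+d$. With these in hand, after unwinding the nested operations both sides of each identity take the form $\bigcup_j\phi_j(P_j)$ for order-preserving embeddings $\phi_j$ whose images partition the relevant index set ($\underline{2(N_1+N_2+N_3)}$ for the first two identities, $\underline{2(N_1+N_2)}$ for the third), and since an order-preserving injection is determined by its image, proving the identity reduces to matching, block by block, the images of the $\phi_j$ on the two sides.

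For the first identity I would partition $\underline{2(N_1+N_2+N_3)}$ into the three blocks (each a union of at most two sub-intervals) on which $P_1$, $P_2$, $P_3$ land, and compute them from both expressions. Tracing the definitions gives that $P_1$ lands on $\{d_1+d_2+1,\dots,d_1+d_2+2N_1\}$, that $P_2$ lands on $\{d_2+1,\dots,d_2+d_1\}\cup\{d_1+d_2+2N_1+1,\dots,d_2+2N_1+2N_2\}$, and that $P_3$ lands on $\{1,\dots,d_2\}\cup\{d_2+2(N_1+N_2)+1,\dots,2(N_1+N_2+N_3)\}$, and these agree on both sides, the hypotheses $d_1\le 2N_2$ and $d_2\le 2N_3$ being exactly what guarantees that the sub-intervals sit inside one another in the expected way. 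The second identity is of the same type: inserting $P_1$ at height $d_1'$ shifts everything strictly above $d_1'$ upward by $2N_1$, which is precisely compensated by replacing the $P_2$-insertion height $d_2'$ by $d_2'+2N_1$ on the right-hand side, and again one checks the three landing blocks coincide, using $d_1'<d_2'\le 2N_3$. For the third identity the entire content is the operator identity $\omega_{N_1+N_2}\circ o_d=o_{2N_2-d}\circ\omega_{N_2}$ (as maps $\underline{2N_2}\to\underline{2(N_1+N_2)}$) together with $\omega_{N_1+N_2}(j+d)=\omega_{N_1}(j)+(2N_2-d)$ for $j\in\underline{2N_1}$, both immediate from the piecewise formulas; here one reads $\#_{2N-d}$ as $\#_{2N_2-d}$, i.e. $N=N_2$, so that $2N_2-d$ is again a valid insertion height.

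Given the pair-partition identities, the labelling statements follow at once: by construction (see \ref{vjuxt}, \ref{ins}, \ref{perm1}) the label carried by an arc of an insertion, a juxtaposition, or a reversal is simply the label of the arc of the relevant constituent datum from which it arises, and the block-by-block computation above already identifies, for each arc of the two sides, the constituent datum and original arc to which it corresponds; hence the labels agree because the arcs do. Finally the ``in particular'' clause drops out: taking $d_1=d_2=0$ in the first identity and using $\#_0=\#$ from \ref{ins} gives associativity of $\#$; $((P,s)^*)^*=(P,s)$ since $\omega_N$ is an involution; and taking $d=0$ in the third identity together with $(P_2,s_2)\#_{2N_2}(P_1,s_1)=(P_1,s_1)\#(P_2,s_2)$ from \ref{ins} yields $[(P_2,s_2)\#(P_1,s_1)]^*=(P_1,s_1)^*\#(P_2,s_2)^*$, so $(\_)^*$ reverses the order of juxtaposition. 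I expect the only real obstacle to be organizational --- carrying the three shift parameters, the three ranks, and the nested order-preserving maps through the computations without an index slip, and invoking each of the inequalities $d_1\le 2N_2$, $d_2\le 2N_3$, $d_1'<d_2'\le 2N_3$ at exactly the point where it is needed; there is no conceptual difficulty once a sufficiently explicit notation is fixed.
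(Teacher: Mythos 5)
Your proposal is correct and is exactly the route the paper takes: the paper's proof is simply ``follows by direct verification,'' and your block-by-block unwinding of $o_d$, the translations, and $\omega_N$ (together with the observation that order-preserving injections are determined by their images, and that labels are inherited arcwise) is precisely that verification carried out in full. The index computations and the use of each hypothesis ($d_1\le 2N_2$, $d_2\le 2N_3$, $d_1'<d_2'\le 2N_3$, and reading $2N-d$ as $2N_2-d$) all check out.
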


\begin{proof} Follows by direct verification. \end{proof}

In what remains of this subsection, definitions and results are geared towards proving Theorem \ref{rform}, which is an analogy of the classification of surfaces, establishing a unique caravan decomposition for TCD when considered up to chord sliding. We begin by defining three small rank TCD, which are the ``prime" factors in this decomposition.

\mdef\label{elem} Define three twisted chord data, $\mob \in \mathcal{TC}_{1}$, $\text{Ann}\in \mathcal{TC}_{1}$, and $\tor\in \mathcal{TC}_{2}$, by
\begin{align*}
    \mob&=(\{\{1,2\}\},s_{\mob}), &s_{\mob}(\{1,2\})&=1,\\   
    \text{Ann}&=(\{\{1,2\}\},s_{\text{Ann}}), &s_{\text{Ann}}(\{1,2\})&=0,\\ 
    \tor&=(\{\{1,3\},\{2,4\}\},s_{\tor}), &s_{\tor}(\{1,3\})&=s_{\tor}(\{2,4\})=0.
\end{align*}
They are illustrated in Figure \ref{fig:ementarytwistedchord}.

Note that these three TCD have the property that they are preserved by all chord slides, \textit{i.e.} $h_{(i,\e)}(P,s)=(P,s)$ for all $(i,\e)$ admissible in $P$. It therefore follows that any decomposition $(P,s)\sim (P_2,s_2)\#_d (P_1,s_1)$ for $(P,s)=\mob, \text{Ann},$ or $\tor$ is trivial; \textit{i.e.} one factor is the unique rank 0 datum. In this sense, these data are prime.
It follows from Theorem \ref{rform}, that these are the only data with this property. 

\begin{figure}[ht]
        \centering
       \begin{subfigure}{.3\textwidth}
       \centering
     $$  \vcenter{\hbox{\begin{overpic}[scale=1]{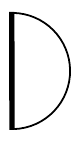}
\put(37, 30){$1$}
\end{overpic}}}$$
\caption{\mob}
        \label{fig:TMob}
       \end{subfigure}
       \begin{subfigure}{.3\textwidth}
       \centering
       $$  \vcenter{\hbox{\begin{overpic}[scale=1]{ElementaryTmob}
\put(37, 30){$0$}
\end{overpic}}}$$
            \caption{Ann}
        \label{fig:TAn}
       \end{subfigure}
       \begin{subfigure}{.3\textwidth}
       \centering
       $$  \vcenter{\hbox{\begin{overpic}[scale=1]{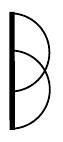}
\put(26, 20){$0$}
\put(26, 42){$0$}
\end{overpic}}}$$
            \caption{\tor}
        \label{fig:Ttor}
       \end{subfigure}
        \caption{Diagrams for $\mob$, $\text{Ann}$ and $\tor$.}
        \label{fig:ementarytwistedchord}
    \end{figure}

In order to prove Theorem \ref{rform}, it will be useful to define two types of sequences of chord slides for algorithmic purposes. In the first type of sequence, which we call an \textbf{evacuation sequence}, one slides a collection of adjacent sites one by one in the same direction. For example, suppose $(P,s)$ contains a twisted arc $\{i,j\}$ where $i<j$; the upward evacuation of sites $i+1$ to $j-1$ is given by the sequence indicated in Figure \ref{fig:evac}. \\

  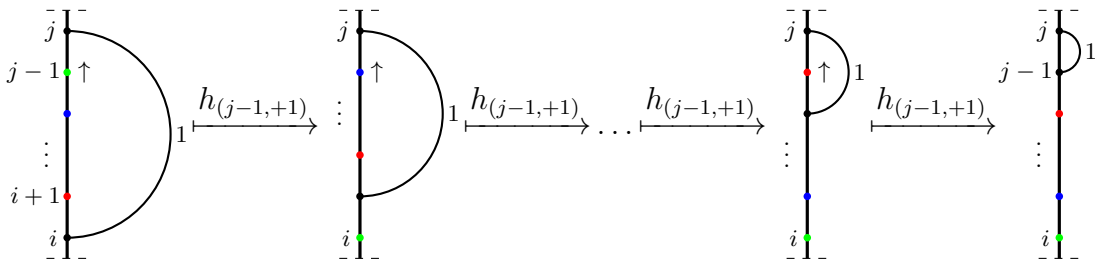
\begin{figure}[ht]
      \centering
\[\begin{tikzpicture}[scale=1.1,every node/.style={shape=circle},inner sep=0pt,baseline=(current bounding box.center) ]
      \draw[dashed] (-0.25,0.75) -- (0.25,0.75);
      \draw[dashed] (-0.25,3.75) -- (0.25,3.75);
       \draw[very thick] (0,0.75) -- (0,3.75);
\node[fill=black,label=left:{\scriptsize $i$},inner sep=1pt] (1) at (0,1) {};
\node[fill=black,label=left:{\scriptsize $j$},inner sep=1pt] (2) at (0,3.5) {};
\draw[thick] (0,3.5) arc (90:-90:1.25cm) node[midway,anchor=west] {\scriptsize{1}};
\node[fill=red,label=left:{\scriptsize $i+1$},inner sep=1pt] (a) at (0,1.5) {};
\node[anchor=east,inner sep=0pt] (b) at (0,2.1) {\scriptsize$\vdots$};
\node[fill=blue,label=left:{},inner sep=1pt] (b) at (0,2.5) {};
\node[fill=green,label=left:{\scriptsize $j-1$},inner sep=1pt] (c) at (0,3) {};
\node[label=right:{\scriptsize $\uparrow$},inner sep=1pt] () at (c) {};
       
\end{tikzpicture} \xmapstoo{h_{(j-1,+1)}} \begin{tikzpicture}[scale=1.1,every node/.style={shape=circle},inner sep=0pt,baseline=(current bounding box.center) ]
      \draw[dashed] (-0.25,0.75) -- (0.25,0.75);
      \draw[dashed] (-0.25,3.75) -- (0.25,3.75);
       \draw[very thick] (0,0.75) -- (0,3.75);
\node[fill=black,label=left:{},inner sep=1pt] (1) at (0,1.5) {};
\node[fill=black,label=left:{\scriptsize $j$},inner sep=1pt] (2) at (0,3.5) {};
\draw[thick] (0,3.5) arc (90:-90:1cm) node[midway,anchor=west] {\scriptsize{1}};
\node[fill=red,label=left:{},inner sep=1pt] (a) at (0,2) {};
\node[anchor=east,inner sep=0pt] (b) at (0,2.6) {\scriptsize$\vdots$};
\node[fill=blue,label=left:{},inner sep=1pt] (b) at (0,3) {};
\node[label=right:{\scriptsize $\uparrow$},inner sep=1pt] () at (b) {};
\node[fill=green,label=left:{\scriptsize $i$},inner sep=1pt] (c) at (0,1) {};
       
\end{tikzpicture}\xmapstoo{h_{(j-1,+1)}}\dots \xmapstoo{h_{(j-1,+1)}}\begin{tikzpicture}[scale=1.1,every node/.style={shape=circle},inner sep=0pt,baseline=(current bounding box.center)]
     \draw[dashed] (-0.25,0.75) -- (0.25,0.75);
      \draw[dashed] (-0.25,3.75) -- (0.25,3.75);
       \draw[very thick] (0,0.75) -- (0,3.75);
\node[fill=black,label=left:{},inner sep=1pt] (1) at (0,2.5) {};
\node[fill=black,label=left:{\scriptsize $j$},inner sep=1pt] (2) at (0,3.5) {};
\draw[thick] (0,3.5) arc (90:-90:0.5cm) node[midway,anchor=west] {\scriptsize{1}};
\node[fill=red,label=left:{},inner sep=1pt] (a) at (0,3) {};
\node[label=right:{\scriptsize $\uparrow$},inner sep=1pt] () at (a) {};
\node[anchor=east,inner sep=0pt] (b) at (0,2.1) {\scriptsize$\vdots$};
\node[fill=blue,label=left:{},inner sep=1pt] (b) at (0,1.5) {};
\node[fill=green,label=left:{\scriptsize $i$},inner sep=1pt] (c) at (0,1) {};
\end{tikzpicture}\xmapstoo{h_{(j-1,+1)}}\begin{tikzpicture}[scale=1.1,every node/.style={shape=circle},inner sep=0pt,baseline=(current bounding box.center) ]
      \draw[dashed] (-0.25,0.75) -- (0.25,0.75);
      \draw[dashed] (-0.25,3.75) -- (0.25,3.75);
       \draw[very thick] (0,0.75) -- (0,3.75);
\node[fill=black,label=left:{\scriptsize $j-1$},inner sep=1pt] (1) at (0,3) {};
\node[fill=black,label=left:{\scriptsize $j$},inner sep=1pt] (2) at (0,3.5) {};
\draw[thick] (0,3.5) arc (90:-90:0.25cm) node[midway,anchor=west] {\scriptsize{1}};
\node[fill=red,label=left:{},inner sep=1pt] (a) at (0,2.5) {};
\node[anchor=east,inner sep=0pt] (b) at (0,2.1) {\scriptsize$\vdots$};
\node[fill=blue,label=left:{},inner sep=1pt] (b) at (0,1.5) {};
\node[fill=green,label=left:{\scriptsize $i$},inner sep=1pt] (c) at (0,1) {};
\end{tikzpicture}  \]
      \caption{Upward evacuation sequence of sites $i+1$ to $j-1$}
      \label{fig:evac}
  \end{figure}

In the second type of sequence, known as a \textbf{boundary slide} of a site over some inserted TCD $(P,s)$, one applies chord slides at a given site and at its resultant position after each step, until this site has been ``transported over the boundary" of $(P,s)$. For example, suppose that $(P,s)$ is of the form $(P',s')\#_{i-1} \tor$; the downward slide of site $i$ over $\tor$ is then illustrated in Figure \ref{fig:slide}.\\ 
  
  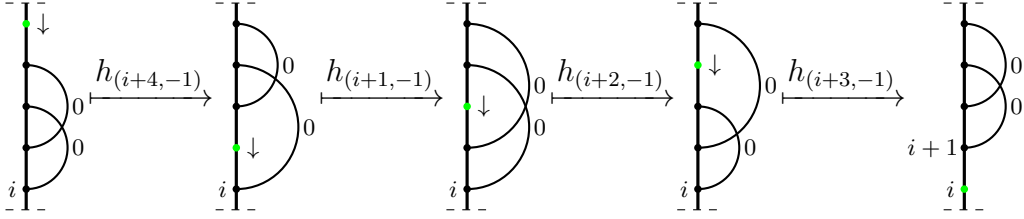
\begin{figure}[ht]
      \centering
\[\begin{tikzpicture}[scale=0.55,every node/.style={shape=circle},inner sep=0pt,baseline=(current bounding box.center)]
      \draw[dashed] (-0.5,0.5) -- (0.5,0.5);
      \draw[dashed] (-0.5,5.5) -- (0.5,5.5);
       \draw[very thick] (0,0.5) -- (0,5.5);
\node[fill=black,label=left:{\scriptsize $i$},inner sep=1pt] (1) at (0,1) {};
\node[fill=black,label=left:{},inner sep=1pt] (2) at (0,2) {};
\node[fill=black,label=left:{},inner sep=1pt] (3) at (0,3) {};
\node[fill=black,label=left:{},inner sep=1pt] (4) at (0,4) {};
\node[fill=green,label=left:{},inner sep=1pt] (5) at (0,5) {};
\node[label=right:{\scriptsize $\downarrow$},inner sep=1pt] () at (5) {};
\draw[thick] (0,4) arc (90:-90:1cm) node[midway,anchor=west] {\scriptsize{0}};
\draw[thick] (0,3) arc (90:-90:1cm) node[midway,anchor=west] {\scriptsize{0}};
       
\end{tikzpicture}\xmapstoo{h_{(i+4,-1)}}\begin{tikzpicture}[scale=0.55,every node/.style={shape=circle},inner sep=0pt,baseline=(current bounding box.center) ]
      \draw[dashed] (-0.5,0.5) -- (0.5,0.5);
      \draw[dashed] (-0.5,5.5) -- (0.5,5.5);
       \draw[very thick] (0,0.5) -- (0,5.5);
\node[fill=black,label=left:{\scriptsize $i$},inner sep=1pt] (1) at (0,1) {};
\node[fill=green,label=left:{},inner sep=1pt] (2) at (0,2) {};
\node[label=right:{\scriptsize $\downarrow$},inner sep=1pt] () at (2) {};
\node[fill=black,label=left:{},inner sep=1pt] (3) at (0,3) {};
\node[fill=black,label=left:{},inner sep=1pt] (4) at (0,4) {};
\node[fill=black,label=left:{},inner sep=1pt] (5) at (0,5) {};
\draw[thick] (0,5) arc (90:-90:1cm) node[midway,anchor=west] {\scriptsize{0}};
\draw[thick] (0,4) arc (90:-90:1.5cm) node[midway,anchor=west] {\scriptsize{0}};
       
\end{tikzpicture}\xmapstoo{h_{(i+1,-1)}}\begin{tikzpicture}[scale=0.55,every node/.style={shape=circle},inner sep=0pt,baseline=(current bounding box.center) ]
      \draw[dashed] (-0.5,0.5) -- (0.5,0.5);
      \draw[dashed] (-0.5,5.5) -- (0.5,5.5);
       \draw[very thick] (0,0.5) -- (0,5.5);
\node[fill=black,label=left:{\scriptsize $i$},inner sep=1pt] (1) at (0,1) {};
\node[fill=black,label=left:{},inner sep=1pt] (2) at (0,2) {};
\node[fill=green,label=left:{},inner sep=1pt] (3) at (0,3) {};
\node[label=right:{\scriptsize $\downarrow$},inner sep=1pt] () at (3) {};
\node[fill=black,label=left:{},inner sep=1pt] (4) at (0,4) {};
\node[fill=black,label=left:{},inner sep=1pt] (5) at (0,5) {};
\draw[thick] (0,5) arc (90:-90:1.5cm) node[midway,anchor=west] {\scriptsize{0}};
\draw[thick] (0,4) arc (90:-90:1.5cm) node[midway,anchor=west] {\scriptsize{0}};
       
\end{tikzpicture}\xmapstoo{h_{(i+2,-1)}}\begin{tikzpicture}[scale=0.55,every node/.style={shape=circle},inner sep=0pt,baseline=(current bounding box.center) ]
      \draw[dashed] (-0.5,0.5) -- (0.5,0.5);
      \draw[dashed] (-0.5,5.5) -- (0.5,5.5);
       \draw[very thick] (0,0.5) -- (0,5.5);
\node[fill=black,label=left:{\scriptsize $i$},inner sep=1pt] (1) at (0,1) {};
\node[fill=black,label=left:{},inner sep=1pt] (2) at (0,2) {};
\node[fill=black,label=left:{},inner sep=1pt] (3) at (0,3) {};
\node[fill=green,label=left:{},inner sep=1pt] (4) at (0,4) {};
\node[label=right:{\scriptsize $\downarrow$},inner sep=1pt] () at (4) {};
\node[fill=black,label=left:{},inner sep=1pt] (5) at (0,5) {};
\draw[thick] (0,5) arc (90:-90:1.5cm) node[midway,anchor=west] {\scriptsize{0}};
\draw[thick] (0,3) arc (90:-90:1cm) node[midway,anchor=west] {\scriptsize{0}};
       
\end{tikzpicture}\xmapstoo{h_{(i+3,-1)}}\begin{tikzpicture}[scale=0.55,every node/.style={shape=circle},inner sep=0pt,baseline=(current bounding box.center) ]
      \draw[dashed] (-0.5,0.5) -- (0.5,0.5);
      \draw[dashed] (-0.5,5.5) -- (0.5,5.5);
       \draw[very thick] (0,0.5) -- (0,5.5);
\node[fill=green,label=left:{\scriptsize $i$},inner sep=1pt] (1) at (0,1) {};
\node[fill=black,label=left:{\scriptsize $i+1$},inner sep=1pt] (2) at (0,2) {};
\node[fill=black,label=left:{},inner sep=1pt] (3) at (0,3) {};
\node[fill=black,label=left:{},inner sep=1pt] (4) at (0,4) {};
\node[fill=black,label=left:{},inner sep=1pt] (5) at (0,5) {};
\draw[thick] (0,5) arc (90:-90:1cm) node[midway,anchor=west] {\scriptsize{0}};
\draw[thick] (0,4) arc (90:-90:1cm) node[midway,anchor=west] {\scriptsize{0}};
       
\end{tikzpicture}\]
      \caption{Downward boundary slide of site $i$ over $\tor$}
      \label{fig:slide}
  \end{figure}

We define these notions, in more generality below:

\mdef\label{def:evac}  For $(P,s)\in \mathcal{TC}_N$, and non-negative integers $d_1 \leq d_2<2N$, and $d_1' \leq d_2'<2N$, define two sequences of chord slides $\mathit{ev}^{+}_{[d_1,d_2]}$ and $\mathit{ev}^{-}_{[d_1',d_2']}$ iteratively as follows: $\mathit{ev}_{[d_2,d_2]}^{+}=h_{(d_2,+1)}$, $\mathit{ev}_{[d_1',d_1']}^{-}=h_{(d_1',-1)}$, and for $d_1<d\leq d_2$ and $d_1'\leq d' <d_2'$
\begin{align*} \mathit{ev}^{+}_{[d-1,d_2]}&:= h_{(\tau^{+}_d(d-1),+1)}\circ \mathit{ev}^{+}_{[d,d_2]},\\
\mathit{ev}^{-}_{[d_1',d'+1]}&:= h_{(\tau^{-}_{d'}(d'+1),-1)}\circ \mathit{ev}^{-}_{[d_1',d']},
\end{align*}
where $\tau^{+}_d, \tau^{-}_{d'}\in \mathfrak{S}_{2N}$ are the permutations determined by $\mathit{ev}^{+}_{[d,d_2]}(P,s)=(\tau^+_d(P),(s^+_d)\circ(\tau^+_d)^{-1} )$ and  $\mathit{ev}^{-}_{[d_1',d']}(P,s)=(\tau^-_{d'}(P),(s^-_{d'})\circ(\tau^-_{d'})^{-1} )$, respectively. We refer to $\mathit{ev}^{+}_{[d_1,d_2]}$ (and $\mathit{ev}^{-}_{[d_1',d_2']}$) as the \textbf{upward} (and \textbf{downward}) \textbf{evacuation} of sites $d_1$ to $d_2$ (and $d_1'$ to $d_2'$) in $(P,s)$, respectively. Observe that 
\begin{equation*}
\mathit{ev}^{-}_{[d_1',d_2']}=\omega_N\circ \left(\mathit{ev}^{+}_{[\omega_N(d_2'),\omega_N(d_1')]}\right)_{(P,s)^*} \circ\omega_N,
\end{equation*}
using the subscript to indicate that the upward slide is in the chord datum $(P,s)^*\in \mathcal{TC}_N$.

\mdef\label{def:slideover} Let $(P_1,s_1)\in \mathcal{TC}_{N_1} $, $(P_2,s_2)\in \mathcal{TC}_{N_2},$ and suppose that $0< d\leq 2N_2$ and $0\leq  d'< 2N_2$ are non-negative integers. Let $\Gamma\subset \del (P_1,s_1)$ be the unique component containing vertices $(2N_1,+1)$ and $(1,-1)$. Then $\Gamma$ is a line which has the form
\[(1,-1) \edge (i_{1},\e_1) \edge (i_{1}-\e_1,-\e_1) \edge (i_{2},\e_2) \edge (i_{2}-\e_2,-\e_2) \edge \dots \]
\[\dots  \edge (i_{k},\e_k) \edge (i_{k}-\e_k,-\e_k) \edge (2N_1,+1),    \]
for $\e_j \in \{\pm 1\}$, $i_j\in \underline{2N_1}$. Define $\mathit{bs}^{+}_{(d,(P_1,s_1))}, \mathit{bs}^{-}_{(d',(P_1,s_1))}:\mathcal{TC}_{N_1+N_2}\to \mathcal{TC}_{N_1+N_2}$ to be the following sequences of handle slides:
\begin{align*}
\mathit{bs}^{+}_{(d,(P_1,s_1))}&:=  h_{(d+i_k+\delta_k,\epsilon_k)} \circ \dots  \circ h_{(d+i_2+\delta_2,\epsilon_2)} \circ h_{(d+i_1+\delta_1,\epsilon_1)}\circ h_{(d,+1)},\\
\mathit{bs}^{-}_{(d',(P_1,s_1))}&:=  h_{(d'+i_{1}+\delta_1,-\epsilon_1)}\circ\dots  \circ h_{(d'+i_{k-1}+\delta_{k-1},-\epsilon_{k-1})}\circ h_{(d'+i_k+\delta_k,-\epsilon_k)}\circ h_{(d'+2N_1+1,-1)},
\end{align*}
where $\delta_i=(\epsilon_i+1)/2$. We refer to $\mathit{bs}^+_{(d,(P_1,s_1))}$ (and $\mathit{bs}^-_{(d',(P_1,s_1))}$) as the \textbf{upward} (and \textbf{downward}) \textbf{boundary slide} of site $d$ (and $d'+1$) over $(P_1,s_1)$. Observe that
\begin{equation*}
\mathit{bs}^{-}_{(d',(P_1,s_1))}=\omega_N\circ \mathit{bs}^+_{\left(2N_2-d',(P_1,s_1)^*\right)}\circ \omega_N.
\end{equation*}

\begin{lemma}\label{moveslem} Let $(P_1,s_1)\in \mathcal{TC}_{N_1} $, $(P_2,s_2)\in \mathcal{TC}_{N_2}$, and let $d<2N_1$ be a non-negative integer. Let $p=\{d+1,i\}\in P_2$ be the unique part containing $d+1$. The following hold:
\begin{enumerate}
    \item If $s(p)=0$, then $\mathit{ev}^+_{[d+1,d+2N_1]}\left((P_2,s_2)\#_d (P_1,s_1)\right)= (P_2,s_2)\#_{i} (P_1,s_1)$.
    \item If $s(p)=1$, then $\mathit{ev}^+_{[d+1,d+2N_1]}\left((P_2,s_2)\#_d (P_1,s_1)\right)= (P_2,s_2)\#_{i-1} (P_1,s_1)^*$.
    \item $\mathit{bs}^-_{(d,(P_1,s_1))}\left((P_2,s_2)\#_d (P_1,s_1)\right)= (P_2,s_2)\#_{d+1} (P_1,s_1)$.
\end{enumerate}
\end{lemma}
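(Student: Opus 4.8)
The plan is to prove all three identities by directly unwinding the composites of chord slides involved, using the explicit permutation formula \eqref{perm} and the $\Z_2$-update rule from \ref{def:chordslide}, and trimming the case analysis with Lemmata \ref{cyclemma} and \ref{fliplemma}.

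For parts (1) and (2): in $(P_2,s_2)\#_d(P_1,s_1)$ the sites occupied by the inserted copy of $P_1$ form the contiguous block $\{d+1,\dots,d+2N_1\}$, which sits immediately below the site $d+2N_1+1$; this site is $P_2$-point $d+1$ and is one endpoint of the image $p_c$ of the part $p=\{d+1,i\}\in P_2$. By \ref{def:evac}, $\mathit{ev}^{+}_{[d+1,d+2N_1]}$ is a composite of $2N_1$ upward chord slides, applied to the sites of this block from the top down. I would prove by induction on the number $t$ of slides performed the invariant: after $t$ slides the bottom $2N_1-t$ block sites are still contiguous and still lie immediately below (the shifted copy of) $p_c$, the top $t$ block sites have been reassembled beyond $p_c$, and every other part is merely re-indexed. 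The induction goes through because at each step the part $q$ met by the slid site is exactly $p_c$, so a single one of the four cases of \eqref{perm} governs every step.

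When $s(p)=0$ that case is one of the ``$s(q)=0$'' ones, so by Remark \ref{rem:orient} no $\Z_2$-labels change; tracking the cycles, the block reassembles in its original internal order just past the site that was $P_2$-point $i$, which is precisely $(P_2,s_2)\#_i(P_1,s_1)$, proving (1). When $s(p)=1$ the relevant case is one of the ``$s(q)=1$'' ones, whose cycle is one step shorter: this reverses the internal order of the block (producing $(P_1,s_1)^{*}$) and deposits it one position lower, at height $i-1$; moreover each $P_1$-part acquires $+s(p)=+1$ once for each of its two endpoints and so is unchanged on the nose, while $p$ and every other $P_2$-part are untouched, giving $(P_2,s_2)\#_{i-1}(P_1,s_1)^{*}$ and proving (2). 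The dichotomy of whether $i$ lies above or below the block is absorbed by Lemmata \ref{cyclemma} and \ref{fliplemma} together with the behaviour of $\#_d$ under $(\cdot)^{*}$ recorded in \ref{assoc1}, so only one configuration need be computed explicitly.

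For part (3): by \ref{def:slideover}, $\mathit{bs}^{-}_{(d,(P_1,s_1))}$ transports the single site $d+2N_1+1$ — once more $P_2$-point $d+1$, just above the $P_1$-block — downward along the distinguished boundary component $\Gamma\subset\del(P_1,s_1)$ and through the whole block. Running the induction on the steps of this sequence, the invariant is that the transported site passes one $P_1$-part at each handle-edge step of $\Gamma$, its $\e$-value flipping exactly as the path description of $\Gamma$ prescribes, while every $P_1$-part is only re-indexed; the terminal vertex $(1,-1)$ of $\Gamma$ leaves the transported site just below the block, whence the result is $(P_2,s_2)\#_{d+1}(P_1,s_1)$. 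I expect the main obstacle to be confirming that $s_2$ is genuinely unchanged: the $\Z_2$-update adds $s(q)$ to the label of the $P_2$-part carrying the transported site each time it slides along a $P_1$-part $q$, so one must show this total is even. This reduces to the assertion that $\Gamma$ uses the two handle-edges of each twisted arc of $(P_1,s_1)$ either both or neither — equivalently, that the two boundary strands running along the two sides of a twisted band lie in a common component of $\del(P_1,s_1)$, the combinatorial counterpart of the statement that a M\"obius band has connected boundary — which I would verify directly from Definition \ref{bdgrp}. If it is cleaner to run the induction in the upward direction, one may instead deduce (3) for $\mathit{bs}^{-}$ from the corresponding statement for $\mathit{bs}^{+}$ via the identity $\mathit{bs}^{-}_{(d,(P_1,s_1))}=\omega\circ\mathit{bs}^{+}_{(2N_2-d,(P_1,s_1)^{*})}\circ\omega$ recorded in \ref{def:slideover}.
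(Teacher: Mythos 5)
Your treatment of parts (1) and (2) is sound: the paper omits the proof (it is a direct verification), and your induction on the evacuation steps, with the observation that at every step the slid block site meets the same arc $p$, together with the $\Z_2$-bookkeeping (no label changes when $s(p)=0$; each $P_1$-part picking up $s(p)=1$ twice when $s(p)=1$, while the order reversal produces $(P_1,s_1)^*$ one level lower), is exactly the computation one would write out. The only quibble there is that conjugating by $\omega_N$ via Lemma \ref{fliplemma} turns $\mathit{ev}^+$ into $\mathit{ev}^-$, so it does not literally reduce the ``$i$ above the block'' case to the ``$i$ below the block'' case of the same statement; but both subcases are handled by the same direct computation anyway, so this is harmless.

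Part (3), however, contains a step that fails as stated. You reduce the claim ``$s_2(p)$ is unchanged'' to the assertion that the distinguished path $\Gamma\subset\del(P_1,s_1)$ contains either both or neither of the two band edges of every twisted arc of $(P_1,s_1)$, justified by ``a M\"obius band has connected boundary.'' This assertion is false for general $(P_1,s_1)\in\mathcal{TC}_{N_1}$ (and the lemma is stated, and later used, for general data, not only for $\mathcal{TC}^*$). Take $N_1=3$, $P_1=\{\{1,4\},\{2,5\},\{3,6\}\}$ with $\{1,4\},\{2,5\}$ twisted and $\{3,6\}$ untwisted. Then the component of $\del(P_1,s_1)$ containing $(1,-1)$ and $(6,+1)$ is
\[(1,-1)\edge(4,-1)\edge(3,+1)\edge(6,-1)\edge(5,+1)\edge(2,+1)\edge(3,-1)\edge(6,+1),\]
which contains the edge $(1,-1)\edge(4,-1)$ but not $(1,+1)\edge(4,+1)$, and contains $(2,+1)\edge(5,+1)$ but not $(2,-1)\edge(5,-1)$ (those two missing edges form a separate $4$-cycle component). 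So the per-arc ``both or neither'' dichotomy you propose to verify from Definition \ref{bdgrp} simply does not hold. What is true — and is all you need — is the weaker parity statement that the total number of twisted band edges on $\Gamma$ is even (note in the example it is $2$). This follows from a sign argument along $\Gamma$: edges of any component alternate between band edges and gap edges, gap edges and untwisted band edges flip the sign $\e$ of the vertex $(i,\e)$ while twisted band edges preserve it, and comparing the endpoint signs $-1$ at $(1,-1)$ and $+1$ at $(2N_1,+1)$ (together with the count of band versus gap edges) forces the number of twisted band edges to be even. Replacing your false intermediate claim by this parity argument repairs the proof of (3); the rest of your induction (only the transported site is ever slid, so only $s_2(p)$ can change, and the block is merely re-indexed) is fine.
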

\begin{proof}
    Omitted.
\end{proof}
\begin{figure}[ht]
    \centering
    \begin{subfigure}{0.4\textwidth}
    \[\begin{tikzpicture}[scale=0.6,every node/.style={shape=circle},inner sep=0pt, baseline=(current bounding box.center)]

       \draw[very thick] (0,0.5) -- (0,1.5);   
       \draw[very thick,dotted] (0,2.25) -- (0,1.5); 
       \draw[very thick] (0,2.25) -- (0,4.5);
       \draw[dashed] (-0.5,0.5) -- (0.5,0.5);    
       \draw[dashed] (-0.5,4.5) -- (0.5,4.5);

\node[fill=black,label=left:{\scriptsize $i$},inner sep=1pt] (1) at (0,1) {};
\node[fill=white,draw=black,style=rectangle,minimum height=0.5cm,opaque,label={[xshift=0.15cm,yshift=-0.1cm]\scriptsize$\uparrow$}] (x) at (0,3) {\scriptsize $(P_1,s_1)$};
\node[fill=black,label=left:{\scriptsize $D+1$},inner sep=1pt] (2) at (0,4) {};

\draw[thick] (0,4) arc (90:-90:1.5cm) node[midway,anchor=west] {\scriptsize{$0$}};      
\end{tikzpicture} \xmapstoo{\mathit{ev}^+_{[d+1,D]}} \begin{tikzpicture}[scale=0.6,every node/.style={shape=circle},inner sep=0pt, baseline=(current bounding box.center)]

       \draw[very thick] (0,0.5) -- (0,2.75);    
       \draw[very thick,dotted] (0,3.5) -- (0,2.75);
       \draw[very thick] (0,3.5) -- (0,4.5);
       \draw[dashed] (-0.5,0.5) -- (0.5,0.5);    
       \draw[dashed] (-0.5,4.5) -- (0.5,4.5);

\node[fill=black,label=left:{\scriptsize $i$},inner sep=1pt] (1) at (0,1) {};
\node[fill=white,draw=black,style=rectangle,minimum height=0.5cm,opaque] (x) at (0,2) {\scriptsize $(P_1,s_1)$};
\node[fill=black,label=left:{\scriptsize $D+1$},inner sep=1pt] (2) at (0,4) {};

\draw[thick] (0,4) arc (90:-90:1.5cm) node[midway,anchor=west] {\scriptsize{$0$}};      
\end{tikzpicture}\]\caption{ Lemma \ref{moveslem} part (1) }
    \end{subfigure}
    \begin{subfigure}{0.4\textwidth}
    \[\begin{tikzpicture}[scale=0.6,every node/.style={shape=circle},inner sep=0pt, baseline=(current bounding box.center)]

       \draw[very thick] (0,0.5) -- (0,1.5);    
       \draw[very thick, dotted] (0,2.25) -- (0,1.5);
       \draw[very thick] (0,2.25) -- (0,4.5);
       \draw[dashed] (-0.5,0.5) -- (0.5,0.5);    
       \draw[dashed] (-0.5,4.5) -- (0.5,4.5);

\node[fill=black,label=left:{\scriptsize $i$},inner sep=1pt] (1) at (0,1) {};
\node[fill=white,draw=black,style=rectangle,minimum height=0.5cm,opaque,label={[xshift=0.15cm,yshift=-0.1cm]\scriptsize$\uparrow$}] (x) at (0,3) {\scriptsize $(P_1,s_1)$};
\node[fill=black,label=left:{\scriptsize $D+1$},inner sep=1pt] (2) at (0,4) {};

\draw[thick] (0,4) arc (90:-90:1.5cm) node[midway,anchor=west] {\scriptsize{$1$}};      
\end{tikzpicture}\xmapstoo{\mathit{ev}^+_{[d+1,D]}} \begin{tikzpicture}[scale=0.6,every node/.style={shape=circle},inner sep=0pt, baseline=(current bounding box.center)]

       \draw[very thick] (0,0.5) -- (0,2.75);    
       \draw[very thick,dotted] (0,3.5) -- (0,2.75);  
       \draw[very thick] (0,3.5) -- (0,4.5);
       \draw[dashed] (-0.5,0.5) -- (0.5,0.5);    
       \draw[dashed] (-0.5,4.5) -- (0.5,4.5);

\node[fill=black,label=left:{\scriptsize $i+1+2N$},inner sep=1pt] (1) at (0,2+0.25) {};
\node[fill=white,draw=black,style=rectangle,minimum height=0.5cm,opaque] (x) at (0,1+0.25) {\scriptsize $(P_1,s_1)^*$};
\node[fill=black,label=left:{\scriptsize $D+1$},inner sep=1pt] (2) at (0,4) {};

\draw[thick] (0,4) arc (90:-90:0.875cm) node[midway,anchor=west] {\scriptsize{$1$}};      
\end{tikzpicture}\]
\caption{Lemma \ref{moveslem} part (2)}
    \end{subfigure}
    
    \begin{subfigure}{0.4\textwidth}
    \[\begin{tikzpicture}[scale=0.6,every node/.style={shape=circle},inner sep=0pt, baseline=(current bounding box.center)]

       \draw[very thick] (0,0.5) -- (0,1.5);    
       \draw[very thick,dotted] (0,2.25) -- (0,1.5);
       \draw[very thick] (0,2.25) -- (0,4.5);
       \draw[dashed] (-0.5,0.5) -- (0.5,0.5);    
       \draw[dashed] (-0.5,4.5) -- (0.5,4.5);

\node[fill=black,label=left:{\scriptsize $i$},inner sep=1pt] (1) at (0,1) {};
\node[fill=white,draw=black,style=rectangle,minimum height=0.5cm,opaque] (x) at (0,3) {\scriptsize $(P_1,s_1)$};
\node[fill=black,label=left:{\scriptsize $D+1$},inner sep=1pt] (2) at (0,4) {};
\node[label={[xshift=0.15cm,yshift=-0.3cm]\scriptsize $\downarrow$},inner sep=0pt] () at (2) {};

\draw[thick] (0,4) arc (90:-90:1.5cm) node[midway,anchor=west] {\scriptsize{$s$}};      
\end{tikzpicture} \xmapstoo{\mathit{bs}^-_{(d,(P_1,s_1))}}\begin{tikzpicture}[scale=0.6,every node/.style={shape=circle},inner sep=0pt, baseline=(current bounding box.center)]

       \draw[very thick] (0,0.5) -- (0,1.5);    
       \draw[very thick,dotted] (0,2.25) -- (0,1.5);
       \draw[very thick] (0,2.25) -- (0,4.5);
       \draw[dashed] (-0.5,0.5) -- (0.5,0.5);    
       \draw[dashed] (-0.5,4.5) -- (0.5,4.5);

\node[fill=black,label=left:{\scriptsize $i$},inner sep=1pt] (1) at (0,1) {};
\node[fill=white,draw=black,style=rectangle,minimum height=0.5cm,opaque] (x) at (0,4-0.25) {\scriptsize $(P_1,s_1)$};
\node[fill=black,label=left:{\scriptsize $d$},inner sep=1pt] (2) at (0,3-0.25) {};

\draw[thick] (0,3-0.25) arc (90:-90:0.875cm) node[midway,anchor=west] {\scriptsize{$s$}}; 

\end{tikzpicture}\]
\caption{Lemma \ref{moveslem} part (3)}
\end{subfigure}
\caption{Lemma \ref{moveslem}, using the short hand $D=d+2N_1$}
    \label{fig:moves_lem}
\end{figure}
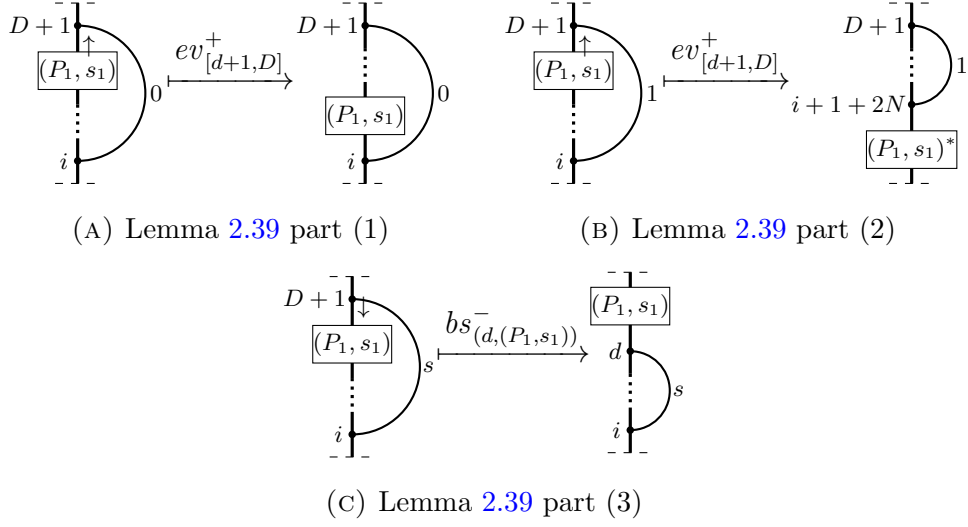
\begin{remark}\label{helpful} It follows from part 3 of \ref{moveslem}, that for any $(P_1,s_1)\in \mathcal{TC}_{N_1} $, $(P_2,s_2)\in \mathcal{TC}_{N_2}$, and non-negative integer $d\leq 2N_2$, we have
\begin{equation*}
(P_2,s_2)\#(P_1,s_1)\sim (P_2,s_2)\#_d(P_1,s_1)\sim (P_1,s_1)\#(P_2,s_2).
\end{equation*}
\end{remark}

\mdef\label{intmatrix} For a twisted chord datum, $(P,s)\in \mathcal{TC}_N$, define an $N\times N$ matrix, $\mathit{IM}(P,s)\in \mathcal{M}_{N}(\Z_2)$, with entries in $\Z_2$ labeled by parts in $P$ given by
\[\left(\mathit{IM}(P,s)\right)_{pp}=s(p), \quad \left(\mathit{IM}(P,s)\right)_{pq}=\begin{cases} 1, & \text{if $p$ and $q$ cross},\\
0, &\text{otherwise},
\end{cases}\]
for $p\neq q$. Following \cite{BGOnTheMelvin}, we call $\mathit{IM}(P,s)$ the \textbf{intersection matrix} of $(P,s)$. To write explicit matrices one requires an ordering on the arcs; our convention unless otherwise stated is to order the arcs in the order they first appear from bottom to top in a diagram. This ordering is indicated in Figure \ref{fig:diagramIM1}.

\begin{remark}\label{dsum} For $(P_1,s_1)\in \mathcal{TC}_{N_1}$ and $(P_2,s_2)\in \mathcal{TC}_{N_2}$, we have 
\begin{equation*}
\mathit{IM}((P_2,s_2)\# (P_1,s_1)) = \mathit{IM}(P_1,s_1)\oplus \mathit{IM}(P_2,s_2)
\end{equation*}
using the previous ordering on arcs.  
\end{remark}

\begin{example} The intersection matrices for the TCD from \ref{elem} are as follows
\begin{equation}\mathit{IM}(\mob)=\begin{pmatrix}1\end{pmatrix}, \qquad \mathit{IM}(\text{Ann})=\begin{pmatrix}0\end{pmatrix}, 
\qquad \mathit{IM}(\tor)= \begin{pmatrix} 
        0 & 1\\
        1 & 0
    \end{pmatrix}.\label{emats}\end{equation}
For the $(P_i,s_i)\in \mathcal{TC}_3$ as per Figure \ref{fig:illustrationmarkedchord3}, the intersection matrices $\mathit{IM}(P_i,s_i)$ are as follows:
\begin{figure}[ht]
        \centering
       \begin{subfigure}{.24\textwidth}
       \centering
       \begin{tikzpicture}[scale=0.55,every node/.style={shape=circle},inner sep=0pt,baseline=(current bounding box.center) ]
      
       \draw[very thick] (0,0.5) -- (0,6.5);
\node[fill=black,label=left:{$1$},inner sep=1pt] (1) at (0,1) {};
\node[fill=black,label=left:{},inner sep=1pt] (6) at (0,6) {};
\draw[thick] (0,2) arc (90:-90:0.5cm) node[midway,anchor=west] {\scriptsize{$1$}};
\node[fill=black,label=left:{},inner sep=1pt] (2) at (0,2) {};
\node[fill=black,label=left:{$3$},inner sep=1pt] (4) at (0,4) {};
\draw[thick] (0,5) arc (90:-90:1.0cm) node[midway,anchor=west] {\scriptsize{$0$}};
\node[fill=black,label=left:{$2$},inner sep=1pt] (3) at (0,3) {};
\node[fill=black,label=left:{},inner sep=1pt] (5) at (0,5) {};
\draw[thick] (0,6) arc (90:-90:1.0cm) node[midway,anchor=west] {\scriptsize{$0$}};

\end{tikzpicture}
\caption{$(P_1, s_1)$}
        \label{fig:diagramIM1}
       \end{subfigure}
       \begin{subfigure}{.24\textwidth}
       \centering
       $$  \vcenter{\hbox{\begin{overpic}[scale=1]{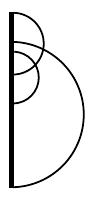}
\put(23, 75){$0$}
\put(20, 50){$0$}
\put(42, 35){$1$}
\end{overpic}}}$$
            \caption{$(P_2, s_2)$}
        \label{fig:diagramIM2}
       \end{subfigure}
       \begin{subfigure}{.24\textwidth}
       \centering
       $$ \vcenter{\hbox{\begin{overpic}[scale=1]{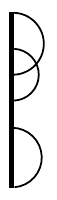}
\put(23, 75){$1$}
\put(21, 55){$1$}
\put(22, 17){$1$}
\end{overpic}}}$$
            \caption{$(P_3, s_3)$}
        \label{fig:diagramIM3}
       \end{subfigure}
       \begin{subfigure}{.24\textwidth}
       \centering
        $$   \vcenter{\hbox{\begin{overpic}[scale=1]{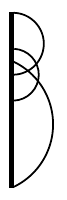}
\put(23, 75){$1$}
\put(21, 59){$1$}
\put(26, 18){$0$}
\end{overpic}}}$$
            \caption{$(P_4, s_4)$}
        \label{fig:diagramIM4}
       \end{subfigure}
        \caption{Illustrations of marked chord diagrams.}
        \label{fig:illustrationmarkedchord3}
    \end{figure}
    \[ \mathit{IM}(P_1, s_1) = \begin{pmatrix}
        1 & 0 & 0 \\
        0 & 0 & 1 \\
        0 & 1 & 0
    \end{pmatrix} = \mathit{IM}(\mob) \oplus \mathit{IM}(\tor), \]
    \[\mathit{IM}(P_2, s_2) = \begin{pmatrix}
        1 & 0 & 1 \\
        0 & 0 & 1 \\
        1 & 1 & 0
    \end{pmatrix}  \quad  \mathit{IM}(P_3, s_3) = \begin{pmatrix}
        1 & 0 & 0 \\
        0 & 1 & 1 \\
        0 & 1 & 1
    \end{pmatrix} \quad \mathit{IM}(P_4, s_4) = \begin{pmatrix}
        0 & 1 & 1 \\
        1 & 1 & 1 \\
        1 & 1 & 1
    \end{pmatrix}. \]
\end{example}

\begin{lemma}\label{matlemma} Suppose that $(P,s)$ and $(P',s')$ are two twisted chord data of rank $N$,  and that $(P,s)\sim (P',s')$. Then $\mathit{IM}(P',s')$ can be obtained from $\mathit{IM}(P,s)$ by a sequence of elementary row and column operations.  
\end{lemma}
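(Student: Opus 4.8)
The plan is to reduce to the case of a single chord slide and then track the effect on the intersection matrix entry by entry. By Proposition~\ref{S2eq}, if $(P,s)\sim (P',s')$ then $(P',s')=h_{(i_r,\e_r)}\circ\cdots\circ h_{(i_1,\e_1)}(P,s)$ for a finite sequence of chord slides, so, since ``being obtainable from one another by a sequence of elementary row and column operations'' is an equivalence relation, it suffices to treat $(P',s')=h_{(i,\e)}(P,s)$ for a single $(i,\e)$; if $(i,\e)$ is not admissible in $P$ then $(P',s')=(P,s)$ and there is nothing to do. Using Lemma~\ref{fliplemma}, $h_{(i,\e)}=\omega_N\circ h_{(\omega_N(i),-\e)}\circ\omega_N$, and since $\omega_N$ reflects the diagram it induces a permutation of the arcs which preserves all crossings and all twist values; hence $\mathit{IM}((P,s)^*)$ is obtained from $\mathit{IM}(P,s)$ by a simultaneous permutation of rows and columns, i.e.\ by elementary operations. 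Therefore one may assume $\e=+1$. Write $p\in P$ for the arc containing $i$, and $q=\{i+1,i'\}\in P$ for the arc containing $i+1$, and recall from \ref{def:chordslide} that $h_{(i,+1)}(P,s)=(\sigma(P),\,s_{(i,+1)}\circ\sigma^{-1})$, where $\sigma=\sigma_{(i,+1,P,s)}$ is one of the cyclic permutations in \eqref{perm}, and $s_{(i,+1)}$ agrees with $s$ except that $s_{(i,+1)}(p)=s(p)+s(q)$. Since $\sigma$ is order preserving on $\underline{2N}\smallsetminus\{i\}$, it induces a bijection $P\to\sigma(P)$, $r\mapsto\sigma(r)$, which we use to index the rows and columns of $\mathit{IM}(P',s')$ by the arcs of $P$; this relabelling is again a simultaneous row/column permutation and hence harmless.

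The key point is a crossing-toggle principle. Because $\sigma$ moves only the single position $i$ relative to all others, for any arcs $r,r'\in P$ with $i\notin r\cup r'$ the crossing relation between $\sigma(r)$ and $\sigma(r')$ equals that between $r$ and $r'$; and for $r\neq p$ the crossing relation between $\sigma(p)$ and $\sigma(r)$ differs from that between $p$ and $r$ exactly by the parity of the number of endpoints of $r$ that $i$ sweeps across as it travels to its new position $\sigma(i)$, which is placed adjacent to $\sigma(i')$. Up to the position $i+1\in q$ (which is not an endpoint of any $r\neq q$), the positions swept are precisely those strictly between $i+1$ and $i'$; hence the crossing $p\edge r$ toggles if and only if $r$ crosses $q$. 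Combining this with $s_{(i,+1)}(p)=s(p)+s(q)$ and a direct check of the $p$--$q$ entry in the four cases of \eqref{perm} (the factor $(-1)^{s(q)}$ in the definition of $\sigma(i)$ being what handles the twisted case), one obtains
\[
\mathit{IM}(P',s') \;=\; E\,\mathit{IM}(P,s)\,E^{\top},\qquad E:=I+E^{(pq)}\in\mathcal{M}_N(\Z_2),
\]
where $E^{(pq)}$ denotes the matrix unit with a single $1$ in position $(p,q)$; that is, $\mathit{IM}(P',s')$ is obtained from $\mathit{IM}(P,s)$ by adding row $q$ to row $p$ and then adding column $q$ to column $p$. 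Over $\Z_2$ this automatically yields the correct diagonal entry, $\mathit{IM}(P',s')_{pp}=\mathit{IM}(P,s)_{pp}+\mathit{IM}(P,s)_{qq}=s(p)+s(q)$, because the cross term $2\,\mathit{IM}(P,s)_{pq}$ vanishes. Since this is a composition of two elementary operations, the inductive step is complete. Conceptually, $\mathit{IM}(P,s)$ is the $\Z_2$-intersection form of the surface built from $(P,s)$ in the basis given by the handle cores, and a chord slide realises the base change $e_p\mapsto e_p+e_q$; the computation above is the combinatorial shadow of this fact.

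The main obstacle is the last direct check: verifying, in each of the four cases of \eqref{perm}, that the geometric toggle of the $p\edge q$ crossing, together with the change of $s(p)$, matches the prediction of $E\,\mathit{IM}(P,s)\,E^{\top}$. This requires a careful bookkeeping of where $\sigma(i)$ lands relative to $\sigma(i+1)$ and $\sigma(i')$ and of whether the other endpoint of $p$ lies between them, in both the untwisted ($s(q)=0$) and twisted ($s(q)=1$) subcases; Lemma~\ref{cyclemma} does not simplify this reduction since conjugation by $C_N$ does not preserve the crossing structure, so each case is handled on its own, much as in the ``direct verification'' proofs of Lemmata~\ref{cyclemma} and~\ref{fliplemma}.
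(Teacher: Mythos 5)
Your proposal is correct and, at its core, follows the same route as the paper: reduce to a single chord slide, use Lemma \ref{fliplemma} to fix $\e=+1$, and then track how the slide changes crossings and twists. Your explicit formulation — that sliding the endpoint $i\in p$ over $q$ transforms the intersection matrix by the congruence $E\,\mathit{IM}(P,s)\,E^{\top}$ with $E=I+E^{(pq)}$, i.e.\ by adding row $q$ to row $p$ and column $q$ to column $p$ — is the right statement, and it usefully fills in what the paper leaves as ``further steps are omitted.'' Your sweep/parity argument for the entries $(p,r)$ with $r\neq p,q$ is sound: the moving endpoint sweeps exactly the positions strictly between the endpoints of $q$ (the endpoints of $q$ themselves do not belong to $r$), and the number of endpoints of $r$ in that range is odd precisely when $r$ crosses $q$. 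The remaining entries (the $pq$ entry and the diagonal entry at $p$) do agree with this congruence in each of the subcases of \eqref{perm} with $\e=+1$, so the ``direct check'' you defer is finite and goes through.

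The one genuine error is your stated reason for setting aside Lemma \ref{cyclemma}: conjugation by $C_N$ \emph{does} preserve the crossing structure. Two arcs cross if and only if their endpoints interleave in the cyclic order on $\underline{2N}$, and a cyclic rotation preserves cyclic order; this is exactly why the paper may combine Lemmata \ref{cyclemma} and \ref{fliplemma} to reduce to $(i,\e)=(1,+1)$, with $\mathit{IM}$ changing only by a simultaneous relabelling of rows and columns under $C_N$ and $\omega_N$. The mistake does not invalidate your argument, since you never rely on it, but it costs you the reduction that would have shrunk your final bookkeeping to the case $i=1$, and the claim should not be asserted as stated.
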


\begin{proof} It is enough to prove the claim for the case $(P',s')=h_{(i,\e)}(P,s)$ for $(i,\e)$ admissible in $P$. Furthermore, since $p, q \in P$ cross, if and only if, $\sigma(p), \sigma(q)\in \sigma(P)$ cross, when $\sigma=C_N$ or $\omega_N$, by using Lemmata \ref{cyclemma} and \ref{fliplemma} it is enough to consider only the case $(i,\e)=(1,+1)$, since for such $\sigma \in \SS_{2N}$, the matrices $\mathit{IM}(P,s)$ and $\mathit{IM}(\sigma(P,s))$ differ only by a re-labelling of rows and columns. Further steps are omitted.
\end{proof}

\mdef\label{width} Define $w:P\to \Z_{\geq 0}$ by $w(p)=M(p)-m(p)-1$ (using the notation of \ref{order}), and call $w(p)$ the \textbf{width} of $p$. 

\begin{theorem}[Caravan Decomposition]\label{rform} For any $(P,s)\in \mathcal{TC}_N$, there exist unique integers $g,b\in \Z_{\geq 0},$ and $t\in\{0,1,2\}$, such that
\begin{equation}\label{eq:rform}
     (P,s)\sim \left(\#_{\kappa=1}^{t} \mob \right)\# \left(\#_{\lambda=1}^{g}\tor\right)\#\left( \#_{\mu=1}^{b} \text{Ann} \right).
\end{equation}\end{theorem}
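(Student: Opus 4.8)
The plan is to treat existence and uniqueness separately, and I would prove uniqueness first since it pins down exactly which invariants the existence algorithm must reproduce.

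\textit{Uniqueness.} The rank $N$ is a $\sim$-invariant by construction of chord slides. By Lemma \ref{matlemma}, $(P,s)\sim(P',s')$ forces $\mathit{IM}(P,s)$ and $\mathit{IM}(P',s')$ to be congruent over $\Z_2$; in particular they have the same size, the same rank, and one is alternating (i.e.\ has vanishing diagonal in every basis) if and only if the other is. By Remark \ref{dsum} together with \eqref{emats}, the right-hand side of \eqref{eq:rform} has intersection matrix congruent to $I_t\oplus H^{\oplus g}\oplus 0_b$, with $H=\left(\begin{smallmatrix}0&1\\1&0\end{smallmatrix}\right)$ and $0_b$ the zero $b\times b$ matrix; this matrix has rank $t+2g$, it is alternating precisely when $t=0$, and when non-alternating its rank has the same parity as $t$. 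Hence $b=N-\operatorname{rank}\mathit{IM}(P,s)$, then $t$ (recovered from whether $\mathit{IM}(P,s)$ is alternating, and if it is not, from the parity of its rank), and then $g=(\operatorname{rank}\mathit{IM}(P,s)-t)/2$ are all determined by the $\sim$-class of $(P,s)$, which gives uniqueness.

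\textit{Existence.} I would argue by strong induction on $N$, the case $N=0$ being vacuous. The inductive step has two parts. The first is a \emph{reduction to a prime factor}: show that any $(P,s)\in\mathcal{TC}_N$ with $N\geq 1$ satisfies $(P,s)\sim E\#(P_0,s_0)$ for some $E\in\{\mob,\text{Ann},\tor\}$ and some $(P_0,s_0)$ of strictly smaller rank. Granting this, the inductive hypothesis puts $(P_0,s_0)$ in the form \eqref{eq:rform}; Remark \ref{helpful} lets one freely commute the prime factors of $E\#(P_0,s_0)$, collecting the $\mob$'s, then the $\tor$'s, then the $\text{Ann}$'s; and finally the relation $\mob\#\mob\#\mob\sim\tor\#\mob$ is applied repeatedly until at most two $\mob$ factors remain, yielding \eqref{eq:rform}. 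I would establish this last relation by exhibiting an explicit finite composite of chord slides carrying one of these two rank $3$ data to the other. For the reduction to a prime factor itself, let $p=\{1,m\}\in P$ be the arc through the bottom point. If $p$ is twisted, the upward evacuation $\mathit{ev}^{+}_{[2,m-1]}$ of \ref{def:evac} turns $p$ into the width-$0$ arc $\{m-1,m\}$, and a width-$0$ arc $\{k,k+1\}\in P$ exhibits $(P,s)$ as an insertion $(P^\flat,s^\flat)\#_{k-1}E$ with $E\in\{\mob,\text{Ann}\}$ by \ref{ins}, so $(P,s)\sim E\#(P^\flat,s^\flat)$ by Remark \ref{helpful}. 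If $p$ is untwisted of width $\geq 1$, I would examine the arc $q$ through point $2$: if $q=\{2,3\}$ we are already done as above; if $q$ is nested under $p$ one reduces the width of $q$ by further evacuations, treating $q$ as the new distinguished arc; if $q$ crosses $p$ one uses evacuations and the boundary slides of Lemma \ref{moveslem} either to lower $w(p)$, or, when $p$ and $q$ cross no other arc, to split off a $\tor$ factor directly. A descent argument, e.g.\ on $w(p)$ (or on the number of arcs crossing $p$), shows the process terminates; note that when no crossings are present some arc is automatically width $0$, so the algorithm never stalls.

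The \emph{main obstacle} is the reduction to a prime factor: this is a combinatorial incarnation of the classification of surfaces, and the real work is organising the chord-slide moves so that a suitable complexity strictly decreases in every case, carrying out the full case analysis that Morse-theoretic handle cancellation usually suppresses. A secondary but essential point is producing the explicit slide sequence realising $\mob\#\mob\#\mob\sim\tor\#\mob$, the combinatorial form of Dyck's relation $\mathbb{RP}^2\#\mathbb{RP}^2\#\mathbb{RP}^2\cong T^2\#\mathbb{RP}^2$, which is precisely what bounds $t$ by $2$.
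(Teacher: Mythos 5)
Your overall route coincides with the paper's in its two load-bearing moves: extracting $\mob$ factors by upward evacuation of the sites under a twisted arc (exactly the paper's first step), commuting factors via Remark \ref{helpful}, and capping $t\le 2$ with the Dyck relation $\mob\#\mob\#\mob\sim\mob\#\tor$, which the paper realises by an explicit slide sequence (Figure \ref{fig:ezmove}). Where you genuinely diverge is in the orientable part of existence and in uniqueness. The paper does \emph{not} reprove the orientable caravan decomposition: after the twisted arcs are stripped off it simply invokes \cite[Theorem 2]{BGOnTheMelvin}. You propose to prove it from scratch by a descent algorithm, and that is precisely where your argument has a real gap: in the case where the bottom arc $p$ is crossed by some $q$, you only say one should ``use evacuations and boundary slides either to lower $w(p)$, or, when $p$ and $q$ cross no other arc, to split off a $\tor$ factor directly,'' with termination by descent on $w(p)$ or the number of crossings. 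The generic situation --- $p$ and $q$ crossed by further arcs, and slides over one arc changing the widths and crossing patterns of the others --- is exactly the content of the cited theorem, and your sketch neither specifies a complexity that provably decreases in every case nor handles it; as written this step would not compile into a proof without substantial additional work (which the paper deliberately avoids by citation).

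Your uniqueness argument is a clean variant of the paper's and is essentially correct, with one caveat. The paper gets $b$ as the nullity of $\mathit{IM}(P,s)$ (invariant under Lemma \ref{matlemma}), then pins down $t$ from the parity constraint $2g+t=N-b$ together with the $\sim$-invariance of orientability (Remark \ref{rem:orient}). You instead classify $\mathit{IM}(P,s)$ as a symmetric $\Z_2$-form: rank gives $b$, the alternating/non-alternating dichotomy separates $t=0$ from $t>0$, and rank parity finishes. That works, but it needs the matrices of $\sim$-equivalent data to be \emph{congruent}, whereas Lemma \ref{matlemma} as stated only asserts a sequence of elementary row and column operations; mere equivalence preserves rank but not the alternating property (over $\Z_2$ the matrices $\mathit{IM}(\mob\#\mob)$ and $\mathit{IM}(\tor)$ are equivalent though not congruent), which is exactly the $t$-versus-$g$ distinction at stake. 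So either verify that the operations in Lemma \ref{matlemma} are simultaneous row/column operations (true, since a chord slide acts as a basis change $e_i\mapsto e_i+e_j$), or fall back on Remark \ref{rem:orient} as the paper does; note that ``$\mathit{IM}$ alternating'' is literally orientability of $(P,s)$, so the two formulations carry the same content.
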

\begin{proof} To prove existence, our strategy will be to reduce to the orientable case treated in \cite{BGOnTheMelvin}. Suppose that $(P,s)$ contains a twisted arc $\{i,j\}$ with $i<j$; by applying the upward evacuation sequence $\mathit{ev}_{[i+1,j-1]}^+$ we obtain some $(P',s')\sim (P,s)$ with a twisted arc $\{j-1,j\}$ (\textit{c.f.} Figure \ref{fig:evac}). Therefore, we have $(P',s')=(P'',s'')\#_{j-2}\mob$ for some $(P'',s'')\in \mathcal{TC}_{N-1}$, and so by Remark \ref{helpful}, $(P,s)\sim \mob \# (P'',s'')$. By iterating this argument on $(P'',s'')$, we eventually obtain $(P,s)\sim \left(\#_{\kappa =1}^{t} \mob\right) \# (\tilde{P},\tilde{s})$, for some $t\in \N$, and $(\tilde{P},\tilde{s})\in \mathcal{TC}_{\tilde{N}}^+$ orientable. Applying \cite[Theorem 2]{BGOnTheMelvin} to $(\tilde{P},\tilde{s})$ then gives a caravan decomposition
\[(P,s)\sim \left(\#_{\kappa=1}^{t} \mob \right)\# \left(\#_{\lambda=1}^{g}\tor\right)\#\left( \#_{\mu=1}^{b} \text{Ann} \right),\]
for $(b,g,t)\in \N^3$. By using the identity $\mob \# \mob \# \mob \sim \mob \# \tor$ (\textit{c.f.} Figure \ref{fig:ezmove}), if necessary, we can ensure $t=0,1$, or $2$ as required.

\begin{figure}[ht]
    \centering
    
       \[\begin{tikzpicture}[scale=0.55,every node/.style={shape=circle},inner sep=0pt,baseline=(current bounding box.center) ]
      
       \draw[very thick] (0,0.5) -- (0,6.5);
\node[fill=black,label=left:{},inner sep=1pt] (1) at (0,1) {};
\node[fill=black,label=left:{},inner sep=1pt] (6) at (0,6) {};
\draw[thick] (6) arc (90:-90:0.5cm) node[midway,anchor=west] {\scriptsize{$1$}};
\node[fill=black,label=left:{\scriptsize $\uparrow$},inner sep=1pt] (2) at (0,2) {};
\node[fill=black,label=left:{},inner sep=1pt] (4) at (0,4) {};
\draw[thick] (4) arc (90:-90:0.5cm) node[midway,anchor=west] {\scriptsize{$1$}};
\node[fill=black,label=left:{},inner sep=1pt] (3) at (0,3) {};
\node[fill=black,label=left:{\scriptsize $\downarrow$},inner sep=1pt] (5) at (0,5) {};
\draw[thick] (2) arc (90:-90:.5cm) node[midway,anchor=west] {\scriptsize{$1$}};

\end{tikzpicture}\quad\xmapstoo{h_{(5,-1)}\circ h_{(2,+1)}}\begin{tikzpicture}[scale=0.55,every node/.style={shape=circle},inner sep=0pt,baseline=(current bounding box.center) ]
      
       \draw[very thick] (0,0.5) -- (0,6.5);
\node[fill=black,label=left:{},inner sep=1pt] (1) at (0,1) {};
\node[fill=black,label=left:{},inner sep=1pt] (6) at (0,6) {};
\draw[thick] (6) arc (90:-90:1.5cm) node[midway,anchor=west] {\scriptsize{$0$}};
\node[fill=black,label=left:{\scriptsize $\uparrow$},inner sep=1pt] (2) at (0,2) {};
\node[fill=black,label=left:{},inner sep=1pt] (4) at (0,4) {};
\draw[thick] (5) arc (90:-90:1.5cm) node[midway,anchor=west] {\scriptsize{$1$}};
\node[fill=black,label=left:{},inner sep=1pt] (3) at (0,3) {};
\node[fill=black,label=left:{\scriptsize $\downarrow$},inner sep=1pt] (5) at (0,5) {};
\draw[thick] (4) arc (90:-90:1.5cm) node[midway,anchor=west] {\scriptsize{$0$}};

\end{tikzpicture}\quad \xmapstoo{h_{(4,-1)}\circ h_{(2,+1)}}
\begin{tikzpicture}[scale=0.55,every node/.style={shape=circle},inner sep=0pt,baseline=(current bounding box.center) ]
      
       \draw[very thick] (0,0.5) -- (0,6.5);
\node[fill=black,label=left:{\scriptsize $\uparrow$},inner sep=1pt] (1) at (0,1) {};
\node[fill=black,label=left:{},inner sep=1pt] (6) at (0,6) {};
\draw[thick] (6) arc (90:-90:2.5cm) node[midway,anchor=west] {\scriptsize{$1$}};
\node[fill=black,label=left:{},inner sep=1pt] (2) at (0,2) {};
\node[fill=black,label=left:{},inner sep=1pt] (4) at (0,4) {};
\draw[thick] (5) arc (90:-90:1cm) node[midway,anchor=west] {\scriptsize{$0$}};
\node[fill=black,label=left:{},inner sep=1pt] (3) at (0,3) {};
\node[fill=black,label=left:{},inner sep=1pt] (5) at (0,5) {};
\draw[thick] (4) arc (90:-90:1cm) node[midway,anchor=west] {\scriptsize{$0$}};
\end{tikzpicture}\quad \xmapstoo{\mathit{bs}^{+}_{(1,\tor)}}\quad\begin{tikzpicture}[scale=0.55,every node/.style={shape=circle},inner sep=0pt,baseline=(current bounding box.center) ]
      
       \draw[very thick] (0,0.5) -- (0,6.5);
\node[fill=black,label=left:{},inner sep=1pt] (1) at (0,1) {};
\node[fill=black,label=left:{},inner sep=1pt] (6) at (0,6) {};
\draw[thick] (6) arc (90:-90:0.5cm) node[midway,anchor=west] {\scriptsize{$1$}};
\node[fill=black,label=left:{},inner sep=1pt] (2) at (0,2) {};
\node[fill=black,label=left:{},inner sep=1pt] (4) at (0,4) {};
\draw[thick] (4) arc (90:-90:1cm) node[midway,anchor=west] {\scriptsize{$0$}};
\node[fill=black,label=left:{},inner sep=1pt] (3) at (0,3) {};
\node[fill=black,label=left:{},inner sep=1pt] (5) at (0,5) {};
\draw[thick] (3) arc (90:-90:1cm) node[midway,anchor=west] {\scriptsize{$0$}};

\end{tikzpicture}\]
\caption{The identity $\mob \# \mob \# \mob \sim \mob \# \tor$}
    \label{fig:ezmove}
\end{figure}

For uniqueness of the caravan decomposition, suppose $(P,s)$ satisfies equation \eqref{eq:rform} and $(P,s)\sim (\#_{\kappa=1}^{t'} \mob )\# (\#_{\lambda=1}^{g'}\tor)\#( \#_{\mu=1}^{b'} \text{Ann} )$. It follows from \ref{matlemma}, \ref{dsum}, and equation \eqref{emats}, that $b=\text{null}(\mathit{IM}(P,s))=b'$. The equalities $g=g'$ and $t=t'$ are then forced by the constraint $2g'+t'=N-b=2g+t$ as well as the orientability of $(P,s)$. \end{proof}
\begin{figure}[ht]
    \centering
    \[ (P,s) \sim \begin{tikzpicture}[scale=0.55,every node/.style={shape=circle},inner sep=0pt,baseline=0pt  ]
        \draw[very thick] (0.5,0)--(19.5,0);
        \draw[very thick,dotted] (2.5,0.2)--(3.5,0.2);
        \draw[very thick,dotted] (2,-0.4)--(3,-0.4);
        \draw[thick,decorate,decoration={brace,amplitude=5pt}] (1,1)--(5,1) node[pos=0.5,anchor=south,inner sep=5pt] {$t$};
        \draw[very thick,dotted] (9.5,0.2)--(10.5,0.2);
        \draw[thick,decorate,decoration={brace,amplitude=5pt}] (6,1.5)--(14,1.5) node[pos=0.5,anchor=south,inner sep=5pt] {$g$};
        \draw[very thick,dotted] (16.5,0.2)--(17.5,0.2);
        \draw[very thick,dotted] (16.5,-0.4)--(17.5,-0.4);
        \draw[thick,decorate,decoration={brace,amplitude=5pt}] (15,1)--(19,1) node[pos=0.5,anchor=south,inner sep=5pt] {$b$};
        
        \node[fill=black,inner sep=1pt,label=below:{\scriptsize $2N$}] (1) at (1,0) {};
        \node[fill=black,label=left:{},inner sep=1pt] (2) at (2,0) {};
        \node[fill=black,label=left:{},inner sep=1pt] (4) at (4,0) {};
        \node[fill=black,label=left:{},inner sep=1pt] (5) at (5,0) {};
        \node[fill=black,label=left:{},inner sep=1pt] (6) at (6,0) {};
        \node[fill=black,label=left:{},inner sep=1pt] (7) at (7,0) {};
        \node[fill=black,label=left:{},inner sep=1pt] (8) at (8,0) {};
        \node[fill=black,label=left:{},inner sep=1pt] (9) at (9,0) {};
        \node[fill=black,label=left:{},inner sep=1pt] (11) at (11,0) {};
        \node[fill=black,label=left:{},inner sep=1pt] (12) at (12,0) {};
        \node[fill=black,label=left:{},inner sep=1pt] (13) at (13,0) {};
        \node[fill=black,label=left:{},inner sep=1pt] (14) at (14,0) {};
        \node[fill=black,label=left:{},inner sep=1pt] (15) at (15,0) {};
        \node[fill=black,label=left:{},inner sep=1pt] (16) at (16,0) {};
        \node[fill=black,label=below:{\scriptsize$2$},inner sep=1pt] (18) at (18,0) {};
        \node[fill=black,label=below:{\scriptsize$1$},inner sep=1pt] (19) at (19,0) {};

        \draw[thick] (1) arc(180:0:0.5cm) node[pos=0.5,anchor=south,inner sep=0pt] {\scriptsize $1$};
        \draw[thick] (4) arc(180:0:0.5cm) node[pos=0.5,anchor=south,inner sep=0pt] {\scriptsize $1$};
        
        \draw[thick] (6) arc(180:0:1cm) node[pos=0.5,anchor=south,inner sep=0pt] {\scriptsize $0$};
        \draw[thick] (7) arc(180:0:1cm) node[pos=0.5,anchor=south,inner sep=0pt] {\scriptsize $0$};
        \draw[thick] (11) arc(180:0:1cm) node[pos=0.5,anchor=south,inner sep=0pt] {\scriptsize $0$};
        \draw[thick] (12) arc(180:0:1cm) node[pos=0.5,anchor=south,inner sep=0pt] {\scriptsize $0$};

        \draw[thick] (15) arc(180:0:0.5cm) node[pos=0.5,anchor=south,inner sep=0pt] {\scriptsize $0$};
        \draw[thick] (18) arc(180:0:0.5cm) node[pos=0.5,anchor=south,inner sep=0pt] {\scriptsize $0$};

    \end{tikzpicture} \]
    \caption{Caravan decomposition as per equation \eqref{eq:rform} (drawn horizontally here).}
    \label{fig:caravan}
\end{figure}
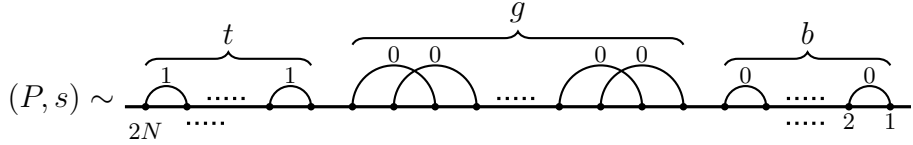

\begin{remark} Prop. \ref{rform} is an analogy of the classification surfaces; just as a (compact) topological surface is determined up to homeomorphism by three things (say for instance, its Euler characteristic, number of boundary components, and orientability), $(P,s)\in \mathcal{TC}_N$ is determined up to chord sliding by the triple of integers $(b,g,t)\in \mathbb{N}^2 \times \{0,1,2\}$ such that \eqref{eq:rform} holds. By analogy, we refer to $(b,g,t)$ as the \textbf{type} of $(P,s)$. Combining Prop. \ref{Prop:bondarycomponentspreserved}, with Theorem \ref{rform}, we have that $|\CC(\del(P,s))|-1=b=\text{null}(\mathit{IM}(P,s))$.  
\end{remark}

\mdef\label{TCDst} Define $\mathcal{TC}^*_N=\{(P,s)\in \mathcal{TC}_N \ | \ | \CC(\del(P,s))|=1\}\subset \mathcal{TC}_N $. It follows from the above remark that $\mathcal{TC}_N^*=\{(P,s) \in \mathcal{TC}_N \ | \ (P,s) \text{ has type }(0,g,t) \in \{0\}\times \N \times \{0,1,2\} \}$ and $\mathcal{TC}^*_N=\{(P,s)\in \mathcal{TC}_N \ | \ \mathit{IM}(P,s) \text{ is invertible}\}$ are two equivalent characterisations. Note that juxtaposition, $\#$, thus defines a map $\mathcal{TC}^*_{N_1}\times \mathcal{TC}^*_{N_2}\to \mathcal{TC}^*_{N_1+N_2}$. 

\section{Square with Bands Diagrams}
In this section, we introduce the main combinatorial objects used in this paper, \textbf{square with bands} (SWB) data and their associated diagrams, as well as the notion of handle slide equivalence on the same objects. These objects are intended to encode embedded curves in (once bounded, possibly non-orientable) surfaces from which we will define a skeletal (linear) monoidal category, extending the Temperley-Lieb category, in \S4.

This section is organised as follows: in \S3.1 we define SWB data, and some basic terminology and results useful in later parts. In \S3.2 we define the vertical juxtaposition of two SWB data, which will later be used to define a categorical composition in \S4. In \S3.3 we define a suitable notion of isotopy on SWB data and using this, in \S3.4 we define the notion of handle slide equivalence, which diagrammatically captures the Morse theoretic notion of handle sliding. In \S3.5 we construct components in SWB diagrams, which geometrically, correspond to boundary parallel curves, and are used to define a tensor product in \S4.
 
\subsection{Square with Bands Diagrams} In this subsection we introduce square with bands (SWB) data and their diagrams. We encourage the reader to view the latter as diagrams of embedded curves in some once bounded surface, in which the underlying surface is drawn by attaching handles or bands to a square. The latter is described by a twisted chord diagram $(P,s)\in \mathcal{TC}_N^*$. To encode such diagrams then, we first introduce frames, $F$, which record the underlying surface, and the points of intersection of the embedded curves with the boundary and the attaching spheres of the handles. Then, stipulating a rule that curves must exit any handle they enter, such diagrams are completely described by a frame $F$ and a crossing-less pairing, $E$, of the aforementioned points of intersection.

It will be helpful to associate to SWB data, certained ordered graphs (\textit{c.f.} \ref{SWBgraph} and \ref{SWBcomp}); this will allow us to give combinatorial characterisations of geometric notions, and also gives us useful criteria for proving the equality of two SWB data (namely Lemma \ref{grp1}). We conclude this section with the Insertion Lemma for SWB data, which is useful for some computations in later subsections.

\mdef\label{frame} A frame $F$ (of rank $N$), is a triple $F=(P,s,f)$ where $(P,s)\in \mathcal{TC}^*_N$ is a twisted chord datum (\textit{c.f.} \ref{TCDst}) and $f$ is a function $f:\overline{P}\to \N$, where $\overline{P}:=P\cup\{\{0\},\{2N+1\}\}$ for $P\in B_{\underline{2N}}$.

Given two frames $F_1$ and $F_2$ (of ranks $N_1$ and $N_2$, respectively), define their \textbf{vertical juxtaposition} as $F_2\# F_1:=(P_2\#P_1,s_2\#s_1,f_2\#f_1)$ (\text{c.f.} \ref{vjuxt}), where $f_2\# f_1:\overline{P_2\# P_1}\to \N$ is given by  $f_2\# f_1(p)=f_1(p)$ for $p\in P_1$, $f_2\# f_1(q+2N_1)=f_2(q)$ for $q \in P_2$, and otherwise
\[ f_2\# f_1(\{0\})=f_1(\{0\}),\quad   f_2\# f_1(\{2(N_1+N_2)+1\})=f_2(\{2N_2+1\}). \]

Define the \textbf{insertion} of $F_1$ in $F_2$ at height $d$, as $F_2\#_d F_1:=(P_2\#_d P_1,s_2\#_d s_1,f_2\#_d f_1)$ (\text{c.f.} \ref{ins}), where $f_2\#_d f_1:\overline{P_2\#_d P_1}\to \N$ is given by $f_2\#_d f_1(p+d )=f_1(p)$ for $p\in P_1$, $f_2\#_d f_1(o_d(q))=f_2(q)$ for $q \in P_2$, and otherwise
\[ f_2\#_d f_1(\{0\})=f_2(\{0\}),\quad   f_2\#_d f_1(\{2(N_1+N_2)+1\})=f_2(\{2N_2+1\}), \]
where $0\leq d\leq 2N_2$ is an integer. Note that the equalities $F_2\#_0 F_1 = F_2\# F_1$ and $F_2\#_{2N_2} F_1= F_1\# F_2$ do not hold in general.

\mdef\label{vset} Given a frame $F=(P,s,f)$ of rank $N$, define two totally ordered vertex sets $(V_F, \prec)$ and $(\V_F, \prec)$, as follows:
\[ 
V_F=\bigcup_{p\in \overline{P} } p\times \underline{f(p) }, \qquad \V_F=\bigcup_{p \in \overline{P}} p \times \underline{(f(p)+1/2)},
\]
where $(i,a)\prec (j,b)$ if $i<j$, $i=j\leq 2N$ and $a<b$, or $i=j=2N+1$ and $a>b$ in each case. Note that the same formulae define a total order on $V_F\cup \V_F$.

We refer to elements of $V_F$ or $\V_F$ of the form $(0,a)$  and $(2N+1, b)$, as \textbf{southern} and \textbf{northern} vertices, respectively, and all remaining elements as \textbf{internal} vertices. Write $(V_F)_{\sth}, (V_F)_{\nth}$, and $(V_F)_{\int}$ for the set of southern, northern, and internal vertices belonging to $V_F$, respectively, write $(V_F)_{\ext}=(V_F)_{\sth}\cup (V_F)_{\nth}$ (the set of \textbf{external} vertices belonging to $V_F$), and do likewise for $\V_F$.  For $i\in \N$ write $(V_F)_i=(\{i\}\times \R) \cap V_F$, and for $p\in \overline{P}$ write $(V_F)_p=(p\times \R )\cap V_F$, and do likewise for $\mathcal{V}_F$.

We associate a \textbf{frame diagram} to $F$ as follows: Firstly, draw a square and place the southern (northern) vertices $(i,a)$ (so $i=0,2N+1$) on the bottom (top) edge of the square, respectively, with $a$ increasing from left to right. Then, mark $2N$ evenly spaced intervals along the right hand edge of the square, place the internal vertices $(i,a)\in V_i$ on the $i$-th such interval (ordered bottom to top) with $a$ increasing upwards, and for each part $\{i,j\}\in P$ join the $i$-th and $j$-th such interval with a band (drawn as a thickened circular arc to the right of the square), which is untwisted if $s(\{i,j\})=0$ and contains a half-twist if $s(\{i,j\})=1$. When arcs in $P$ cross the corresponding bands in the frame diagram will overlap; we regard two frame diagrams as the same if they are related by changing the apparent over/under crossings of bands. 

\begin{example}\label{frex} Let $F_1=(\{p_1,p_2\},s_1,f_1)$ with $p_1=\{1,3\}$, and $F_2=(\{q_1,q_2,q_3\},s_2,f_2)$ with $q_1=\{1,4\}$ and $q_2=\{2,6\}$,  where 
\begin{align*}
    s_1(p_1, p_2)&=(0,1),& f_1(p_0,p_1,p_2,p_{3})&=(3,2,1,1),\\
    s_2(q_1, q_2,q_3)&=(0,1,0),& f_2(q_0,q_1,q_2,q_{3},q_4)&=(2,1,2,3,4),\end{align*}
using $p_0=\{0\},p_3=\{5\}$, and likewise for $q_0, q_4$. Then frame diagrams for $F_1$ and $F_2$ are below   
\[\frone{0.6} \qquad \frtwo{0.41538461538}\]
In future diagrams we will omit vertex labels.
    \end{example}

Note that the ordering on $V_F$ can obtained from a frame diagram for $F$ by regarding the bottom left most vertex and as the minimum and otherwise increasing in an anti-clockwise direction. Furthermore, one can identify vertices in $\V_F$ with certain intervals in the frame diagram such that the order $\prec$ on $\V_F$ (and indeed on $V_F\cup \V_F$) is compatible with this anti-clockwise ordering.  

In what follows, let $F=(P,s,f)$ be a frame of rank $N$. 

\mdef\label{inv} Define a map, $\iota_F : (V_F)_{\int} \to (V_F)_{\int}$, by the formula
\[\iota_F(i,a)=\begin{cases}  (j,f(p)+1-a) & \{i,j\}:=p \in P,\ \& \ s(p)=0,\\ 
(j,a) & \{i,j\}:=p \in P, \ \& \ s(p)=1. \end{cases}\]
Note that, abusing notation, the same formula defines a map $\iota_F: (\V_F)_{\int} \to (\V_F)_{\int}$ and in both cases one has $(\iota_F)^2=\id$.

By construction, one can draw a collection of $f(p)$ parallel arcs in the band corresponding to each part $p\in P$ which joins vertices $v$ and $\iota_F(v)$ for all $v\in (V_F)_p$, in the frame diagram for $F$. This is illustrated in the frame diagram for $F_2$ in \ref{frex}.

\mdef\label{cpx} Define the \textbf{complexity} of $F$,  as 
\[c(F)=\sum_{p\in P} f(p).\]
Observe that $|V_F|=f(\{0\})+f(\{2N+1\})+2c(F)$.

\mdef\label{permSWB} Given a permutation $\sigma \in \mathfrak{S}_{2N}$, define $\sigma(F)=(\sigma(P,s), f\circ \overline{\sigma}^{-1})$, where $\overline{\sigma}$ denotes the unique extension of $\sigma$ to a permutation of $\{0,1,\dots, 2N+1\}$ fixing $0$ and $2N+1$. In addition, define $F^*$ as the frame $F^*=(\omega_{N}(P), s\circ \omega_N, f\circ \omega'_N )$, where $\omega'_N$ denotes the unique order reversing (OR) permutation of $\{0,1,\dots, 2N+1\}$. Observe that $(F^*)^*=F$.

\label{flip} Let $\omega_F$ denote the unique OR map $\omega_F:V_F\to V_{F^*}$ and observe that 
\[\omega_F(i,a)=(\omega_N'(i),f(p(i) )+1-a ),\]
where $p(i)\in \overline{P}$ is defined to be the unique part containing $i$. From this formula, it is clear that a frame diagram for $F^*$ is simply a vertically reflected (upside down) frame diagram for $F$ and that $\omega_F\circ\iota_F=\iota_{F^*}\circ\omega_F: (V_F)_{\int}\to (V_{F^*})_{\int}$.

\mdef\label{SWBdef} A \textbf{square with bands datum} (or SWB-datum) of rank $N$ is a pair $\Th=(F,E)$, where $F=(P,s,f)$ is a frame of rank $N$, and $E$ is a crossingless pair-partition of $(V_F,\prec)$. We say that $\Th$ has type $(n,m)$ if $f(\{0\})=n$ and $f(\{2N+1\})=m$. 

Let $Sq_N(n,m)$ denote the set of all rank $N$, type $(n,m)$ SWB-data, and let $Sq(n,m)=\cup_{N\in \N} Sq_N(n,m)$ (note then, that $Sq(n,m)=\emptyset$ unless $n=m\mod 2$). In addition, define the following subsets of $Sq(n,m)$:
\begin{align*} Sq_{\tor}(n,m)&:= \{ \Th \ :\ (P,s)=\tor\}, &  Sq_{\mob}(n,m)&:= \{ \Th \ :\ (P,s)=\mob\},\\
Sq_+(n,m)&:=\{ \Th \ :\ (P,s) \text{ is orientable}\}. \end{align*}

We associate a \textbf{SWB-diagram} to $\Th=(F,E)$ as follows: Draw a frame diagram for $F$, including the parallel arcs in each band as per the right most diagram in \ref{frex}, and for each part $e=\{u,v\}\in E$ draw an arc connecting vertices $u\in V_F$ and $v \in V_F$ inside the square. The crossingless condition on $E$ ensures all such arcs can be drawn without intersections. 

\mdef\label{SWBdual} Given $\Th=(F,E)$, define $\Th^*=(F^*, \omega_F(E))$ and observe that $(\Th^*)^*=\Th$. From \ref{flip}, it follows that a SWB-diagram for $\Th^*$  is obtained by vertically reflecting one for $\Th$.

\begin{example}\label{SWBex} Let $F_1$ and $F_2$ be as per \ref{frex}. The SWB data $\Th_1:=(F_1,E_1)$ with
\[E_1=\{(0,1)\edge(3,2),(0,2)\edge(3,1),(0,3)\edge(1,1),(1,2)\edge(2,1),(5,1)\edge(4,1)\},\]
and $\Th_1^*$ have the following diagrams (left and right hand diagrams, respectively)
\[\SWBone{0.45} \quad \xmapstoo{(\_)^*}\quad \SWBonef{0.45}\]
Given an SWB diagram with frame $F$ one can unambiguously determine the pairing $E$ on $V_F$, \textit{i.e.} the diagram below on the left determines a unique SWB datum $\Th_2'=(F_2,E_2')$ for $F_2$ as per \ref{frex}. Indeed, we can use the following diagrammatic convention (below right) to represent a generic SWB datum $\Th_2=(F_2,E_2)$ with $E_2$ unspecified:
\[\SWBtwo{0.45} \qquad \SWBtw{0.45}\]
Furthermore, using the box convention (\textit{c.f.} Figure \ref{fig:box}) to represent a generic TCD $(P,s)\in \mathcal{TC}_N^*$, we can represent a completely generic SWB datum $(F,E)$ of type (n,m) with $F=(P,s,f)$ as follows:
\[\genSWB{2}\]
We draw the box for $(P,s)$ enclosed by the surface, since we know this surface has only one boundary component.
\end{example}

The reader will have observed that SWB diagrams resemble once bounded surfaces with embedded curves where the latter correspond to collections of vertices in $V_F$ joined successively by arcs (so far, we have distinguished distinct such curves by drawing them in colour). This motivates the following definition:

\mdef\label{SWBgraph} Given $\Th=(F,E)\in Sq(n,m)$, define the \textbf{curve graph} $G(\Th)=(V_F, E\cup D_F)$ where 
\[D_F=\{ v \edge \iota_F(v)  : \ v \in (V_F)_{\int} \},  \]
\text{i.e.} $D_F$ is a collection of edges which joins internal vertices by parallel arcs in each band. For $p\in P$, let $(D_F)_p=\{ v \edge \iota_F(v) \ | \ v\in (V_F)_p\}$.

A component $\Gamma \in \mathcal{C}(G(\Th))$ is said to be \textbf{external} if $V(\Gamma)$ contains a northern or southern vertex, and \textbf{internal} otherwise. If $V(\Gamma)$ contains no internal vertices, we say that $\Gamma$ is a \textbf{fully external} component.\\

We will often make use of local diagrams, which are ``zoomed in" versions of a diagram, showing only certain features of interest. We will draw such diagrams enclosed in dashed lines, indicating they are to be regarded as having been ``cut out" of a larger SWB diagram. For example, in the following local diagrams we show only the curves passing through one band (twisted or untwisted):
\[\LocalU{0.8} \qquad \LocalT{0.8}\]

\begin{example}\label{CPTex} For all diagrams in \ref{SWBex}, one sees that all curves correspond to external components. Below we give two SWB diagrams for some $\Th_3 \in Sq_{\tor}(3,1)$ and $\Th_4 \in Sq_{\mob}(1,3)$, whose curve graphs contain internal components.
\[\SWBth{0.45} \qquad \SWBfo{0.8} \]
    
\end{example}

\begin{lemma}
\label{grp1} Let $\Th$ and $\Th'$ be two SWB-datum. If $\Th$ and $\Th'$ have the same frame, then $\Th= \Th'$, if and only if, $G(\Th)=G(\Th')$. 
\end{lemma}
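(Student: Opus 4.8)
The plan is to prove the biconditional by establishing that an SWB datum $\Th = (F,E)$ is determined by its curve graph $G(\Th)$, once the frame $F$ is fixed. The forward direction is trivial: if $\Th = \Th'$ then $G(\Th) = G(\Th')$ by definition. So the content is in the converse, and the key observation is that the edge set $E \cup D_F$ of $G(\Th)$ decomposes canonically once we know $F$: the subset $D_F = \{v \edge \iota_F(v) : v \in (V_F)_{\int}\}$ depends only on the frame $F$ (via the involution $\iota_F$ of \ref{inv}), and is in fact determined by $F$ alone.

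First I would observe that since $\Th$ and $\Th'$ share the frame $F = (P,s,f)$, they have the same vertex set $V_F$ and the same ordered set $(V_F, \prec)$, and in particular the same $D_F$ (as $\iota_F$ is defined purely in terms of $F$). Writing $G(\Th) = (V_F, E \cup D_F)$ and $G(\Th') = (V_F, E' \cup D_F)$, the hypothesis $G(\Th) = G(\Th')$ gives $E \cup D_F = E' \cup D_F$ as edge sets. To conclude $E = E'$, it suffices to show that $E$ can be recovered from $E \cup D_F$ knowing $D_F$, i.e. that $E$ and $D_F$ are disjoint as sets of edges; then $E = (E \cup D_F) \setminus D_F = (E' \cup D_F) \setminus D_F = E'$, hence $\Th = (F, E) = (F, E') = \Th'$.

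The remaining point is therefore the disjointness $E \cap D_F = \emptyset$. An edge in $D_F$ is of the form $v \edge \iota_F(v)$ where $v$ is an internal vertex and $\iota_F(v)$ lies "in the same band," i.e. for $v = (i,a)$ with $\{i,j\} = p \in P$ we have $\iota_F(v) \in (V_F)_p$ (either $(j, f(p)+1-a)$ or $(j,a)$ depending on $s(p)$). In particular, both endpoints of a $D_F$-edge are internal, and they lie on the two intervals indexed by the arc $p$; one checks from the order convention of \ref{vset} that such a pair cannot be "crossing-less-compatible" with the square side of the diagram — more precisely, since $E$ is by definition (\ref{SWBdef}) a crossingless pair-partition of $(V_F, \prec)$ whose arcs are drawn \emph{inside the square}, whereas $D_F$-arcs are drawn \emph{in the bands}, these are distinct geometric regions. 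Combinatorially, the cleanest argument is: an edge $\{v, \iota_F(v)\} \in D_F$ has the property that either $v$ or $\iota_F(v)$ is an internal vertex $(i,a)$ with $i < 2N+1$ paired to a vertex on interval $j$ with $\{i,j\} \in P$; but an arc of $D_F$ separates the vertices of $(V_F)_p$ that lie strictly between its endpoints from the rest in a way incompatible with the crossingless pairing $E$ restricted to the square — equivalently, for each $p \in P$ the edges of $(D_F)_p$ form a nested family linking the two intervals of $p$, and were such an edge also in $E$ it would force a crossing in $E$ with one of the other parallel $D_F$-arcs that is "enclosed." The honest and simplest route, which I would take, is to note that $E$ and $D_F$ are disjoint simply because each part of $E$ consists of two vertices joined by an arc in the interior of the square and each part of $D_F$ of two vertices joined by an arc inside a band; since the SWB-diagram is drawn without such arcs ever coinciding (the diagram is well-defined by \ref{SWBdef}), no edge can belong to both. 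To make this fully combinatorial, I would record the (easy) lemma that for $v \in (V_F)_{\int}$ the vertex $\iota_F(v)$ is determined by $v$ and $F$, and that $\{v,\iota_F(v)\} \notin E$ because $E$ being crossingless of $(V_F,\prec)$ together with the structure of $D_F$ (a full nested collection of parallel arcs through each band) would be violated — this is the one place needing a short verification, but it is routine.

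The main obstacle, such as it is, is purely bookkeeping: making precise the claim $E \cap D_F = \emptyset$ without appealing to pictures. I expect this to reduce to the observation that the arcs of $D_F$ through a given band $p$ already form a crossingless pairing "on the band side," and that adding any one of them to the square-interior pairing $E$ would be inconsistent with $E$ being a \emph{pair}-partition (each vertex used once). Indeed, every internal vertex $v \in (V_F)_{\int}$ appears in exactly one edge of $D_F$ (namely $\{v, \iota_F(v)\}$) and in exactly one edge of $E$ (as $E$ partitions all of $V_F$ into pairs); if some edge lay in both $E$ and $D_F$, then for its endpoint $v$ the unique $E$-edge at $v$ equals the unique $D_F$-edge at $v$, which is possible only if $E$ "uses up" $v$ via $\iota_F(v)$ — but then one counts that the vertices of $(V_F)_p$ not so paired cannot be completed to a crossingless pairing within the square without crossing the $D_F$-arc $\{v,\iota_F(v)\}$'s nested neighbours, contradicting crossing-less-ness. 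Once disjointness is in hand, the conclusion $E = E'$ and hence $\Th = \Th'$ is immediate. This completes the proof.
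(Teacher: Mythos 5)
Your reduction of the problem to the single claim $E\cap D_F=\emptyset$ is where the argument breaks: that claim is false, and nothing in Definition \ref{SWBdef} forbids an $E$-part from coinciding with a $D_F$-edge. The crossing-less condition on $E$ only constrains pairs of $E$-parts with respect to $(V_F,\prec)$; it says nothing about $E$-arcs versus band arcs, and your geometric justification ("arcs in the square" versus "arcs in the bands") conflates drawn arcs with combinatorial edges, which are just two-element vertex sets. A minimal counterexample: take $F=(P,s,f)$ with $(P,s)=\mob$, $f(\{0\})=f(\{3\})=0$ and $f(\{1,2\})=1$, so that $V_F=\{(1,1),(2,1)\}$, $\iota_F(1,1)=(2,1)$ and $D_F=\{\{(1,1),(2,1)\}\}$. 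The only pair-partition of $V_F$ is $E=\{\{(1,1),(2,1)\}\}=D_F$, and it is trivially crossing-less, so this is a legitimate SWB datum (its curve is the core circle of the M\"obius band -- exactly the internal twisted components the category needs for the parameter $\b$; the red component of $\Th_4$ in Example \ref{CPTex} is another instance). Consequently $E$ cannot be recovered as $(E\cup D_F)\smallsetminus D_F$, and your proposed converse collapses at its key step; the counting sketch in your last paragraph does not rescue it, since it again appeals to a nonexistent "no crossing with $D_F$-arcs" constraint.

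The lemma is nevertheless true, and the paper's proof gets around exactly this issue by a valence argument rather than disjointness: if $e\in E\smallsetminus E'$, then equality of the edge sets forces $e\in D_F$, so $e=\{v,\iota_F(v)\}$ for an internal vertex $v$; but then $v$ has valence $1$ in $G(\Th)$ (its unique $E$-edge and its unique $D_F$-edge coincide) while it has valence $2$ in $G(\Th')$ (its $D_F$-edge is still $e$, whereas its $E'$-edge is some other part, since $e\notin E'$), contradicting $G(\Th)=G(\Th')$; hence $E\smallsetminus E'=\emptyset$ and by symmetry $E=E'$. If you want to keep the flavour of your "recover $E$ from the graph" strategy, the correct statement is vertex-local, not global disjointness: from $G(\Th)$ and $F$ one reads off the $E$-edge of each vertex $v$ as its unique incident edge if $v$ is external or has degree $1$, and as the incident edge different from $\{v,\iota_F(v)\}$ if $v$ is internal of degree $2$. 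That recovery is equivalent to the paper's degree argument, but it must be argued via degrees, not via $E\cap D_F=\emptyset$.
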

\begin{proof} Clearly $\Th= \Th'$ implies $G(\Th)=G(\Th')$. Now write, $\Th=(F,E)$, $\Th'=(F,E')$ and suppose $G(\Th)=G(\Th')$. If $e \in E\smallsetminus E'$ then $e\in D_F$ (otherwise $E(G(\Th))\neq E(G(\Th'))$), but then a vertex in $e$ has valence $2$ in $G(\Th)$ and valence $1$ in $G(\Th')$, a contradiction. Thus $E\smallsetminus E'=\emptyset$, and by symmetry $E=E'$. \end{proof}

\begin{remark} Note that we can relax the last part of the above to $G(\Th)\simeq_O G(\Th')$ (\textit{c.f.} \ref{OG}), since having the same frame guarantees both graphs have the same vertex set. We will use this fact frequently.\end{remark}

\mdef\label{twist} Given a SWB $\Th=(F,E)$, and a component $\Gamma\in \mathcal{C}(G(\Th))$, define the \textbf{twist} of $\Gamma$, $\tau_\Gamma\in \Z_2$, by the formula
\[\tau_{\Gamma}= \sum_{p \in P} \left|(D_F)_p\cap E(\Gamma)\right|s(p).\]
The twist of a component then, is simply the $\Z_2$-count of the number of twisted bands the corresponding curve passes through. Geometrically then, the twist of a component is 1 if the corresponding curve is $1$-sided, and 0 if it is $2$-sided. 

\begin{example}\label{TWex} From \ref{SWBex}, the red components in the diagrams for $\Th_1$ and $\Th_2'$ have twist 1. From \ref{CPTex}, the red component in $\Th_4$ has twist 1, whereas the blue components has twist 0. 
    
\end{example}
\mdef\label{cptfn} Given a SWB $\Th=(F,E)$, $F=(P,s,f)$, and a component $\Gamma \in \mathcal{C}(G(\Th))$, define $g_{\Gamma}:\overline{P}\to \Z$ by 
\[g_{\Gamma}(p)=\left|(V_F)_p\cap V(\Gamma)\right|.\]

In the following proposition, we establish how to ``delete curves" from SWB-data:

\begin{prop}\label{del} Let $\Th=(F,E)\in Sq_N(n,m)$, and let $\Gamma\in \mathcal{C}(G(\Th))$. There exists a unique SWB datum $\Th\smallsetminus \Gamma$ with frame $F_{-\Gamma}=(P,s,f-g_{\Gamma})$, such that $G(\Th\smallsetminus \Gamma)\simeq_{O} G(\Th)\smallsetminus \Gamma$.
\end{prop}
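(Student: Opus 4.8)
The statement has two halves: existence of a SWB datum $\Th\smallsetminus\Gamma$ with the prescribed frame, and uniqueness. By Lemma \ref{grp1}, once we fix the frame $F_{-\Gamma}=(P,s,f-g_\Gamma)$, uniqueness is automatic: any two SWB data with this frame whose curve graphs are both order-isomorphic to $G(\Th)\smallsetminus\Gamma$ are equal, since $G(\Th)\smallsetminus\Gamma$ has a well-defined vertex set (a subset of $V_F$) and an order-isomorphism of curve graphs respecting a fixed frame forces equality of the pairings. So the real content is existence, i.e. producing a genuine SWB datum — a frame together with a \emph{crossingless pair-partition} of its vertex set — realizing $G(\Th)\smallsetminus\Gamma$.

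First I would check that $F_{-\Gamma}=(P,s,f-g_\Gamma)$ is a legitimate frame. We need $f-g_\Gamma$ to be a function $\overline P\to\N$, i.e. $g_\Gamma(p)\leq f(p)$ for all $p\in\overline P$; this is immediate from the definition $g_\Gamma(p)=|(V_F)_p\cap V(\Gamma)|\leq|(V_F)_p|=f(p)$ (for external parts $p=\{0\},\{2N+1\}$ the same bound holds). Since $(P,s)\in\mathcal{TC}_N^*$ is unchanged, $F_{-\Gamma}$ is a frame of rank $N$. Note also that deleting $\Gamma$ deletes equally many vertices from the two intervals $i,j$ of each part $p=\{i,j\}\in P$: this follows because $\iota_F$ restricts to a bijection $(V_F)_i\cap V(\Gamma)\to(V_F)_j\cap V(\Gamma)$ (the edges of $D_F$ inside band $p$ are exactly the $\iota_F$-pairs, and $\Gamma$ being a component means it contains both endpoints of any $D_F$-edge it meets), hence $g_\Gamma(i)=g_\Gamma(j)$; this consistency is exactly what is needed for $F_{-\Gamma}$ to be a frame and for $D_{F_{-\Gamma}}$ to ``match up" with the surviving $D_F$-edges.

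Next, the key identification: there is a canonical order-isomorphism $\phi$ from $(V_{F_{-\Gamma}},\prec)$ to $(V_F\smallsetminus V(\Gamma),\prec)$. On each interval $(V_F)_i$ (or each external part), both sides are totally ordered finite sets of the same cardinality $f(i)-g_\Gamma(i)$, so there is a unique order-isomorphism there; assembling these over all $i$ (respecting the global order, which is lexicographic in $i$ then $a$, with the reversal convention at $2N+1$) gives the unique order-isomorphism $\phi$ of the whole vertex sets. I would then set $E':=\phi^{-1}(E\cap\mathcal{P}(V_F\smallsetminus V(\Gamma)))$ — that is, take the edges of $E$ both of whose endpoints survive the deletion, and transport them back along $\phi$ — and claim $\Th\smallsetminus\Gamma:=(F_{-\Gamma},E')$ works. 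Two things must be verified: (i) $E'$ is a pair-partition of $V_{F_{-\Gamma}}$, and (ii) $E'$ is crossingless. For (i): every vertex of $G(\Th)\smallsetminus\Gamma$ has valence $\leq 2$; a surviving vertex $v\in(V_F)_{\int}\smallsetminus V(\Gamma)$ has its $D_F$-partner $\iota_F(v)$ also surviving (as above), so $v$ has exactly one $D_F$-edge, hence exactly one $E$-edge, in $G(\Th)$, and that $E$-edge must have its other endpoint surviving (the other endpoint lies on $\Gamma$ iff $v$ does, since they're in the same component); similarly an external surviving vertex has valence $1$ in $G(\Th)$ via a single $E$-edge whose partner survives. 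So every vertex of $V_F\smallsetminus V(\Gamma)$ is covered exactly once by an edge of $E\cap\mathcal{P}(V_F\smallsetminus V(\Gamma))$, making $E'$ a pair-partition. For (ii): crossinglessness is a condition purely about the cyclic/linear order of the four endpoints of two arcs; since $\phi$ is order-preserving and crossinglessness is order-reflection-stable under passing to ordered subsets (restricting a crossingless pair-partition to a subset of indices keeps it crossingless — this is elementary, and is of the same flavour as Lemma \ref{TLarg}), $E'$ inherits crossinglessness from $E$. Finally $G(\Th\smallsetminus\Gamma)=(V_{F_{-\Gamma}},E'\cup D_{F_{-\Gamma}})$ and, by construction, $\phi$ carries $E'$ to $E\cap\mathcal{P}(V_F\smallsetminus V(\Gamma))$ and $D_{F_{-\Gamma}}$ to $D_F\cap\mathcal{P}(V_F\smallsetminus V(\Gamma))$ (this last equality again uses $g_\Gamma(i)=g_\Gamma(j)$ and that $\phi$ matches the $\iota$-pairing, which I would check by a short direct computation with the explicit formulas for $\iota_F$ and $\omega$-type maps), so $\phi:G(\Th\smallsetminus\Gamma)\xrightarrow{\sim}G(\Th)\smallsetminus\Gamma$ is an order isomorphism, i.e. $G(\Th\smallsetminus\Gamma)\simeq_O G(\Th)\smallsetminus\Gamma$, as required.

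The main obstacle, I expect, is the careful bookkeeping in showing $D_{F_{-\Gamma}}$ corresponds under $\phi$ to the surviving $D_F$-edges — i.e. that after re-indexing the shrunken intervals the combinatorial pairing $\iota_{F_{-\Gamma}}$ agrees with the restriction of $\iota_F$. This is where the distinction between twisted ($s(p)=1$, pairing $(i,a)\mapsto(j,a)$) and untwisted ($s(p)=0$, pairing $(i,a)\mapsto(j,f(p)+1-a)$) bands matters, and the order-isomorphism $\phi$ on interval $(V_F)_j$ versus $(V_F)_i$ must be checked to be compatible with both the ``$a\mapsto a$" and the ``$a\mapsto f(p)+1-a$" conventions; once one observes that $\Gamma$ meets band $p$ in a set of $\iota_F$-pairs, so the surviving vertices of $(V_F)_i$ and $(V_F)_j$ are matched bijectively by $\iota_F$, and that $\phi$ is the \emph{unique} order-isomorphism on each interval, the two conventions go through symmetrically — but writing this cleanly requires some care with the $2N+1$ order-reversal convention in \ref{vset}. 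Everything else is routine.
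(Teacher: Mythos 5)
Your proposal is correct and follows essentially the same route as the paper's proof: transport the surviving edges of $E$ along the unique order-preserving bijection between $V_F\smallsetminus V(\Gamma)$ and $V_{F_{-\Gamma}}$ (the paper uses the map $o_\Gamma$, the inverse of your $\phi$), verify the compatibility $o\circ\iota_F=\iota_{F_{-\Gamma}}\circ o$ so that the band edges match, and invoke Lemma \ref{grp1} for uniqueness. The additional verifications you spell out (that $F_{-\Gamma}$ is a frame, that the surviving edges form a crossingless pair-partition, that $\Gamma$ meets each band in $\iota_F$-pairs) are exactly the routine checks the paper leaves implicit.
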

\begin{proof} Since $|V_F\smallsetminus V(\Gamma)|=|V_{F_{-\Gamma}}|$ by construction, we may set $o :V_F\smallsetminus V(\Gamma) \to V_{F_{-\Gamma}}$ as the unique OP map. Then $\Th':=(F_{- \Gamma}, o(E\smallsetminus E(\Gamma)  ) )$ is a SWB-datum and 
\[G(\Th')=(V_{F_{-\Gamma}}, o(E\smallsetminus E(\Gamma))\cup D_{F_{-\Gamma}} ).\]
Since $|(V_F)_i \smallsetminus V(\Gamma)|=|(V_{F_{-\Gamma}})_i|$ for all $i=0,\dots, 2N+1$, one has that $o((V_F)_i \smallsetminus V(\Gamma))=(V_{F_{-\Gamma}})_i$. 
One can futher check that for any $v\in (V_F)_\int \smallsetminus V(\Gamma)$, we have $o \circ \iota_F(v)=\iota_{F_{-\Gamma}}\circ o(v)$, from which it follows that $D_{F_{-\Gamma}}=o(D_F)$, and hence $G(\Th')=o(G(\Th)\smallsetminus \Gamma )$. Thus we may take $\Th\smallsetminus\Gamma =\Th'$. Uniqueness is guaranteed by \ref{grp1}.
\end{proof}

Henceforth, let $o_{\Gamma} :V_F\smallsetminus V(\Gamma) \to V_{F_{-\Gamma}}$ denote the unique OP map. Thus in particular $G(\Th\smallsetminus \Gamma)= o_{\Gamma}(G(\Th)\smallsetminus \Gamma)$.

\begin{prop}\label{delp} Let $\Th=(F,E)$ be a SWB and let $\Gamma\in \mathcal{C}(G(\Th))$. The following hold:
\begin{enumerate}
\item\label{delp1} For $\Gamma \neq \Lambda\in \mathcal{C}(\Th)$, set $\Gamma':=o_{\Lambda}(\Gamma)\in \mathcal{C}(G(\Th\smallsetminus \Lambda))$ and $\Lambda'=o_{\Gamma}(\Lambda)\in \mathcal{C}(G(\Th\smallsetminus \Gamma))$. Then $\tau_{\Gamma'}=\tau_{\Gamma}$, $\Gamma$ is internal if and only if $\Gamma'$ is, and
\[(\Th\smallsetminus \Gamma)\smallsetminus \Lambda'=(\Th\smallsetminus \Lambda)\smallsetminus \Gamma'.\] 
\item\label{delp2} $G(\Th^*)\simeq_O G(\Th)^*$. Furthermore, set $\Gamma^*=\omega_F(\Gamma)\in \mathcal{C}(G(\Th^*))$. Then $\tau_{\Gamma^*}=\tau_{\Gamma}$, $\Gamma^*$ is internal if and only if $\Gamma$ is, and 
\[(\Th\smallsetminus \Gamma)^*=\Th^*\smallsetminus \Gamma^*.\]
\end{enumerate}
\end{prop}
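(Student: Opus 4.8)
The plan is to reduce everything to the behaviour of the curve graph $G(\Th)$ and the defining data $E,D_F$ under the operations $o_\Gamma$ and $\omega_F$, invoking Proposition \ref{del} (which produces $\Th\smallsetminus\Gamma$ with the stated curve graph) and Lemma \ref{grp1} (which lets us recognise an SWB datum from its frame and curve graph up to order-isomorphism). For part \eqref{delp1}, first observe that both $(\Th\smallsetminus\Gamma)\smallsetminus\Lambda'$ and $(\Th\smallsetminus\Lambda)\smallsetminus\Gamma'$ have frame $F_{-\Gamma-\Lambda}=(P,s,f-g_\Gamma-g_\Lambda)$: indeed $g_{\Lambda'}=g_{\Lambda}\circ o_\Gamma^{-1}$ so that $(f-g_\Gamma)-g_{\Lambda'}$, transported back along $o_\Gamma$, is exactly $f-g_\Gamma-g_\Lambda$, and symmetrically for the other side; a short check of the defining formula for $g$ confirms $g_\Gamma+g_\Lambda$ makes sense because $V(\Gamma)\cap V(\Lambda)=\emptyset$. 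Then by Lemma \ref{grp1} it suffices to show the two curve graphs agree (up to $\simeq_O$), and by Proposition \ref{del} applied twice both equal $(G(\Th)\smallsetminus\Gamma)\smallsetminus\Lambda'$ and $(G(\Th)\smallsetminus\Lambda)\smallsetminus\Gamma'$ respectively, transported by the composite order-preserving maps; since graph deletion is symmetric in the two deleted subgraphs ($E(G)\smallsetminus E(\Gamma)\smallsetminus E(\Lambda)=E(G)\smallsetminus E(\Lambda)\smallsetminus E(\Gamma)$ and likewise on vertices) and order-preserving bijections between the same ordered sets are unique, the two agree. The statements $\tau_{\Gamma'}=\tau_\Gamma$ and ``$\Gamma'$ internal iff $\Gamma$ internal'' follow because $o_\Lambda$ restricts to a graph isomorphism $\Gamma\overset{\sim}{\to}\Gamma'$ preserving the partition of edges into the $(D_F)_p$ (since $o_\Lambda\circ\iota_F=\iota_{F_{-\Lambda}}\circ o_\Lambda$ away from $V(\Lambda)$, as in the proof of Proposition \ref{del}), so $|(D_F)_p\cap E(\Gamma)|=|(D_{F_{-\Lambda}})_p\cap E(\Gamma')|$, giving equal twists by the formula in \ref{twist}; and $o_\Lambda$ sends external vertices to external vertices and internal to internal, so the external/internal dichotomy is preserved.

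For part \eqref{delp2}, the first claim $G(\Th^*)\simeq_O G(\Th)^*$ is essentially the observation in \ref{flip} that $\omega_F\circ\iota_F=\iota_{F^*}\circ\omega_F$ together with the fact that $\omega_F:V_F\to V_{F^*}$ is the unique order-reversing bijection: $\omega_F$ carries $D_F$ to $D_{F^*}$ and $E$ to $\omega_F(E)=E(\Th^*)$ by definition of $\Th^*$ in \ref{SWBdual}, hence $\omega_F$ is a graph isomorphism $G(\Th)\to G(\Th^*)$ that reverses the order, i.e.\ it exhibits $G(\Th^*)\simeq_O G(\Th)^*$ (cf.\ \ref{OG}). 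Setting $\Gamma^*=\omega_F(\Gamma)$, this restricts to an isomorphism $\Gamma\overset{\sim}{\to}\Gamma^*$ respecting the decomposition $E=D_F\sqcup E$ into band-arcs and square-arcs and matching $(D_F)_p$ with $(D_{F^*})_{\omega_N(p)}$, where $s^*(\omega_N(p))=s(p)$; hence $\tau_{\Gamma^*}=\tau_\Gamma$ by the twist formula, and $\Gamma^*$ is internal iff $\Gamma$ is since $\omega_F$ swaps northern and southern vertices but preserves internal ones. Finally, for $(\Th\smallsetminus\Gamma)^*=\Th^*\smallsetminus\Gamma^*$: both sides have frame $(F_{-\Gamma})^*$; indeed $(F^*)_{-\Gamma^*}=(P^*,s^*,f^*-g_{\Gamma^*})$ and one checks $g_{\Gamma^*}=g_\Gamma\circ\omega_N'^{-1}$ on $\overline{P^*}$, so that $f^*-g_{\Gamma^*}=(f-g_\Gamma)\circ\omega_N'^{-1}=(f-g_\Gamma)^*$, matching the frame of $(\Th\smallsetminus\Gamma)^*$. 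Then by Lemma \ref{grp1} it remains to compare curve graphs: $G((\Th\smallsetminus\Gamma)^*)\simeq_O G(\Th\smallsetminus\Gamma)^*\simeq_O (G(\Th)\smallsetminus\Gamma)^*$ by the first claim and Proposition \ref{del}, while $G(\Th^*\smallsetminus\Gamma^*)\simeq_O G(\Th^*)\smallsetminus\Gamma^*\simeq_O G(\Th)^*\smallsetminus\omega_F(\Gamma)$, and these two order-graphs coincide because deletion commutes with reversing the order on the common vertex set $V_{F^*}$.

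The routine parts are the bookkeeping identities $g_{\Lambda'}=g_\Lambda\circ o_\Gamma^{-1}$ and $g_{\Gamma^*}=g_\Gamma\circ\omega_N'^{-1}$, which follow directly from the definitions of $g$ in \ref{cptfn} and of $o_\Gamma$, $\omega_F$; I would state these and verify them in one line each. \textbf{The main obstacle} I anticipate is organising the commutativity arguments so that the chains of order-preserving (respectively order-reversing) bijections are genuinely canonical: one must be careful that ``the unique OP map $o_{\Gamma,\Lambda'}\colon V_F\smallsetminus(V(\Gamma)\cup V(\Lambda))\to V_{F_{-\Gamma-\Lambda}}$'' arising from the two orders of deletion really is the same map, and that $\omega_F$ restricted to complements of components composes correctly with $o_\Gamma$ and $o_{\Gamma^*}$; this is where Lemma \ref{grp1} (via $\simeq_O$) does the real work, since it frees us from tracking the maps explicitly and lets us conclude equality of SWB data from equality of frames plus order-isomorphism of curve graphs.
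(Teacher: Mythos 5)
Your proposal is correct and follows essentially the same route as the paper: both arguments reduce each claim to an equality of frames plus an order-isomorphism of curve graphs via Lemma \ref{grp1}, using the commutation identities $o_\Lambda\circ\iota_F=\iota_{F_{-\Lambda}}\circ o_\Lambda$ and $\omega_F\circ\iota_F=\iota_{F^*}\circ\omega_F$ for the twist and internality statements, and Proposition \ref{del} together with \ref{contpr} for the symmetry/duality of deletions. The only blemishes are notational (e.g.\ writing $g_{\Lambda'}=g_\Lambda\circ o_\Gamma^{-1}$ when $g$ is a function on $\overline{P}$, so one should simply say $g_{\Lambda'}=g_\Lambda$, and the slip ``$E=D_F\sqcup E$''), which do not affect the argument.
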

\begin{proof} For (1), since $\iota_{F_{-\Lambda}}\circ o_{\Lambda}=o_{\Lambda }\circ \iota_F$ (where defined), and since $o_{\Lambda}((V_F)_p \smallsetminus V(\Lambda))=(V_{F_{-\Lambda}})_p$ for any $p\in P$, $o_{\Lambda}$ induces a bijection $o_{\Lambda}:E(\Gamma) \cap D_F \to E(\Gamma') \cap D_{F_{-\Lambda}}$ which restricts to bijections $E(\Gamma) \cap (D_F)_p \to E(\Gamma') \cap (D_{F_{-\Lambda}})_p$. It therefore follows that $\tau_{\Gamma'}=\tau_{\Gamma}$. The second claim is clear since $o_{\Lambda}$ restricts to a bijection $(V_F)_{\int}\smallsetminus V(\Lambda) \overset{\sim}{\to} (V_{F_{-\Lambda}})_{\int}$. The third claim follows by \ref{grp1} since $g_{\Gamma'}=g_{\Gamma}$ (and likewise for $\Lambda$) and since (using \ref{contpr})
\[G\left( (\Th\smallsetminus \Gamma)\smallsetminus \Lambda'\right)\simeq_{O}G(\Th) \smallsetminus(\Gamma\cup \Lambda )\simeq_{O}G\left( (\Th\smallsetminus \Lambda)\smallsetminus \Gamma'\right).\]
For (2), one can check using the formula in \ref{flip} that $\omega_{F}\circ \iota_F =\iota_{F^*} \circ \omega_{F}$, from which it follows that $\omega_F(D_F)=D_{F^*}$ and hence $G(\Th^*)=\omega_F(G(\Th))$. Clearly, $\omega_F$ descends to a bijection $(V_F)_\int\overset{\sim}{\to} (V_{F^*})_\int$, which proves the second claim about $\Gamma^*$. For the first we note that $\omega_{F}$ induces a bijection $E(\Gamma)\cap D_F\overset{\sim}{\to} E(\Gamma^*)\cap D_{F^*}$ which descends to bijections $E(\Gamma)\cap (D_F)_p\overset{\sim}{\to} E(\Gamma^*)\cap (D_{F^*})_{\omega_N(p)}$ for all $p\in P$. The last claim follows by \ref{grp1}, since $g_{\Gamma^*}=g_{\Gamma}\circ \omega_{N}'$, and (using \ref{contpr})
\[G((\Th\smallsetminus \Gamma)^*)\simeq_O (G(\Th)\smallsetminus \Gamma)^*=G(\Th)^*\smallsetminus \Gamma^*\simeq G(\Th^*\smallsetminus \Gamma^*).\qedhere\]
\end{proof}

Just as we can describe the curves in a SWB-diagram via the curve graph $G(\Th)$, so too we can describe the complement of these curves with a graph as follows:

\mdef\label{SWBcomp} Given a SWB $\Th=(F,E)$, define the \textbf{complement graph} $\mathcal{G}(\Th)$ to be the graph with vertex set $\V_F$, and edge set $\mathcal{D}_F\cup \mathcal{E}$ where
\begin{align*}\mathcal{D}_F&=\{ v\edge \iota_F(v) \ : \ v \in (\V_F)_{\int} \}, \\
\mathcal{E}&=\{ u\edge v \ : \ u,v \in \V_F, \ \{u,v\} \text{ crosses no part in } E  \}.
\end{align*}
Here we regard $\prec$ as defining a total order on $V_F\cup \V_F$ (\textit{c.f.} \ref{vset}).

\begin{example} Below we illustrate the complement graph on the SWB diagram for $\Th_1$ (\textit{c.f.} \ref{SWBex}). The vertices in $\V_F$ (drawn as black circles) correspond to regions between the vertices in $V_F$, and the edges (drawn as dashed curves) are all possible pairings of the vertices which do not cross curves in the SWB-diagram. Thus one sees that components of $\G(\Th)$ correspond to components of the complement of the curves in a given SWB diagram, for example $|\CC(\G(\Th_1))|=1$.
    \[\CPTone{0.6}\]
Furthermore, observe that components of the subgraph $(\V_F, \mathcal{E})$ correspond to the connected components of the complement of the curves inside the square, now ignoring the bands.
\end{example}

\begin{prop}\label{comp1} Let $\Th=(F,E)$ be a SWB datum. Then $|\mathcal{C}(\mathcal{G}(\Th))|\leq |\mathcal{C}(G(\Th))|+1$.
\end{prop}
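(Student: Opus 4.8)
The plan is to set up a bijection (or at least a surjection with controlled fibers) between the vertex set $\V_F$ of the complement graph $\G(\Th)$ and a suitable subdivision of the regions cut out by the curve graph $G(\Th)$, and then count components. More precisely, I would argue as follows. Consider the SWB diagram for $\Th=(F,E)$ as a genuine picture of a surface with embedded curves. The curves of $G(\Th)$ cut the surface into connected pieces, one per component of $\G(\Th)$ (this is the content of the example immediately preceding the proposition, which should be promoted to a lemma or at least invoked). The claim is that cutting along the curves produces at most one more piece than there are curves. Each embedded curve either is a simple closed curve (internal component of $G(\Th)$) or is an arc with both endpoints on the boundary (external component). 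I would prove the bound by induction on $|\CC(G(\Th))|$, deleting one component $\Gamma$ at a time via Proposition \ref{del}.

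First I would handle the base case $|\CC(G(\Th))|=0$, i.e.\ $E = \emptyset$ and $c(F)=0$ so there are no internal vertices and no curves at all; then the complement is the surface itself minus nothing, which is connected, giving $|\CC(\G(\Th))| = 1 \leq 0 + 1$. (One must check $\G(\Th)$ is connected in this degenerate case: with no curves, every pair of vertices in $\V_F$ crosses no part of $E$, so $\mathcal{E}$ makes $\V_F$ a complete graph, hence connected — provided $\V_F\neq\emptyset$, which holds since $f(\{0\})+1/2$-indexed southern vertices always exist.) For the inductive step, pick any $\Gamma\in\CC(G(\Th))$ and form $\Th\smallsetminus\Gamma$ via Proposition \ref{del}; by induction $|\CC(\G(\Th\smallsetminus\Gamma))|\leq |\CC(G(\Th\smallsetminus\Gamma))|+1 = |\CC(G(\Th))|-1+1 = |\CC(G(\Th))|$. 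It then suffices to show that deleting the single curve $\Gamma$ from the complement graph increases the number of components by at most $1$: that is, $|\CC(\G(\Th))| \leq |\CC(\G(\Th\smallsetminus\Gamma))| + 1$. The geometric intuition is that removing one embedded curve (arc or circle) from a surface can disconnect a piece into at most two pieces, since a connected 1-manifold component has a connected regular neighbourhood, and cutting a connected surface along a connected properly embedded 1-submanifold yields at most two components; merging the neighbourhood back in re-glues them.

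The combinatorial execution of the inductive step is where I expect the real work. I would need to relate $\V_{F}$ and $\V_{F_{-\Gamma}}$ (via an order-preserving-ish correspondence analogous to $o_\Gamma$ of Proposition \ref{del}, but now on the half-integer vertex set) and track how the edge sets $\mathcal{E}$, $\mathcal{D}_F$ compare before and after deletion. The vertices of $\V_F$ that ``straddle'' vertices of $V(\Gamma)$ get identified in $\V_{F_{-\Gamma}}$, and new crossing-free pairings appear once $\Gamma$'s arcs are gone; the content of the step is that the partition of $\V_F$ into $\G(\Th)$-components refines the pullback of the partition of $\V_{F_{-\Gamma}}$ into $\G(\Th\smallsetminus\Gamma)$-components, with each block splitting into at most two. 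I would prove this by examining the local picture along $\Gamma$: the complement vertices adjacent to an edge $e=\{u,v\}\in E(\Gamma)$ fall into two classes (``left of $e$'' and ``right of $e$''), and one shows that within a single $\G(\Th\smallsetminus\Gamma)$-component, these two classes each stay connected in $\G(\Th)$, so the component splits into at most the two classes. The twist of $\Gamma$ is irrelevant to the count (a one-sided curve still has a neighbourhood whose complement-side is connected, being a Möbius band's boundary-annulus). The main obstacle, then, is making the ``left/right of the curve'' dichotomy precise purely combinatorially and verifying it is consistent along the whole component $\Gamma$ — essentially a discrete Jordan-curve-type argument on the curve graph; I would isolate this as a self-contained lemma about crossingless pairings and prove it by following $\Gamma$ edge by edge, using the crossingless condition on $E$ to guarantee that the two sides never get swapped as one traverses $\Gamma$.
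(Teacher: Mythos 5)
Your route is genuinely different from the paper's, and in outline it works. The paper does not induct on curve deletion at all: it first observes that any path $v_1\edge\cdots\edge v_k$ in $G(\Th)$ lifts to a path $v_1^{\pm}\edge\cdots\edge v_k^{\pm}$ in $\G(\Th)$ (with the sign flipping across arcs of $E$ and untwisted band edges, and preserved across twisted band edges), and then shows that the $\prec$-minimal vertex of any component of $\G(\Th)$ is either $(0,1/2)$ or $\min_\prec(V(\Gamma))^{+}$ for some $\Gamma\in\CC(G(\Th))$; this exhibits $\G(\Th)$ as a union of at most $|\CC(G(\Th))|+1$ components in one stroke, with no deletion, no left/right dichotomy, and no Jordan-curve-type lemma. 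Your induction via Proposition \ref{del} buys a statement that is arguably more informative (the ``cutting along one connected curve adds at most one complement component'' lemma, which is essentially what the paper later packages as separating/non-separating in \ref{comp2}), but at the cost of exactly the discrete Jordan-curve bookkeeping you flag, which is comparable in length to the paper's whole proof.

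Two cautions on your sketch. First, the heart of your argument --- the inductive inequality $|\CC(\G(\Th))|\leq|\CC(\G(\Th\smallsetminus\Gamma))|+1$ --- is only a plan; carrying it out requires (i) the identification of $\V_F$-vertices with $\V_{F_{-\Gamma}}$-vertices and the check that $\G(\Th)\to\G(\Th\smallsetminus\Gamma)$ sends edges to edges or loops, and (ii) the rerouting step showing that if an edge $\{a,b\}$ of $\G(\Th\smallsetminus\Gamma)$ fails to lift because it crosses a part of $E(\Gamma)$, then $a$ is $\G(\Th)$-adjacent to a $\Gamma$-adjacent vertex (this does follow from crossinglessness of $E$, but it must be written out). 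Second, your closing claim that ``the two sides never get swapped as one traverses $\Gamma$'' is false as stated for twisted components: at a twisted band the parallel transport of sides is $u^{\pm}\edge\iota_F(u)^{\pm}$, so the sides \emph{are} exchanged there, and when $\tau_\Gamma=1$ the two side-classes merge into a single connected set. That only helps your bound, but the correct formulation is the sign bookkeeping the paper uses (flip the sign across $E$-arcs and untwisted bands, keep it across twisted bands), not side-preservation.
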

\begin{proof} For $v=(i,a)\in V_F$, let $v^{\pm}$ denote the vertex $(i,a\pm 1/2)\in \V_F$. For a path $v_1\edge v_2\edge \dots\edge v_k\subset G(\Th)$, one has that $v_1^{\pm}\edge v_2^{\pm}\edge \dots\edge v_k^{\pm}\subset \G(\Th)$, where the signs at each step are determined as follows: For $u\edge v\in E $ we have $u^{\pm}\edge v^{\mp}\in \mathcal{E}$, and for $u\edge v \in (D_F)_p$ we have $u^{\pm}\edge v^{\mp} \in \mathcal{D}_F$ if $s (p)=0$, and $u^{\pm}\edge v^{\pm}\in \mathcal{D}_F$ for $s (p)=1$. 

Now let $\mathcal{H}\subset \G(\Th)$ be a connected component, and set $\bm{v}:=\min_{\prec}V(\mathcal{H})$. We now claim that either $\bm{v}=v^+$ where $v=\min_{\prec}(V(\Gamma))$ for some connected component $\Gamma \subset G(\Th)$, or $\bm{v}=(0,1/2)$. If $\bm{v}\neq (0,1/2)$ then $\bm{v}=v^+$ for some $v\in V_F$, as otherwise, $\bm{v}=(i,1/2)$ for $i>0$ implies $(i,1/2)-\max_{\prec}((V_F)_{i-1} )^+$ contradicting minimality. Let $\Gamma \in \CC(G(\Th))$ be such that $v\in V(\Gamma)$, and suppose that $v\neq v':= \min_{\prec}V(\Gamma)$. By the previous paragraph, there exists a path between $\bm{v}=v^+$ and $(v')^\pm$ in $\mathcal{H}$ which contradicts minimality of $\bm{v}$. Thus we have proven the claim, and it follows that $\G(\Th)$ may be written as a union of components
\[\G(\Th)=\G_{(0,1/2)}\cup \left(\cup_{\Gamma \in \mathcal{C}(G(\Th)) }\G_{ \min_{\prec}(V(\Gamma))^{+} }\right),\]
where $\G_{\bm{v}}$ denotes the unique component of $\G(\Th)$ containing the vertex $\bm{v}$. Since this union is in general not disjoint, we obtain the desired bound on $|\CC(\G(\Th))|$.\end{proof}

\mdef Given a SWB $\Th=(F,E)$ and a component $\Gamma\in \CC(G(\Th))$, let $\Th_{\Gamma}=(F_\Gamma,E_\Gamma)$ be the SWB obtained from $\Th$ by deleting all components except $\Gamma$ (this is well defined by \ref{delp} part \ref{delp1}). We say that $\Gamma$ is \textbf{separating} (in $\Th$) if $\G (\Th_{\Gamma})$ has two components, and \textbf{non-separating} if it has one.

\begin{prop}\label{comp2} Let $\Th=(F,E)$ be a SWB and let $\Gamma\in \mathcal{C}(G(\Th))$. The following hold:
\begin{enumerate}
\item\label{comp21} If $\tau_\Gamma=1$, then $\Gamma$ is non-separating.

\item\label{comp22} $\G(\Th^*)\simeq_O \G(\Th)^*$. In particular, $\Gamma$ is separating in $\Th$  if and only if $\omega_F(\Gamma)$ is separating in $\Th^*$.

\item\label{comp23} For $\Gamma \neq \Lambda\in \mathcal{C}(\Th)$, $\omega_\Lambda(\Gamma)$ is separating in $\Th\smallsetminus \Lambda$, if and only if, $\Gamma$ is separating in $\Th$. 
\end{enumerate}
\end{prop}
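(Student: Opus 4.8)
The plan is to prove the three parts of Proposition \ref{comp2} in the order given, reducing each to statements about the complement graph $\G(\Th_\Gamma)$ and exploiting the machinery already developed for $G(\Th)$, $(\_)^*$, and deletion.

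\textbf{Part \ref{comp21}.} Suppose $\tau_\Gamma = 1$. The plan is to show that $\G(\Th_\Gamma)$ is connected by producing, for any pair of vertices in $\V_{F_\Gamma}$, a path between them. First I would invoke Proposition \ref{comp1} applied to $\Th_\Gamma$: since $\Th_\Gamma$ has a single curve component $\Gamma$ (after renaming via $o_{\Lambda}$'s), we get $|\CC(\G(\Th_\Gamma))| \leq 2$, with the two candidate components being $\G_{(0,1/2)}$ and $\G_{\min_\prec(V(\Gamma))^+}$ in the notation of the proof of \ref{comp1}. So it suffices to show these two coincide, i.e. that there is a path in $\G(\Th_\Gamma)$ from $(0,1/2)$ to $\min_\prec(V(\Gamma))^+$. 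Here the twistedness enters: because $\tau_\Gamma = 1$, the curve $\Gamma$ passes an odd number of twisted bands, so when we trace along $\Gamma$ via the ``shadowing'' path construction from the proof of \ref{comp1} (where $v \edge v' \in E$ forces a sign flip $+\mapsto -$, an untwisted band edge forces a flip, and a twisted band edge preserves the sign), the total parity of sign flips around the curve is such that both the $+$ and $-$ shadow of each vertex of $\Gamma$ end up in the same component of $\G(\Th_\Gamma)$. I would make this precise by walking along $\Gamma$ from its $\prec$-minimal vertex $v_0$ (a southern or northern vertex of valence 1 in $G(\Th_\Gamma)$, or if $\Gamma$ is internal, any endpoint), building the shadow path, and observing that the existence of a twisted band on $\Gamma$ gives an edge $u^{\pm}\edge v^{\pm}$ in $\mathcal{D}_F$ (same sign), which ``joins'' the two shadows; combined with the $\mathcal{E}$-edges at external vertices or at the turning points of $\Gamma$, one connects $(0,1/2)$ to both shadows. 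This parity bookkeeping is the main obstacle in the whole proposition, and I expect to need a careful lemma recording how the sign of the shadow path changes along $\Gamma$ as a function of $\tau$ restricted to initial segments.

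\textbf{Part \ref{comp22}.} This should follow the pattern of Proposition \ref{delp} part \ref{delp2}. The plan is to check that $\omega_F$ (the order-reversing map $V_F\to V_{F^*}$, extended to $\V_F \to \V_{F^*}$ by the analogous formula) carries $\mathcal{D}_F$ to $\mathcal{D}_{F^*}$ and $\mathcal{E}$ (for $\Th$) to $\mathcal{E}$ (for $\Th^*$). The first is exactly the identity $\omega_F\circ\iota_F = \iota_{F^*}\circ\omega_F$ already used; the second holds because $\omega_F$ is order-reversing, hence carries the pairing $E$ to $\omega_F(E)$ (the pairing of $\Th^*$) and preserves the crossing/non-crossing relation of pairs — crossing is symmetric under reversing the total order. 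Therefore $\omega_F$ is a graph isomorphism $\G(\Th) \to \G(\Th^*)$ which is order-reversing on vertices, giving $\G(\Th^*)\simeq_O \G(\Th)^*$. For the ``in particular'' clause: by Proposition \ref{delp} part \ref{delp2}, $(\Th_\Gamma)^* = (\Th^*)_{\Gamma^*}$ where $\Gamma^* = \omega_F(\Gamma)$ (after checking deletion of all-but-$\Gamma$ commutes with $(\_)^*$, which is part \ref{delp2} applied repeatedly, or a direct consequence), and then $|\CC(\G((\Th^*)_{\Gamma^*}))| = |\CC(\G((\Th_\Gamma)^*))| = |\CC(\G(\Th_\Gamma))|$ since $(\_)^*$ does not change the number of components of a graph. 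Hence $\Gamma$ separating $\iff \Gamma^*$ separating.

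\textbf{Part \ref{comp23}.} The plan is to reduce this to the observation that $\Th_\Gamma$ and $(\Th\smallsetminus\Lambda)_{o_\Lambda(\Gamma)}$ have isomorphic complement graphs. Indeed, deleting all components except $\Gamma$ from $\Th$, versus first deleting $\Lambda$ (and other components) then deleting everything except $o_\Lambda(\Gamma)$, yields the same SWB datum up to the canonical order-preserving relabelling — this is precisely the ``third claim'' style statement in Proposition \ref{delp} part \ref{delp1} extended to deleting multiple components, which follows from \ref{grp1} together with the associativity of graph contraction in Proposition \ref{contpr} part \ref{conassoc}. Once we know $(\Th\smallsetminus\Lambda)_{o_\Lambda(\Gamma)} = \Th_\Gamma$ (up to the OP relabelling $o$), their complement graphs are order-isomorphic, so they have the same number of components, and $\Gamma$ separating in $\Th$ $\iff$ $o_\Lambda(\Gamma)$ separating in $\Th\smallsetminus\Lambda$. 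I should double-check the bookkeeping that ``delete all but $\Gamma$'' genuinely factors through ``delete $\Lambda$'' — this is where I would cite \ref{delp} part \ref{delp1} to commute the relevant single-component deletions, and then a short induction handles the rest.

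Overall, Parts \ref{comp22} and \ref{comp23} are essentially formal, riding on \ref{delp} and \ref{contpr}; Part \ref{comp21} is the substantive one, and within it the parity/sign-tracking argument along the curve $\Gamma$ — showing that an odd twist forces the two shadow components to merge — is the step I expect to require the most care.
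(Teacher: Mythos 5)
Your proposal follows essentially the same route as the paper: part \ref{comp21} via the shadow-path/sign-flip construction from the proof of Proposition \ref{comp1}, with the odd count of twisted-band arcs along $\Gamma$ forcing the two candidate components $\G_{(0,1/2)}$ and $\G_{\min_\prec(V(\Gamma))^+}$ to merge; part \ref{comp22} via the identity $\omega_F\circ\iota_F=\iota_{F^*}\circ\omega_F$ together with preservation of the crossing relation under order reversal; and part \ref{comp23} as a formal consequence of Proposition \ref{delp}. The parity bookkeeping you flag as the delicate point is exactly what the paper's proof does (splitting on whether $\Gamma$ is external or internal and counting twisted versus other arcs), so no essential difference or gap.
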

\begin{proof} For (1), set $\mathcal{G}=\mathcal{G}(\Th_{\Gamma})$. From the proof of \ref{comp1}, it is enough to prove that $(0,1/2)\sim_{\G} v^+$, where $v=\min V_{F_\Gamma}$. If $\Gamma$ is external with $v$ southern and $w=\max V_{F_\Gamma}$ northern, it follows that $\G$ contains a path between $v^+$ and $w^{+}=\max (\V_{F_{\Gamma}})$, since $\Gamma$ consists of odd number of arcs coming from twisted bands and an even number of other arcs, and thus $(0,1/2)\sim_{\G} w^+ \sim_{\G} v^+$. If $\Gamma$ is internal with two southern vertices as end points, say $v(=\min V_{F_\Gamma})$ and $u$, then by a similar argument $(0,1/2)=v^-\sim_{G} u^-=v^+$. If $\Gamma$ is internal then $\G$ contains a path between $v^-$ and $v^+$, since $\Gamma$ now consists of an odd number of arcs through twisted bands and an odd number of other arcs, thus $(0,1/2)\sim_\G v^- \sim_\G v^+$.

For part (2), we observe that the identity $\omega_{F}\circ \iota_F =\iota_{F^*} \circ \omega_{F}$, used in the proof of \ref{delp}, remains true when both sides are regarded as maps $(\V_F)_\int\to (\V_{F^*})_\int$. Thus we have $\mathcal{D}_{F^*}=\omega_F(\mathcal{D}_F)$. Using $\omega_{F^*}\circ \omega_F=\text{id}$ one sees that 
\[\omega_F(\mathcal{E})=\{u\edge v \ : \ u,v\in \V_{F^*}, \{u,v\} \text{ crosses no part in } \omega_F(E)\}.\]
Hence $\G(\Th^*)=\omega_F(\G(\Th))$.  Part (3) is immediate.
\end{proof}

\begin{lemma}
    [Insertion Lemma]\label{insSWB} Let $\Th=(F,E)$ be a SWB with $F=(P,s,f)$, and suppose $(P,s)=(P_2,s_2)\#_{d}(P_1,s_1)$, where $(P_i,s_i)\in \mathcal{TC}_{N_i}$, and $1\leq d\leq 2N_2$. Then there exist unique frames $F_1=(P_1,s_1,f_1)$ and $F_2=(P_2,s_2,f_2)$, and a SWB, without fully external components, $\Th_1=(F_1,E_1)$, together with an OP map $o:V_{F_1}\to V_F$, obeying $o(i,a)=(i+d,a)$ for all $(i,a)\in (V_{F_1})_I$, such that 
\[\Th=(F_2\#_N F_1, o(E_1)\sqcup \left(E\smallsetminus o(E_1) \right) ).\]
\end{lemma}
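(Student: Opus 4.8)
The statement asserts that an SWB datum $\Th=(F,E)$ whose underlying TCD factors as an insertion $(P,s)=(P_2,s_2)\#_d(P_1,s_1)$ can itself be recovered as an insertion of a smaller SWB datum $\Th_1$ (with no fully external components) into a frame $F_2$, via an explicit order-preserving identification of vertex sets. The plan is to build the data $F_1$, $F_2$, $\Th_1$, $o$ directly from $F$ and then verify the asserted identity using the vertex/edge bookkeeping lemmas already established (especially \ref{grp1} and the order-preserving-map arguments used in \ref{del}, \ref{delp}). First I would define $f_1:\overline{P_1}\to\N$ and $f_2:\overline{P_2}\to\N$ as the ``restrictions'' of $f$ dictated by the insertion: for $p\in P_1$ set $f_1(p)=f(p+d)$, for $q\in P_2$ set $f_2(q)=f(o_d(q))$ (recall $o_d:\underline{2N_2}\to\underline{2(N_1+N_2)}\smallsetminus(\underline{2N_1}+d)$ from \ref{ins}), and assign the four boundary values $f_1(\{0\}),f_1(\{2N_1+1\}),f_2(\{0\}),f_2(\{2N_2+1\})$ appropriately --- the two inner boundary values of $F_1$ are new free parameters, but they are pinned down by the requirement that $\Th_1$ have no fully external components once $E_1$ is specified, so I would instead choose them a posteriori. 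The cleanest route: let $o:V_{F_1}\to V_F$ be forced to be the OP map whose internal-vertex behaviour is $o(i,a)=(i+d,a)$; counting vertices band-by-band (as in the proof of \ref{del}, using $|(V_{F_1})_p|=f_1(p)=f(p+d)=|(V_F)_{p+d}|$) shows such an OP map exists once we fix $f_1(\{0\})$ and $f_1(\{2N_1+1\})$ to be the number of vertices of $V_F$ lying ``just below'' and ``just above'' the inserted block, i.e. the cardinalities of the appropriate order-intervals of $V_F$; this makes $o$ a bijection onto the down-set/up-set complement exactly as in the insertion picture of Figure \ref{fig:box}.

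Next I would define $E_1$ and $F_2$. Let $D:=o(V_{F_1})\subset V_F$ be the image (an order-interval-shaped subset), and set $E_1:=\{o^{-1}(e)\ :\ e\in E,\ e\subset D\}$, i.e. the parts of $E$ both of whose endpoints land in the inserted block; crossinglessness of $E_1$ with respect to $(V_{F_1},\prec)$ follows from crossinglessness of $E$ together with the fact that $o$ is order-preserving. Then $\Th_1:=(F_1,E_1)$ is an SWB datum, and I would argue it has no fully external component: a fully external component would consist entirely of southern/northern vertices of $F_1$ joined by arcs of $E_1$ not meeting the right-hand bands, but $o$ carries such an arc to an arc of $E$ lying inside the square in the region ``between'' the block and the rest of the diagram --- which by the choice of $f_1(\{0\}),f_1(\{2N_1+1\})$ as exact counts cannot be closed up without involving a band of $(P_1,s_1)$; more carefully, any such arc would force an extra pairing inconsistent with $o$ being a bijection onto $D$. (This is the one spot requiring a genuine argument rather than bookkeeping.) With $\Th_1$ in hand, define $F_2$ so that $F_2\#_N F_1$ has frame $(P_2\#_N P_1,\ s_2\#_N s_1,\ f_2\#_N f_1)$ equal to $F$; by \ref{ins} the twisted-chord parts match by hypothesis, and $f_2\#_N f_1=f$ holds precisely by our definitions of $f_1,f_2$ and the boundary assignments, since $\#_N$ glues $f_1$ on $P_1+N$ and $f_2$ on $o_N(P_2)$. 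Here I must be slightly careful that the height is $N=N_1+N_2$ rather than $d$: the point is that $(P_2,s_2)\#_d(P_1,s_1)$ and $(P_2,s_2)\#_N(P_1,s_1)$ need not be equal, but the lemma only claims the former equals the reassembled object, and the subscript ``$N$'' on $\#_N$ in the statement together with \ref{ins}'s convention $(P_2,s_2)\#_{2N_2}(P_1,s_1)=(P_1,s_1)\#(P_2,s_2)$ must be reconciled --- so I would first check which height makes $F_2\#_? F_1$ literally have twisted-chord part $(P_2,s_2)\#_d(P_1,s_1)$ and set $F_2$ accordingly (this is forced and unique since $\#$ with $F_1$ is injective in $F_2$ once ranks are fixed).

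Finally I would verify the displayed identity $\Th=(F_2\# _N F_1,\ o(E_1)\sqcup(E\smallsetminus o(E_1)))$. Both sides have the same frame $F$ by the previous paragraph, so by Lemma \ref{grp1} it suffices to check $G(\Th)=G(F_2\#_N F_1,\ o(E_1)\sqcup(E\smallsetminus o(E_1)))$, and since the frames agree, $D_F$ is common to both curve graphs; hence it suffices that the $E$-part agrees, i.e. $E=o(E_1)\sqcup(E\smallsetminus o(E_1))$. This is a set-theoretic tautology once $o(E_1)=\{e\in E\ :\ e\subset D\}\subset E$, which holds by the very definition of $E_1$ and the injectivity of $o$. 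Uniqueness of $F_1,F_2,\Th_1$ follows because (i) $f_1,f_2$ are forced by $f=f_2\#_N f_1$ and the prescribed boundary values, (ii) $o$ is the unique OP map with the prescribed internal behaviour, and (iii) $E_1$ is then forced by $\Th_1$ having no fully external component together with $o(E_1)\subset E$ --- any larger $E_1$ would contradict the no-fully-external-component hypothesis and any smaller one would leave an arc of $E$ inside $D$ unaccounted for.

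\textbf{Main obstacle.} The only non-routine step is pinning down the inner boundary values $f_1(\{0\}),f_1(\{2N_1+1\})$ and proving $\Th_1$ genuinely has no fully external component --- equivalently, that the OP map $o$ can be chosen to realise the ``box'' picture of Figure \ref{fig:box} so that every arc of $E$ with both endpoints in $D$ is captured and no spurious boundary-parallel arc of $\Th_1$ is created. Everything else is the same order-preserving-bijection and valence bookkeeping already deployed in the proofs of \ref{del} and \ref{delp}, combined with a direct appeal to \ref{grp1}.
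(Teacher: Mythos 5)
Your overall skeleton (restrict $f$ to get $f_1,f_2$, build an OP map $o$, extract $E_1$, and finish by checking $o(E_1)\subset E$, with the subscript on $\#$ read as the insertion at height $d$) matches the paper's proof, but the step you yourself flag as "the one spot requiring a genuine argument" is exactly where your construction breaks, and it breaks because your choices of $f_1(\{0\}),f_1(\{2N_1+1\})$, of $E_1$, and of $o$ on external vertices are the wrong ones. In the paper, $n=f_1(\{0\})$ and $m=f_1(\{2N_1+1\})$ are \emph{not} the total numbers of vertices of $V_F$ lying below/above the inserted block: they are the numbers of vertices $u_1\prec\dots\prec u_n$ (below) and $w_1\succ\dots\succ w_m$ (above) that are joined by an arc of $E$ to a vertex of the block $V=\cup_{p\in P_1}(V_F)_{p+d}$. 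Correspondingly, $E_1$ is not just the pullback of the arcs of $E$ contained in the block: it consists of those arcs together with \emph{new} arcs $(0,i)\edge\tilde o(v_i)$ and $(2N_1+1,j)\edge\tilde o(v^j)$, where $v_i,v^j\in V$ are the block endpoints of the crossing arcs $u_i\edge v_i,\ w_j\edge v^j\in E$, and $o$ is defined to send $(0,i)\mapsto u_i$ and $(2N_1+1,j)\mapsto w_j$ (these targets are in general internal vertices of $F$ on $P_2$-bands, not boundary vertices). This is what makes $o(E_1)\subset E$ a non-trivially true statement and makes $\Th_1$ an SWB datum without fully external components.

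With your definitions neither reading of $E_1$ works. If $E_1$ means "arcs of $E$ with both endpoints in the inserted block" (your gloss), then the external vertices of $F_1$ are left unpaired, so $E_1$ is not a pair-partition of $V_{F_1}$ and $(F_1,E_1)$ is not an SWB datum, while the arcs of $E$ with exactly one endpoint in the block are represented nowhere in $\Th_1$. If instead $E_1=\{o^{-1}(e):e\in E,\ e\subset D\}$ with $D=o(V_{F_1})$ containing \emph{all} vertices below and above the block (your stated choice of boundary values), then every arc of $E$ with both endpoints outside the block pulls back to an arc of $E_1$ between two external vertices of $F_1$, i.e.\ a fully external component — such arcs exist in general (they are the $E'$ arcs of Figure \ref{fig:ins}) — so $\Th_1$ violates the required condition, and since "no fully external components" is precisely what pins down $n,m,E_1$ in the uniqueness claim, your objects are not the ones the lemma asserts. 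Your sketched argument that $o$ being "a bijection onto $D$" rules out such components therefore does not go through; the fix is the cut-and-reconnect construction above, after which the verification (frames agree because the frame insertion discards $f_1(\{0\}),f_1(\{2N_1+1\})$, and $o(E_1)\sqcup(E\smallsetminus o(E_1))=E$) proceeds as you indicate.
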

\begin{proof} Let $U:=\cup_{i=0}^{d-1} (V_F)_{i},  V:=\cup_{p\in P_1} (V_F)_{p+d}, W:=V_F\smallsetminus (U\cup V)$, and define vertices $u_i$ and $w_i$, by
\begin{align*}
\{u_1\prec u_2\prec \dots \prec u_n\}&:=\{u \in U \ : \ \text{there exists } v\in V \text{ with } u-v \in E \},  \\
\{w_1\succ w_2 \succ \dots\succ w_m\}&:=\{w \in W \ : \ \text{there exists } v \in V \text{ with } w-v \in E  \}. 
\end{align*}  
Now let $v_i$ ($v^j$) be the unique vertex in $V$ such that $u_i\edge v_i\in E$ $(w_j-v^j\in E)$, respectively. Define $f_1:\overline{P_1}\to \N$ by $f_1(\{0\})=n, f_1(\{2N_1+1\})=m$ and $f_1(p)=f(p+d)$ for $p\in P_1$, define $f_2:\overline{P_2}\to \N$ by $f_2=f\circ o_d$ (using \ref{ins}), and set $F_i=(P_i,s_i,f_i)$ for $i=1,2$. Let $\tilde{o}: V \to (V_{F_1})_\int$ be the unique OP map, and define
\[E_1=\tilde{o}\left(\{e \in E \ : \ e\subset V\}\right)\cup \{(0,i)\edge \tilde{o}(v_i) \ : \ 1\leq i \leq n\} \cup \{(2 N_1+1,j)\edge \tilde{o}(v^j) \ : \ 1\leq j \leq m\}.  \]  
Then by construction $(F_1,E_1)\in Sq_{N_1}(n,m)$, and is without fully external components. Define an OP map $o':V_{F_1}\to V_{F}$ by
\[o'(v)= \begin{cases}(\tilde{o})^{-1}(v), &v \text{ is internal}, \\
u_i, &v=(0,i) \in (V_{F_1})_\sth, \\
w_j, &v=(2N_1+1,j) \in (V_{F_1})_\nth.\end{cases}\]
By construction, $o'(E_1)\subset E$ and so the result follows taking $o=o'$.\end{proof}

\begin{figure}[ht]
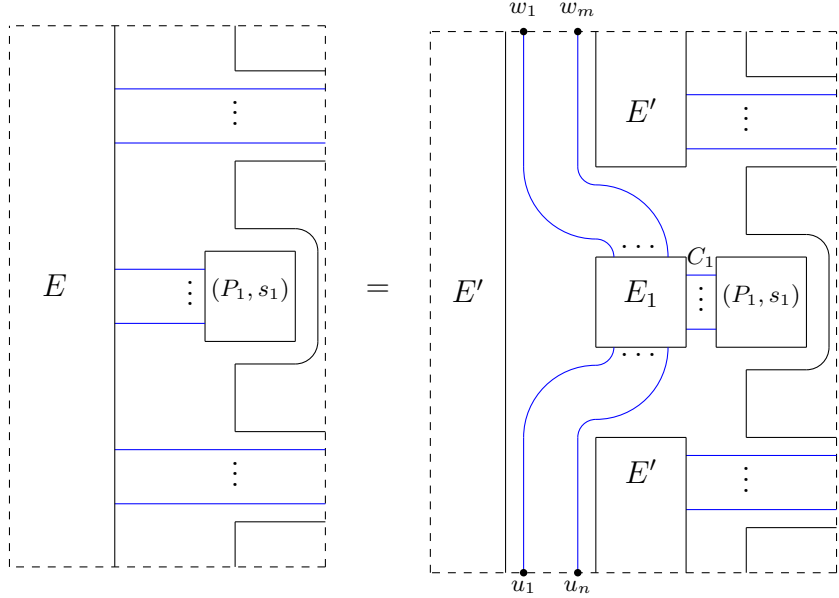

    \centering
    \[\inslone{1.2} \quad = \quad \insltw{1.2}  \]
    \caption{Local diagram for Lemma \ref{insSWB}, where $E':=E\smallsetminus o(E_1)$, and $C_1=C(F_1)$. }
    \label{fig:ins}
\end{figure}

\subsection{Vertical Juxtaposition} In this subsection we define the vertical juxtaposition of SWB data $(\_) \# (\_) : Sq(m,l)\times Sq(n,m) \to Sq(n,l)$, which will be used in \S4 to define a categorical composition. After defining this operation, we establish some of its key properties in Prop \ref{vjassoc}, and prove the Factorisation Lemma for SWB in Lemma \ref{vjfact}. 

\mdef\label{vjvset} Given SWB data $\Th_1=(F_1,E_1)\in Sq_{N_1}(n,m)$ and $\Th_2=(F_2,E_2)\in Sq_{N_2}(m,l)$, define a set $V_{F_1,F_2}$ as 
\[V_{F_1,F_2}=V_{F_1}\sqcup V_{F_2}/(V_{F_1}\ni (2N_1+1,a) \sim (0,a) \in V_{F_2} \text{ for } a=1,\dots, m).\]
Let $(\_)^{(i)}:V_{F_i}\to V_{F_1,F_2}$ denote the canonical inclusion maps, so that in particular, $(2N_1+1,a)^{(1)}=(0,a)^{(2)}$ for all $a=1,\dots, m$. Let $U_{F_1,F_2}\subset V_{F_1,F_2}$ be the subset $(V_{F_1})_{\nth}^{(1)}=(V_{F_2})_{\sth}^{(2)}$, and let $W_{F_1,F_2}=V_{F_1,F_2}\smallsetminus U_{F_1,F_2}$. Define a total order on $W_{F_1,F_2}$ by $v^{(i)}\prec u^{(j)}$ if $i<j$, or if $i=j$ and $v\prec u$ in $V_{F_i}$. Noting that $|W_{F_1,F_2}|=|V_{F_2\#F_1}|$ (\textit{c.f.} \ref{frame}), let $o_{F_1,F_2}:W_{F_1,F_2}\to V_{F_2\#F_1}$ denote the unique OP map. 

Define the \textbf{composite graph} of the pair $(\Th_1,\Th_2)$ as the graph (with vertex set $V_{F_1,F_2}$) given by
\[G(\Th_1,\Th_2):=G(\Th_1)^{(1)}\cup G(\Th_2)^{(2)},\]
and define the \textbf{linking number} $L_{\Th_1,\Th_2}$ as the index (\textit{c.f.} \ref{con1})  $L_{\Th_1,\Th_2}=[U_{F_1,F_2}, G(\Th_1,\Th_2)]$. 

Similarly, given SWB data $\Th_i=(F_i,E_i)\in Sq_{N_i}(n_i,n_{i+1})$ for $i=1,\dots, k$, define 
\[V_{\bm{F}}= \left[\sqcup_{i=1}^k V_{F_i}\right]/ \left((V_{F_i})_{\nth} \ni (2N_i+1,a) \sim (0,a) \in (V_{F_{i+1}})_{\sth} \text{ for } a=1,\dots, n_{i+1}\right),\]
(where $\bm{F}=(F_1,\dots, F_k)$). Define $U_{F_i,F_{i+1}}=(V_{F_i})_{\nth}^{(i)}$, and $W_{\bm{F}}=V_{\bm{F}}/(\cup_i U_{F_i,F_{i+1}})$. Observe, the previous formulae defines a total order on $W_{\bm{F}}$ also. Define the composite graph of the tuple $\bm{\Th}=(\Th_1,\dots, \Th_k)$ as $G(\bm{\Th})=\cup_{i} G(\Th_i)^{(i)}$ on vertex set $V_{\bm{F}}$, and define the linking number as $L_{\bm{\Th}}=[\cup_i U_{F_i,F_{i+1}} , G(\bm{\Th})]$.

\begin{example}\label{vjuxteg} The composite graph of the pair $(\Th_1,\Th_2)$ can be visualised by vertically stacking the SWB diagram for $\Th_2$ on top that for $\Th_1$, lining up the vertices $(0,a)\in V_{F_2}$ with $(2N_1+1,a)\in V_{F_1}$ for each $a$. For example, let $\Th_1 \in Sq(3,9), \Th_2\in Sq(9,5)$ be the SWB data represented by the diagrams below. 
    \[\vjuxtega{0.7} \qquad \qquad \vjuxtegb{0.4}\]
Then the composite graph $G=G(\Th_1,\Th_2)$ is visualised below. The vertices drawn as large blue dots are those comprising the set $U_{F_1,F_2}$. One reads off the linking number $L_{\Th_1,\Th_2}=1$ since there is one component of $G$ whose vertex set is contained in $U_{F_1,F_2}$ (namely, the component drawn with red dashed lines).
\[\vjuxteg{0.8} \quad \to \quad  \vjuxtegg{0.6} \]
By deleting all such components from the above diagram, it can be regarded as an SWB diagram on frame $F_2 \# F_1$ by simply ignoring the blue vertices as depicted on the right above. In particular, the ordering on $W_{F_1,F_2}\subset V_{F_1,F_2}$ is compatible with the ordering on $V_{F_1\#F_2}$ as the ``anticlockwise" ordering. This motivates what follows.  
\end{example}

\begin{prop}[Vertical Juxtaposition]\label{vjuxtSWB} Let $\Th_1=(F_1,E_1)\in Sq_{N_1}(n,m)$ and $\Th_2=(F_2,E_2)\in Sq_{N_2}(m,l)$. Then there exists a unique SWB-datum $\Th_2\# \Th_1\in Sq_{N_1+N_2}(n,l)$ with frame $F_2\# F_1$, such that $G(\Th_2\# \Th_1)\simeq_O G(\Th_1,\Th_2)/ U_{F_1,F_2}$. 
\end{prop}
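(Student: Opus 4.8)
The plan is to show that the graph $G(\Th_1,\Th_2)/U_{F_1,F_2}$ is (isomorphic, via an order-preserving isomorphism, to) the curve graph of a SWB datum on the frame $F_2\# F_1$, and then invoke Lemma \ref{grp1} for uniqueness. First I would set $U=U_{F_1,F_2}$, $W=W_{F_1,F_2}$, $G=G(\Th_1,\Th_2)$, and recall that $o_{F_1,F_2}:W\to V_{F_2\# F_1}$ is the unique order-preserving bijection. By Proposition \ref{contpr} part \ref{conconn}, $V(G/U)=V_{F_1,F_2}\smallsetminus U=W$, so $o_{F_1,F_2}$ carries $G/U$ isomorphically (as a graph) onto a graph $o_{F_1,F_2}(G/U)$ on vertex set $V_{F_2\# F_1}$, and this isomorphism is order-preserving by construction of the order on $W$. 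It then remains to identify $o_{F_1,F_2}(G/U)$ with $G(\Th')$ for a suitable $\Th'=(F_2\# F_1,E')$, i.e. to produce the pairing $E'$.

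The key computation is to decompose the edge set of $G/U$. Write $G(\Th_i)=(V_{F_i},E_i\cup D_{F_i})$. The edges of $G$ split into the ``band'' edges $D_{F_1}^{(1)}\sqcup D_{F_2}^{(2)}$ and the ``arc'' edges $E_1^{(1)}\sqcup E_2^{(2)}$. None of the band edges $D_{F_i}$ touch external vertices, hence none touch $U$; so under contraction they survive untouched, and since $\iota_{F_2\# F_1}$ restricted to the $F_1$-bands and the ($2N_1$-shifted) $F_2$-bands agrees with $\iota_{F_1}$ and $\iota_{F_2}$ respectively (this is exactly the construction of $F_2\# F_1$ in \ref{frame} together with the formula for $\iota$ in \ref{inv}), we get $o_{F_1,F_2}(D_{F_1}^{(1)}\sqcup D_{F_2}^{(2)})=D_{F_2\# F_1}$. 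For the arc edges: a part of $E_1^{(1)}$ or $E_2^{(2)}$ not incident to $U$ survives the contraction directly; a part incident to $U$ gets absorbed into the ``paths through $U$'' in the definition of contraction \ref{con1}. Since every vertex of $U$ has valence at most $2$ in $G$ (each external vertex of a curve graph lies on at most one arc in $E_i$ plus at most one ``identified'' arc from the other side), Lemma \ref{grlemma} applies to $G[U\cup A_G(U)]$: the components through $U$ are paths and cycles, and $V_{(d=1)}/\sim$ is a pair partition. Thus the paths through $U$ between distinct vertices of $A_G(U)$ pair up the remaining (non-$U$) endpoints of arcs touching $U$, and this is precisely where the ``linking'' behaviour lives. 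Concretely, using the notation $u_i,w_j,v_i,v^j$ of Example \ref{vjuxteg}-type stacking, each maximal path $u_{i_1}\edge v_{i_1}\edge\cdots\edge v_{i_r}\edge u_{i_{r+1}}$ (with interior in $U$) contributes one new edge between the two non-$U$ endpoints; the cyclic components entirely in $U$ are exactly the $L_{\Th_1,\Th_2}$ absorbed curves and contribute nothing. Define $E'$ to be $o_{F_1,F_2}$ applied to the union of (arc edges of $G$ not touching $U$) and (these new edges from paths through $U$). Then by the above $o_{F_1,F_2}(G/U)=(V_{F_2\# F_1},E'\cup D_{F_2\# F_1})$.

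Finally I must check $\Th'=(F_2\# F_1,E')$ is a genuine SWB datum, i.e. that $E'$ is a crossingless pair partition of $(V_{F_2\# F_1},\prec)$. That it is a pair partition follows from the valence-$\le 2$ analysis above (every vertex of $W$ has valence exactly $1$ in $G/U$ after removing band edges, since it had valence $1$ in $E_i$ and contraction through a path preserves this), together with the fact, from \ref{grp1}'s proof style, that no vertex can be isolated because every external vertex of a curve graph lies on exactly one arc of $E_i$. For crossinglessness I would appeal to Lemma \ref{TLarg}: set $V_1=(V_{F_1})_{\sth}^{(1)}$-side leftover vertices (more precisely the image in $W$ of the non-$U$ part of $V_{F_1}$), $V_2=U$, $V_3=$ image of the non-$U$ part of $V_{F_2}$, with the three induced orders; the hypotheses on the orders $\prec_{12},\prec_{13},\prec_{23}$ are exactly the compatibility relations between the boundary orderings of a stacked pair of SWB diagrams (the $F_2$-southern order is the reverse of the $F_1$-northern order, matching $(V_2,\prec_{12})=(V_2,\succ_{23})$), and $E_1^{(1)},E_2^{(2)}$ are crossingless pair partitions of $U_{12},U_{23}$. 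Lemma \ref{TLarg} then yields directly that $E'=G/V_2$ restricted to arcs is a crossingless pair partition of $U_{13}$, which is the order $\prec$ on $V_{F_2\# F_1}$ transported through $o_{F_1,F_2}$. Uniqueness of $\Th_2\#\Th_1$ follows from Lemma \ref{grp1} (in its ordered-graph form, per the Remark after \ref{grp1}), since any SWB datum with frame $F_2\# F_1$ and curve graph order-isomorphic to $G(\Th_1,\Th_2)/U$ must equal $\Th'$. The main obstacle I anticipate is the careful bookkeeping needed to verify that contraction along paths through $U$ matches exactly the combinatorics in Lemma \ref{TLarg} and that the resulting edge set is transported correctly by the order-preserving map $o_{F_1,F_2}$; the crossinglessness is not automatic and genuinely requires the $V=V_1\sqcup V_2\sqcup V_3$ setup of \ref{TLarg}, so stating that reduction precisely is the crux.
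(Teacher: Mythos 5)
Your proposal is correct and follows essentially the same route as the paper's proof: both define the new pairing by contracting the arc edges $E_1^{(1)}\cup E_2^{(2)}$ on $U_{F_1,F_2}$, invoke Lemma \ref{TLarg} (with exactly the $V_1\sqcup V_2\sqcup V_3$ order setup you describe) to get a crossingless pair partition, use the compatibility $o_{F_1,F_2}\circ\iota_{F_i}=\iota_{F_2\#F_1}\circ o_{F_1,F_2}$ to identify the band edges with $D_{F_2\#F_1}$, and conclude uniqueness via Lemma \ref{grp1}. The only cosmetic difference is that you separately verify the pair-partition property by a valence argument, which the paper absorbs into the appeal to \ref{TLarg}.
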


\begin{proof} Let $o=o_{F_1,F_2}$ for this proof. Set $\Th=(F_2\#F_1, o(E) )$, where $E$ is uniquely defined by 
\[(W_{F_1,F_2},E)=(V_{F_1}, E_1)^{(1)}\cup (V_{F_2}, E_2)^{(2)}/U_{F_1,F_2}.\]
 Then one has that $\Th\in Sq_{N_1+N_2}(n,l)$ by \ref{TLarg}. One can now check the identities $o\circ \iota_{F_1}^{(1)}=\iota_{F_2\#F_1}\circ o$ and $o\circ \iota_{F_2}^{(2)}=\iota_{F_2\#F_1}\circ o$, wherever both sides are defined. Thus we obtain $D_{F_2\#F_1}=o(D_{F_1}^{(1)}\cup D_{F_2}^{(2)})$, from which it follows that
\begin{align*}
G(\Th)&=o\left( (V_{F_1,F_2}, E_1^{(1)}\cup E_2^{(2)})/U_{F_1,F_2}\right) \cup (V_{F_2\#F_1}, D_{F_2\#F_1})\\
&=o\left( \left( V_{F_1,F_2}, (E_1\cup D_{F_1})^{(1)}\cup (E_2\cup D_{F_2})^{(2)} \right) /U_{F_1,F_2}\right)=o(G(\Th_1,\Th_2)/ U_{F_1,F_2}),
\end{align*}
using \ref{contpr}. Thus $\Th$ has the desired properties for $\Th_2\# \Th_1$, uniqueness follows by \ref{grp1}. 
\end{proof}

\begin{example}The SWB diagram for $\Th_2\#\Th_1$ with $\Th_1$ and $\Th_2$ as per \ref{vjuxteg}, is precisely the diagram on the right of the composite graph in the same example. \end{example}

\begin{prop}[Properties of Vertical Juxtaposition]\label{vjassoc} Given $(\Th_1, \Th_2, \Th_3)\in Sq_{M_1}(n,m)\times Sq_{M_2}(m,l)\times Sq_{M_3}(l,k)$, write $\Th_i=(F_i,E_i)$ for each $i$. The following hold:
\begin{enumerate}
    \item $\mathcal{G}(\Th_2\# \Th_1)\simeq_O\mathcal{G}(\Th_1)^{(1)}\cup \mathcal{G}(\Th_2)^{(2)}/\mathcal{U}_{F_1,F_2}$ (defined analogously to in \ref{vjvset}).
    \item $\Th_3\#(\Th_2\# \Th_1)=(\Th_3\#\Th_2)\#\Th_1\in Sq_{M}(n,k)$ where $M=M_1+M_2+M_3$ and  $L_{\Th_1,\Th_2}+L_{\Th_2\#\Th_1,\Th_3}=L_{\Th_1,\Th_3\#\Th_2}+L_{\Th_2,\Th_3}$.
    \item $(\Th_2\#\Th_1)^*=\Th_1^*\#\Th_2^*\in Sq_{M_1+M_2}(l,n)$ and $L_{\Th_1,\Th_2}=L_{\Th_2^*,\Th_1^*}$.
    \item For any internal component $\Gamma\subset G(\Th_1)$, $\Gamma':=o_{F_1,F_2}(\Gamma^{(1)})\subset G(\Th_2\#\Th_1)$ is an internal component such that $\Th_2\#(\Th_1\smallsetminus \Gamma)=(\Th_2\#\Th_1)\smallsetminus \Gamma'$, and $L_{\Th_1,\Th_2}=L_{\Th_1\smallsetminus\Gamma,\Th_2}$. Furthermore, $\tau_{\Gamma}=\tau_{\Gamma'}$, and $\Gamma'$ is separating if and only if $\Gamma$ is.   \end{enumerate}\end{prop}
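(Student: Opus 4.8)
The plan is to prove the four properties of vertical juxtaposition by reducing each to graph-theoretic statements about the curve graph $G(\Th_1,\Th_2)$ and its contraction, using the characterisation of vertical juxtaposition from Prop. \ref{vjuxtSWB} together with the machinery of graph contraction developed in \S2.1, and then checking each claim against the analogous facts for deletion ($(\_)\smallsetminus \Gamma$), reflection ($(\_)^*$), and the complement graph $\G(\Th)$ established in \S3.1.

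\textbf{Part (1).} First I would observe that, since $G(\Th_2\#\Th_1)\simeq_O G(\Th_1,\Th_2)/U_{F_1,F_2}$ by Prop. \ref{vjuxtSWB}, and since the complement graph $\G(\Th)$ is built from $V_{\V_F}$ via the rule relating edges of $G(\Th)$ to edges of $\G(\Th)$ given in the proof of Prop. \ref{comp1}, it suffices to verify that contraction commutes appropriately with the $v\mapsto v^\pm$ correspondence. Concretely, I would use the maps $(\_)^{(i)}$ and $o_{F_1,F_2}$ to set up $\mathcal{U}_{F_1,F_2}\subset \V_{F_1,F_2}$ and check that $\mathcal{D}_{F_2\#F_1}=o(\mathcal{D}_{F_1}^{(1)}\cup \mathcal{D}_{F_2}^{(2)})$ (the same identity $\omega_F\circ\iota_F=\iota_{F^*}\circ\omega_F$-style computation used in Prop. \ref{delp}), and that the ``no-crossing'' edge set $\mathcal{E}$ behaves correctly under the gluing along $U_{F_1,F_2}$; Lemma \ref{grp1}, together with Prop. \ref{contpr}, then gives the isomorphism of ordered graphs.

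\textbf{Parts (2) and (3).} For associativity, I would invoke Prop. \ref{vjuxtSWB} twice to identify both $G(\Th_3\#(\Th_2\#\Th_1))$ and $G((\Th_3\#\Th_2)\#\Th_1)$ with iterated contractions of $G(\bm\Th)$ for $\bm\Th=(\Th_1,\Th_2,\Th_3)$, and then apply Prop. \ref{contpr} part \ref{conassoc} (using that the relevant adjacency sets are disjoint, since the vertex sets $U_{F_1,F_2}$ and $U_{F_2,F_3}$ meet only through $W_{\bm F}$) to conclude $G(\bm\Th)/U_{F_1,F_2}/U_{F_2,F_3}=G(\bm\Th)/(U_{F_1,F_2}\cup U_{F_2,F_3})$; the equality of SWB data then follows from Lemma \ref{grp1}. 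The linking number identity $L_{\Th_1,\Th_2}+L_{\Th_2\#\Th_1,\Th_3}=L_{\Th_1,\Th_3\#\Th_2}+L_{\Th_2,\Th_3}$ is exactly the additivity of the index, Prop. \ref{contpr} part \ref{conassoc}, applied in two orders. For part (3), since a SWB diagram for $\Th^*$ is obtained by vertical reflection (\ref{SWBdual}), the reflection map $\omega$ identifies $G(\Th_2,\Th_1)$ with $G(\Th_1^*,\Th_2^*)$ swapping the roles of $U$ and the analogous gluing set, so Prop. \ref{contpr} part \ref{conmap} (invariance of index and commuting with injections) gives both $(\Th_2\#\Th_1)^*=\Th_1^*\#\Th_2^*$ via Lemma \ref{grp1} and $L_{\Th_1,\Th_2}=L_{\Th_2^*,\Th_1^*}$.

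\textbf{Part (4).} For an internal component $\Gamma\subset G(\Th_1)$, I would first note that $\Gamma^{(1)}$ is disjoint, as a subgraph of $G(\Th_1,\Th_2)$, from the gluing locus $U_{F_1,F_2}$ (internal components contain no external vertices, so no vertices of $U_{F_1,F_2}$), hence by Remark \ref{compmap} it descends to an honest component $\Gamma'=o_{F_1,F_2}(\Gamma^{(1)})$ of $G(\Th_2\#\Th_1)=G(\Th_1,\Th_2)/U_{F_1,F_2}$, still internal and with $\tau_{\Gamma'}=\tau_\Gamma$ (the twist is a $\Z_2$-count of edges through twisted bands, preserved by the OP bijection, exactly as in Prop. \ref{delp} part \ref{delp1}). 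Then $G((\Th_2\#\Th_1)\smallsetminus \Gamma')\simeq_O (G(\Th_1,\Th_2)/U_{F_1,F_2})\smallsetminus \Gamma'$, and using that $\Gamma^{(1)}$ and $U_{F_1,F_2}$ have disjoint adjacency sets one gets $(G(\Th_1,\Th_2)\smallsetminus\Gamma^{(1)})/U_{F_1,F_2}\simeq_O (G(\Th_1,\Th_2)/U_{F_1,F_2})\smallsetminus\Gamma'$ by Prop. \ref{contpr} part \ref{conunion}; the left side is $G(\Th_2\#(\Th_1\smallsetminus\Gamma))$, so Lemma \ref{grp1} finishes the deletion identity. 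The linking number invariance $L_{\Th_1,\Th_2}=L_{\Th_1\smallsetminus\Gamma,\Th_2}$ is Prop. \ref{contpr} part \ref{conunion} (deleting a component disjoint from $U_{F_1,F_2}$ does not change the index of $U_{F_1,F_2}$). Finally, separating vs.\ non-separating is a statement about $|\CC(\G(\Th_{\Gamma}))|$, and since $(\Th_2\#\Th_1)_{\Gamma'}=\Th_{1,\Gamma}$ up to the frame bookkeeping of the juxtaposition (deleting everything but $\Gamma'$ recovers the data of $\Gamma$ sitting in the larger frame), the complement-graph computation reduces to Prop. \ref{comp2} part \ref{comp23}.

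The main obstacle I anticipate is part (1): matching the complement graphs through the gluing requires care, because the vertex identifications along $U_{F_1,F_2}$ (and the corresponding ``half-integer'' identifications for $\mathcal{U}_{F_1,F_2}\subset\V_{F_1,F_2}$) interact with the sign rule $u\edge v\in E\leadsto u^\pm\edge v^\mp\in\mathcal E$ versus $u\edge v\in(D_F)_p\leadsto u^\pm\edge v^{\pm\text{ or }\mp}\in\mathcal D_F$ depending on $s(p)$; one must check that a path in $G(\Th_2\#\Th_1)$ crossing the gluing seam at a vertex of $U_{F_1,F_2}$ lifts to a path in $\G(\Th_1)^{(1)}\cup\G(\Th_2)^{(2)}$ passing through the corresponding vertex of $\mathcal U_{F_1,F_2}$ with consistent signs. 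All the other parts are, by contrast, fairly mechanical once phrased in terms of Prop. \ref{contpr} and Lemma \ref{grp1}.
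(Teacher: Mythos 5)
Your overall route is the same as the paper's: identify $G(\Th_2\#\Th_1)$ with $G(\Th_1,\Th_2)/U_{F_1,F_2}$ via Prop.~\ref{vjuxtSWB}, push everything through the contraction calculus of Prop.~\ref{contpr} (associativity of contraction for (2), the order-reversing bijection $\omega$ exchanging the two factors for (3), deletion of a component disjoint from the gluing locus for (4)), and conclude equalities of SWB data with Lemma~\ref{grp1}; your treatment of (2)--(3) and of the deletion and linking-number claims in (4) matches the paper step for step, and you even supply the index-invariance detail for $L_{\Th_1,\Th_2}=L_{\Th_1\smallsetminus\Gamma,\Th_2}$ that the paper leaves implicit. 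Two small corrections. First, in (1) Lemma~\ref{grp1} is neither applicable nor needed: the claim is an isomorphism of ordered complement graphs, not an equality of SWB data, so the whole content is the edge-level check you defer to your ``obstacle'' paragraph, namely that a pair in $\mathcal{E}(\Th_2\#\Th_1)$ with one endpoint on each side of the seam corresponds exactly to a path through $\mathcal{U}_{F_1,F_2}$ in $\G(\Th_1)^{(1)}\cup\G(\Th_2)^{(2)}$; the paper disposes of this with a short diagrammatic argument (regions of the curve complement meeting the seam only in the gaps between glued external vertices), and the sign rule from the proof of Prop.~\ref{comp1} that you invoke is not the relevant mechanism. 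Second, in (4) the separating claim does not reduce to Prop.~\ref{comp2}(3) alone: that statement compares deletions within a fixed datum, whereas here $(\Th_2\#\Th_1)_{\Gamma'}$ and $(\Th_1)_{\Gamma}$ live on different frames, so ``up to frame bookkeeping'' is exactly the missing step. The paper closes it by observing $(\Th_2\#\Th_1)_{\Gamma'}=\emptyset_{(P_2,s_2)}\#(\Th_1)_{\Gamma}$ and applying part (1) to get $|\CC(\G((\Th_1)_{\Gamma}))|=|\CC(\G((\Th_2\#\Th_1)_{\Gamma'}))|$; since you have part (1), this is an easy repair, but as written your reduction is incomplete.
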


\begin{proof} For (1), placing elements of $\V_{F_1}^{(1)}$ and $\V_{F_2}^{(2)}$ in a diagram for the composite graph $G(\Th_1,\Th_2)$ in the obvious way, it is clear that for $v_1^{(i_1)}, v_2^{(i_2)}\in \mathcal{W}_{F_1,F_2}$ we have that $o_{F_1,F_2}(v_1^{(i_1)})$, $o_{F_1,F_2}(v_2^{(i_2)}) \in \V_{F_2\#F_1}$ \footnote{here we uniquely extend $o_{F_1,F_2}$ to an OP map $V_{F_1,F_2}\cup \V_{F_1,F_2}\smallsetminus(U_{F_1,F_2}\cup \mathcal{U}_{F_1,F_2} )\to  V_{F_2\#F_1}\cup \V_{F_2\#F_1}$.} belong to the same component of the curve complement within the square (for a diagram of $\Th_2\#\Th_1$), if and only if, there is a path (possibly of length 2) through $\mathcal{U}_{F_1,F_2}$ in $\mathcal{G}(\Th_1)^{(1)}\cup \mathcal{G}(\Th_2)^{(2)}$ (\textit{c.f.} Figure \ref{fig:vjcomp}). Thus the desired result follows.
\begin{figure}[ht]
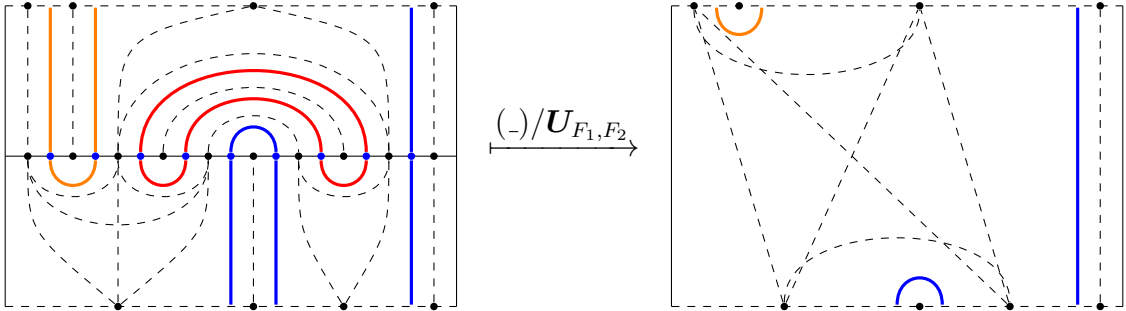


\[ \vjuxtcompa{2} \quad \xmapstoo{ (\_ )/\bm{U}_{F_1,F_2}}\quad  \vjuxtcompb{2}\]
    \caption{Local diagram for Prop. \ref{vjassoc}, part 1 with $\Th_1$ and $\Th_2$ as per \ref{vjuxteg} ($\bm{U}_{F_1,F_2}=U_{F_1,F_2}\cup \mathcal{U}_{F_1,F_2}$). }
    \label{fig:vjcomp}
\end{figure}

For (2), set $\Th_L=\Th_3\#(\Th_2\# \Th_1)$ and $\Th_R=(\Th_3\#\Th_2) \# \Th_1$. One can check that $\Th_L$ and $\Th_R$ have equal frames (\textit{c.f.} \ref{assoc1}). Furthermore, using \ref{contpr}, one has that 
\begin{align*}G(\Th_L )\simeq_{O} G(\Th_1,\Th_2,\Th_3)/(U_{F_1,F_2}\cup U_{F_2,F_3})\simeq_O G((\Th_R ),\end{align*}
hence $\Th_L=\Th_R$ by \ref{grp1}. The identity $L_{\Th_1,\Th_2}+L_{\Th_2\#\Th_1,\Th_3}=L_{\Th_1,\Th_3\#\Th_2}+L_{\Th_2,\Th_3}$ follows by \ref{contpr}; both sides are equal to $L_{(\Th_1,\Th_2,\Th_3)}$.

For (3), one can check that $(F_2\# F_1)^*=F_1^*\# F_2^*$ and using \ref{contpr}, we compute
\begin{align*}G(\Th_1^*\# \Th_2^*)&\simeq_{O} (\omega_{F_1}(G(\Th_1))^{(2)}\cup  \omega_{F_2}(G(\Th_2))^{(1)})/U_{F_2^*,F_1^*}\\
&=\omega( G(\Th_1)^{(1)} \cup G(\Th_2)^{(2)} / U_{F_1,F_2})\simeq_O G(\Th_2\#\Th_1)^*, \end{align*}
where $\omega:V_{F_1,F_2}\to V_{F_2^*,F_1^*}$ is the map given by $\omega(v^{(1)})=\omega_{F_1}(v)^{(2)}$ and $\omega(u^{(2)} )=\omega_{F_2}(u)^{(1)}$, which satisfies $\omega(U_{F_1,F_2})= U_{F_2^*,F_1^*}$ and restricts to the unique OR map $W_{F_1,F_2}\to W_{F_2^*,F_1^*}$. Furthermore, one computes 
\[L_{\Th_2^*,\Th_1^*}=[U_{F_2^*,F_1^*}, G(\Th_2^*,\Th_1^*)]=[U_{F_1,F_2}, G(\Th_1,\Th_2)] =L_{\Th_1,\Th_2}. \]
For (4), clearly $o_{F_1,F_2}$ maps $(V_{F_1})^{(1)}_{\int}$ into $(V_{F_2\# F_1})_\int$, so $\Gamma'$ is internal. Furthermore, $g_{\Gamma'}|_{P_1}=g_{\Gamma}|_{P_1}$, and $g_{\Gamma'}$ and $g_{\Gamma}$ are 0 elsewhere, hence $F_2\#(F_1)_{-\Gamma}=(F_2\#F_1)_{-\Gamma'}$ (where we have written $F_1=(P_1,s_1,f_1)$). Using \ref{contpr}, one has that 
\[G(\Th_2\#(\Th_1\smallsetminus \Gamma)) \simeq_O (G(\Th_1,\Th_2)/U_{F_1,F_2})\smallsetminus \Gamma^{(1)} \simeq_O G((\Th_2\#\Th_1)\smallsetminus \Gamma'),\]
Hence $\Th_2\#(\Th_1\smallsetminus \Gamma)=(\Th_2\#\Th_1)\smallsetminus \Gamma'$ follows by \ref{grp1}. 

To prove $\tau_{\Gamma}=\tau_{\Gamma}$, observe that $o_{F_1,F_2}^{(1)}$ induces a bijection $E(\Gamma)\cap D_{F_1}\to E(\Gamma')\cap D_{F_2\# F_1} $ which restricts to bijections $E(\Gamma)\cap (D_{F_1})_p\to E(\Gamma')\cap (D_{F_2\# F_1})_p$ for all $p\in P_1$, and that $E(\Gamma')\cap (D_{F_2\# F_1})_{q+2N_1}=\emptyset$ for all $q\in P_2$. For the last claim, we use that $(\Th_2\#\Th_1)_{\Gamma'}=\emptyset_{(P_2,s_2)} \# (\Th_1)_{\Gamma}$ with part (1), to conclude $|\G((\Th_1)_{\Gamma})|=|\G((\Th_2\#\Th_1)_{\Gamma'})|$. 
\end{proof}

\begin{lemma}[Factorisation Lemma]\label{vjfact} Let $\Th =(F,E)$ be a SWB with $F=(P,s,f)$, and suppose that $(P,s)=(P_2,s_2)\# (P_1,s_1)$ for $(P_a,s_a)\in \mathcal{TC}^*_{N_i}$. Then there exists frames $F_1=(P_1,s_1,f_1)$ and $F_2=(P_2,s_2,f_2)$, and SWB data $\Th_a=(F_{a},E_a)$, such that $\Th=\Th_2\#\Th_1$ and $L_{\Th_1, \Th_2}=0$. \end{lemma}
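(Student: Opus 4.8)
The strategy is to reduce the Factorisation Lemma to the Insertion Lemma (Lemma \ref{insSWB}) applied at the height $d=2N_2$, where $(P,s)=(P_2,s_2)\#(P_1,s_1) = (P_2,s_2)\#_{2N_2}(P_1,s_1)$ by the last sentence of \ref{ins}. First I would invoke Lemma \ref{insSWB} with $d=2N_2$: this produces frames $F_1'=(P_1,s_1,f_1)$ and $F_2'=(P_2,s_2,f_2)$, an SWB datum $\Th_1=(F_1',E_1)$ without fully external components, and an OP map $o:V_{F_1'}\to V_F$ with $o(i,a)=(i+2N_2,a)$ on internal vertices, such that $\Th=(F_2'\#_N F_1', o(E_1)\sqcup(E\smallsetminus o(E_1)))$. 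Since insertion at $d=2N_2$ is vertical juxtaposition in the opposite order, $F_2'\#_N F_1'$ must be recognised as $F_1'\#F_2'$; I would check this directly from the formulae in \ref{frame} and \ref{ins}, noting the shift $o(i,a)=(i+2N_2,a)$ places $F_1'$ "on top." Then the remaining pairs $E\smallsetminus o(E_1)$ all lie among the vertices below, i.e. in the image of the canonical inclusion of $V_{F_2'}$, so they define a crossingless pairing $E_2$ of $V_{F_2'}$ with $\Th_2:=(F_2',E_2)\in Sq(n,m)$ for the appropriate middle index $m=f_2(\{2N_2+1\})=f_1(\{0\})$. By construction $\Th=\Th_1\#\Th_2$ — wait: I must be careful about which factor sits on top. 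The statement asks for $\Th=\Th_2\#\Th_1$ with $\Th_a$ having frame $F_a=(P_a,s_a,f_a)$; so I would set things up so that $\Th_1$ has frame $F_1=(P_1,s_1,f_1)$ sitting at the bottom and $\Th_2$ has frame $F_2=(P_2,s_2,f_2)$ on top, matching $F_2\#F_1$. Concretely: apply Lemma \ref{insSWB} at $d=0$ instead, so that $\Th=(F_2\#_N F_1,\dots)=(F_2\#F_1,\dots)$ directly, extract $\Th_1$ (bottom, no fully external components) from the inserted block and $\Th_2$ (top) from the complement, and conclude $\Th=\Th_2\#\Th_1$ by Lemma \ref{grp1}, after verifying the curve graphs agree.

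Next I would verify $L_{\Th_1,\Th_2}=0$. By definition $L_{\Th_1,\Th_2}=[U_{F_1,F_2}, G(\Th_1,\Th_2)]$, the index of the set $U_{F_1,F_2}$ of glued middle vertices in the composite graph. By Remark \ref{compmap}, $[U:G]=0$ holds precisely when the deletion-contraction map on components is a bijection, equivalently when no component of $G(\Th_1,\Th_2)$ has its entire vertex set inside $U_{F_1,F_2}$. Here the key point is that $\Th_1$ was produced by Lemma \ref{insSWB} \emph{without fully external components}: every component of $G(\Th_1)$ either contains an internal vertex of $F_1$, or is an external component meeting the southern edge of $F_1$ (these survive since $\Th_1$ is a genuine type $(n,m)$ datum with $n$ southern vertices), so in particular no component of $G(\Th_1)^{(1)}$ is entirely supported on $(V_{F_1})_{\nth}^{(1)}=U_{F_1,F_2}$. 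Gluing on $G(\Th_2)^{(2)}$ only merges components or adds new vertices outside $U_{F_1,F_2}$, so still no component of $G(\Th_1,\Th_2)$ lies entirely in $U_{F_1,F_2}$; hence the contraction $G(\Th_1,\Th_2)/U_{F_1,F_2}$ loses no component and $L_{\Th_1,\Th_2}=0$. To make this rigorous I would use Proposition \ref{contpr}(\ref{conconn}) together with Remark \ref{compmap} exactly as above, and cross-reference the analogous argument sketched in Example \ref{vjuxteg} and in the proof of Proposition \ref{vjassoc}(2).

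The main obstacle I anticipate is the bookkeeping identifying $F_2\#_N F_1$ (output of the Insertion Lemma at the extreme height) with an honest vertical juxtaposition $F_2\#F_1$, and correctly tracking how the index-$m$ middle boundary of the two factors is built from $f$; in particular ensuring the "no fully external components" clause of Lemma \ref{insSWB} really does guarantee that no composite-graph component is trapped in $U_{F_1,F_2}$, rather than merely that $\Th_1$ has no closed curves disjoint from the square. I would resolve this by carefully separating the two possibilities for an external component of $G(\Th_1)$ — it reaches the southern edge of the sub-diagram, or it is fully external — and excluding the latter by the lemma, exactly as the $(V_F)_{\mathrm{sth}}$-survival argument in the proof of Proposition \ref{comp2}(1) does for the complement graph. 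The rest is routine verification via Lemma \ref{grp1} and Proposition \ref{contpr}.
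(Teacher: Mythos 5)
Your reduction to the Insertion Lemma does not work, for two reasons. First, the bookkeeping you flag as the ``main obstacle'' is in fact fatal: insertion at an extreme height is \emph{not} juxtaposition at the level of frames. The paper notes exactly this in \ref{frame} (the equalities $F_2\#_0 F_1=F_2\#F_1$ and $F_2\#_{2N_2}F_1=F_1\#F_2$ do not hold in general), because a frame insertion keeps both boundary values of the ambient frame, whereas $F_2\#F_1$ takes its southern value from $F_1$, its northern value from $F_2$, and needs a middle boundary count (the number of strands crossing the cut) for which insertion has no slot. (Also $d=0$, the case you finally settle on, is excluded by the hypothesis $1\leq d\leq 2N_2$ of Lemma \ref{insSWB}, and your first choice $d=2N_2$ realises $(P_1,s_1)\#(P_2,s_2)$, not $(P_2,s_2)\#(P_1,s_1)$.) Second, and more substantively, the datum $\Th_1$ that Lemma \ref{insSWB} extracts consists only of the curves that actually run through the inserted bands: a cup joining two southern vertices of $\Th$, or a strand from the southern boundary passing straight up into the $(P_2,s_2)$-region, is not incident to the inserted block and so stays in $E\smallsetminus o(E_1)$. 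Hence the extracted $\Th_1$ is in general not of type $(f(\{0\}),\cdot)$ and cannot be the bottom factor, and the complement $E\smallsetminus o(E_1)$ is a pairing of $V_F\smallsetminus o(V_{F_1})$, which still contains level-$0$ vertices and is not a copy of $V_{F_2}$; so your $\Th_2$ is not an SWB datum on $(P_2,s_2)$ and the verification via Lemma \ref{grp1} cannot succeed. The Insertion Lemma cuts curves where they leave a neighbourhood of the inserted bands; the Factorisation Lemma needs a cut along the horizontal line between levels $2N_1$ and $2N_1+1$, and these are genuinely different decompositions.

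The paper's proof is a direct construction along that horizontal cut: set $V_1=\cup_{i\leq 2N_1}(V_F)_i$ and $V_2=\cup_{i\geq 2N_1+1}(V_F)_i$, split $E$ into the edges inside $V_1$, the edges inside $V_2$, and the crossing edges $E'$; crossinglessness forces $E'$ to be nested, $u_1\prec\dots\prec u_m\prec v_m\prec\dots\prec v_1$, which is exactly what allows one to cut each crossing edge into two halves attached order-preservingly to $m$ new middle vertices $(2N_1+1,a)$ of $\Th_1$ and $(0,a)$ of $\Th_2$. Then $\Th=\Th_2\#\Th_1$ holds essentially by construction, and $L_{\Th_1,\Th_2}=0$ because each new middle vertex is joined by an edge to $o_1(u_a)$ resp.\ $o_2(v_a)$, which is not a middle vertex, so no component of $G(\Th_1,\Th_2)$ has its vertex set inside $U_{F_1,F_2}$ (Remark \ref{compmap}). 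Note also that your justification of $L=0$ via ``$\Th_1$ has no fully external components'' is an artefact of your construction: in the correct factorisation $\Th_1$ may well have fully external components (a southern cup, or the lower half of a crossing strand), and the vanishing of the linking number comes instead from the wiring of the middle vertices just described.
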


\begin{proof} Let $N=N_1+N_2, V_1=\cup_{i=0}^{2N_1} (V_F)_i$, $V_2=\cup_{i=2N_1+1}^{2N+1} (V_F)_i$, and partition $E$ into sets $E_1'=\{e \in E \ | \ e \subset V_1\},  E_2'=\{e \in E \ | \ e \subset V_2\},  E'=E\smallsetminus(E_1'\cup E_2')$. Let $|E'|=m$. The crossing-less condition for $E$ now implies that 
\[ E'=\{ u_i\edge v_i \ | \ u_i\in V_1, \ v_i\in V_2, \ i=1,\dots, m \}, \]
where $u_1\prec u_2 \prec \dots \prec u_m\prec v_m \prec v_{m-1}\prec \dots \prec v_1$. 

Define $f_1:\overline{P_1}\to \N$ by $f_1(p)=f(p)$ for $p\in P_1$, $f_1(\{0\})=f(\{0\})$, and $f_1(\{2N_1+1\})=m$, and define $f_2:\overline{P_2}\to \N$ by $f_2(p)=f(p+2N_1)$ for $p\in P_2$, $f_2(\{0\})=m$, and $f_2(\{2N_2+1\})=f(\{2N+1\})$. Let $o_1:V_1\to (V_{F_1})_{\sth}\cup (V_{F_1})_{\int}$ and $o_2:V_2 \to (V_{F_2})_{\nth}\cup (V_{F_2})_{\int}$ be the unique OP maps, where $F_i=(P_i,s_i,f_i)$. Now define pair-partitions
\begin{align*} E_1&=o_1(E_1')\cup \{ o(u_a)\edge (2N_1+1,a) \ | \ a=1, \dots, m\},\\
E_2&=o_1(E_2')\cup \{ o(v_a)\edge (0,a) \ | \ a=1, \dots, m\}.\end{align*}
The crossing-less condition for $E$ implies that $E_1$ and $E_2$ are crossing-less, that is, $\Th_1=(F_1,E_1)$ and $\Th_2=(F_2,E_2)$ are SWB-data, and one has $\Th=\Th_2\# \Th_1$ and $L_{\Th_1, \Th_2}=0$ by construction.  \end{proof}

\subsection{Isotopy} In this subsection we define a notion of isotopy for SWB data, which is an equivalence relation on the latter \ref{iso}. We establish some key properties of this relation in Prop. \ref{ptprops} and then prove Prop. \ref{isorep}, which establishes the existence and uniqueness of isotopy reduced representatives, under mild conditions.

\mdef\label{turnb} For $\Th=(F,E)\in Sq_N (n,m)$, a \textbf{turn-back} is a part $e\in E$ of the form $e=\{(i,a),(i,b)\}$ for $i\in \underline{2N}$, \textit{i.e.} $e\subset V_i$. For $(i,a)\in (\Z_{> 0})^2$ fix the notation $((i,a))=\{(i,a),(i,a+1)\}$. If $((i,a))\subset (V_F)_\int$, define $U_{((i,a))}=((i,a))\cup \iota_{F}((i,a))\subset (V_F)_\int$, and $V_{((i,a))}=V_F\smallsetminus U_{((i,a))}$. Setting $F=(P,s,f)$, define $\delta_i:\overline{P}\to \Z_{\geq 0}$ by
\[\delta_{i}(p)=\begin{cases}1, & \text{if }i\in p,\\
0, &\text{otherwise,}\end{cases}\]
and $F_{((i,a))}:=(P,s,f-2\delta_i)$. Observe that $|V_{((i,a))}|=|V_{F_{((i,a))}}|$ and let $o_{((i,a))}:V_{((i,a))}\to V_{F_{((i,a))}}$ be the unique OP map. 

\begin{prop}[Pull-throughs]\label{pthru} Let $\Th=(F,E)$ be a SWB-datum, and suppose that $((i,a))\in E$ for $(i,a)\in (V_F)_\int$, while $\iota_{F}((i,a))\notin E$. There exists a unique SWB-datum $\Th_{((i,a))}:=(F_{((i,a))}, E')$, such that 
\[G(\Th_{((i,a))})\simeq_O G(\Th)/U_{((i,a))},\]
Furthermore, there is a bijection $O_{((i,a))}:\mathcal{C}(G(\Th))\to \mathcal{C}(G(\Th_{((i,a))}))$ given by 
\[O_{((i,a))}(\Gamma)=o_{((i,a))}\left(\Gamma/\left(V(\Gamma)\cap U_{((i,a))}\right) \right).\] 
\end{prop}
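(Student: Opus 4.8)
The plan is to mimic the proof of Proposition~\ref{del}: construct the pair-partition $E'$ by hand, identify the edge set of $G(\Th)/U_{((i,a))}$ explicitly, and then invoke Lemma~\ref{grp1} for uniqueness and Remark~\ref{compmap} for the component bijection. First I would pin down the local picture of $U_{((i,a))}$ inside $G(\Th)$. Since $((i,a))=\{(i,a),(i,a+1)\}\in E$ is a turn-back and $\iota_F((i,a))\notin E$, the vertices $(i,a),(i,a+1)$ are internal with $E$-edge $((i,a))$ and $D_F$-edges to $\iota_F(i,a),\iota_F(i,a+1)$, while $\iota_F(i,a)$ and $\iota_F(i,a+1)$ are paired by $E$ to two \emph{distinct} vertices $w_1,w_2\in V_{((i,a))}=V_F\smallsetminus U_{((i,a))}$ (distinct because $E$ is a pair-partition and $\iota_F(i,a)\ne\iota_F(i,a+1)$; outside $U_{((i,a))}$ because an $E$-partner cannot equal a $D_F$-partner, and $w_r=\iota_F(i,a+1\!-\!(r\!-\!1))$ is ruled out precisely by the hypothesis $\iota_F((i,a))\notin E$). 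Hence $N_{G(\Th)}(U_{((i,a))})$ is exactly the path $w_1\edge\iota_F(i,a)\edge(i,a)\edge(i,a+1)\edge\iota_F(i,a+1)\edge w_2$ together with its incident edges, all of $U_{((i,a))}$ lies in a single component $\Gamma\in\mathcal{C}(G(\Th))$, $A_{G(\Th)}(U_{((i,a))})=\{w_1,w_2\}$, and $w_1-_{U_{((i,a))}}w_2$; thus $G(\Th)/U_{((i,a))}$ is obtained from $G(\Th)$ by deleting the four vertices of $U_{((i,a))}$ and the six incident edges and inserting the single edge $w_1\edge w_2$.

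Next I would set $\tilde E:=\bigl(E\smallsetminus\{((i,a)),\ \iota_F(i,a)\edge w_1,\ \iota_F(i,a+1)\edge w_2\}\bigr)\cup\{w_1\edge w_2\}$, which is a pair-partition of $V_{((i,a))}$, and define $E':=o_{((i,a))}(\tilde E)$. The substantive step is to verify that $\tilde E$ is crossing-less for the inherited order. The parts coming from $E$ stay pairwise non-crossing, since crossing of two parts contained in a subset depends only on the induced order; so one only needs that $w_1\edge w_2$ crosses no surviving part. This is the combinatorial shadow of the geometric fact that the turn-back together with the two parallel band-strands bounds a disc, along which the broken arc $w_1\to\iota_F(i,a)\to(i,a)\to(i,a+1)\to\iota_F(i,a+1)\to w_2$ retracts to a direct arc $w_1\to w_2$; I would discharge it by a short case check on the $\prec$-positions of $w_1,w_2,\iota_F(i,a),\iota_F(i,a+1)$, split according to the twist $s(p)$ of the part $p\ni i$ (which governs, via \ref{inv}, whether $\iota_F(i,a+1)\prec\iota_F(i,a)$ or $\iota_F(i,a)\prec\iota_F(i,a+1)$ on interval $j$). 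Granting this, $(F_{((i,a))},E')$ is a genuine SWB-datum.

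Then, to get $G(F_{((i,a))},E')\simeq_O G(\Th)/U_{((i,a))}$: since $o_{((i,a))}:V_{((i,a))}\to V_{F_{((i,a))}}$ is the unique order-preserving bijection it induces an ordered graph isomorphism, so it suffices to check it carries the edge set of $G(\Th)/U_{((i,a))}$ onto $E'\cup D_{F_{((i,a))}}$. By the first paragraph that edge set is $\tilde E\cup\bigl(D_F\smallsetminus\{(i,a)\edge\iota_F(i,a),\ (i,a+1)\edge\iota_F(i,a+1)\}\bigr)$, so the only remaining point is the identity $o_{((i,a))}\bigl(D_F\smallsetminus\{\dots\}\bigr)=D_{F_{((i,a))}}$; exactly as in Proposition~\ref{del}, this reduces to the naturality relation $o_{((i,a))}\circ\iota_F=\iota_{F_{((i,a))}}\circ o_{((i,a))}$ on $(V_{((i,a))})_{\int}$, which I would confirm from the explicit formula for $\iota$ together with the reindexing of $o_{((i,a))}$ on the two intervals $i,j$ of $p$ — the only intervals where $f$ decreases (by $2$ each). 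Uniqueness of $\Th_{((i,a))}$ with frame $F_{((i,a))}$ and $G(\Th_{((i,a))})\simeq_O G(\Th)/U_{((i,a))}$ is then immediate from Lemma~\ref{grp1} and the remark following it.

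Finally, for the bijection $O_{((i,a))}$: no component of $G(\Th)$ lies inside $U_{((i,a))}$ (e.g.\ $\iota_F(i,a)$ has an $E$-edge leaving it), the contraction only affects the single component $\Gamma$ containing $U_{((i,a))}$, and it replaces $\Gamma$ by a connected graph (the two pieces of $\Gamma$ on either side of $U_{((i,a))}$ become joined through the new edge $w_1\edge w_2$), so $|\mathcal{C}(G(\Th)/U_{((i,a))})|=|\mathcal{C}(G(\Th))|$, i.e.\ $[U_{((i,a))}:G(\Th)]=0$. Remark~\ref{compmap} then promotes $\Gamma\mapsto\Gamma/(V(\Gamma)\cap U_{((i,a))})$ to a bijection $\mathcal{C}(G(\Th))\to\mathcal{C}(G(\Th)/U_{((i,a))})$, and postcomposing with the ordered isomorphism induced by $o_{((i,a))}$ yields exactly $O_{((i,a))}$. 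The main obstacle is the crossing-less check for $\tilde E$ in the second paragraph — ensuring that re-pairing $w_1$ with $w_2$ after excising $U_{((i,a))}$ creates no crossing with a surviving arc, for which one tracks the relative $\prec$-positions of $w_1,w_2,\iota_F(i,a),\iota_F(i,a+1)$ in both the twisted and untwisted cases; everything else is bookkeeping with the contraction calculus of \S2.1, paralleling Propositions~\ref{del} and~\ref{delp}.
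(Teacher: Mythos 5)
Your proposal is correct and follows essentially the same route as the paper's proof: you build $E''$ by deleting $((i,a))$ and the two parts at $\iota_F(i,a),\iota_F(i,a+1)$ and re-pairing their partners $w_1\edge w_2$, check crossing-lessness from the crossing-less condition on $E$ (using that $\iota_F(i,a),\iota_F(i,a+1)$ are consecutive), identify $D_{F_{((i,a))}}$ via $o_{((i,a))}\circ\iota_F=\iota_{F_{((i,a))}}\circ o_{((i,a))}$, get uniqueness from Lemma \ref{grp1}, and obtain the component bijection from $[U_{((i,a))}:G(\Th)]=0$ together with \ref{contpr}/\ref{compmap}. The only difference is presentational: you spell out the crossing-less case check and the index-zero argument that the paper leaves as brief assertions.
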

\begin{proof} Let $u=(i,a)$ and $v=(i,a+1)$ for convenience. Since $\iota_F((i,a))\notin E$, it follows that there exist distinct $u', v' \in V_F$ such that $u'\edge \iota_F(u)\edge u\edge v\edge \iota_F(v)\edge v' \subset G(\Th)$, with $\{u',\iota_F(u)\}, \{v,\iota_F(v)\} \in E$. The crossingless condition for $E$, then implies that 
\[E''=E \smallsetminus \{\{u,v\},\{u', \iota_F(u)\},\{v', \iota_F(v)\}\} \cup\{ \{u', v'\}\}, \]
is a crossingless pair-partition of $V_{((i,a))}$, and hence $\Th'=(F_{((i,a))}, o_{((i,a))}(E'') )$ is a SWB-datum. 

Using the fact that $\iota_{F_{((i,a)}}\circ o_{((i,a))}= o_{((i,a))}\circ \iota_F$ (where defined), one can check that $G(\Th')$ is obtained from $G(\Th)$ by replacing the path $u'\edge \iota_F(u)\edge u\edge v\edge \iota_F(v)\edge v'$ with the path $u'\edge v'$ (which is precisely the effect of contracting on the vertex subset $U_{((i,a))}$), and then applying the OP map $o_{((i,a))}$. The last claim follows by \ref{contpr} since $[U_{((i,a))}: G(\Th)]=0$.
\end{proof}

\begin{figure}[ht]
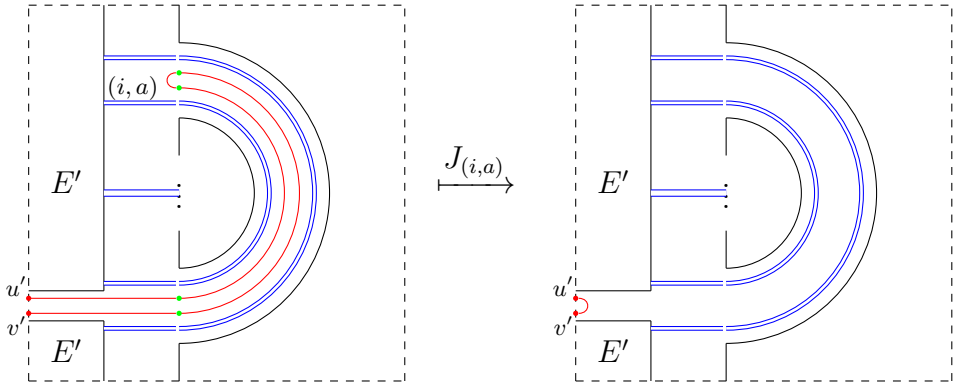

    \centering
\[\isoa{1} \quad\xmapstoo{J_{(i,a)}}\quad \isoab{1}\]
    \caption{Local diagram for a pull-through, using notation as in the proof of \ref{pthru}. We use thickened, decorated curves (as per the blue curves) to indicate an arbitrary number of parallel curves for brevity. We will use this convention throughout.  }
    \label{fig:iso_local}
\end{figure}
\mdef\label{pthru2} For $N\in \Z_{\geq 0}$, and $(i,a)\in \underline{2N}\times \Z_{>0}$ define the \textbf{pull-through}, $J^{(N)}_{(i,a)}:Sq_N (n,m)\to Sq_N (n,m)$ by
\[J^{(N)}_{(i,a)}(\Th)=\begin{cases}\Th_{((i,a))}, &((i,a))\in E \text{ and } \iota_{F}((i,a))\notin E,\\
\Th, & \text{otherwise.}
\end{cases}\]
We will often omit the superscript and write simply $J_{(i,a)}$ for a pull-through when the rank is clear from context.

\begin{example} We draw the SWB diagrams for $\Th_2'$ as per \ref{SWBex}, and $J_{(3,1)}(\Th_2')$ below. 
\[\SWBtwo{0.42} \quad\xmapstoo{J_{(3,1)}}\quad \SWBiso{0.42}\]
\end{example}

\mdef\label{iso} Let $\overset{\iso}{\sim}$ be the equivalence relation on $Sq(n,m)$ generated by $\Th_1 \overset{\iso}{\sim} \Th_2$, if $\Th_2=J^{(N)}_{(i,a)}(\Th_1)$ for some $N\in \Z_{\geq 0}$ and $(i,a)\in \underline{2N}\times \Z_{> 0}$. We call the equivalence relation $\overset{\iso}{\sim}$ \textbf{isotopy}, and if $\Th_1\overset{\iso}{\sim} \Th_2 $ we say that $\Th_1$ and $\Th_2$ are \textbf{isotopic}. We denote the set of isotopy classes by $Sq^{\iso}(n,m)=Sq(n,m)/\overset{\iso}{\sim}$.

\begin{prop}[Properties of pull-throughs]\label{ptprops} Let $\Th=(F,E)\in Sq_N(n,m)$ be such that $((i,a))\in E$ for $(i,a)\in (V_F)_\int$, whilst $\iota_F((i,a))\notin E$. Let $\Th'=(F',E')\in Sq_{N'}(m,l)$ and write $\Th''=(F'',E'')=\Th'\#\Th\in Sq_{N''}(n,l)$, and $L=L_{\Th,\Th'}$. The following hold:
\begin{enumerate}
    \item $|\CC(\mathcal{G}(\Th_{((i,a))}))|=|\CC(\mathcal{G}(\Th))|$.
    \item $(\Th_{((i,a))})^*=J_{\omega_{F}(i,j+1)}(\Th^*)$.
    \item For $\Gamma\in \mathcal{C}(G(\Th))$, set $\Gamma'=O_{((i,a))}(\Gamma)\in \mathcal{C}(G(\Th_{((i,a))}))$. If $(i,a)\in V(\Gamma)$ then $\Th\smallsetminus \Gamma=\Th_{((i,a))}\smallsetminus \Gamma'$, otherwise $J_{o_\Gamma(i,a)}(\Th\smallsetminus \Gamma)=\Th_{((i,a))}\smallsetminus \Gamma'$. Furthermore, $\tau_{\Gamma'}=\tau_{\Gamma}$, and $\Gamma'$ is internal (separating) if and only if $\Gamma$ is, respectively.
    \item If $\iota_{F''}((i,a))\notin E''$ then $J_{(i,a)}^{(N'')}(\Th'')=\Th'\#\Th_{((i,a))}$ and $L_{\Th_{((i,a))},\Th'}=L$. If $\iota_{F''}((i,a))\in E''$ then $U^{(F'')}_{((i,a))}$ is the vertex set of a component $\Gamma \in \mathcal{C}(G(\Th''))$, \textit{i.e.} $V(\Gamma)=U^{(F'')}_{((i,a))}$, which satisfies $\Th''\smallsetminus \Gamma=\Th'\#\Th_{((i,a))}$ and $L_{\Th_{((i,a))},\Th'}=L+1$. Futhermore, $\Gamma$ is internal, separating, and satisfies $\tau_\Gamma=0$.
\end{enumerate}\end{prop}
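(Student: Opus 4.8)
The plan is to prove the four parts by exhibiting suitable order-preserving identifications and invoking Lemma \ref{grp1} together with the graph-theoretic results of \S2.1 (especially Prop. \ref{contpr}), thereby reducing everything to verifications about how contracting on the vertex subset $U_{((i,a))}$ interacts with the various graph constructions. Throughout, write $u=(i,a)$, $v=(i,a+1)$, so that $U_{((i,a))}=\{u,v,\iota_F(u),\iota_F(v)\}$ and $[U_{((i,a))}:G(\Th)]=0$ as established in the proof of Prop. \ref{pthru}.

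For part (1), I would work with the complement graphs. As in the proof of Prop. \ref{comp1}, each vertex $w=(i,a)\in V_F$ gives rise to vertices $w^\pm\in\V_F$, and paths in $G(\Th)$ lift to paths in $\G(\Th)$ with controlled signs. The key observation is that the pull-through replaces the length-five path $u'\edge\iota_F(u)\edge u\edge v\edge\iota_F(v)\edge v'$ by the single edge $u'\edge v'$, and correspondingly the four $\V$-vertices lying ``between'' consecutive vertices of this path are removed while the complement edges reconnect; since $((i,a))\in E$ we have $u^+\edge v^-\in\mathcal{E}$ and these two $\V$-vertices are identified (or rather, the region they bound persists), so no component of $\G$ is created or destroyed. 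More precisely I would set up an OP map $\V_{F_{((i,a))}}\to \V_F\smallsetminus(\text{two removed }\V\text{-vertices})$ and show $\G(\Th_{((i,a))})$ is obtained from $\G(\Th)$ by a contraction with index $0$, using Prop. \ref{contpr} and Remark \ref{compmap}. Part (2) is the cleanest: by \ref{flip}, $\omega_F\circ\iota_F=\iota_{F^*}\circ\omega_F$, so $\omega_F(U_{((i,a))})=U_{\omega_F((i,a))}$, and one checks $\omega_F((i,a))=\omega_N'(i)$-indexed vertex of the form $((\omega_F(i,a)))$ after accounting for the order reversal on each band (this is where the index shift $(i,j+1)\mapsto\omega_F(i,j+1)$ appears); then $G((\Th_{((i,a))})^*)=\omega_F(G(\Th)/U_{((i,a))})=\omega_F(G(\Th))/\omega_F(U_{((i,a))})=G(\Th^*)/U_{\omega_F(i,a)}$ by Prop. \ref{contpr}(\ref{conmap}), so both sides equal $J_{\omega_F(i,j+1)}(\Th^*)$ by Lemma \ref{grp1}.

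Part (3) splits into the two cases $(i,a)\in V(\Gamma)$ and $(i,a)\notin V(\Gamma)$. In the first case $U_{((i,a))}\subset V(\Gamma)$ (since $\Gamma$ is connected and contains the whole path through these vertices), so deleting $\Gamma$ from either $\Th$ or $\Th_{((i,a))}$ removes all of $U_{((i,a))}$; I would apply Prop. \ref{contpr}(\ref{conassoc}) with $W=V(\Gamma)$ (noting $A_G(U_{((i,a))})\subset V(\Gamma)$ so the hypothesis holds) to get $G((\Th\smallsetminus\Gamma))\simeq_O G((\Th_{((i,a))}\smallsetminus\Gamma'))$. In the second case $U_{((i,a))}\cap V(\Gamma)=\emptyset$, so deleting $\Gamma$ does not disturb the turn-back, and $(i,a)$ survives as $o_\Gamma(i,a)$ with $((o_\Gamma(i,a)))\in E(\Th\smallsetminus\Gamma)$ but $\iota((o_\Gamma(i,a)))\notin$; then applying the pull-through commutes with deletion, again by Prop. \ref{contpr}(\ref{conassoc}) applied to the disjoint contraction sets, and Lemma \ref{grp1}. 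The statements $\tau_{\Gamma'}=\tau_\Gamma$, internality, and separating-ness follow because $O_{((i,a))}$ restricts to bijections $E(\Gamma)\cap(D_F)_p\to E(\Gamma')\cap(D_{F_{((i,a))}})_p$ for each $p\in P$ (the contraction only removes/merges $D_F$-edges within a single band and $E$-edges, preserving the twisted-band count), together with part (1) applied to $\Th_\Gamma$ for the separating claim.

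Part (4) is the main obstacle and requires the most care. Here $\Th''=\Th'\#\Th$, and the turn-back $((i,a))$ of $\Th$ sits at an internal band of $F$ which is also an internal band of $F''=F'\#F$ (since $i\leq 2N$, the band index is unchanged under $\#$, which stacks $F'$ above $F$). Under the juxtaposition, the vertices $u',v'\in V_F$ that neighboured $U_{((i,a))}$ via $E$ in $\Th$ may or may not get identified with northern vertices of $F_1=F$ that are glued to southern vertices of $F_2=F'$ — but since $((i,a))$ and $\iota_F((i,a))$ are internal, the vertex $\iota_F((i,a))$ and its $E$-partners lie in $V_F$, and whether $\iota_{F''}((i,a))\in E''$ is determined by whether, after composition, the curve through $u$ and $v$ closes up onto the curve through $\iota_F(u),\iota_F(v)$ — equivalently whether the $E$-path through $\iota_F(u)$ and that through $\iota_F(v)$ become joined in $G(\Th_1,\Th_2)/U_{F_1,F_2}$. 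In the first subcase ($\iota_{F''}((i,a))\notin E''$), I would show $G(J_{(i,a)}^{(N'')}(\Th''))=G(\Th'')/U^{(F'')}_{((i,a))}\simeq_O (G(\Th_1,\Th_2)/U_{F_1,F_2})/U_{((i,a))}\simeq_O (G(\Th'_{})^{(2)}\cup G(\Th_{((i,a))})^{(1)})/U_{F_1,F_2}=G(\Th'\#\Th_{((i,a))})$, using Prop. \ref{contpr}(\ref{conassoc}) to commute the two contractions (their adjacency sets are disjoint), plus Lemma \ref{grp1}; the linking-number equality $L_{\Th_{((i,a))},\Th'}=L$ then follows from $[U_{F_1,F_2}:{-}]$ being unchanged by Prop. \ref{contpr}(\ref{conassoc})/(\ref{conunion}). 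In the second subcase ($\iota_{F''}((i,a))\in E''$), the set $U^{(F'')}_{((i,a))}=\{u,v,\iota_F(u),\iota_F(v)\}$ is exactly the vertex set of a small component $\Gamma$ of $G(\Th'')$ — a curve passing twice through the same internal band — and one computes $G(\Th''\smallsetminus\Gamma)=G(\Th'')\smallsetminus\Gamma$, which upon deleting these four vertices and their incident edges is order-isomorphic to $G(\Th'\#\Th_{((i,a))})$ (the deletion has the same reconnection effect as the contraction, since the component is a cycle of length $4$); the linking number jumps by $1$ because this component, once we take $\Th''_{\Gamma}$, has $|\G|$ one larger than expected — I would make this precise via $[U_{F_1,F_2}:{-}]$ again, using that $\Gamma^{(1)}$ now lies entirely above the cut in $G(\Th_1,\Th_2)$ in a way that contributes to the index. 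Finally $\tau_\Gamma=0$ since $\Gamma$ passes through exactly one band twice, contributing $2s(p)\equiv 0\pmod 2$; $\Gamma$ is internal since $U^{(F'')}_{((i,a))}\subset(V_{F''})_\int$; and $\Gamma$ is separating by the twist-$0$, small-cycle structure (apply part (1) to $\Th''_\Gamma$, or argue directly that the curve bounds a disk-with-two-handle-passes whose complement within the square has two pieces). The hard part throughout part (4) is keeping the order-preserving identifications honest across the gluing $V_{F_1,F_2}$, so I would lean heavily on the already-proven associativity and deletion compatibilities in Prop. \ref{vjassoc} to avoid re-deriving the bookkeeping.
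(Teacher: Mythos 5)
Your proposal follows essentially the same route as the paper's proof: every part is reduced, via Lemma \ref{grp1} and Prop.\ \ref{contpr}, to identities between curve graphs obtained by contracting on $U_{((i,a))}$ --- a local analysis of $\G$ for (1), conjugation by $\omega_F$ for (2), frame/deletion bookkeeping for (3), commuting the turn-back contraction past $U_{F,F'}$ for (4), and identifying the length-four cycle on $U^{(F'')}_{((i,a))}$ in the second subcase. The one point to tighten is the $+1$ jump in part (4): it comes from the fact that contracting $G(\Th'')$ on the vertex set of that cycle has index $1$ (equivalently $|\CC(G(\Th''\smallsetminus\Gamma))|=|\CC(G(\Th''))|-1$ while $|\CC(G(\Th_{((i,a))},\Th'))|=|\CC(G(\Th,\Th'))|$), not from $\Gamma$ ``lying above the cut'' in $G(\Th,\Th')$ (where it is not a component at all), and the separating claim for $\Gamma$ should be verified directly on $\G(\Th''_{\Gamma})$, since no pull-through is applicable to $\Th''_{\Gamma}$ and hence part (1) cannot be invoked there.
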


\begin{proof} For (1), set $\mathcal{G}=\mathcal{G}(\Th)$, $\mathcal{G}'=\mathcal{G}(\Th_{((i,a))})$ and $u=(i,a), v=(i,a+1)\in V_F$. One sees from \ref{fig:iso_local} that there are two components of $\mathcal{G}$ of interest, namely, the components containing $u^+$ and $u^-$, respectively. After applying $J_{(i,a)}$, the former component is changed by deleting vertex subset $\{u^+, \iota_F(u^+)\}$, and the latter is changed by identifying $u^-\sim v^+$ and $\iota_F(u^-)\sim \iota_F(v^+)$ (both up to applying an OP map). Neither of these steps changes the number of components, hence (1) follows.

For (2), one can check both SWB-datum have frame $(F_{((i,a))})^*$, and we compute 
\[G((\Th_{((i,a))})^* ) \simeq_O \omega_F ( G(\Th )/U_{((i,a))}) = G(\Th^* )/ U^{(F^*)}_{ \omega_F((i,a)) }\simeq_O G(J_{\omega_F(i,j+1)}(\Th^*)),  \]
using the fact that $\omega_F(U_{((i,a))})=U^{(F^*)}_{\omega_F ((i,a))}$. The desired equality follows from \ref{grp1}.

For (3), if $(i,a)\in V(\Gamma)$ then $g_{\Gamma}=g_{\Gamma'}+2 \delta_i$, hence $\Th\smallsetminus \Gamma$ and $\Th_{((i,a))}\smallsetminus \Gamma'$ have frame $F_{-\Gamma}$. If $(i,a)\notin V(\Gamma)$ then $g_{\Gamma}=g_{\Gamma'}$, hence $J_{o_{\Gamma}(i,a)}(\Th\smallsetminus \Gamma)$ and $\Th_{((i,a))}\smallsetminus \Gamma'$ have frame $(F_{((i,a))})_{-\Gamma'}$. In both cases we have 
\[G(\Th_{((i,a))}\smallsetminus \Gamma')\simeq_O (G(\Th)/U_{((i,a))})/o_{((i,a))}^{-1}( V(\Gamma') )=G(\Th)/(U_{((i,a))}\cup o_{((i,a))}^{-1}V(\Gamma' ) ).  \]
If $(i,a)\in V(\Gamma)$ then $U_{((i,a))}\cup o_{((i,a))}^{-1}V(\Gamma' )=V(\Gamma)$ hence $G(\Th_{((i,a))}\smallsetminus \Gamma')\simeq_O G(\Th\smallsetminus \Gamma)$. If $(i,a)\notin V(\Gamma)$ then $o_{((i,a))}^{-1}V(\Gamma' )=V(\Gamma)$, hence 
\[G(\Th_{((i,a))}\smallsetminus \Gamma')\simeq_O(G(\Th)/V(\Gamma))/U_{((i,a))} \simeq_O G(J_{o_{\Gamma}(i,a)}(\Th\smallsetminus \Gamma)).\]
In both cases, the desired equality now follows from \ref{grp1}. 

Since $o_{((i,a))}$ restricts to a bijection $(V_{F})_{\int}/U_{((i,j))}\to (V_{F_{((i,j))}})_\int$, it follows that $O_{((i,a))}$ maps internal (external) components of $G(\Th)$, to internal (external) components of $G(\Th_{((i,a))})$, respectively. If $((i,a))\subset V(\Gamma)$, then $E(\Gamma')\cap D_{F_{((i,j))}} $ contains two fewer parts both belonging to the same band, so $\tau_{\Gamma'}$ is unchanged. If $((i,a))\not\subset V(\Gamma)$, then $o_{((i,a))}$ induces a bijection $E(\Gamma)\cap D_{F}\to E(\Gamma')\cap D_{F_{((i,j))}}$ which preserves the band each part belongs to so $\tau_{\Gamma'}=\tau_{\Gamma}$. For the last claim, it follows from the first part of (3) and part (1) that $|\CC(\G(\Th_{\Gamma}))|=|\CC(\G((\Th_{((i,j))})_{\Gamma'}))|$.

For (4), in both cases using \ref{grp1} we can compute
\[ G(\Th'\#\Th_{((i,a))})\simeq_O   \left(G(\Th, \Th')/ (U_{((i,a))}^{(F)})^{(1)}\right)/U_{F,F'}\simeq_O G(\Th'')/U_{((i,a))}^{(F'')},  \]
and that $\Th'\#\Th_{((i,a))}$ has frame $F'\# F_{((i,a))}=F''_{((i,a))}$. 

If $\iota_{F''}((i,a))\notin E''$, then $J^{(N'')}_{((i,a))}(\Th'')=(\Th'')_{((i,a))}$ which has frame $F''_{((i,a))}$, and satisfies $G(\Th''_{((i,a))})\simeq_O G(\Th'')/U_{((i,a))}^{(F'')}$, hence $J^{(N'')}_{((i,a))}(\Th'')=\Th'\# \Th_{((i,a))}$ by \ref{grp1}. Furthermore,  $|\CC(G(\Th''_{((i,a))}))|=|\CC(G(\Th''))|$ so $L_{\Th_{((i,a))},\Th'}=L$.

If $\iota_{F''}((i,a))\in E''$, then $G(\Th'')$ contains the cycle $\Gamma=u\edge v\edge \iota_{F''}(v)\edge \iota_{F''}(u)\edge u$ (where we let $((i,a))=\{u,v\}$), and clearly $V(\Gamma)=U^{(F'')}_{((i,a))}$. Furthermore, $\Th''\smallsetminus \Gamma$ has frame $(P'',s'',f''-2\delta_i)=F'\# F_{((i,a))}$ (where $F''=(P'',s'',f'')$), and satisfies $G(\Th''\smallsetminus \Gamma)\simeq_O G(\Th'')/ V(\Gamma)\simeq_O G(\Th' \# \Th_{((i,a))})$. Hence $\Th''\smallsetminus \Gamma=\Th' \# \Th_{((i,a))}$ by \ref{grp1}, and since $|\CC(G(\Th''\smallsetminus \Gamma))|=|\CC(G(\Th''))|-1$, we find $F_{\Th_{((i,a))}, \Th'}=L+1$. The desired properties for $\Gamma$ are clear.
\end{proof}

\begin{prop}\label{isorep} Let $\Th$ be a SWB-datum whose curve graph has no internal components. Then the class $[\Th]_\iso$ has a unique representative $\Th'\isosim\Th$ without turnbacks. We call such a representative \textbf{isotopy reduced}.    
\end{prop}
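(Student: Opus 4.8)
The plan is to prove existence and uniqueness separately, using the pull-through machinery established in Prop. \ref{pthru}, Prop. \ref{ptprops}, and the combinatorial criterion Lemma \ref{grp1}.

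For existence, I would argue by induction on the complexity $c(F)$ (c.f. \ref{cpx}). If $\Th=(F,E)$ has no turnbacks, take $\Th'=\Th$. Otherwise pick a turnback $e=\{(i,a),(i,b)\}\in E$; since $e$ is crossingless and $E$ pairs all of $V_F$, we can choose $e$ to be ``innermost'' so that $b=a+1$, i.e. $e=((i,a))$. Since $\Th$ has no internal components, its curve graph has every vertex reachable from an external vertex; in particular the component $\Gamma$ through $(i,a)$ is external and hence cannot be the cycle $((i,a))\cup\iota_F((i,a))$, so $\iota_F((i,a))\notin E$ (otherwise $\Gamma$ would be a closed curve with no external vertex). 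Thus the pull-through $J_{(i,a)}(\Th)=\Th_{((i,a))}$ is defined, has strictly smaller complexity, and by Prop. \ref{ptprops} part (3) its curve graph again has no internal components (pull-throughs send internal components to internal components and vice versa, and deleting... here one uses that $O_{((i,a))}$ is a bijection preserving internality, so no new internal component is created). By induction $\Th_{((i,a))}\isosim \Th''$ for some turnback-free $\Th''$, and $\Th\isosim\Th''$.

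For uniqueness, the plan is to show that if $\Th_1$ and $\Th_2$ are both turnback-free and $\Th_1\isosim\Th_2$, then $\Th_1=\Th_2$. The key observation is that a turnback-free SWB datum is isotopy-rigid: if $\Th$ has no turnbacks then for every $(i,a)\in(V_F)_\int$ we have $((i,a))\notin E$, so $J_{(i,a)}(\Th)=\Th$ for all $(i,a)$; hence $\Th$ is a fixed point of every pull-through. Since $\isosim$ is generated by single pull-throughs, $\Th_1\isosim\Th_2$ means there is a finite chain $\Th_1=\Xi_0,\Xi_1,\dots,\Xi_r=\Th_2$ where consecutive terms differ by one pull-through. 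I would track an invariant of the isotopy class that pins down the turnback-free representative: the natural candidate is the data recovered by ``reading off'' the curve graph after maximally reducing, but more concretely one can use that each pull-through changes complexity by exactly $2$ when nontrivial and by $0$ when trivial, so along the chain the sequence of complexities is determined up to the trivial steps, and the turnback-free representatives sit at the (unique) local minima. The cleanest route: show that the map $\Th\mapsto$ (turnback-free representative produced by the existence algorithm) is constant on $\isosim$-classes by checking it is unchanged under a single pull-through --- if $\Xi_{k+1}=J_{(i,a)}(\Xi_k)$, then running the reduction algorithm from $\Xi_{k+1}$ versus from $\Xi_k$ produces the same output because pull-throughs at distinct internal sites commute (a local computation, reducing to Prop. \ref{ptprops} and the crossingless condition) and a nontrivial pull-through followed by the reduction of the result equals the reduction of the original.

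The main obstacle I anticipate is the commutation/confluence step in the uniqueness argument: one must show the reduction procedure is well-defined independent of the order in which turnbacks are pulled through, i.e. a local confluence (diamond) lemma for pull-throughs, together with termination (which is immediate from strict complexity decrease). Local confluence amounts to checking that if $((i,a))$ and $((j,b))$ are two distinct innermost turnbacks of $\Th$, then $J_{(i,a)}J_{(j',b')}(\Th)=J_{(j,b)}J_{(i',a')}(\Th)$ after suitably re-indexing the sites (the indices shift because deleting curves relabels $V_F$), and this is exactly the kind of identity already packaged in Prop. \ref{ptprops} parts (3) and (4); the bookkeeping of index shifts under the order-preserving maps $o_{((i,a))}$ is the fiddly part but is routine given Lemma \ref{grp1}. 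Once confluence and termination are in hand, Newman's lemma gives a unique normal form, which is the desired isotopy reduced representative, and turnback-freeness of a normal form is automatic since any turnback would permit a further nontrivial pull-through.
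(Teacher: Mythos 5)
Your proposal is correct and takes essentially the same route as the paper: existence by repeatedly applying non-trivial pull-throughs at innermost turnbacks (using the absence of internal components to guarantee $\iota_F((i,a))\notin E$, with the strict drop in complexity as the terminating measure), and uniqueness via Newman's diamond lemma from local confluence of pull-throughs verified through the curve-graph criterion of Lemma \ref{grp1}. The only point to make explicit in the confluence check is the case $|U_{((i_1,a_1))}\cap U_{((i_2,a_2))}|=2$, where the two turnbacks interact through a band and the two single pull-throughs already yield the same datum (so the diamond closes with no further re-indexed slides), while the disjoint case is the commuting square you describe.
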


\begin{proof} Suppose that $\Th=(F,E)$ has a turnback $e\in E$, such that $e\subset (V_F)_i$ for some integer $i$. The crossing-less condition for $E$, then implies that $((i,a))\in e$ for some $j$. Since $G(\Th)$ had no internal components, we have $\iota_F((i,a))\notin E$ hence $J_{(i,a)}(\Th)=\Th_{((i,a))}$, \textit{i.e.} we can always apply a non-trivial pull-through to such a $\Th$. The existence of an isotopy representative $\Th' \sim \Th$ without turnbacks now follows by iterating this procedure finitely many times, since the complexity (\text{c.f.} \ref{cpx}) strictly decreases with each iteration; $C(\Th_{((i,a))})=C(\Th)-2$.

For uniqueness we apply Newman's diamond lemma \cite{Newman}; suppose that $\Th_1=J_{((i_1,a_1))}(\Th)$ and $\Th_2=J_{((i_2,a_2))}(\Th)$ for $(i_1,a_1)\neq (i_2,a_2) $. We may assume WLOG that $((i_1,a_1)), ((i_2,a_2)) \in E$, and set $o_{((i_1,a_1))}(i_2,a_2)=(i_2,a_2')$ and $o_{((i_2,a_2))}(i_1,a_1)=(i_1,a_1')$, for convenience. There are two cases two consider, namely,  $|U_{((i_1,a_1))}\cap U_{((i_2,a_2))}|=0,2$. When $U_{((i_1,a_1))}$ and $U_{((i_2,a_2))}$ are disjoint, one can verify straightforwardly using the criteria from \ref{grp1} that $J_{((i_2,a_2'))}(\Th_1)= J_{((i_1,a_1'))}(\Th_2)$. Otherwise, when $|U_{((i_1,a_1))}\cap U_{((i_2,a_2))}|=2$ one can similarly verify that $\Th_1=\Th_2$. In either case, there exists some common SWB $\Th_3$ obtainable from $\Th_1$ and $\Th_2$ by a pull-through. \end{proof}

\begin{figure}[ht]
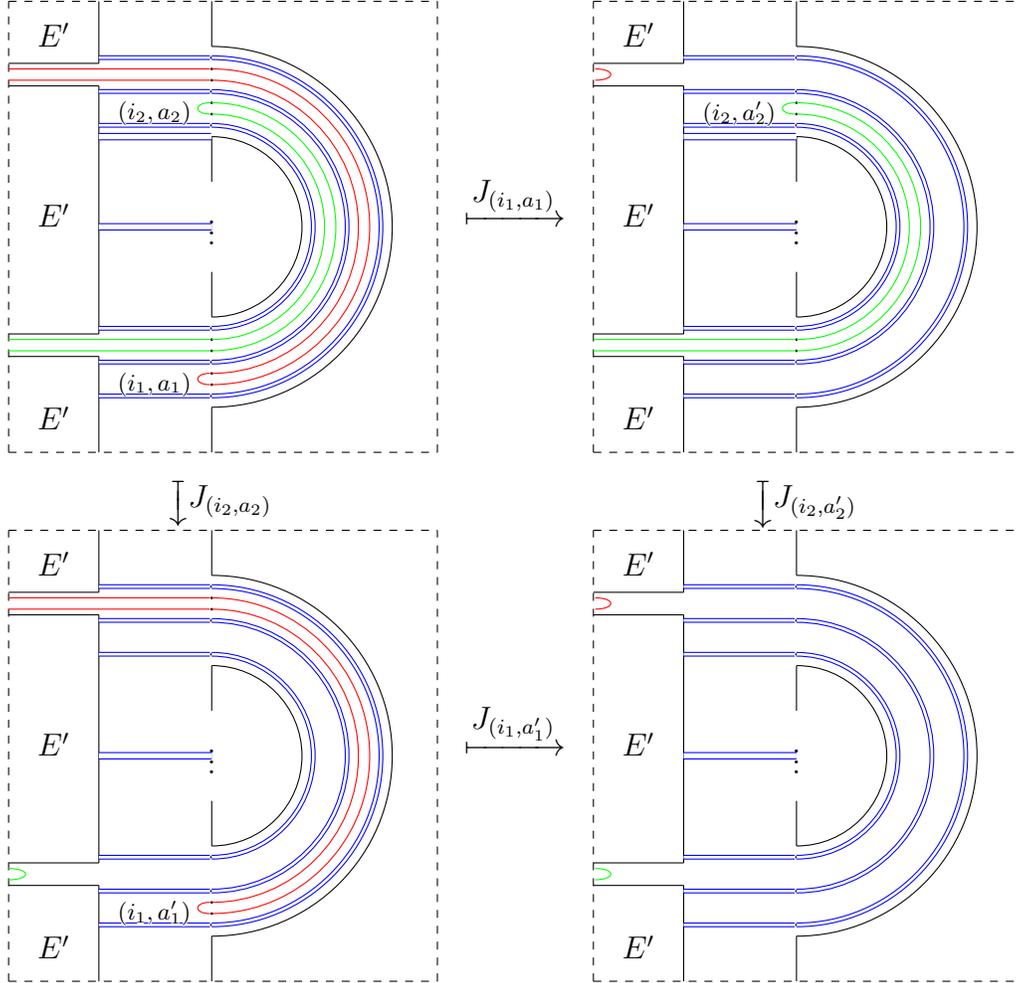

    \centering
    \[
 \begin{matrix}\isob{1.2}& \xmapstoo{J_{(i_1,a_1)}} &\isoba{1.2}\\ 
 \rotatebox{-90}{\xmapstoo{\rotatebox{90}{$J_{(i_2,a_2)}$}}}&&\rotatebox{-90}{\xmapstoo{\rotatebox{90}{$J_{(i_2,a_2')}$}}}\\
 \isobb{1.2}& \xmapstoo{J_{(i_1,a_1')}}&  \isobc{1.2}
\end{matrix}\] 
    \caption{Local diagram for the proof of \ref{isorep} showing $J_{((i_2,a_2'))}(\Th_1)= J_{((i_1,a_1'))}(\Th_2)$ when $U_{((i_1,a_1))}\cap U_{((i_2,a_2))}=\emptyset$ (here $E'=E\smallsetminus\{((i_1,a_1)),((i_2,a_2))\}$). In general, we may not have $\{i_1,i_2\}\in P$.}
    \label{fig:case1}
    \end{figure}
    
    \begin{figure}[ht]
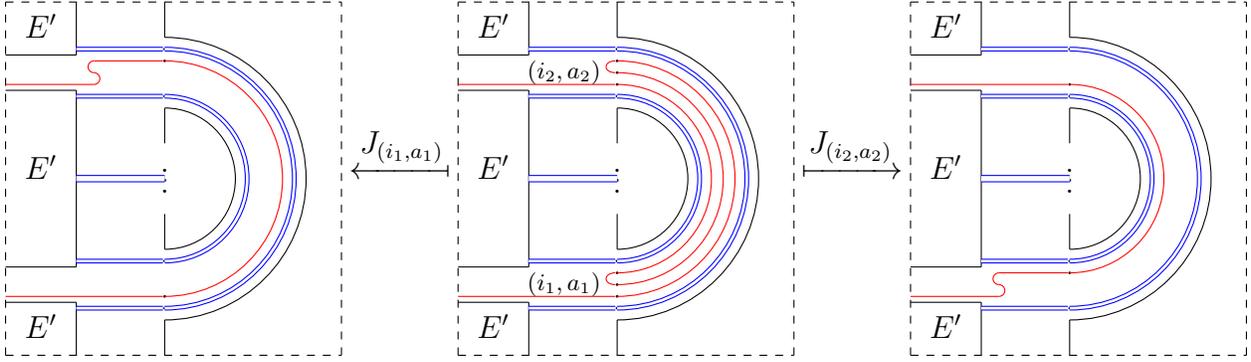

    \centering
\[\isoca{0.94} \ \xmapsfrom{J_{(i_1,a_1)}} \ \isoc{0.94} \ \xmapstoo{J_{(i_2,a_2)}}\ \isocb{0.94} \] 
    \caption{Local diagram for the proof of \ref{isorep}, showing $\Th_1=\Th_2$ when $|U_{((i_1,a_1))}\cap U_{((i_2,a_2))}|=2$ (here $E'=E\smallsetminus\{((i_1,a_1)),((i_2,a_2))\}$).}
    \label{fig:case2}
\end{figure}

\subsection{Handle slides} In this subsection we define handle slides for SWB data; these are certain transformations $Sq_N(n,m)\to Sq_N(n,m)$ (\textit{c.f.} \ref{hsdef}) whose diagrammatic interpretation corresponds to the Morse theoretic notion. In Prop. \ref{hselem} we establish basic properties of handle slides, in Prop. \ref{hseq} we use them to define handle slide equivalence of SWB data, $\hssim$, and we then prove Corollary \ref{hscor} which ensures each SWB datum (considered up to $\hssim$) has a factorisation in terms of data from $Sq_{\tor}$, $Sq_{\mob}$, and $Sq_0$.

\mdef\label{hsvset} Let $\Th=(F,E)\in Sq_N(n,m)$ with $F=(P,s,f)$, let $(i,\e)$ be an admissible pair in $P$, and let $h_{(i,\e)}(P,s)=(\sigma(P), s_{(i,\e)}\circ \sigma^{-1})$ (with $\sigma=\sigma_{(i,\e,P,s)}\in \SS_{2N}$) as per \ref{admissibility}. Suppose that $i\in p\in P$ and define $f_{(i,\e)}:\overline{P}\to \Z_{\geq 0}$ by  
\[f_{(i,\e)}(q)=\begin{cases}f(q)+f(p), &i+\e\in q,\\
f(q), & \text{otherwise}.\end{cases}\]
Now define $F_{(i,\e)}=(\sigma(P), s_{(i,\e)}\circ \sigma^{-1},f_{(i,\e)}\circ \overline{\sigma}^{-1})$ with $\overline{\sigma}$ here understood as the unique extension of $\sigma$ to a permutation of $\{0,1,\dots, 2N, 2N+1\}$ which fices 0 and $2N+1$.

Let $i'$ be the unique integer such that $\{i+\e,i'\}=:q\in P$, and define $U_{(i,\e)}\subset V_{F_{(i,\e)}}$ as the unique convex subset of order $2\cdot f(p)$ such that $V_{\sigma(i)}\subset U_{(i,\e)}\subset V_{\sigma(i)}\cup V_{\sigma(i')}$ (note that $|\sigma(i)-\sigma(i')|=1$). Then since $|V_F|=|V_{F_{(i,\e)}}\smallsetminus U_{(i,\e)}|$, set $o_{(i,\e)}:V_F\to V_{F_{(i,\e)}}\smallsetminus U_{(i,\e)}$ as the unique OP map.

\begin{figure}[ht]
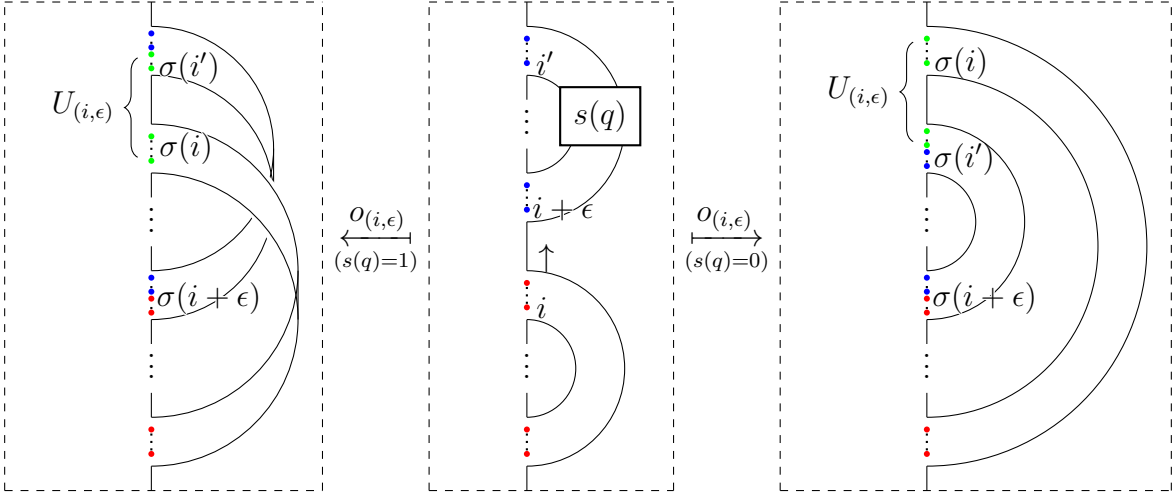

    \centering
\[   \hsvsetb{0.65} \ \underset{{(s(q)=1)}}{\xmapsfrom{o_{(i,\e)}}} \  \hsvset{0.65} \  \underset{{(s(q)=0)}}{\xmapstoo{o_{(i,\e)}}} \ \hsvseta{0.65} \] 
    \caption{Local diagram for \ref{hsvset}. Red vertices comprise the set $V_p$ and blue vertices comprise the set $V_q$ (or their images under $o_{(i,\e)}$, respectively).}
    \label{fig:hsvs}
\end{figure}

\begin{prop}[Handle slides]\label{hsprop} Let $\Th=(F,E)\in Sq_M(n,m)$ with $F=(P,s,f)$, and let $(i,\e)$ be an admissible pair in $P$. Then there exists a unique SWB-datum $\Th_{(i,\e)}\in Sq_M(n,m)$ of the form $\Th_{(i,\e)}=(F_{(i,\e)},E')$ such that
\begin{equation}G(\Th_{(i,\e)})=o_{(i,\e)}\left(G(\Th)\smallsetminus\bigcup_{u \in (U_F)_i} u\edge \iota_F(u)\right)\cup \bigcup_{u\in (V_F)_i} P_u  ,\label{hsgr}\end{equation}
where $(U_F)_i:=\{ u\in (V_F)_i \ | \ \{u,\iota_F(u)\}\notin E\}$, and $P_u$ is the path
\[P_u:=[o_{(i,\e)}(u) ] \edge [\iota_{F_{(i,\e)}}\circ o_{(i,\e)}(u)] \edge [\iota_{F_{(i,\e)}}\circ o_{(i,\e)}\circ \iota_{F}(u)]\edge [o_{(i,\e)}\circ \iota_{F}(u)].\]
Furthermore, there is a bijection $O_{(i,\e)}: \mathcal{C}(G(\Th))\to \mathcal{C}(G(\Th_{(i,\e)}))$ uniquely defined by 
\[O_{(i,\e)}(\Gamma)/(V(O_{(i,\e)}(\Gamma))\cap U_{(i,\e)})=o_{(i,\e)}(\Gamma).\] \end{prop}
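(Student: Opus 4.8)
The plan is to follow the now-familiar template used to prove Propositions \ref{del}, \ref{pthru}, and the vertical juxtaposition result \ref{vjuxtSWB}: namely, construct an explicit candidate SWB datum, verify that its curve graph agrees (up to order-preserving isomorphism) with the right-hand side of \eqref{hsgr}, and then invoke Lemma \ref{grp1} for uniqueness. First I would fix notation: write $p\in P$ for the part containing $i$, $q=\{i+\e,i'\}\in P$ for the part containing $i+\e$, $\sigma=\sigma_{(i,\e,P,s)}$, and recall the structures $F_{(i,\e)}$, $U_{(i,\e)}$, $o_{(i,\e)}$ from \ref{hsvset}. The candidate edge set $E'$ should be obtained as follows: start from $E$, discard the ``band edges'' $u\edge \iota_F(u)$ for $u$ in $(U_F)_i$ (these are exactly the edges in $(D_F)_p$ lying on curves that genuinely pass through band $p$, as opposed to those that turn back inside band $p$), transport the remaining pairing via $o_{(i,\e)}$, and then the paths $P_u$ reconnect things appropriately inside the enlarged band $q$. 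Concretely $E'=o_{(i,\e)}(E\smallsetminus \bigcup_{u\in(U_F)_i}u\edge\iota_F(u))$ together with, for each $u\in(V_F)_i$, the two ``reconnecting'' edges $[\iota_{F_{(i,\e)}}\circ o_{(i,\e)}(u)]\edge[\iota_{F_{(i,\e)}}\circ o_{(i,\e)}\circ\iota_F(u)]$ and (when the original band edge was absent, i.e. $u\in(U_F)_i$) the edge $[o_{(i,\e)}(u)]\edge[o_{(i,\e)}\circ\iota_F(u)]$ is replaced by the longer path; one must write this out carefully distinguishing the turn-back case ($\{u,\iota_F(u)\}\in E$) from the pass-through case.

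The key verification steps, in order, are: (1) check that $E'$ is a crossing-less pair-partition of $V_{F_{(i,\e)}}$ — this is where the admissibility of $(i,\e)$ and the crossing-less hypothesis on $E$ are essential, and where the geometry of ``sliding band $p$ over band $q$'' must be translated into the combinatorics of the permutation $\sigma$ and the convex set $U_{(i,\e)}$; this is essentially the SWB-level analogue of Lemma \ref{TLarg}. (2) Check that $\Th_{(i,\e)}:=(F_{(i,\e)},E')$ lies in $Sq_M(n,m)$, i.e. has the right rank and type — rank is immediate since $F_{(i,\e)}$ has rank $M$, and type is immediate since $f_{(i,\e)}$ does not alter $f(\{0\})=n$ or $f(\{2M+1\})=m$. (3) Verify the identity $o_{(i,\e)}\circ \iota_F = \iota_{F_{(i,\e)}}\circ o_{(i,\e)}$ wherever both sides are defined (the standard compatibility lemma appearing in every such proof), so that $D_{F_{(i,\e)}}$ decomposes as $o_{(i,\e)}(D_F\smallsetminus\bigcup_{u\in(U_F)_i}u\edge\iota_F(u))$ together with the new band edges inside $q$, which are exactly the remaining edges of the paths $P_u$. (4) Assemble (1)–(3) to conclude that $G(\Th_{(i,\e)})=(V_{F_{(i,\e)}},E'\cup D_{F_{(i,\e)}})$ equals the right-hand side of \eqref{hsgr}. (5) Apply Lemma \ref{grp1} (in the $\simeq_O$ form of the subsequent remark) to conclude uniqueness. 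Finally, for the bijection $O_{(i,\e)}$: since $o_{(i,\e)}$ is an OP bijection $V_F\to V_{F_{(i,\e)}}\smallsetminus U_{(i,\e)}$ and, by construction of $E'$, each curve of $\Th$ is modified only by inserting the path $P_u$ (a subdivided arc) at each vertex $u$ of $(V_F)_i$ lying on it, there is an evident correspondence of connected components; one defines $O_{(i,\e)}(\Gamma)$ to be the unique component of $G(\Th_{(i,\e)})$ whose contraction on $V(\cdot)\cap U_{(i,\e)}$ gives $o_{(i,\e)}(\Gamma)$, and checks using Proposition \ref{contpr} (specifically Remark \ref{compmap}, since $[U_{(i,\e)}:G(\Th_{(i,\e)})]=0$ — no new closed components are created) that this is well-defined and bijective.

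I expect the main obstacle to be step (1), the crossing-less check, together with the bookkeeping of exactly which band edges survive and how the paths $P_u$ thread through the enlarged band $q$. The difficulty is twofold: first, the permutation $\sigma$ has four cases (as spelled out in equation \eqref{perm}) according to whether $i+\e=m(q)$ or $M(q)$ and whether $s(q)=0$ or $1$, and the twist function $s_{(i,\e)}$ changes on $p$ when $s(q)=1$, so the local picture of band $p$ after the slide is either an untwisted or a twisted band depending on parity; Figure \ref{fig:hsvs} captures the two geometric possibilities and the proof should reduce to these. Second, one must be careful that curves which turn back within band $p$ (i.e. $u$ with $\{u,\iota_F(u)\}\in E$) behave differently from curves that pass through: the former contribute a genuine new turn-back inside the slid-over configuration while the latter get the full reconnecting path $P_u$; getting the edge set $E'$ exactly right in both sub-cases, and confirming it remains crossing-less, is the crux. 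As in the analogous lemmas earlier in the paper, I would handle this by first reducing to the case $(i,\e)=(1,+1)$ using the graph isomorphisms induced by $C_N$ and $\omega_N$ together with Lemmata \ref{cyclemma} and \ref{fliplemma} (lifted to the SWB level via \ref{permSWB} and \ref{SWBdual}), and then verifying the remaining finite case directly from the local diagrams; the routine parts (rank, type, the compatibility identity for $\iota$) I would state and leave to direct verification in the style of the other proofs in this section.
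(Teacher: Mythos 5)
Your overall architecture matches the paper's: construct the candidate $(F_{(i,\e)},E')$, show its curve graph equals the right-hand side of \eqref{hsgr}, invoke Lemma \ref{grp1} for uniqueness, and obtain $O_{(i,\e)}$ from \ref{contpr} together with $G(\Th_{(i,\e)})/U_{(i,\e)}=o_{(i,\e)}(G(\Th))$ (that last part of your plan is fine). However, two of your steps do not survive scrutiny as stated. Your step (3) asserts $o_{(i,\e)}\circ\iota_F=\iota_{F_{(i,\e)}}\circ o_{(i,\e)}$ ``wherever both sides are defined''; both sides are defined on all of $(V_F)_\int$, and the identity fails exactly on $(V_F)_p$ --- if it held there, $D_{F_{(i,\e)}}$ would contain all of $o_{(i,\e)}(D_F)$ and no slide would have taken place. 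Correspondingly, your decomposition of $D_{F_{(i,\e)}}$ is wrong: \emph{all} of $(D_F)_p$ must be excised from the band edges, not only the edges indexed by $(U_F)_i$; the edges $o_{(i,\e)}(u)\edge o_{(i,\e)}(\iota_F(u))$ for $u\in(V_F)_i\smallsetminus(U_F)_i$ survive in $G(\Th_{(i,\e)})$ only because they lie in $o_{(i,\e)}(E)$, not in $D_{F_{(i,\e)}}$. The actual content of the existence proof, which you defer, is the explicit computation of $\iota_{F_{(i,\e)}}\circ o_{(i,\e)}$ on $(V_F)_i$ and on $\iota_F((V_F)_i)$ (four cases, according to $s(q)$ and the relative order of $\sigma(i),\sigma(i')$). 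From these formulas one reads off that $E_{(i,\e)}:=\iota_{F_{(i,\e)}}\circ o_{(i,\e)}\{u\edge\iota_F(u)\mid u\in(V_F)_i\}$, the set of middle edges of the paths $P_u$, is a nested pairing of the convex set $U_{(i,\e)}$; crossing-lessness of $E'=o_{(i,\e)}(E)\cup E_{(i,\e)}$ is then nearly immediate, since an arc with both endpoints in a convex set disjoint from the image of $o_{(i,\e)}$ cannot cross an arc of $o_{(i,\e)}(E)$. In particular no analogue of Lemma \ref{TLarg} is needed here; the crux you identify is resolved by the computation you never propose to carry out.

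The second problem is your plan to reduce to $(i,\e)=(1,+1)$. The $\omega_N$/duality half is legitimate (it is how the paper later proves $(\Th_{(i,\e)})^*=H_{(\omega_M(i),-\e)}(\Th^*)$), but there is no lift of $C_N$ to SWB data: \ref{permSWB} defines $\sigma(F)$ only on frames, and a cyclic rotation of the bands has no compatible action on the pairing $E$ --- the external vertices break the cyclic symmetry, and transporting an arc attached to the bottom band to the top of the right-hand edge does not preserve crossing-lessness --- so Lemma \ref{cyclemma} cannot be transported as you suggest. The paper performs no such reduction; it treats all cases at once through the explicit formulas above. Your reduction step would therefore stall, leaving you to do the direct four-case verification anyway, and that verification must actually be supplied since it carries the proof.
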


\begin{proof} For this proof, fix $F'=F_{(i,\e)}, o'=o_{(i,\e)}$, $U'=U_{(i,\e)}$, set $q=\{i+\e,i'\}\in P$ and $i\in p\in P$. By direct computation, one finds the following for $(i,a) \in (V_F)_i$:
\begin{align*}
    \iota_{F'}\circ o' (i,a)&=\begin{cases} (\sigma(i'),f(p)+f(q)+1-a), & s(q)=0, \& \ \sigma(i)>\sigma(i'),\\
    (\sigma(i'),f(q)+a), & s(q)=1, \& \ \sigma(i)>\sigma(i'),\\
    (\sigma(i'),f(p)+1-a), & s(q)=0, \& \  \sigma(i')>\sigma(i),\\
    (\sigma(i'),a), & s(q)=1, \& \ \sigma(i')>\sigma(i),   \end{cases} \\
    \iota_{F'}\circ o' \circ \iota_{F} (i,a)&=\begin{cases} (\sigma(i),a), & s(q)=0,\\
    (\sigma(i),f(p)+1-a), & s(q)=1.   \end{cases} 
\end{align*}
From the above, one can verify $\iota_{F'}\circ o'((V_F)_i\cup \iota_F(V_F)_i )=U'$ and the following:
\begin{align*}E_{(i,\e)}&:=\iota_{F'}\circ o'\{ u \edge \iota_F(u) | \ u \in (V_F)_i \}\\
&=\begin{cases} \left\{ (\sigma(i),f(p)+1-a)\edge (\sigma(i'),f(q)+a)    \ | \ a=1,\dots, f(p)\right\}, &  \sigma(i')<\sigma(i),\\
\left\{ (\sigma(i),f(p)+1-a)\edge (\sigma(i'),a)    \ | \ a=1,\dots, f(p)\right\}, &  \sigma(i)<\sigma(i').
\end{cases} \end{align*}
Thus $E_{(i,\e)}$ is a crossing-less pairing of the convex subset $U'$, and so $\Th'=(F', o'(E)\cup E_{(i,\e)})$ is a SWB-datum. 

To see that $G(\Th')$ has the desired property \eqref{hsgr}, we note that the identity $o'\circ \iota_F=\iota_{F'} \circ o'$ holds on $(V_F)_\int \smallsetminus (V_F)_p$, so 
\[D_{F'}=o'(D_F\smallsetminus (D_F)_p )\cup \left\{\begin{array}{c}  {[\iota_F'\circ o'(u)] \edge [o'(u)]  ,}\\ {[\iota_F'\circ o'\circ \iota_F(u)] \edge [o'\circ \iota_F(u)],}\end{array}  \ \Big| \ u \in (V_F)_i   \right\}.\]
The identity \eqref{hsgr} now follows from \ref{SWBgraph}. The existence and bijectivity of the map $O_{(i,\e)}$ follows from \ref{contpr}, noting that $G(\Th')/ U'=o'(G(\Th))$; contraction on vertices $\iota_F'\circ o'(u)$ and $\iota_F'\circ o'\circ \iota_F(u)$ replaces the path $P_u$ with the edge $o'(u\edge \iota_F(u))$ for all $u\in (V_F)_i$. \end{proof}

\begin{figure}[ht]
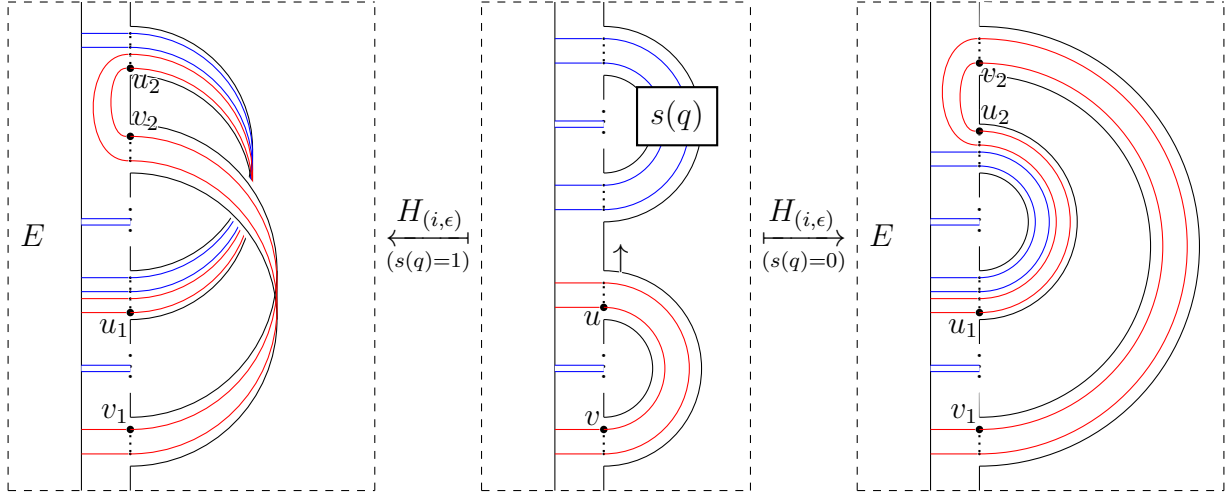

    \centering
\[  \hsab{0.65} \ \underset{(s(q)=1)}{\xmapsfrom{H_{(i,\e)}}} \ \hsa{0.65}\ \underset{(s(q)=0)}{\xmapstoo{H_{(i,\e)}}} \ \hsaa{0.65}\]
    \caption{Local diagram for a handle slide. For $u=\min(V_F)_i$, we show the path $P_u=u_1\edge u_2 \edge v_2 \edge v_1 \subset G(\Th_{((i,\e))})$ using the shorthands $v=\iota_F(u)$, $ u_1=o_{((i,\e))}(u)$, $ u_2=\iota_{F_{((i,\e))}}(u_1)$, $v_1=o_{((i,\e))}(v)$, and $v_2=\iota_{F_{((i,\e))}}(v_1)$.}
    \label{fig:hsloc}
\end{figure}

\begin{example}\label{Ex:handleslide} Below we apply a handle slide to some SWB datum $\Th\in Sq_{\tor}(5,5)$. 
\[\exaahs{0.4} \ \xmapstoo{H_{(2,-1)}} \ \exabhs{0.4}\overset{\iso}{\sim}\exachs{0.4}\]
We indicate the handle slide to be performed diagrammatically with an arrow pointing up or down and the appropriate site.
\end{example}

\mdef\label{hsdef} For any $N\in \Z_{\geq 0}$ and $(i,\e)\in \underline{2N}\times \{\pm 1\}$, define a map $H^{(N)}_{(i,\e)}:Sq_{N}(n,m)\to Sq_{N}(n,m)$, by 
\[ H^{(N)}_{(i,\e)}(\Th ) = \begin{cases} \Th_{(i,\e)}, & (i,\e) \text{ admissible in } P,\\
\Th, & \text{otherwise},\end{cases} \]
for $\Th=(F,E)$, $F=(P,s,f)$. We refer to the map $H^{(M)}_{(i,\pm 1)}$ as the \textbf{handle slide} up (+1) or down (-1) at site $i$. We will often omit the superscript and write simply $H_{(i,\e)}$ for a handle slide when the rank is clear from context.

\begin{prop}[Properties of handle-sliding]\label{hselem} Let $\Th=(F,E)\in Sq_N(n,m)$ be an SWB datum, with $F=(P,s,f)$, and suppose $(i,\e)$ is admissible in $P$. Then the following hold:
\begin{enumerate}
    \item\label{hscplt} $|\mathcal{G}(\Th_{(i,\e)})|=|\mathcal{G}(\Th)|$. 
    \item\label{hsdual} $(\Th_{(i,\e)})^*=H_{(\omega_M(i),-\e)}(\Th^*)$.
    \item\label{hscpt} For $\Gamma\in \mathcal{C}(G(\Th))$ set $\Gamma'=O_{(i,\e)}(\Gamma)$. Then  $H_{(i,\e)}(\Th\smallsetminus \Gamma)=\Th_{(i,\e)}\smallsetminus \Gamma'$. Furthermore, $\tau_{\Gamma}=\tau_{\Gamma'}$, and $\Gamma'$ is internal (separating), if and only if $\Gamma$ is, respectively.
    \item\label{hsiso}$H_{(i,\e)}\circ J_{(j,a)}(\Th)\overset{\iso}{\sim} \Th_{(i,\e)}$ for all $(j,a)\in \underline{2M}\times \Z_{>0}$.
    \item\label{hsjuxt} For any $\Th' \in Sq_{N'}(m,l)$ we have $\Th' \# \Th_{(i,\e)}=H^{(N+N')}_{(i,\e)}\left(\Th' \#\Th\right)$ and $L_{\Th_{(i,\e)},\Th'}=L_{\Th,\Th'}$.
\end{enumerate}\end{prop}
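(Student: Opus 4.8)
The plan is to deduce parts~\ref{hscplt}--\ref{hsjuxt} from Proposition~\ref{hsprop}, which already records the curve graph $G(\Th_{(i,\e)})$ as an order-preserving image of $G(\Th)$ with a controlled modification near column $i$ (equation~\eqref{hsgr}) together with the index-zero bijection $O_{(i,\e)}$ on components; the only further tools needed are the contraction calculus of Proposition~\ref{contpr} and the graph criterion Lemma~\ref{grp1}. With these in hand, parts~\ref{hsdual}, \ref{hscpt} and \ref{hsjuxt} follow the template of Propositions~\ref{delp} and \ref{hsprop} itself. For part~\ref{hsdual} I would first check the frame identity $(F_{(i,\e)})^*=(F^*)_{(\omega_N(i),-\e)}$: its twisted-chord component is exactly Lemma~\ref{fliplemma}, and the $f$-component is a direct computation from the formula in~\ref{flip} and the definition of $f_{(i,\e)}$ in~\ref{hsvset}. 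Then $G((\Th_{(i,\e)})^*)=\omega_{F_{(i,\e)}}(G(\Th_{(i,\e)}))$; applying $\omega_{F_{(i,\e)}}$ to~\eqref{hsgr} and comparing with the same formula for $H_{(\omega_N(i),-\e)}(\Th^*)$ yields an order-isomorphism of curve graphs, and Lemma~\ref{grp1} concludes.

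For part~\ref{hscpt} the frames of $H_{(i,\e)}(\Th\smallsetminus\Gamma)$ and $\Th_{(i,\e)}\smallsetminus\Gamma'$ agree because $g_{\Gamma'}$ equals $g_\Gamma$ up to the order-preserving reindexing inside $o_{(i,\e)}$, so $F_{-\Gamma}$ is handle-slid to $(F_{(i,\e)})_{-\Gamma'}$; both curve graphs are then identified via Proposition~\ref{contpr} with $o_{(i,\e)}(G(\Th))$ contracted on the vertices of $\Gamma$, so Lemma~\ref{grp1} applies. Invariance of the twist follows since $o_{(i,\e)}$ carries $E(\Gamma)\cap(D_F)_r$ bijectively onto $E(\Gamma')\cap(D_{F_{(i,\e)}})_{\sigma(r)}$ for every band $r$ other than the band $p$ through $i$, while along $p$ the edges of $\Gamma$ in $(D_F)_p$ are replaced by the paths $P_u$ through band $q$ together with the new adjacent-column edges $E_{(i,\e)}$, changing the count of twisted bands met by $\Gamma'$ by $\bigl(s(p)+s(q)\bigr)+s(q)\equiv 0\bmod 2$ --- precisely the shift built into $s_{(i,\e)}$. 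The internal/separating statements reduce to part~\ref{hscplt}: separation is read off $\mathcal{G}(\Th_\Gamma)$, and iterating the first assertion of part~\ref{hscpt} gives $(\Th_{(i,\e)})_{\Gamma'}=(\Th_\Gamma)_{(i,\e)}$, whence part~\ref{hscplt} forces $|\CC(\mathcal{G}((\Th_{(i,\e)})_{\Gamma'}))|=|\CC(\mathcal{G}(\Th_\Gamma))|$.

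Part~\ref{hscplt} is the one step requiring a genuinely new local computation, since Proposition~\ref{hsprop} controls only the curve graph and not the complement graph $\mathcal{G}$. The plan is to describe, column by column, how $\mathcal{G}(\Th_{(i,\e)})$ is built from $\mathcal{G}(\Th)$ near site $i$ (Figure~\ref{fig:hsvs}): outside columns $\sigma(i)$ and $\sigma(i')$ nothing changes, and locally the handle slide carries the $f(p)$ strands of column $i$ over band $q$, which on the complement side merely subdivides and re-routes the incident faces without merging or disconnecting any of them. Concretely I would exhibit a convex subset $\mathcal{U}_{(i,\e)}\subset\mathcal{V}_{F_{(i,\e)}}$ with $[\mathcal{U}_{(i,\e)}:\mathcal{G}(\Th_{(i,\e)})]=0$ and $\mathcal{G}(\Th_{(i,\e)})/\mathcal{U}_{(i,\e)}\simeq_O\mathcal{G}(\Th)$, so that $|\CC(\mathcal{G}(\Th_{(i,\e)}))|=|\CC(\mathcal{G}(\Th))|$ by Remark~\ref{compmap}; the verification splits into the cases $s(q)=0$ and $s(q)=1$ of Figure~\ref{fig:hsvs}, using the $\mathcal{E}$-versus-$\mathcal{D}_F$ parity bookkeeping from the proof of Proposition~\ref{comp1}. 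Part~\ref{hsjuxt} then follows formally once one notes the frame identity $(F'\# F)_{(i,\e)}=F'\# F_{(i,\e)}$ (valid because $i\le 2N$ lies in the lower factor and the handle slide touches only bands of $F$): the curve-graph identity is obtained by combining~\eqref{hsgr} with the composite-graph description of Proposition~\ref{vjuxtSWB} and Proposition~\ref{contpr}, and $L_{\Th_{(i,\e)},\Th'}=L_{\Th,\Th'}$ follows from Proposition~\ref{contpr} part~\ref{conassoc}, the handle-slide contraction having index $0$ and vertex set disjoint from $U_{F,F'}$ (the handle slide does not touch the northern vertices of $F$), so the two linking-number indices coincide.

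Finally, part~\ref{hsiso}. If $(j,a)$ is not a turn-back of $\Th$, or if $\iota_F((j,a))\in E$, then $J_{(j,a)}$ acts trivially and there is nothing to prove; so assume $J_{(j,a)}(\Th)=\Th_{((j,a))}$. The key observation is that a handle slide sends a turn-back to a turn-back, possibly relocated: the two legs of $\{(j,a),(j,a+1)\}$ both lie in column $j$, which $o_{(i,\e)}$ carries into a single column. The plan is then a case analysis on the position of $j$ relative to the support of $\sigma=\sigma_{(i,\e,P,s)}$. When column $j$ is untouched by the handle slide, $H_{(i,\e)}$ and $J_{(j,a)}$ commute up to relabelling, so $H_{(i,\e)}(\Th_{((j,a))})=J_{(\sigma(j),a')}(\Th_{(i,\e)})\overset{\iso}{\sim}\Th_{(i,\e)}$; when $j$ lies in the support of $\sigma$, and in particular when $j=i$ or $j=i+\e$, direct inspection of the local pictures (Figures~\ref{fig:iso_local} and~\ref{fig:hsloc}) shows that $H_{(i,\e)}(\Th_{((j,a))})$ and $\Th_{(i,\e)}$ differ by a bounded number of pull-throughs, the complexity decreasing in matched amounts on the two sides. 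I expect this last case analysis --- reconciling a pull-through with a handle slide whose support it meets --- to be the main obstacle, being the one part not reducible to the graph formalism already developed; it is, in effect, the first instance of the compatibility of handle sliding with isotopy that will be needed to make the category of~\S4 well defined.
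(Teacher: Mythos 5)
Parts (2), (3) and (5) of your plan are essentially the paper's own argument (frame identity plus the identities $\omega_{F'}\circ\iota_{F'}=\iota_{(F')^*}\circ\omega_{F'}$ and $\omega_{F_{(i,\e)}}\circ o_{(i,\e)}=o_{(\omega_N(i),-\e)}\circ\omega_F$, then Lemma~\ref{grp1}; band-by-band counting for the twist; Lemma~\ref{grp1} again for juxtaposition), and part (4) is the paper's route via Figure~\ref{fig:hsiso}. Two small corrections there: in part (3) the per-edge change in the twist count is $\bigl[s_{(i,\e)}(p)+s_{(i,\e)}(q)\bigr]-s(p)=2s(q)\equiv 0$, not ``$(s(p)+s(q))+s(q)\equiv 0$'' as you wrote (the conclusion is unaffected); and in part (4) the case needing two pull-throughs on the slid side is $j\in p$ (the turn-back lies on the band being slid, giving $H_{(i,\e)}\circ J_{(j,a)}(\Th)=J_{(j_2,a_2)}\circ J_{(j_1,a_1)}(\Th_{(i,\e)})$), not $j=i+\e$; columns merely renumbered by $\sigma$ need only one relabelled pull-through.

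The genuine gap is in part (1). You propose to exhibit a convex $\mathcal{U}\subset\V_{F_{(i,\e)}}$ of index $0$ with $\G(\Th_{(i,\e)})/\mathcal{U}\simeq_O\G(\Th)$; a vertex count forces $|\mathcal{U}|=2f(p)$, and no such subset exists in general, because the handle slide rewires the $\mathcal{D}$-edges of the complement graph through bands $p$ and $q$ rather than merely adding contractible pockets. The cleanest failure is $f(p)=0$: then $\mathcal{U}$ must be empty and $\V_F=\V_{F_{(i,\e)}}$, yet the single interstitial vertex of column $i$ is $\mathcal{D}$-joined to its band-$p$ partner column before the slide and to a band-$q$ column after it; if some other curve's arcs separate these interstitials inside the square (easily arranged while keeping $(P,s)\in\mathcal{TC}_N^*$), the two graphs have different edge sets on the same ordered vertex set, so they are not order-isomorphic even though their component counts agree. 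The same phenomenon obstructs the case $f(p)\geq 1$: the old band-$p$ edges $v^{\pm}\edge\iota_F(v)^{\mp}$ connect regions that, after the slide, are joined only through a route the contraction of the ``finger'' pockets need not reproduce edge-for-edge once other curves separate the relevant square regions. What is true, and what the paper uses, is the two-sided comparison \eqref{eqhscpt}: both $\G(\Th)$ and $\G(\Th_{(i,\e)})$ admit component-count-preserving contractions (on $\{v_M^+,w_m^-,\iota_F(w_m^-)\}$ and on $\{o_{(i,\e)}(v_M)^+\}\cup\mathcal{U}_{(i,\e)}$ respectively) onto a common graph, with the case $\e=-1$ then deduced from part (2). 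So your plan for part (1) needs to be replaced by such a two-sided contraction (or another argument comparing only component counts); as stated, the step would fail when you tried to execute it.
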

\begin{proof} As for \ref{hsprop}, set $F'=F_{(i,\e)}$, $o'=o_{(i,\e)}$ and let $i\in p\in P$ and let $q=\{i+\e, i'\}\in P$.

    For (1), by considering the curve complements in Figure \ref{fig:hsloc}, one sees that for $\e=+1$
\begin{equation}\mathcal{G}(\Th)/\{v_M^+, w_m^-, \iota_F(w_m^-) \} \simeq_O \mathcal{G}(\Th_{(i,\e)})/\{o'(v_M)^{+}\}\cup \mathcal{U}_{(i,\e)}  \label{eqhscpt}\end{equation}
where $v_M=\max((V_F)_i)$, $w_m=\min((V_F)_{i+1})$, and $\mathcal{U}_{(i,\e)}:=\left\{u^+, u^- \ | \ u \in U_{(i,\e)} \right\}$ (using the notation $v^\pm\in \V_F$ for $v\in V_F$ as in the proof of Prop. \ref{comp1}). The desired equality in the $\e=+1$ case now follows since both contractions in \eqref{eqhscpt} preserve the number of components. The case $\e=-1$ follows by combining with part \eqref{hsdual}.

    For (2), using \ref{fliplemma} one has that $H_{(\omega_N(i),-\e)}(\Th^*)=(\Th^*)_{(\omega_N(i),-\e)}$, and both SWB data have frame $(\omega(P),s_{(i,\e)}\circ\omega^{-1},f_{(i,\e)}\circ\omega^{-1}  )$, where $\omega=\omega_N\circ \sigma$. Furthermore, one computes 
    \begin{align*}
          G(\Th_{(i,\e)}^*)&=\omega_{F_{(i,\e)}}(G(\Th_{(i,\e)}))\\
          &=o_{(i,\e)}\left(G(\Th^*)\smallsetminus\bigcup_{u \in (U_{F^*})_{\omega_N(i)}} u- \iota_{F^*}(u)\right)\cup \bigcup_{u\in (V_{F^*})_{\omega_N(i)}} P^{(\Th^*)}_u\\
          &=G(H_{(\omega_N(i),-\e)}(\Th^*) ),
    \end{align*}
    where we have used the identities $\omega_{F'}\circ \iota_{F'}= \iota_{(F')^*}\circ \omega_{F'}$ (for any frame $F'$) and $\omega_{F_{(i,\e)}}\circ o_{(i,\e)}=o_{(\omega_N(i),-\e)}\circ \omega_F$, wherever defined. The result now follows by \ref{grp1}.

    For (3), we first note that $(f-g_{\Gamma})_{(i,\e)}=f_{(i,\e)}-(g_{\Gamma})_{(i,\e)}=f_{(i,\e)}-g_{\Gamma'}$, hence both SWB-data have the same frame. Their equality follows from \ref{grp1} now since both have graphs $\simeq_O G(\Th_{(i,\e)}) \smallsetminus \Gamma'$.

    Clearly $O_{(i,\e)}$ preserves internality of components, and $|\CC\left(\mathcal{G}((\Th_{(i,\e)})_{\Gamma'}  )\right)|=|\CC\left(\mathcal{G}(\Th_{\Gamma}  )\right)|$ follows by combining part (1) and (3). Since $o_{(i,\e)}$ induces a bijections $E(\Gamma)\cap (D_{F})_r\to E(\Gamma')\cap (D_{F'})_{\sigma(r)} $, for all $r\in P\smallsetminus\{p,q\}$, we now compute
    \begin{align*}\tau_{\Gamma'}&=\sum_{p' \in \sigma( P) }| E(\Gamma')\cap (D_{F'})_{p'}|s_{(i,\e)}\circ \sigma^{-1}(p')\\
    &=\sum_{r\in P\smallsetminus\{p,q\}} | E(\Gamma)\cap (D_{F})_r|s(r)+| E(\Gamma')\cap (D_{F'})_{\sigma(p)}|s_{(i,\e)}(p)+| E(\Gamma')\cap (D_{F'})_{\sigma(p)}|s_{(i,\e)}(q) \\
    &=\sum_{r\in P\smallsetminus\{p,q\}} | E(\Gamma)\cap (D_{F})_r|s(r)+| E(\Gamma)\cap (D_{F})_{p}|\left( s(p)+s(q) \right)\\
    &\quad+\left(| E(\Gamma)\cap (D_{F})_{p}|+| E(\Gamma)\cap (D_{F})_{q}|\right )s(q)=\sum_{r\in P} | E(\Gamma)\cap (D_{F})_r|s(r)=\tau_\Gamma.
    \end{align*}
    
    Part (4) is trivial unless $((j,a)) \in E$ and $\iota_F((j,a))\notin E$. If $j \in p$, we then claim that $H_{(i,\e)}\circ J_{(j,a)}(\Th)=J_{(j_2,a_2)}\circ J_{(j_1,a_1)} (\Th_{(i,\e)})$ where $((j_1,a_1))=o'((j,a))$, and $((j_2,a_2))=\iota_{F'}\circ o'\circ \iota_F((j,a))$. Otherwise, we claim that $H_{(i,\e)}\circ J_{(j,a)}(\Th)=J_{(j_1,a_1)} (\Th_{(i,\e)})$ for $(j_1,a_1)$ as before. Part (5) is verified straightforwardly using the criteria in \ref{grp1}.\end{proof}

\begin{figure}[ht]
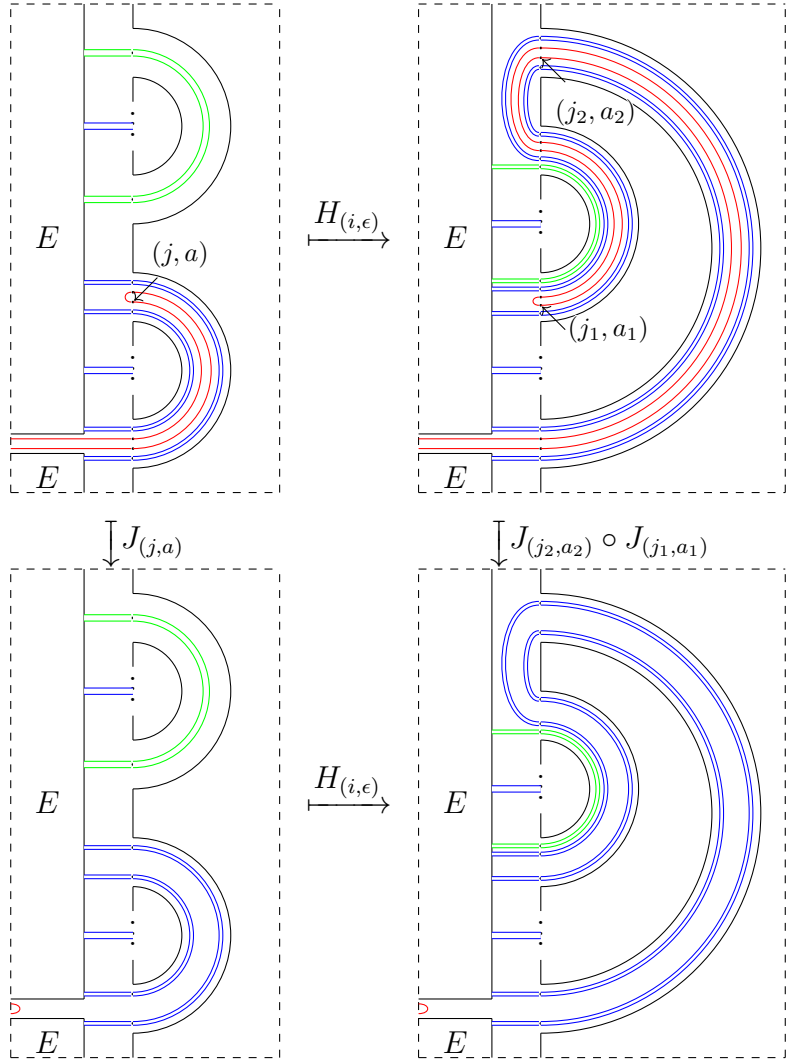

\centering
\[ \begin{matrix}\hsisoa{0.65} & \xmapstoo{H_{(i,\e)}}&\hsisoab{0.65}\\
\rotatebox{-90}{\xmapstoo{\rotatebox{90}{$J_{(j,a)}$}}}&&\rotatebox{-90}{\xmapstoo{\rotatebox{90}{$J_{(j_2,a_2)}\circ J_{(j_1,a_1)} $}}} \\
\hsisoaa{0.65}& \xmapstoo{H_{(i,\e)}}&\hsisoac{0.65}\end{matrix}\]
\caption{Local diagram showing $H_{(i,\e)}\circ J_{(j,a)}(\Th)=J_{(j_2,a_2)}\circ J_{(j_1,a_1)} (\Th_{(i,\e)})$ for part \ref{hsiso} of Prop. \ref{hselem}.}
\label{fig:hsiso}\end{figure}

\begin{remark}\label{hsdesc} By \ref{hselem} part \ref{hsiso} it follows that the handle slide $H^{(N)}_{(i,\e)}:Sq_N(n,m)\to Sq_N(n,m)$ descends to a map $Sq_N(n,m)/\isosim\to Sq_N(n,m)/\isosim$ (namely $[\Th]_{\iso}\mapsto [H^{(N)}_{(i,\e)}(\Th)]_{\iso}$). Henceforth, we abuse notation and write $H^{(N)}_{(i,\e)}$ for the latter map also.
\end{remark} 

\begin{prop}[Handle slide equivalence]\label{hseq} There is an equivalence relation on $Sq(n,m)$ defined by $\Th\overset{\hs}{\sim} \Th'$ if and only if $[\Th]_\iso =H [\Th']_{\iso}$, for $H:Sq_N(n,m)/\isosim\to Sq_N(n,m)/\isosim$ a finite (possibly empty) composite of handle slides, $H^{(N)}_{(i,\e)}$ for some $N\in \Z_{\geq 0}$. We call the relation $\overset{\hs}{\sim}$ \textbf{handle slide equivalence}.\end{prop}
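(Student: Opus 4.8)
The statement to prove is that $\overset{\hs}{\sim}$ is an equivalence relation on $Sq(n,m)$. This is exactly parallel to Proposition \ref{S2eq} for twisted chord data, so the plan is to follow that template, importing the necessary machinery from Proposition \ref{hselem} and Remark \ref{hsdesc}.

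First I would observe that reflexivity is immediate by taking $H$ to be the empty composite, and transitivity follows because a composite of composites of handle slides is again a composite of handle slides (one must check that the rank $N$ can be taken uniformly, but since all the data in a given $\isosim$-class have ranks that... actually here one should be slightly careful: $\isosim$ does not change the rank $N$, since $J^{(N)}_{(i,a)}$ preserves rank; so each $\isosim$-class sits inside a fixed $Sq_N(n,m)$, and handle slides $H^{(N)}_{(i,\e)}$ are well-defined on $Sq_N(n,m)/\isosim$ by Remark \ref{hsdesc}; composing them within this fixed $N$ poses no issue). The substantive point, as in Proposition \ref{S2eq}, is \textbf{symmetry}: it suffices to show that for any $\Th=(F,E)\in Sq_N(n,m)$ with $F=(P,s,f)$ and any admissible pair $(i,\e)$ in $P$, we have $\Th \overset{\hs}{\sim} H^{(N)}_{(i,\e)}(\Th)$, i.e. that $[\Th]_\iso$ is recovered from $[H^{(N)}_{(i,\e)}(\Th)]_\iso$ by a further handle slide.

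The key step is then to identify the inverse handle slide. Write $H_{(i,\e)}(P,s)=(\sigma(P), s_{(i,\e)}\circ\sigma^{-1})$ with $\sigma=\sigma_{(i,\e,P,s)}$ and let $i+\e\in q\in P$. In the proof of Proposition \ref{S2eq} the inverse chord slide was found to be $h_{(\sigma(i),\e')}$ where $\e'=-\e$ if $s(q)=0$ and $\e'=\e$ if $s(q)=1$; moreover one checks there that $(P,s)=h_{(\sigma(i),\e')}[h_{(i,\e)}(P,s)]$ at the level of twisted chord data. I would claim that the same pair $(\sigma(i),\e')$ gives the inverse handle slide: that is, $[\Th]_\iso = H^{(N)}_{(\sigma(i),\e')}\big([H^{(N)}_{(i,\e)}(\Th)]_\iso\big)$. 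To verify this, I would first confirm that $(\sigma(i),\e')$ is admissible in $\sigma(P)$ (this is inherited from the corresponding fact for chord data used in \ref{S2eq}), so that the right-hand handle slide is non-trivial. Then, writing $\Th_1 = H^{(N)}_{(i,\e)}(\Th) = (F_{(i,\e)}, E')$ and applying Proposition \ref{hsprop} once more to the admissible pair $(\sigma(i),\e')$, I would compute the curve graph $G\big((\Th_1)_{(\sigma(i),\e')}\big)$ using formula \eqref{hsgr} and show it is order-isomorphic to $G(\Th)$ up to a pull-through, i.e. equal to $G(J_{(j,a)}(\Th))$ for some explicit $(j,a)$ — the extra pull-through being exactly the source of the $\iso$-equivalence rather than literal equality, precisely the phenomenon already recorded in Proposition \ref{hselem}\eqref{hsiso} (handle-sliding up and immediately back down returns you to the original datum only up to isotopy, since a pair of curves gets created and then cancelled via a turnback). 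By Lemma \ref{grp1} (in its $\simeq_O$ form, cf.\ the Remark after it) this pins down $(\Th_1)_{(\sigma(i),\e')}$ up to isotopy, giving $[\Th]_\iso = [(\Th_1)_{(\sigma(i),\e')}]_\iso = H^{(N)}_{(\sigma(i),\e')}([\Th_1]_\iso)$, which is symmetry.

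I expect the main obstacle to be the bookkeeping in this last verification: the frame $F_{(i,\e)}$ has the band widths at sites $\sigma(i)$ and $\sigma(i')$ altered (the width $f(p)$ having been added to the $q$-band), so performing the reverse slide $H_{(\sigma(i),\e')}$ requires carefully tracking how the order-preserving maps $o_{(i,\e)}$ and $o_{(\sigma(i),\e')}$ compose, and how the involutions $\iota_{F_{(i,\e)}}$ behave at the modified sites — essentially the analogue of the "Explicitly one has" case analysis in \eqref{perm}. This is routine but tedious; it can be organised as a single local-diagram computation along the lines of Figure \ref{fig:hsloc} and Figure \ref{fig:hsiso}, checking the four sub-cases according to the two values of $s(q)$ and the two relative orders of $\sigma(i),\sigma(i')$. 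Once symmetry is in hand, the proof is complete, and I would conclude by noting that handle slide equivalence on $Sq(n,m)$ thus refines both isotopy and the chord-slide equivalence of Proposition \ref{S2eq} on the underlying twisted chord data.

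\begin{proof}
Reflexivity holds by taking $H$ to be the empty composite. For transitivity, if $[\Th]_\iso = H[\Th']_\iso$ and $[\Th']_\iso = H'[\Th'']_\iso$ with $H, H'$ composites of handle slides (all within the fixed $Sq_N(n,m)/\isosim$, since pull-throughs and handle slides preserve the rank $N$), then $[\Th]_\iso = (H\circ H')[\Th'']_\iso$ with $H\circ H'$ again such a composite. It remains to prove symmetry, and for this it suffices to show that for any $\Th=(F,E)\in Sq_N(n,m)$, $F=(P,s,f)$, and $(i,\e)$ admissible in $P$, we have $\Th\overset{\hs}{\sim} H^{(N)}_{(i,\e)}(\Th)$.

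Write $h_{(i,\e)}(P,s)=(\sigma(P),s_{(i,\e)}\circ\sigma^{-1})$ with $\sigma=\sigma_{(i,\e,P,s)}$, let $i+\e\in q\in P$, and set
\[
\e'=\begin{cases}-\e, & s(q)=0,\\ \e, & s(q)=1.\end{cases}
\]
By the argument in the proof of Proposition \ref{S2eq}, $(\sigma(i),\e')$ is admissible in $\sigma(P)$ and $(P,s)=h_{(\sigma(i),\e')}[h_{(i,\e)}(P,s)]$. Set $\Th_1=H^{(N)}_{(i,\e)}(\Th)=(F_{(i,\e)},E')$. Applying Proposition \ref{hsprop} to $\Th_1$ and the admissible pair $(\sigma(i),\e')$ produces $(\Th_1)_{(\sigma(i),\e')}=H^{(N)}_{(\sigma(i),\e')}(\Th_1)$, whose curve graph is given by \eqref{hsgr}. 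Computing this graph and comparing with $G(\Th)$ using the explicit formulae for $\iota_{F_{(i,\e)}}$ and $o_{(i,\e)}$ from the proof of Proposition \ref{hsprop} (checking the four sub-cases according to $s(q)\in\{0,1\}$ and the relative order of $\sigma(i),\sigma(i')$), one finds that $G\big((\Th_1)_{(\sigma(i),\e')}\big)\simeq_O G(J_{(j,a)}(\Th))$ for an explicit admissible $(j,a)$; the extra pull-through arises because sliding a handle and sliding it back creates a pair of parallel curves joined by a turn-back, exactly as in Proposition \ref{hselem}\eqref{hsiso}. Since $(\Th_1)_{(\sigma(i),\e')}$ and $J_{(j,a)}(\Th)$ have the same frame $F$, Lemma \ref{grp1} gives $(\Th_1)_{(\sigma(i),\e')}=J_{(j,a)}(\Th)\isosim \Th$. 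Hence
\[
[\Th]_\iso = \big[(\Th_1)_{(\sigma(i),\e')}\big]_\iso = H^{(N)}_{(\sigma(i),\e')}\big([\Th_1]_\iso\big) = H^{(N)}_{(\sigma(i),\e')}\big(H^{(N)}_{(i,\e)}[\Th]_\iso\big),
\]
so $\Th\overset{\hs}{\sim} H^{(N)}_{(i,\e)}(\Th)$. This establishes symmetry, and therefore $\overset{\hs}{\sim}$ is an equivalence relation on $Sq(n,m)$.
\end{proof}
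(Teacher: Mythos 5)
Your overall strategy is the same as the paper's: reflexivity and transitivity are immediate, and symmetry is reduced to showing $H^{(N)}_{(i,\e)}(\Th)\hssim\Th$ by applying the reverse handle slide $H^{(N)}_{(\sigma(i),\e')}$ with $(\sigma(i),\e')$ taken exactly as in the proof of Proposition \ref{S2eq}, and then checking that the composite $H_{(\sigma(i),\e')}\circ H_{(i,\e)}(\Th)$ is isotopic to $\Th$; this is precisely the paper's argument, recorded in Figure \ref{fig:hseq}.

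There is, however, a slip in your verification of that key claim which, as written, does not go through. You assert that $(\Th_1)_{(\sigma(i),\e')}$ equals a single pull-through $J_{(j,a)}(\Th)$ and justify this by saying the two data ``have the same frame $F$'', so that Lemma \ref{grp1} applies. They do not have the same frame: the first slide adds $f(p)$ to the width of the band $q$ containing $i+\e$, and the reverse slide adds $f(p)$ again, so $H_{(\sigma(i),\e')}\circ H_{(i,\e)}(\Th)$ has band-$q$ width $f(q)+2f(p)$ (its complexity exceeds that of $\Th$ by $2f(p)$), whereas $J_{(j,a)}(\Th)$ has complexity at most that of $\Th$. Consequently Lemma \ref{grp1} cannot be invoked for this pair, and the isotopy runs in the opposite direction from the one you wrote: the double-slid datum carries $f(p)$ nested turnbacks in band $q$ (not just ``a pair of curves''), and one recovers $\Th$ by applying $f(p)$ pull-throughs \emph{to the double-slid datum}; a single pull-through suffices only when $f(p)\leq 1$. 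This is a local, fixable error rather than a wrong approach --- the conclusion $[\Th]_{\iso}=H^{(N)}_{(\sigma(i),\e')}\bigl(H^{(N)}_{(i,\e)}[\Th]_{\iso}\bigr)$ is correct and is exactly what the paper claims --- but the verification should be organised as: compute $G\bigl(H_{(\sigma(i),\e')}\circ H_{(i,\e)}(\Th)\bigr)$ via \eqref{hsgr}, exhibit the $f(p)$ turnbacks created in band $q$, remove them by pull-throughs, and only then compare the resulting datum with $\Th$ (which now do share the frame $F$) using Lemma \ref{grp1}.
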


\begin{proof} The relation $\overset{\hs}{\sim}$ is manifestly reflexive and transitive. For symmetry, it suffices to check that $H_{(i,\e)} (\Th) \hssim \Th$ for any rank N, SWB datum $\Th=(F,E)$, and $(i,\e)\in H_N$. Suppose that $F=(P,s,f)$, then the previous relation is trivial unless $(i,\e)$ is admissible in $P$. In this case we claim that $H_{(\sigma(i), \e')}\circ H_{(i,\e)} (\Th) \isosim \Th$, where $(\sigma(i),\e')$ is as per the proof of \ref{S2eq} (\textit{c.f.} Figure \ref{fig:hseq}).\end{proof}

\begin{figure}[ht] 
\centering
\[\hseqa{0.55}  \ \xmapstoo{H_{(i,\e)}}\ \hseqaa{0.55} \ \xmapstoo{H_{(\sigma(i),\e')}} \ \hseqab{0.55} \isosim \hseqA{0.55}
 \]
\caption{Local diagram showing $H_{(\sigma(i), \e')}\circ H_{(i,\e)} (\Th) \isosim \Th$ for Prop. \ref{hseq}.}
\label{fig:hseq}\end{figure}

We conclude this subsection with the following corollary:
 
\begin{corollary}\label{hscor}
    For any $\Th\in Sq_N(n,m)$ with $N>0$, there exist unique $g\in \Z_{\geq 0}$ and $t\in \{0,1,2\}$ subject to $2g+t=N$, and $l_i\in \Z_{\geq 0}$ for $i=1,\dots, g+t-1$, such that 
    \[\Th \hssim (\Psi_{t}\# \dots \# \Psi_{1}) \# (\Th_g \# \dots \# \Th_1)  \]
    for some $\Th_{i}\in Sq_{\tor}(l_{i-1},l_i)$, $\Psi_i\in Sq_{\mob}(l_{g+i-1}, l_{g+i})$, where we set $l_0=n$ and $l_{g+t}=m$.
\end{corollary}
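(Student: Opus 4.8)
The plan is to reduce Corollary \ref{hscor} to the Caravan Decomposition (Theorem \ref{rform}) for twisted chord data, together with the compatibility results between handle slides and the various operations ($\#$, deletion of components, isotopy) established in Prop. \ref{hselem} and the Factorisation Lemma \ref{vjfact}. The key observation is that a handle slide $H_{(i,\e)}$ on a SWB datum $\Th=(F,E)$ with $F=(P,s,f)$ changes the underlying twisted chord datum $(P,s)$ precisely by the chord slide $h_{(i,\e)}$; so whenever $(P,s)\sim (P',s')$ via a composite of chord slides, one should be able to lift that composite to a composite of handle slides carrying $[\Th]_\iso$ to some SWB datum with frame based on $(P',s')$. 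The target identity \eqref{eq:rform} of Theorem \ref{rform} then gives $(P,s)\hssim$-equivalent (at the level of frames) to a vertical juxtaposition of copies of $\mob$, $\tor$, and $\text{Ann}$; lifting this, and then using the Factorisation Lemma \ref{vjfact} repeatedly to split the resulting SWB datum along the $\#$-decomposition of its frame, produces the desired factorisation with each tensorand lying in $Sq_{\tor}$, $Sq_{\mob}$, or $Sq_0$.

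More concretely, I would proceed as follows. First, observe that for $(i,\e)$ admissible in $P$, the frame $F_{(i,\e)}$ produced in \ref{hsvset} has underlying twisted chord datum $h_{(i,\e)}(P,s)$; hence by induction on the length of a chord-slide composite, if $(P,s)\sim(P',s')$ then for any $\Th=(F,E)$ with frame based on $(P,s)$ there is a composite of handle slides $H$ and a SWB datum $\Th'$ with frame based on $(P',s')$ such that $[\Th']_\iso = H[\Th]_\iso$, i.e. $\Th\hssim\Th'$. Second, apply this with $(P',s')$ the right-hand side of \eqref{eq:rform}: there exist unique $g,t$ (and $b=0$ since $(P,s)\in\mathcal{TC}_N^*$, using the characterisation in \ref{TCDst}) with $2g+t=N$ and $t\in\{0,1,2\}$ such that $(P,s)\sim (\#_\kappa^t\mob)\#(\#_\lambda^g\tor)$. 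So $\Th\hssim\tilde\Th$ where $\tilde\Th$ has frame whose twisted chord datum is exactly this juxtaposition. Third, apply the Factorisation Lemma \ref{vjfact} repeatedly: since the frame of $\tilde\Th$ has $(P,s)=\mob\#\dots\#\mob\#\tor\#\dots\#\tor$ (a $\#$-product of $g+t$ prime factors), iterating \ref{vjfact} splits $\tilde\Th=\Psi_t\#\dots\#\Psi_1\#\Th_g\#\dots\#\Th_1$ with each factor a SWB datum on a single-prime frame — so $\Th_i\in Sq_{\tor}(l_{i-1},l_i)$ and $\Psi_i\in Sq_{\mob}(l_{g+i-1},l_{g+i})$ for suitable type integers $l_i$, with $l_0=n$, $l_{g+t}=m$. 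Uniqueness of $g$ and $t$ is inherited directly from the uniqueness clause of Theorem \ref{rform}, since any such factorisation forces the underlying frame to have a caravan decomposition of type $(0,g,t)$, and handle slide equivalence preserves the underlying twisted chord datum up to $\sim$ (hence its type is an invariant).

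The main obstacle I anticipate is the first step — lifting a chord-slide composite to a handle-slide composite. The subtlety is that the handle slide $H_{(i,\e)}$ is only well-defined up to isotopy on the space $Sq_N(n,m)/\isosim$ (Remark \ref{hsdesc}), and more importantly, when one performs a chord slide $h_{(i,\e)}$ to get from $(P,s)$ to $(P',s')$, the frame data $f$ and the pairing $E$ are dragged along in a way prescribed by \ref{hsvset}–\ref{hsprop}, so the resulting SWB datum is not uniquely determined by $(P',s')$ — different representatives $\Th$ of $[\Th]_\iso$ and different choices along the slide path can yield frames with different $f$-values on the ``new'' band. One must check that this flexibility is harmless: it is, because the only invariant we need to control for the final statement is the underlying twisted chord datum together with the type $(n,m)$, and the type integers $l_i$ in the conclusion are allowed to be arbitrary non-negative integers. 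So the lifting step just needs: (a) that each chord slide lifts to a handle slide at the level of frames, which is immediate from \ref{hsvset}; and (b) that the handle slide maps respect composition of slides up to isotopy, which follows from Prop. \ref{hselem}\eqref{hsiso} and the fact (Prop. \ref{hseq}) that handle slide equivalence is genuinely an equivalence relation. One should also note the edge case $N>0$ in the hypothesis rules out the empty-frame situation, so there is at least one prime factor and the statement is non-vacuous.

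Throughout, the computations themselves (e.g. checking that applying \ref{vjfact} $g+t-1$ times gives the stated splitting with the right types, or tracking the $l_i$) are routine and I would not belabour them; the conceptual content is entirely in (i) the lift of Theorem \ref{rform} from twisted chord data to SWB data via Prop. \ref{hselem}, and (ii) the final disassembly via the Factorisation Lemma. I would also remark that, just as in the proof of Theorem \ref{rform}, one can if desired normalise the number of $\mob$ factors using a SWB-level analogue of the identity $\mob\#\mob\#\mob\sim\mob\#\tor$ (which holds at the frame level by Figure \ref{fig:ezmove} and lifts by the same argument), ensuring $t\in\{0,1,2\}$.
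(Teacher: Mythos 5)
Your proposal is correct and matches the paper's own (one-line) proof, which simply combines the Factorisation Lemma \ref{vjfact} with the Caravan Decomposition, Theorem \ref{rform}. The details you spell out — lifting chord slides to handle slides via \ref{hsvset}–\ref{hselem}, noting $b=0$ since frames live over $\mathcal{TC}_N^*$, iterating \ref{vjfact} along the $\#$-decomposition, and deducing uniqueness of $(g,t)$ from the type being a handle-slide invariant — are exactly the steps the paper leaves implicit.
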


\begin{proof}
    Follows by combining the Factorisation Lemma \ref{vjfact}, with Theorem \ref{rform}.
\end{proof}

\subsection{Left-most and Right-most Components} In this subsection we define two (families of) maps $\I_L,\I_R:Sq(n,m)\to Sq(n+1,m+1)$, which can be defined by adding certain components to the corresponding curve graphs. Diagrammatically, these correspond to adding curves which are isotopic to sub-intervals of the surface boundary, either on the left or right hand edge of the diagram. In Prop \ref{Ilrprop}, we prove that both maps descend to maps on handle slide equivalence classes, as well as some other key properties. In Lemmata \ref{lem1}, \ref{lem2}, and \ref{lem3} we establish three identities, which will be used in \S4 to establish a monoidal structure on the category we define in \S4. Finally we conclude this section with Prop. \ref{decomp}, from which we can deduce a monoidal generating set for said category.

\mdef For a frame $F=(P,s,f)$ of rank $N$, define frames $F_L=(P,s,f_L)$ and $F_R=(P,s,f_R)$ by $f_L(r)=f_R(r)=f(r)+1$ for $r\in \{\{0\}, \{2N+1\}\}$, and $f_L(p)=f(p)$, $f_R(p)=f(p)+2$ for $p \in P$. Define vertex subsets $U_{L}\subset V_{F_L}$ and $U_{R}\subset V_{F_R}$ by
\[U_L=\{\min(V_{F_L}), \max(V_{F_L})\}, \quad U_R=\{\max(V_{F_R})_i, \min(V_{F_R})_{i+1} \ | \ i=0,\dots, 2N\}.\]
Let $o_L:V_{F}\to V_{F_L}\smallsetminus U_L$ and $o_R: V_F \to V_{F_R}\smallsetminus U_R$ be the unique OP maps.

\begin{prop}\label{LRadd} Let $\Th\in Sq_N(n,m)$. Then there exist unique SWB-data $\Th_L=(F_L,E'), \Th_R=(F_R,E'')\in Sq(n+1,m+1)$, such that 
\[G(\Th_L)= o_L(G(\Th))\sqcup \Gamma_L, \qquad  G(\Th_R)=o_R(G(\Th))\sqcup \Gamma_R,\]
where $\Gamma_L=(0,1)\edge (2N+1,1)$, and $\Gamma_R\simeq_O \overline{\del(P,s)}$ (\textit{c.f.} \ref{bdgrp}) is a path between $(0,n+1)$ and $(2N+1,m+1)$. Furthermore, $\Gamma_L$ and $\Gamma_R$ are separating with $\tau_{\Gamma_L}=0=\tau_{\Gamma_R}$.\end{prop}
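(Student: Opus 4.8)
The plan is to construct $\Th_L$ and $\Th_R$ explicitly, read off the claimed curve graph decompositions, deduce uniqueness from Lemma \ref{grp1}, and finally verify the twist and separating assertions. First, for $\Th_L$: set $\Gamma_L := (0,1)\edge(2N+1,1)$ and $E' := o_L(E)\cup\{(0,1)\edge(2N+1,1)\}$. Since $(0,1)=\min(V_{F_L})$ and $(2N+1,1)=\max(V_{F_L})$ the new part crosses no other part, while $o_L(E)$ is crossing-less as the image of $E$ under an order-preserving map, so $\Th_L:=(F_L,E')$ is a SWB datum. Because $f_L$ and $f$ agree on arcs of $P$ and $o_L$ is the identity on $(V_F)_\int$, one has $\iota_{F_L}\circ o_L = o_L\circ\iota_F$ there, hence $D_{F_L}=o_L(D_F)$ and therefore $G(\Th_L)=o_L(G(\Th))\sqcup\Gamma_L$, with $\Gamma_L$ a (fully external) component since its endpoints have valence $1$.

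For $\Th_R$, I would first verify that the unique order-preserving bijection $\phi\colon V(\overline{\del(P,s)})\to U_R$ is given by $\phi(0,+1)=(0,n+1)$, $\phi(2N+1,-1)=(2N+1,m+1)$, and $\phi(i,-1)=(i,1)$, $\phi(i,+1)=(i,f_R(p_i))$ for $1\le i\le 2N$, where $p_i\in P$ is the arc containing $i$. Writing $E(\overline{\del(P,s)})=E_{\mathrm{sq}}\sqcup E_{\mathrm{bd}}$, with $E_{\mathrm{sq}}$ the edges $(i,+1)\edge(i+1,-1)$ ($0\le i\le 2N$) and $E_{\mathrm{bd}}$ the edges arising from arcs of $P$, set $E'':=o_R(E)\cup\phi(E_{\mathrm{sq}})$. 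Each part of $\phi(E_{\mathrm{sq}})$ joins two consecutive vertices of $(V_{F_R},\prec)$ (the top of one stack to the bottom of the next), hence crosses nothing, so $\Th_R:=(F_R,E'')$ is a SWB datum. A direct computation using the formula for $\iota_F$ in \ref{inv} gives $\iota_{F_R}\circ o_R=o_R\circ\iota_F$ on internal vertices, whence $o_R((D_F)_p)\subseteq (D_{F_R})_p$, and the two remaining parallel arcs in each band $p$ are precisely the $\phi$-images of the two elements of $E_{\mathrm{bd}}$ coming from $p$; therefore $D_{F_R}=o_R(D_F)\sqcup\phi(E_{\mathrm{bd}})$. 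Combining, $G(\Th_R)=o_R(G(\Th))\sqcup\phi(\overline{\del(P,s)})$, so with $\Gamma_R:=\phi(\overline{\del(P,s)})$ we obtain $\Gamma_R\simeq_O\overline{\del(P,s)}$, which is a path between $(0,n+1)$ and $(2N+1,m+1)$ by Proposition \ref{Prop:bondarycomponentspreserved} together with the hypothesis $(P,s)\in\mathcal{TC}^*_N$.

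Uniqueness of $\Th_L$ and $\Th_R$ is then immediate from Lemma \ref{grp1}, since both have the prescribed frame and curve graph. For the twists, $E(\Gamma_L)$ contains no band arc so $\tau_{\Gamma_L}=0$, and $E(\Gamma_R)$ contains exactly two parallel arcs from each band, so $\tau_{\Gamma_R}=\sum_{p\in P}2\,s(p)\equiv 0\bmod 2$. To see both curves are separating I would analyse $\G((\Th_L)_{\Gamma_L})$ and $\G((\Th_R)_{\Gamma_R})$: in the first, $(\Th_L)_{\Gamma_L}$ carries only $\Gamma_L$ and has no internal vertices, and one checks that $\{(0,1/2),(2N+1,1/2)\}\in\mathcal{E}$ is an isolated edge of $\G((\Th_L)_{\Gamma_L})$, so $|\CC(\G((\Th_L)_{\Gamma_L}))|\ge 2$; combined with Proposition \ref{comp1} this forces the value $2$. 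In the second, $\Gamma_R$ runs parallel to the entire boundary, and a similar (more involved) inspection shows that the collar region it cuts off is a single component of $\G((\Th_R)_{\Gamma_R})$ disjoint from the rest, again giving $|\CC(\G((\Th_R)_{\Gamma_R}))|=2$ via Proposition \ref{comp1}.

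The step I expect to be hardest is showing that $\Gamma_R$ is separating: unlike $\Gamma_L$, the curve $\Gamma_R$ meets every band, so one must track how it interacts with all of the complement regions simultaneously rather than with a single bigon. The construction of $E''$ via $\phi$ and the identification $D_{F_R}=o_R(D_F)\sqcup\phi(E_{\mathrm{bd}})$, while also nontrivial, reduce to a finite case check of the sort already carried out in the proof of Proposition \ref{hsprop}, and the remaining claims (crossing-lessness, the curve graph decompositions, twists) are routine bookkeeping once the order-preserving maps $o_L$, $o_R$, $\phi$ are pinned down.
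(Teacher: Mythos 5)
Your construction is the same as the paper's: your $E'$ and $\phi(E_{\mathrm{sq}})$ coincide with the paper's $o_L(E)\cup\{(0,1)\edge(2N+1,1)\}$ and $\{v^{(i)}_M\edge v^{(i+1)}_m\}$, your identification $D_{F_R}=o_R(D_F)\sqcup\phi(E_{\mathrm{bd}})$ is the paper's observation that $\iota_{F_R}$ preserves $U_R$ with $\overline{\del(P,s)}\simeq_O G(\Th_R)[U_R]$, and uniqueness, twists, and separatingness are handled exactly as in the paper (the "collar" component you invoke for $\Gamma_R$ is the paper's component $\simeq_O\overline{\del(P,s)}$ on $\{(0,3/2),(2N+1,3/2)\}\cup\{(i,1/2),(i,5/2)\}$, combined with the bound of Prop.~\ref{comp1}). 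So the proposal is correct and follows essentially the same route; the only difference is that you leave the final inspection for $\Gamma_R$ asserted rather than naming that component's vertex set explicitly.
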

\begin{proof} Set $v_M^{(i)}=\max((V_{F_R})_i)$ and $v_m^{(i)}=\min((V_{F_R})_i)$ and take $E'=o_L(E) \cup \{ (0,1)\edge (2N+1,1) \}$ and $E''=o_R(E)\cup \{v_M^{(i)}\edge v_m^{(i+1)} \ | \ i=0,\dots 2N\}$. Then $\Th'=(F_L,E')$ and $\Th''=(F_R,E'')$ are SWB-data. It is clear that $\Th'$ is the unique such datum with desired property for $\Th_L$. Observe that $\iota_{F_R}$ preserves the vertex subset $U_R\subset V_{F_R}$, thus one may write $G(\Th'')=o_R(G(\Th))\sqcup G(\Th'')[U_R]$. Since $\iota_{F_R}(v_M^{(i)})=v_m^{(i')}$ where $\{i,i'\}\in P$, we establish an OP graph isomorphism $\overline{G(P,s)} \simeq_O G(\Th'')[U_R]$ via the map $(i,+)\mapsto v_M^{(i)}$ and $(i,-)\mapsto v_m^{(i)}$, thus $\Th''$ is the unique SWB datum with the desired property for $\Th_R$.

The component $\Gamma_L\subset \G(\Th_L)$ has $\tau_{\Gamma_L}=0$, since it contains no edges from $D_{F_L}$, and $\Gamma_R\subset \G(\Th_R)$ has $\tau_{\Gamma_R}=0$ since $|E(\Gamma_R)\cap (D_F)_p|=2$ for all $p\in P$. Furthermore, $\mathcal{G} \left((\Th_L)_{\Gamma_L}\right)$ contains the component $(0,1/2) \edge (2N+1,1/2)$, and $\mathcal{G}\left((\Th_R)_{\Gamma_R} \right)$ contains a component $\simeq_O\overline{\del(P,s)}$ on the vertex set 
\[\left\{(0,3/2),(2N+1,3/2)\right\}\cup \left\{ (i,1/2),(i,5/2) \ | \ i=1,\dots 2N\right\}.\]
Thus the components $\Gamma_L$ and $\Gamma_R$ are separating.\end{proof}

\mdef\label{Ilr} Let $\I_L, \I_R: Sq(n,m)\to Sq(n+1,m+1)$ be maps defined by $\I_L(\Th)=\Th_L$ and $\I_R(\Th)=\Th_R$, respectively. 

\begin{example} Let $\Th_1 \in Sq(3,1)$ be as per \ref{SWBex}. SWB diagrams for $\I_L(\Th_1)$ and $\I_R(\Th_2)$ are below:
\[\I_L(\Th_1)=\ILexa{0.45}, \quad \I_R(\Th_1)=\IRexa{0.45}.\]
    Let $\Th_2=(F_2, E_2)$ as per the same example with $E_2$ generic. We can draw $\I_L(\Th_1)$ and $\I_R(\Th_2)$ by adding new components to the otherwise generic diagram:
    \[ \I_L(\Th_2)=\SWBtwL{0.35}, \quad \I_R(\Th_2)=\SWBtwR{0.35} .\]
For a completely generic diagram of some $\Th=(F,E)$, drawn using the box notation as in Example \ref{SWBex}, we can similarly draw $\I_L(\Th)$ and $\I_R(\Th)$ by adding new components to the generic diagrams
\[\I_L(\Th)= \genSWBL{1.4} , \qquad \I_R(\Th)=\genSWBR{1.4}.\]
\end{example}

\begin{prop}\label{Ilrprop} Let $\Th\in Sq(n,m)$ The following hold:
\begin{enumerate}    
    \item For any component $\Gamma \subset G(\Th)$, $o_L(\Gamma)\subset G(\I_L(\Th))$ and $o_R(\Gamma)\subset G(\I_R(\Th))$ are components with the same twist, internality, and separatingness.
    \item\label{Ilrdesc} $\I_L$ and $\I_R$ descend to maps $\I_L,\I_R : Sq^{\hs}(n,m)\to Sq^{\hs}(n+1,m+1)$.
    \item For $\Th' \in Sq(m,l)$, we have $\I_L(\Th'\# \Th)=\I_L(\Th')\# \I_L(\Th)$, $\I_R(\Th'\# \Th)=\I_R(\Th')\# \I_R(\Th)$, and $L_{\Th,\Th'}=L_{\I_L(\Th),\I_L(\Th')}=L_{\I_R(\Th),\I_R(\Th')}$.
    \item $\I_L(\Th^*)=(\I_L(\Th))^*$ and $\I_R(\Th^*)=(\I_R(\Th))^*$. 
\end{enumerate}
\end{prop}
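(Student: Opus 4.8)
\textbf{Proof proposal for Prop.~\ref{Ilrprop}, part (4).}

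The plan is to deduce the two identities $\I_L(\Th^*)=(\I_L(\Th))^*$ and $\I_R(\Th^*)=(\I_R(\Th))^*$ from the uniqueness clause in Prop.~\ref{LRadd} together with the criterion \ref{grp1}, exactly as in the proofs of the analogous duality statements for deletion (\ref{delp} part \ref{delp2}) and vertical juxtaposition (\ref{vjassoc} part (3)). First I would check that both sides of each claimed equality have the same frame: writing $F=(P,s,f)$, one has $(F_L)^*=(F^*)_L$ and $(F_R)^*=(F^*)_R$, which follows directly from the definitions of $F_L, F_R$ in \ref{Ilr}'s preamble and of $(\,\cdot\,)^*$ on frames in \ref{permSWB}, using that $\omega_N'$ reverses the order on $\{0,\dots,2N+1\}$ and fixes neither endpoint's $f$-value in a way that interferes (the $+1$ is added symmetrically to $\{0\}$ and $\{2N+1\}$, and the $+2$ uniformly to all parts $p\in P$).

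Next, by \ref{grp1} (and the remark following it allowing $\simeq_O$), it suffices to show $G(\I_L(\Th^*))\simeq_O G((\I_L(\Th))^*)$ and likewise for $\I_R$. Here I would use that $G((\I_L(\Th))^*)=\omega_{F_L}(G(\I_L(\Th)))$ by \ref{delp} part \ref{delp2} (the identity $G(\Th^*)\simeq_O G(\Th)^*$), combined with the defining formula $G(\I_L(\Th))=o_L(G(\Th))\sqcup \Gamma_L$ from Prop.~\ref{LRadd}. The key computations are the compatibilities $\omega_{F_L}\circ o_L = o_L^{(F^*)}\circ \omega_F$ (wherever both sides are defined) and $\omega_{F_L}(\Gamma_L)=\Gamma_L^{(F^*)}$, i.e.\ $\omega_{F_L}$ sends the edge $(0,1)\edge(2N+1,1)$ in $V_{F_L}$ to the corresponding edge in $V_{(F^*)_L}$; both follow from the explicit formula for $\omega_F$ in \ref{flip} and the observation that $\omega_{F_L}$ sends $U_L$ to the corresponding subset $U_L^{(F^*)}$ (since $U_L$ consists of the global min and max of $V_{F_L}$, which $\omega_{F_L}$ interchanges). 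Granting these, $\omega_{F_L}(G(\I_L(\Th)))=o_L^{(F^*)}(\omega_F(G(\Th)))\sqcup\Gamma_L^{(F^*)}\simeq_O o_L^{(F^*)}(G(\Th^*))\sqcup\Gamma_L^{(F^*)}=G(\I_L(\Th^*))$, and then \ref{grp1} gives $(\I_L(\Th))^*=\I_L(\Th^*)$.

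For $\I_R$ the argument is identical in structure, with $\Gamma_R$ in place of $\Gamma_L$: one needs $\omega_{F_R}$ to preserve the vertex subset $U_R$ (it does, since $\iota_{F_R}$ preserves $U_R$ by the proof of Prop.~\ref{LRadd} and $\omega_{F_R}$ interchanges $\max(V_{F_R})_i\leftrightarrow \max(V_{F_R})_{\omega_N'(i)}$ up to the $\iota$-pairing), and $\omega_{F_R}(\Gamma_R)=\Gamma_R^{(F^*)}$, which amounts to the fact that reversing the order on a twisted chord diagram reverses the boundary path $\overline{\del(P,s)}$ --- precisely the graph isomorphism $\del(P,s)\overset{\sim}{\to}\del((P,s)^*)$ already used in the proof of Prop.~\ref{Prop:bondarycomponentspreserved}. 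The main obstacle, such as it is, is bookkeeping: carefully matching up the order-reversing maps $\omega_{F}$, $\omega_{F_L}$, $\omega_{F_R}$ on the enlarged vertex sets and verifying they commute with the $o_L$, $o_R$ insertion maps on the nose (up to the convex subsets $U_L$, $U_R$ they skip over). No genuinely new idea is required beyond the template already deployed for \ref{delp} and \ref{vjassoc}; the routine verifications of the commutation identities for $\omega$ and $o_{L}$, $o_R$ are omitted.
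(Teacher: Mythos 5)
Your proposal treats only part (4) of Prop.~\ref{Ilrprop}, but for that part it is correct and takes essentially the paper's route: the paper disposes of (4) in one line by appealing to the construction in Prop.~\ref{LRadd} together with the identity $\omega_F(\max((V_F)_i))=\min((V_{F^*})_{\overline{\omega_N}(i)})$, which is precisely the compatibility of $\omega$ with $o_L$, $o_R$, $U_L$, $U_R$, $\Gamma_L$, $\Gamma_R$ that you verify before invoking \ref{grp1}. One small slip: where you say $\omega_{F_R}$ ``interchanges $\max(V_{F_R})_i\leftrightarrow \max(V_{F_R})_{\omega_N'(i)}$'', the correct statement (and the one the paper cites, and that your argument actually uses) is that $\omega_{F_R}$ sends $\max((V_{F_R})_i)$ to $\min\bigl((V_{(F^*)_R})_{\omega_N'(i)}\bigr)$, so that $U_R$ is carried onto the corresponding $U_R^{(F^*)}$.
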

\begin{proof}
    Part (1) follows since $\Th_L\smallsetminus \Gamma_L = \Th = \Th_R\smallsetminus \Gamma_R$, since for any $\Gamma \in \mathcal{C}(G(\Th))$ and $p \in P$ we have $|E(o_L(\Gamma))\cap (D_{F_L})_p|=| E(\Gamma)\cap (D_F)_p|=| E(o_R(\Gamma))\cap (D_{F_R})_p|$, and since $o_L$ and $o_R$ preserve the internality/externality of vertices.

    For (2), it suffices to check that $\I_L\circ J_{(i,\a)}(\Th) \hssim \I_L(\Th)$, $\I_L\circ H_{(i,\e)}(\Th) \hssim \I_L(\Th)$, $\I_R\circ J_{(i,\a)}(\Th) \hssim \I_R(\Th)$, and $\I_R\circ H_{(i,\e)}(\Th) \hssim \I_R(\Th)$ for any relevant $(i,a)$ and $(i,\e)$. In the first three cases, the following identities can be proven by the criteria in Lemma \ref{grp1}:
    \[\I_L\circ J_{(i,a)}(\Th) =J_{(i,a)} \circ \I_L(\Th), \quad   \I_L\circ H_{(i,\e)}(\Th) =H_{(i,\e)} \circ \I_L(\Th),\quad \I_R\circ J_{(i,a)}(\Th)=J_{(i,a+1)}\circ \I_R (\Th).\]
    For the last case, comparing $\I_R\circ H_{(i,\e)}(\Th)$ and $H_{(i,\e)}\circ \I_R(\Th)$ we see that the latter contains an extra turn-back at level $\sigma(i+\e)$, belonging to the added component $O_{(i,\e)}(\Gamma_R)$. Removing this turn-back using the isotopy equivalence gives $\I_R\circ H_{(i,\e)}(\Th)\isosim H_{(i,\e)}\circ \I_R(\Th)$ (\textit{c.f.} Figure \ref{fig:Irdes}).

    For (3), letting $G(\Th'_L)=o'_L(G(\Th'))\cup \Gamma_L$ and likewise for $\Th_R$, we have 
    \begin{align*}G(\Th_L,\Th_L') &= (o_L(G(\Th))^{(1)} \cup o'_L(G(\Th))^{(2)}) \sqcup(\Gamma_L^{(1)} \cup {\Gamma'_L}^{(2)}),\\
    G(\Th_R,\Th_R') &= (o_R(G(\Th))^{(1)} \cup o'_R(G(\Th))^{(2)}) \sqcup(\Gamma_R^{(1)} \cup {\Gamma'_R}^{(2)}).\end{align*}
Observe that the subgraphs $\Gamma_L^{(1)} \cup {\Gamma'_L}^{(2)}$ and $\Gamma_R^{(1)} \cup {\Gamma'_R}^{(2)}$ are lines which join the left-most (right-most) southern and northern vertices in the composite graphs, respectively. After contracting the above graphs on vertices $U_{F_L,F_L'}$ and $U_{F_R,F_R'}$, and applying the OP maps $o_{F_L,F_L'}$ and $o_{F_R,F_R'}$, respectively, one obtains $\I_L(\Th)=\Th'_L \# \Th_L$ and $\I_R(\Th)=\Th'_R \# \Th_R$ by using Lemma \ref{grp1}. Futhermore, it is clear from the above that $[U_{F_L,F_L'}:G(\Th_L,\Th'_L)]=[U_{F,F'}:G(\Th, \Th')]=[U_{F_R,F_R'}:G(\Th_R,\Th'_R)]$. Part (4) follows immediately using the construction in \ref{LRadd} and the fact that $\omega_{F}(\max((V_F)_i))=\min((V_{F^*})_{\overline{\omega_N}(i)})$ for any frame $F$ and relevant integer $i$.\end{proof}

\begin{figure}[ht]
    \centering
    \[\begin{matrix}
         \IRdesa{0.65}&\xmapstoo{H_{(i,\e)}}&\IRdesaa{0.65}&\xmapstoo{\I_R}& \IRdesaaa{0.65} \\
         \rotatebox{-90}{\xmapstoo{\rotatebox{90}{$\I_R$}}}\\
         \IRdesab{0.65}&\xmapstoo{H_{(i,\e)}}&\IRdesaba{0.65}&\isosim& \IRdesaaa{0.65}
    \end{matrix}\]
    \caption{Local diagram of the relation $\I_R\circ H_{(i,\e)}(\Th)\isosim H_{(i,\e)}\circ \I_R(\Th)$, for Prop. \ref{Ilrprop} part \eqref{Ilrdesc}.}
    \label{fig:Irdes}
\end{figure}

\mdef\label{sft} Let $\mathbb{0}\in Sq_0(0,0), \mathbb{V}\in Sq_0(2,0)$ and $\mathbb{U}=\mathbb{V}^*\in Sq_0(0,2)$ be the unique such rank 0 SWB datum. For $m>0$, define a map $\mathit{Sf}=\mathit{Sf}^{(n,m)}:Sq(n,m)\to Sq(n+1,m-1)$, by
\[\mathit{Sf}(\Th)=\mathcal{I}_R^{m-1}(\mathbb{V})\# \mathcal{I}_L(\Th).\]
We usually omit superscript for the map $\mathit{Sf}$ in order to take powers of $\mathit{Sf}$. 

For $n\in \N$, define $\mathbb{I}_n:=\mathcal{I}_L^n(\mathbb{0})\in Sq_0(n,n)$, $\mathbb{V}_n:=\mathit{Sf}^n(\mathbb{I}_n)\in Sq_0(2n,0)$ and $\mathbb{U}_n:=(\mathbb{V}_n)^*\in Sq_0(0,2n)$. See Figure \ref{fig:IVU} for diagrams of each data.
\begin{figure}[ht]
\begin{subfigure}{\textwidth}
    \centering
\[ \genSWB{1.4}\quad \xmapstoo{\mathit{Sf}}\quad \genSWBS{1.4}\]
    \caption{Generic diagram for the map $\mathit{Sf}$, using box notation.}
    \label{fig:sfmap}
\end{subfigure}

\begin{subfigure}{\textwidth}
        \centering
\[ \mathbb{I}_n=\II{3} \qquad \mathbb{V}_n=\VV{3} \qquad \mathbb{U}_n= \UU{3}\]
    \caption{SWB diagrams for $\mathbb{I}_n, \mathbb{V}_n$ and $\mathbb{U}_n$.}
    \label{fig:IVU}
   \end{subfigure}
   \caption{Diagrams for \ref{sft}.}
   \label{fig:351}
\end{figure}

\mdef\label{setup} Suppose that $\Th=(F, E)$ is a SWB-datum without turn-backs and without internal components, and that $F=(P,s,f)$ where $(P,s)=(P_2,s_2)\#_d (P_1,s_1)$ for $(P_i,s_i)\in \mathcal{TC}_{N_i}$ and $d<2N_2$. Let $p=\{d+1, c\}\in P_2$ be the unique part containing $d+1$, and write $o_d(p)=:\{D+1,C\}$, where $D=o_d(d+1)=d+2N_1+1$. Then let $F_2=(P_2,f_2,s_2)$, $F_1=(P_1,s_1,f_1)$, be frames, $\Th=(F_1,E_1)\in Sq(n,m)$ a SWB-datum without fully external components, and $o:V_{F_1}\to V_{F}$ an OP map according to \ref{insSWB}, such that, $\Th=(F_2\#_d F_1, o(E_1)\sqcup \overline{E} )$.

Define the following vertex subsets of $V_F$ 
\begin{align*}
    U&:=o((V_{F_1})_{\sth}),& V&:=o(V_{F_1})\cap (V_F)_{D+1}\\
    U''&:= \{u\in (V_F)_{<C} \ | \ u\edge \iota_F(v)\in E \text{ for } v\in V  \}, & U'&:=o(V_{F_1})\cap (V_F)_{>D+1}, \\
    U'''&:= \{u\in (V_F)_{>C} \ | \ u\edge \iota_F(v) \in E \text{ for } v\in V \}, & W&:=V \cup \iota_F(V) ,
\end{align*}
and write $|U'|=m_1$, $|U''|=m_2$, $|U'''|=m_3$, hence $m=m_1+m_2+m_3$, and $|U|=n$. The condition that $\Th$ is without turnbacks implies that $V$ is the set of the least $m_2+m_3$ elements of $(V_F)_{D+1}$. See Figure \ref{fig:setup} for a diagram of this notation.
\begin{figure}[ht]
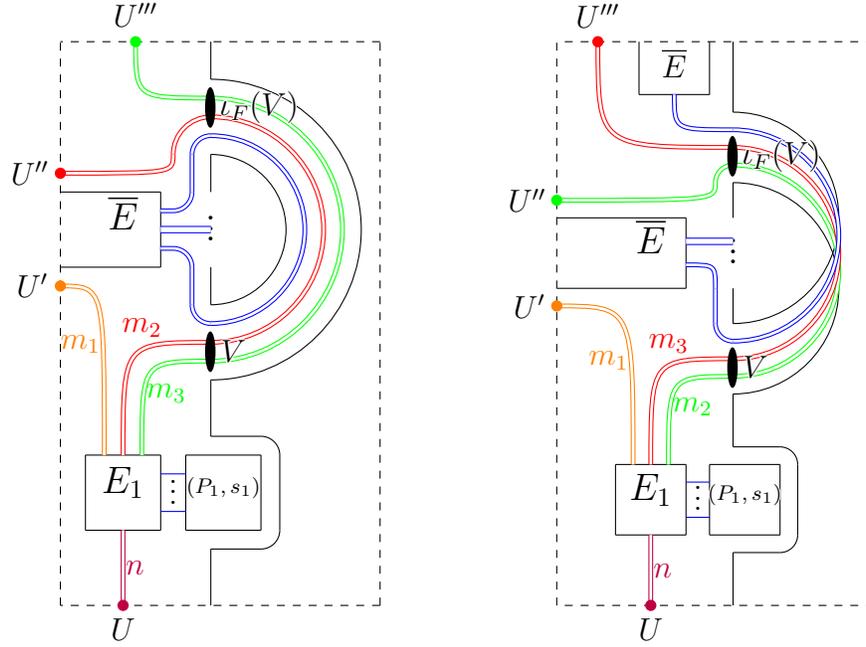

    \centering
\[\lemoneintA{1} \qquad \qquad \lemtwintA{0.9375}\]
    \caption{Diagram for the notation in \ref{setup}, with the case $s(p)=0$ on the left and $s(p)=1$ on the right.}
    \label{fig:setup}
\end{figure}

\begin{lemma}\label{lem1} Let $\Th=(F, E)=(F_2\#_d F_1, o(E_1)\sqcup \overline{E} )$ be as per \ref{setup} and suppose that $s(p)=0$. Then 
\[\Th\hssim (F', E' ):= (F_2'\#_c F_1', o'(E_1')\sqcup  o''(\overline{E}\smallsetminus [\overline{E}\cap W] )\sqcup \overline{E}'), \]
where $(F_1',E_1')=\mathit{Sf}^{m_1+m_2}(\Th_1) \in Sq(n+m_1+m_2,m_3)$ and $F_2'=(P_2,s_2,f_2')$ with $f_2'(p)=f_2(p)+n+m_1-m_2-m_3$, and otherwise $f_2'(q)=f_2(q)$ for $q\neq p \in \overline{P_2}$.
    
    Here $o''$ is the unique OP map $V_F\smallsetminus (W\cup o((V_{F_1})_\int) )\to V_{F'}\smallsetminus (W'\cup o'((V_{F_1'})_\int))$, where $W'=V'\cup \iota_{F'}(V')$ for $V'$ the set of the $n+m_1$ greatest elements of $(V_{F'})_c$, and $o':V_{F_1'}\to V_{F'} $ is the unique OP map such that $o'(i,a)=(i+c,a)$ for $(i,a)\in (V_{F_1'})_{\int}$ and $o'((V_{F_1'})_{\ext})=V'\cup o''(U''\cup U''')$. $\overline{E}'$ is the unique pairing of $\iota_{F'}(V')\cup o''(U\cup U')$ without turnbacks. 
\end{lemma}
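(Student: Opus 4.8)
The strategy is to realise $\Theta \hssim (F',E')$ by an explicit composite of handle slides, and then to pin down the result via the graph criterion of Lemma \ref{grp1}. Since $\Theta$ has no internal components, by part (3) of Proposition \ref{hselem} no handle slide creates one, so by Proposition \ref{isorep} each handle-slide-equivalence class encountered has a unique turn-back-free representative; and by Remark \ref{hsdesc} handle slides are well-defined maps on $Sq(n,m)/\isosim$. Because $s(p)=0$, Lemma \ref{moveslem}(1) says that the chord-slide sequence $\mathit{ev}^+_{[d+1,d+2N_1]}$ carries $(P_2,s_2)\#_d(P_1,s_1)$ to $(P_2,s_2)\#_c(P_1,s_1)$, i.e.\ it slides the inserted block of $2N_1$ bands from the $d{+}1$-end of band $p$ to the $c$-end. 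I would apply the composite $\mathcal{H}$ of handle slides built iteratively in exactly the way $\mathit{ev}^+_{[d+1,d+2N_1]}$ is built from the $h_{(\cdot,+1)}$ in \ref{def:evac} (each $h_{(i,+1)}$ replaced by $H_{(i,+1)}$); admissibility depends only on $(P,s)$, so $\mathcal{H}$ realises the same chord-slide sequence on the $(P,s)$-part. Let $\widetilde\Theta$ be the turn-back-free representative of $\mathcal{H}[\Theta]_\iso$; it remains to check $\widetilde\Theta=(F',E')$.

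The frame of $\widetilde\Theta$ has twisted-chord part $(P_2,s_2)\#_c(P_1,s_1)$ by \ref{moveslem}(1), agreeing with that of $F_2'\#_c F_1'$ since $\mathit{Sf},\mathcal{I}_L,\mathcal{I}_R$ leave the twisted-chord datum untouched. For the $f$-part I would track it additively through $\mathcal{H}$ using the rule in \ref{hsvset}: a handle slide $H_{(i,\e)}$ with $i\in p'$, $i+\e\in q'$ sends $f(q')\mapsto f(q')+f(p')$ and leaves the band structure and other values unchanged up to relabelling. Summing these contributions along $\mathcal{H}$, the band $p$ absorbs additively the net count $n+m_1-m_2-m_3$ coming from the strands it is slid past, giving $f_2'(p)=f_2(p)+n+m_1-m_2-m_3$ and $f_2'(q)=f_2(q)$ for $q\ne p\in\overline{P_2}$; meanwhile the inserted block's $f$-values pass from those of $\Theta_1$ to those of $\mathit{Sf}^{m_1+m_2}(\Theta_1)$, which by the constructions in \ref{LRadd} and \ref{sft} is the frame $F_1'$ (an $n{+}m_1{+}m_2$-southern, $m_3$-northern frame over $(P_1,s_1)$). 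This matches $F'=F_2'\#_c F_1'$.

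For the pairing --- the heart of the argument --- I would use Lemma \ref{grp1}: it suffices to match curve graphs up to order-isomorphism. One tracks $G(\Theta)$ through $\mathcal{H}$ using the formula \eqref{hsgr} and the component bijections $O_{(i,\e)}$. The subsets $U,V,U',U'',U''',W$ of \ref{setup} are designed precisely to classify the curves meeting the inserted datum $\Theta_1$: its $n$ southern strands (ending at $U$), together with its $m=m_1+m_2+m_3$ northern strands, which split according to whether they run right past column $D{+}1$ ($U'$, $m_1$ of them), or pass through band $p$ to $\iota_F(V)\subset(V_F)_C$ and then run left of column $C$ ($U''$, $m_2$) or right of column $C$ ($U'''$, $m_3$). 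As the block slides from the $d{+}1$-end to the $c$-end of $p$, the $m_1+m_2$ northern strands that wrap around one side of $p$ get straightened into the $m_1+m_2$ extra southern strands of the block --- this is precisely the cumulative effect of $m_1+m_2$ applications of $\mathit{Sf}$ to $\Theta_1$ (compare Figure \ref{fig:sfmap}) --- while the remaining $m_3$ strands survive as its northern strands; the part of $\overline{E}$ disjoint from $W$ is carried along unchanged by the order-preserving map $o''$; and the pairing $\overline{E}'$ of the leftover vertices $\iota_{F'}(V')\cup o''(U\cup U')$ is forced, being the unique turn-back-free completion of the curve graph (Proposition \ref{isorep}). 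Assembling, this identifies $G(\widetilde\Theta)$ with the curve graph of $(F_2'\#_c F_1',\,o'(E_1')\sqcup o''(\overline{E}\smallsetminus[\overline{E}\cap W])\sqcup\overline{E}')$ up to order-isomorphism, so by \ref{grp1} they are equal.

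The main obstacle is this third step: the bookkeeping of the three strand families through the iteratively defined sequence $\mathcal{H}$, and in particular the identification of the $m_1+m_2$ ``straightened'' strands with the iterated-$\mathit{Sf}$ structure on $\Theta_1$. The cleanest organisation is probably an induction on $m_1+m_2$, peeling off one wrapping strand (one factor of $\mathit{Sf}$) per step, with base case $m_1+m_2=0$ the pure block slide $\Theta\hssim F_2'\#_c F_1'$ (where $F_1'$ is then just $\Theta_1$ with $f$ adjusted), and each inductive step absorbing one more of the $U'\cup U''$ strands. Lemmata \ref{lem2} and \ref{lem3} should then follow from analogous handle-slide sequences, with Lemma \ref{moveslem}(2)--(3) replacing \ref{moveslem}(1) in the twisted and boundary-slide variants.
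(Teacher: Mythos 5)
Your proposal is essentially the paper's own proof: apply the handle-slide lift $\mathit{Ev}_{[d+1,D]}$ of the evacuation sequence $\mathit{ev}^+_{[d+1,d+2N_1]}$ (using $s(p)=0$ and Lemma \ref{moveslem}(1) to land on $(P_2,s_2)\#_c(P_1,s_1)$), pass to the unique isotopy-reduced representative (legitimate since $\Th$ has no internal components, by Prop.~\ref{hselem}(3) and Prop.~\ref{isorep}), and identify it with $(F',E')$ via the graph criterion of Lemma \ref{grp1}; the paper carries out the bookkeeping you defer by introducing the relabelling map $\tau'$, the paths $P_e$ and $Q_u$ recording the new turn-backs, and a single contraction computing the reduction, rather than your suggested induction on $m_1+m_2$. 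One small correction to your frame computation: summing the rule of \ref{hsvset} along $\mathcal{H}$ alone inflates $f(p)$ by $2C(F_1)=2\sum_{q\in P_1}f_1(q)$, not by $n+m_1-m_2-m_3$; the stated value $f_2'(p)=f_2(p)+n+m_1-m_2-m_3$ only emerges after the isotopy reduction, where each of the $|E_1|-(n+m_1)$ pull-throughs removes two strands from band $p$, so the turn-back count must enter your frame argument explicitly rather than being attributed to the slides themselves.
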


\begin{proof} Let $\mathit{Ev}_{[d+1,D]}:Sq_N\to Sq_N$ be the sequence of handle slides obtained from the upward evacuation of sites $d+1$ to $D$, sequence of chord slides, $\mathit{ev}_{[d+1,D]}:\mathcal{TC}_N\to \mathcal{TC}_N$ (\textit{c.f.} \ref{def:evac}), by replacing each chord slide $h_{(i,\e)}$ with the corresponding handle slide $H_{(i,\e)}$. Set $\Th'':=(F'',E''):=\mathit{Ev}_{[d+1,D]}(\Th)$ with $F''=(P'',s'',f'')$. By Lemma \ref{moveslem}, it follows that $(P'',s'')=\mathit{ev}_{[d+1,D]}(P,s)=(P_2,s_2)\#_c (P_1,s_1)$ hence we can write $F''=F_2''\#_c F_1''$, with $F_2''=(P_2,s_2,f_2'')$ and $F_1''=(P_1, s_1,f_1'')$. Furthermore, $f_2''(q)$ differs from $f_2(q)$ only when $q=p \in \overline{P_2}$, in which case $f_2''(p)=f_2(p)+2 C(F_1)$. 

Let $\tau \in \mathfrak{S}_{2N}$ be the resultant permutation from applying $\mathit{ev}_{[d+1,D]}$ (\textit{i.e.} $P''=\tau(P)$), let $\tau(v)$ for any $v=(i,a)\in (V_F)_\int$ denote $(\tau(i),a)\in V_{F''}$, and let $o_H:V_F\to V_{F''}$ be the OP map which is the composite of the OP maps $o_{(i,\e)}$ (\textit{c.f.} \ref{hsvset}) for each handle slide in $\mathit{Ev}_{[d+1,D]}$. Note that $V_{F''}\smallsetminus o_{H}(V_F)=\iota_{F''}\circ o_{H}\circ o((V_{F_1})_{\int}) \cup \tau\circ o((V_{F_1})_\int)$. Let $\tau':V_{F_1}\to V_{F''}$ be the map given by 
\[
\tau'(v)=\begin{cases}\tau\circ o(v), & v\in (V_{F_1})_\int,\\
\begin{matrix} o_{H}(u) \text{ for the unique $u\neq o(v)$ s.t. }\\ u\edge \iota_{F}\circ o(v)\edge o(v) \subset G(\Th)\end{matrix} , & v \in  o^{-1}\left((V_{F})_{D+1}\right),\\ 
o_{H}\circ o(u), & \text{otherwise.}    
\end{cases}
\]
For each part $e=\{u,v\}\in E_1$ such that $e\subset (V_{F_1})_{\int}\cup o^{-1}\left((V_{F})_{D+1}\right)$, we have that $o_H\circ o(e):=\{u'',v''\}\in E''$ is a turnback in $\Th''$ and $G(\Th'')$ contains the (full) subgraph
\[P_{e}:=\left(\tau'(u)\edge \iota_{F''}(u'')\edge u'' \edge v'' \edge \iota_{F''}(v'')\edge\tau'(v)\right). \]
Since $\Th$ was without turnbacks, these are the only turnbacks in $\Th''$. For any $u \in V_{F_1}\smallsetminus \left[(V_{F_1})_{\int}\cup o^{-1}\left((V_{F})_{D+1}\right)\right]$, there is a part
$e=\{u,v_u\}\in E_1$ for $v_u\in (V_{F_1})_\int$. Letting $o_H\circ o(e):=\{u'',(v_u)''\}\in E''$ again, we have that $G(\Th'')$ contains the (full) subgraph
\[ Q_{u}:=\left(u''=\tau'(u)\edge (v_u)''\edge \iota_{F''}((v_u)'')\edge \tau'(v_u)\right).  \]
\begin{figure}[ht]
    \centering
    \[\lemoneA{1}\quad \xmapstoo{\mathit{Ev}_{[d+1,D]}}\quad \lemoneBB{0.88235294117}  \]
    \caption{Local diagram for $\mathit{Ev}_{[d+1,D]}(\Th)$.}
    \label{fig:localEV}
\end{figure}
Let $\tilde{\Th}=(\tilde{F},\tilde{E})$ be the unique isotopy reduced representative of $\Th''$. We then claim that $\tilde{\Th}=(F',E')$. By counting the number of turn-backs removed by pull-throughs one finds that $\tilde{F}=F'$ as desired. Now let $U_{F_1}=\{v \in (V_{F_1})_\int\cup o^{-1}\left( (V_F)_{D+1} \right) \ | \ \{u,v\}\in E_1 \text{ for } u \in (V_{F_1})_\int\cup o^{-1}\left( (V_F)_{D+1} \right) \}$ so that 
\[G(\tilde{\Th})\simeq_O G(\Th'')/\left[o_H\circ o(U_{F_1})\cup \iota_{F''}\circ o_H\circ o(U_{F_1})\right].   \]
The result of this contraction is simply to replace all paths $P_e$ with $\tau'(e)$. After applying the unique OP map, say $o_P$, between the vertex sets of the above graphs (right to left), we have that
\begin{align*}
    o'(E_1')&=o_P\left(\{ \tau'(e) \ | \ e \subset U_{F_1}\} \cup \{\iota_{F''}((v_u)'')\edge \tau'(v_u) \ | \ u \in V_{F_1}\smallsetminus \left[(V_{F_1})_{\int}\cup o^{-1}\left((V_{F})_{D+1}\right)\right] \}\right),\\
    \overline{E}'&= o_P\left(\{\tau'(u) \edge (v_u)'' \ | \ u \in V_{F_1}\smallsetminus \left[(V_{F_1})_{\int}\cup o^{-1}\left((V_{F})_{D+1}\right)\right]  \}\right).
\end{align*}
which combined with the fact that $o_P\circ o_H$ agrees with $o''$ when restricted to the appropriate domain, now gives $\tilde{\Th}=(F',E')$ as desired. Thus $\Th\hssim \tilde{\Th}=(F',E')$.\end{proof}

\begin{figure}[ht]
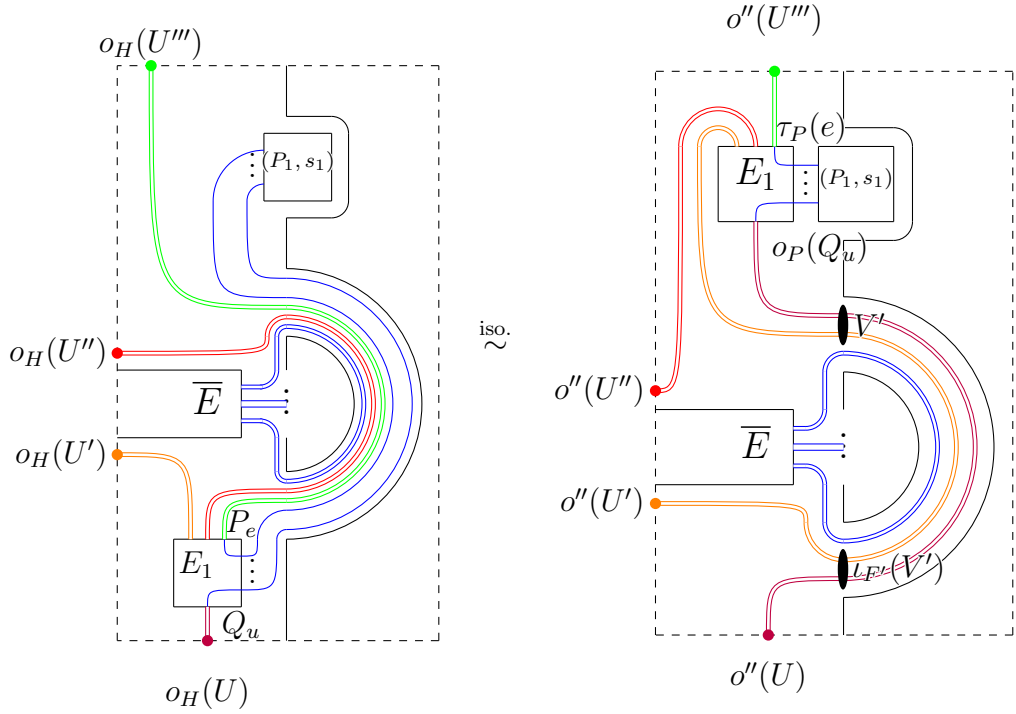

    \centering
\[\lemoneBB{0.9}\quad \isosim\quad \lemoneB{1}\]
    \caption{Local diagram of isotopy reduction for $\Th''$  
(using the short-hand $\tau_P=o_P\circ \tau'$).}
    \label{fig:lemm1}
\end{figure}

\begin{lemma}\label{lem2} Let $\Th=(F, E)=(F_2\#_d F_1, o(E_1)\sqcup \overline{E} )$ be as per \ref{setup} and suppose that $s(p)=1$. Then 
\[\Th\hssim (F', E' ):= (F_2'\#_{c-1} F_1', o'(E_1')\sqcup  o''(\overline{E}\smallsetminus (\overline{E}\cap W) )\sqcup \overline{E}'), \]
where $(F_1',E_1')=(\mathit{Sf}^{m_2+m_3}(\Th_1))^* \in Sq(m_2,n+m_1+m_3)$ and $F_2'=(P_2,s_2,f_2')$ with $f_2'(p)=f_2(p)+n+m_1-m_2-m_3$, and otherwise $f_2'(q)=f_2(q)$ for $q\neq p \in P_2$. 
    
    Here $o''$ is the unique OP map $V_F\smallsetminus (W\cup o((V_{F_1})_\int) )\to V_{F'}\smallsetminus (W'\cup o'((V_{F_1'})_\int))$, where $W'=V'\cup \iota_{F'}(V')$ for $V'$ the set of the $n+m_1$ least elements of $(V_{F'})_{c+2N_1}$, and $o':V_{F_1'}\to V_{F'} $ is the unique OP map such that $o'(i,a)=(i+c-1,a)$ for $(i,a)\in (V_{F_1'})_{\int}$, and $o'((V_{F_1'})_{\ext})=V'\cup o''(U''\cup U''')$. $\overline{E}'$ is the unique pairing of $\iota_{F'}(V')\cup o''(U\cup U')$ without turnbacks. 
\end{lemma}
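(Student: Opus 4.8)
The plan is to follow the proof of Lemma \ref{lem1} essentially verbatim, substituting part (2) of Lemma \ref{moveslem} for part (1) throughout. As there, let $\mathit{Ev}_{[d+1,D]}:Sq_N(n,m)\to Sq_N(n,m)$ denote the sequence of handle slides obtained from the upward evacuation of sites $d+1$ to $D$ of chord slides $\mathit{ev}_{[d+1,D]}$ (\textit{c.f.} \ref{def:evac}) by replacing each $h_{(i,\e)}$ with the corresponding $H_{(i,\e)}$, and set $\Th''=(F'',E''):=\mathit{Ev}_{[d+1,D]}(\Th)$ with $F''=(P'',s'',f'')$. First I would apply Lemma \ref{moveslem} part (2): since $s(p)=1$, we have $(P'',s'')=\mathit{ev}_{[d+1,D]}(P,s)=(P_2,s_2)\#_{c-1}(P_1,s_1)^*$, so $F''$ factors as $F''=F_2''\#_{c-1}F_1''$ where $F_2''$ is a frame on $(P_2,s_2)$ and $F_1''$ a frame on $(P_1,s_1)^*$; moreover $f_2''$ differs from $f_2$ only at $p$, where it increases by $2C(F_1)$. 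This is precisely the step that produces the dual $(\,\cdot\,)^*$ and the shifted insertion height $c-1$ appearing in the statement, so it is worth isolating and verifying first.

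Next I would track how the curves of $\Th$ are transported by $\mathit{Ev}_{[d+1,D]}$, introducing the handle-slide composite OP map $o_H:V_F\to V_{F''}$ and an assignment $\tau':V_{F_1}\to V_{F''}$ exactly as in the proof of Lemma \ref{lem1}, but now incorporating the reflection forced by the twist: because the band $p$ is twisted, $\iota_{F''}$ identifies the internal vertices of the re-scaled $\Th_1$-region with vertices on the $C$-side of the diagram \emph{orientation-reversingly}, so curve endpoints that in the untwisted case were matched straight through are now matched flipped. The upshot is that the $m_2+m_3$ vertices paired with $V$ through $\overline{E}$ end up in the role formerly played by the northern vertices, and the region $U'$ that in Lemma \ref{lem1} was folded together with $\Th_1$ is replaced by the $U'''$-part on the $C$-side; this is exactly why $\mathit{Sf}$ is applied $m_2+m_3$ times rather than $m_1+m_2$, and why $\Th_1$ is dualised. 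Having fixed these maps, I would enumerate the turn-backs created in $\Th''$ together with the full subgraphs $P_e$ and $Q_u$, precisely as in Lemma \ref{lem1} but with the flipped indexing; the picture is dictated by the $s(p)=1$ diagram of Figure \ref{fig:setup} and the analogues of Figures \ref{fig:localEV} and \ref{fig:lemm1}.

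Finally I would pass to the unique isotopy-reduced representative $\tilde\Th=(\tilde F,\tilde E)$ of $\Th''$ via Prop. \ref{isorep} (applicable since $\Th$, and hence $\Th''$, has no internal components), and verify $\tilde\Th=(F',E')$: counting the turn-backs removed by the successive pull-throughs pins down $\tilde F=F'$, while contracting $G(\Th'')$ on the relevant vertex set (replacing each $P_e$ by $\tau'(e)$) followed by the canonical OP map identifies the pieces $o'(E_1')$, $\overline{E}'$, and $o''(\overline{E}\smallsetminus[\overline{E}\cap W])$ as claimed; Lemma \ref{grp1} then forces the equality. Since each $\mathit{Ev}_{[d+1,D]}$ step is a handle slide and isotopy is contained in $\hssim$, this gives $\Th\hssim\tilde\Th=(F',E')$. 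I expect the main obstacle to be the orientation-reversal bookkeeping of the middle paragraph: making the substitutions $m_1\leftrightarrow m_3$, $\Th_1\mapsto(\,\cdot\,)^*$, $c\mapsto c-1$ simultaneously precise and self-consistent, in particular getting the assignment $o'((V_{F_1'})_{\ext})=V'\cup o''(U''\cup U''')$ and the turn-back-free pairing $\overline{E}'$ of $\iota_{F'}(V')\cup o''(U\cup U')$ exactly right. A careful local figure, as in Lemma \ref{lem1}, should determine the correct matching unambiguously.
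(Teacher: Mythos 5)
Your proposal follows exactly the route of the paper's own (very terse) proof: apply the handle-slide evacuation $\mathit{Ev}_{[d+1,D]}$, invoke Lemma \ref{moveslem} part (2) to obtain the factorisation $(P_2,s_2)\#_{c-1}(P_1,s_1)^*$ (hence the dual and the shift to $c-1$), and then repeat the Lemma \ref{lem1} argument with the map $\tau'$ modified on internal vertices by the orientation-reversal $\omega_{F_1}$ before isotopy-reducing and concluding via Lemma \ref{grp1}. The only thing the paper records that you leave implicit is the explicit piecewise formula for the twisted $\tau'$, which is precisely the bookkeeping you flag as the remaining work, so your sketch is correct and essentially identical in approach.
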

\begin{proof} The proof proceeds the same as for \ref{lem1}, \textit{i.e.} $\textit{Ev}_{[d+1,D]}(\Th):=(F'',E'')\isosim (F',E')$. The key differences here are that $\textit{ev}_{[d+1,D]}(P,s)=(P_2,s_2)\#_{c-1}[\omega_{N_1}(P_1,s_1)]$ (\textit{c.f.} Lemma \ref{moveslem}), and that the map $\tau':V_{F_1}\to V_{F''}$ used in defining the paths $P_e$ and $Q_u$ becomes the map
\[
\tau'(v)=\begin{cases}\omega_{F_1}(v)+(c-1,0), & v\in (V_{F_1})_\int,\\
\begin{matrix} o_{H}(u) \text{ for the unique $u\neq o(v)$ s.t. }\\ u\edge \iota_{F}\circ o(v)\edge o(v) \subset G(\Th)\end{matrix} , & v \in  o^{-1}\left((V_{F})_{D+1}\right),\\ 
o_{H}\circ o(u), & \text{otherwise.}    
\end{cases}\qedhere\]\end{proof}

\begin{figure}[ht]
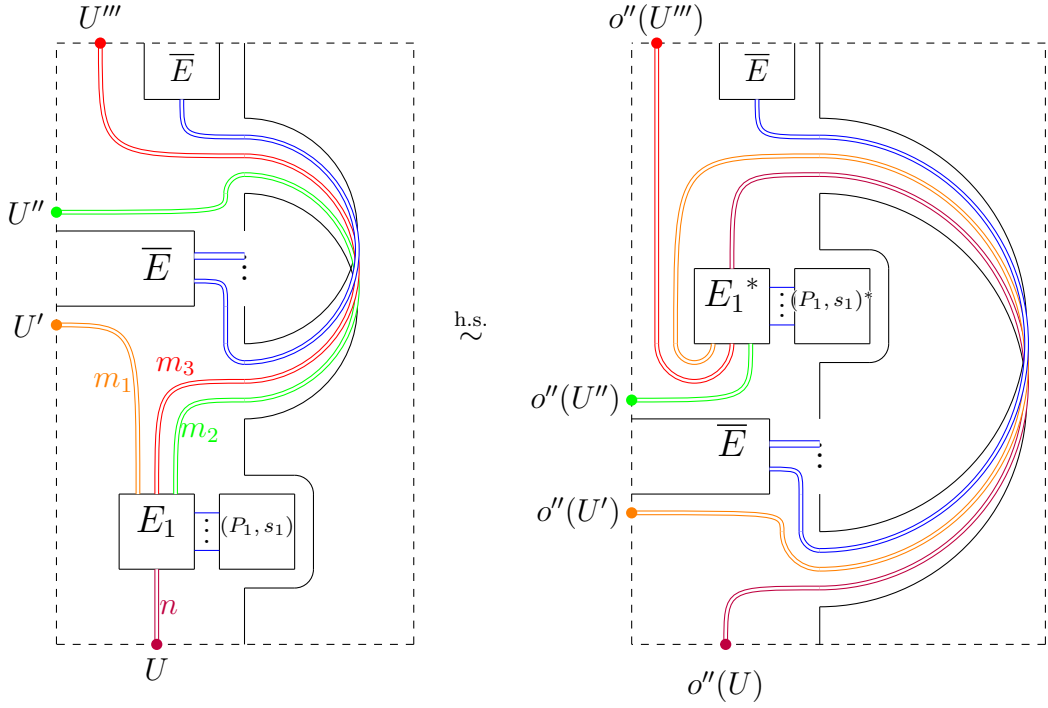

    \centering
\[\lemtwA{1}\quad \hssim\quad \lemtwB{1}\]
    \caption{Local diagram for \ref{lem2}, with the shorthand $E_1^*:=\omega_{F_1}(E_1)$.}
    \label{fig:lemm2}
\end{figure}

\begin{lemma}\label{lem3} Let $\Th=(F, E)=(F_2\#_d F_1, o(E_1)\sqcup \overline{E} )$ be as per \ref{setup}, and suppose that $m_2=m_3=0$, and that $f(p)=l+k$, where $l=|X|, k=|X'|$, for vertex subsets 
\begin{align*}
    X&:=\{v \in (V_F)_{<D} \ | \ v\edge u \in E \text{ for } u \in (V_F)_{D+1}\},\\
    X'&:=\{v \in (V_F)_{>D} \ | \ v\edge u\in E \text{ for } u \in (V_F)_{D+1}\},
\end{align*}
(note that one of $k$ and $m$ must be 0). Then
\[\Th\hssim (F',E'):=(F_2\#_{d+1} F_1',o'(E_1')\sqcup o''(\overline{E}\smallsetminus[\overline{E}\cap (V_F)_{D+1}])  ),\]
where $(F_1',E_1')=\mathit{Sf}^{k}(\I_R^{l+k}(\Th_1))$, $o'':V_F\smallsetminus(o((V_{F_1})_{\int})\cup (V_F)_{D+1})\to V_{F'}\smallsetminus(o'((V_{F_1'})_{\int})\cup (V_{F})_d)$ is the unique OP map, and $o':V_{F_1'}\to V_{F'}$ is the OP map uniquely defined by $o'(i,a)=(i+d+1,a)$ for $(i,a)\in (V_{F_1})_{\int}$, and $o'((V_{F_1'})_{\ext})=o''(X\cup U \cup X' \cup U')\cup (V_{F'})_{d+1}$.
\end{lemma}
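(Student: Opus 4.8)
\textbf{Proof plan for Lemma \ref{lem3}.}

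The plan is to follow the same strategy as in Lemmata \ref{lem1} and \ref{lem2}, but using the downward boundary slide $\mathit{bs}^-_{(d,(P_1,s_1))}$ in place of the upward evacuation sequence. Concretely, let $\mathit{Bs}^-_{(d,(P_1,s_1))}:Sq_N(n,m)\to Sq_N(n,m)$ denote the sequence of handle slides obtained from the sequence of chord slides $\mathit{bs}^-_{(d,(P_1,s_1))}$ of Definition \ref{def:slideover} by replacing each $h_{(i,\e)}$ with the corresponding $H_{(i,\e)}$. First I would set $\Th''=(F'',E''):=\mathit{Bs}^-_{(d,(P_1,s_1))}(\Th)$. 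By part (3) of Lemma \ref{moveslem} the underlying twisted chord datum transforms as $(P'',s'')=\mathit{bs}^-_{(d,(P_1,s_1))}(P,s)=(P_2,s_2)\#_{d+1}(P_1,s_1)$, so $F''$ factors as $F_2''\#_{d+1}F_1''$ with $f_2''$ agreeing with $f_2$ off the part $p$ (where it increases by $2C(F_1)$), exactly as in Lemma \ref{lem1}. The diagrammatic effect of this boundary slide is to transport the $l+k$ endpoints of $V$ in level $D+1$ once ``around the boundary'' of the inserted block $(P_1,s_1)$, which (reading off the curve graph via \ref{SWBgraph} and using that each handle slide $H_{(i,\e)}$ acts by the formula \eqref{hsgr}) produces turn-backs in $\Th''$; applying the isotopy reduction of Prop. \ref{isorep} removes these turn-backs and yields the claimed reduced datum $(F',E')$.

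The key steps, in order, are: (i) verify the frame identity $F''=F_2''\#_{d+1}F_1''$ using Lemma \ref{moveslem}(3) and the definition \ref{hsvset} of $f_{(i,\e)}$; (ii) identify, as in the proofs of \ref{lem1} and \ref{lem2}, an OP map $o_H:V_F\to V_{F''}$ built as the composite of the maps $o_{(i,\e)}$ over the handle slides in $\mathit{Bs}^-$, and an associated map $\tau':V_{F_1}\to V_{F''}$ tracking where the curves of $\Th_1$ land; (iii) describe $G(\Th'')$ as $G(\Th)$ with the paths $P_e$ (for parts $e\in E_1$ internal to the inserted block or meeting level $D+1$) and $Q_u$ (for the remaining parts) inserted, observing that the only turn-backs in $\Th''$ arise from the slid endpoints in $V$; (iv) apply Prop. \ref{isorep} and count the turn-backs removed by pull-throughs to conclude $\tilde F=F'$ where $(F_1',E_1')=\mathit{Sf}^k(\I_R^{l+k}(\Th_1))$ — here the factor $\I_R^{l+k}$ records that $l+k$ new boundary-parallel curves on the right of the $\Th_1$-block are created by sliding the level-$(D+1)$ endpoints over it, and the $\mathit{Sf}^k$ records the subsequent relocation of the $k$ curves whose other endpoint lay above level $D+1$; (v) finally, use the criteria of Lemma \ref{grp1} (comparing curve graphs up to OP isomorphism, exactly as in \ref{lem1}) to pin down the pairings $o'(E_1')$ and $o''(\overline E\smallsetminus[\overline E\cap (V_F)_{D+1}])$ and hence identify $\tilde\Th=(F',E')$, giving $\Th\hssim(F',E')$.

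The main obstacle will be step (iv)–(v): correctly bookkeeping which of the $l+k$ slid endpoints produce a fully boundary-parallel curve (handled by $\I_R^{l+k}$) versus which must be ``shifted forward'' (handled by $\mathit{Sf}^k$), and confirming that after isotopy reduction the surviving pairing $\overline E'$ on $o''(X\cup U\cup X'\cup U')\cup(V_{F'})_{d+1}$ is precisely the unique turn-back-free one. This is a diagram-chase essentially identical in structure to the one carried out in Lemma \ref{lem1}, so I expect it to go through by the same method; the hypothesis $m_2=m_3=0$ (equivalently, one of $k$ and $m$ is $0$, so the endpoints in $V$ do not reconnect across the block) is what makes the boundary slide behave cleanly and keeps the combinatorics of the reduction tractable. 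As in the earlier lemmas, the routine verifications of the OP-map identities (e.g. $o_P\circ o_H$ restricting to $o''$) and the explicit forms of $P_e$, $Q_u$ will be recorded but not belaboured.
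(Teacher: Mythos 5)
Your core move is exactly the paper's: the entire published proof consists of the claim that $\mathit{Bs}^{-}_{(d,(P_1,s_1))}(\Th)=(F',E')$, where $\mathit{Bs}^{-}$ is the handle-slide version of the downward boundary slide of \ref{def:slideover}, so your choice of tool is the right one. However, two points in your execution do not match what actually happens and would derail the computation if followed literally. First, your frame bookkeeping is transposed from Lemma \ref{lem1}: in the boundary slide the band being slid is $p$ itself, and a handle slide never changes the multiplicity of the slid band (in \ref{hsvset} it is $f_{(i,\e)}(q)=f(q)+f(p)$, i.e.\ only the bands slid \emph{over} grow). So $f''$ on (the image of) $p$ stays equal to $l+k$, while each band of the inserted block gains $2(l+k)$ — one contribution of $f(p)$ for each of the two passes of the boundary component of $(P_1,s_1)$ through that band (note $(P_1,s_1)\in\mathcal{TC}^*_{N_1}$ here, since $\mathrm{null}\,\mathit{IM}(P,s)=0$ forces $\mathrm{null}\,\mathit{IM}(P_1,s_1)=0$). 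This is precisely why the target is $F_2\#_{d+1}F_1'$ with the \emph{original} $f_2$ and with the block frame replaced by that of $\I_R^{l+k}(\Th_1)$; your claimed increase of $f_2''$ on $p$ by $2C(F_1)$ "exactly as in Lemma \ref{lem1}" would give a frame that does not equal $F'$.

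Second, the turn-back/isotopy-reduction layer is spurious in this case. Under $m_2=m_3=0$ no curve of the inserted block meets level $D+1$ ($V=\emptyset$), so the only curves affected by the slides are the $l+k$ band-$p$ curves, whose endpoints in $X\cup X'$ lie outside the block. Tracking \eqref{hsgr} through the slide sequence, every new square edge $E_{(i,\e)}$ joins two distinct adjacent levels, and the block's own curves are only relabelled, so $\mathit{Bs}^{-}_{(d,(P_1,s_1))}(\Th)$ is already turn-back free and equals $(F',E')$ on the nose — the $\I_R^{l+k}$ components arise directly from dragging the $l+k$ curves around the block boundary, and the $\mathit{Sf}^{k}$ shift records where the $k$ curves attached to $X'$ re-enter, not a subsequent pull-through reduction as in Lemmata \ref{lem1}–\ref{lem2}. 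Your plan would still terminate at the right answer if you computed honestly (Prop.\ \ref{isorep} applied to a turn-back-free datum is vacuous), but the justification of $\tilde F=F'$ by "counting turn-backs removed" has nothing to count, so the argument as written does not establish the frame identity; you should instead verify $\mathit{Bs}^{-}(\Th)=(F',E')$ directly via the criterion of Lemma \ref{grp1}, as the paper does.
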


\begin{figure}[ht]
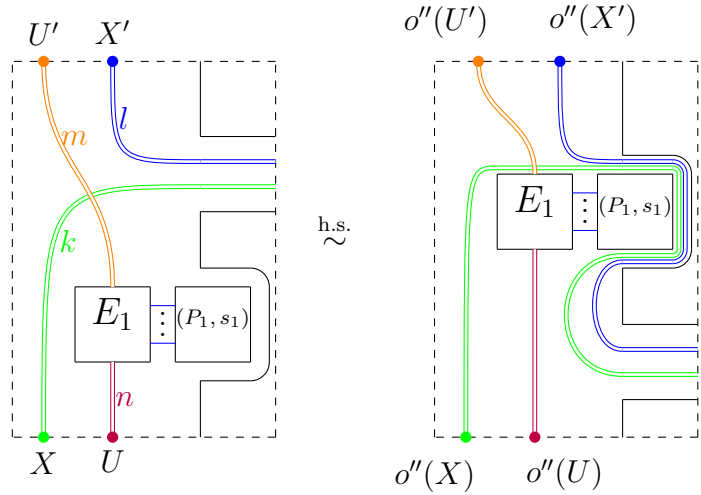

    \centering
\[\lemthA{1} \quad \hssim \quad \lemthB{1}\]
    \caption{Local diagram for \ref{lem3}.}
    \label{fig:lemm3}
\end{figure}

\begin{proof} We claim $\mathit{Bs}_{(d,(P_1,s_1))}^{-}(\Th)=(F',E')$, where $\mathit{Bs}_{(d,(P_1,s_1))}^{-}$ is the sequence of handle slides obtained from the sequence of chord slides $\mathit{bs}_{(d,(P_1,s_1))}^{-}$ (\textit{c.f.} \ref{def:slideover}) in the obvious way.\end{proof}

\mdef\label{tgens} Let $l,m,n \in \N$ and define two families of SWB data $\TT_{l,m}\in Sq_{\tor}(l+m,l+m)$ and $\MM_n \in Sq_{\mob}(n,n)$ as follows: $\TT_{l,m}=:(F_{l,m},E_{l,m})$ is uniquely defined by the properties $|(V_{F_{l,m}})_1|=m$, $|(V_{F_{l,m}})_2|=l$, $G(\TT_{l,m})$ is without fully external components; $\MM_n=:(F_{n},E_{n})$ is uniquely defined by the properties $|(V_{F_n})_1|=n$, and $G(\MM_n)$ is without fully external components. Observe that $(\TT_{l,m})^*=\TT_{m,l}$ and $(\MM_n)^*=\MM_n$.

\begin{figure}[ht]
    \centering
    \[ \TT_{l,m}=\TTpic{0.45} \qquad  \MM_n=\MMpic{0.81}\]
    \caption{SWB diagrams for $\TT_{l,m}$ and $\MM_{n}$ as per \ref{tgens}.}
    \label{fig:enter-label}
\end{figure}

\begin{prop}\label{decomp} Let $\Th \in Sq_{\tor}(n,m)$ and let $\Psi \in Sq_{\mob} (l,k)$. Then there exist non-negative integers $\l$, $\mu$, $a$,  and $\nu$, $b$, and rank 0 SWB data $T_1\in Sq_0(n,\l+\mu+a)$, $T_2\in Sq_0(\l+\mu+a,m)$, and $T_3\in Sq_0(l,\nu+b)$, $T_4\in Sq_0(\nu+b,k)$,  such that
\[\Th=T_2\#(\I_L^a(\TT_{\l,\mu}))\# T_1, \qquad \Psi=T_4\#(\I_L^b(\MM_\nu))\# T_3,  \]
with all associated linking numbers being 0, \textit{i.e.}: 
\[L_{T_1,\I_L^a(\TT_{\l,\mu}) }=L_{\I_L^a(\TT_{\l,\mu}),T_2}=0=L_{T_3,\I_L^b(\MM_\nu)}=L_{\I_L^b(\MM_\nu),T_4}.\]
\end{prop}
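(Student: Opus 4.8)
The plan is to prove the assertion for $\Th\in Sq_{\tor}(n,m)$; the one for $\Psi\in Sq_{\mob}(l,k)$ is its verbatim analogue, with $\tor$, $\TT_{\l,\mu}$, and rank $2$ replaced throughout by $\mob$, $\MM_\nu$, and rank $1$, so I only treat the former. Write $\Th=(F,E)$ with $F=(\tor,s_{\tor},f)$. Since, by \ref{vjuxt} and the definition of $\I_L$, any datum of the form $T_2\#\I_L^{a}(\TT_{\l',\mu'})\#T_1$ with $T_1,T_2$ of rank $0$ has underlying frame carrying exactly $\mu'$ strands on the band $\{1,3\}$ and $\l'$ strands on $\{2,4\}$, the parameters are forced: set $\mu:=f(\{1,3\})$, $\l:=f(\{2,4\})$. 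The only remaining freedom is in $a$ (the number of boundary-parallel left strands) and in the two rank-$0$ Temperley--Lieb caps $T_1,T_2$.

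The first step is to strip off the rank-$0$ caps. Applying the Factorisation Lemma \ref{vjfact} to the product decomposition $\tor=(\text{rank }0)\#\tor$ gives $\Th=T_2\#\Th'$ with $T_2$ of rank $0$, $\Th'\in Sq_{\tor}$, and $L_{\Th',T_2}=0$; a second application, now to $\tor=\tor\#(\text{rank }0)$, gives $\Th'=\Th''\#T_1$ with $T_1$ of rank $0$, $\Th''\in Sq_{\tor}$, and $L_{T_1,\Th''}=0$. Non-negativity of linking numbers together with the interchange identity in \ref{vjassoc}(2) then forces $L_{\Th'',T_2}=0$, so it suffices to normalise the middle piece: I must bring $\Th''$, by moves that do not disturb the caps, to $\I_L^{a}(\TT_{\l,\mu})$ and then identify $a$. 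Inspecting the proof of \ref{vjfact}, $T_1$ and $T_2$ absorb the turn-backs of $\Th$ lying on the bottom and top edges of the square, so $\Th''$ has none; after a maximal sequence of pull-throughs (isotopy, \ref{isorep}) it also has no turn-backs on the bands except those belonging to internal components.

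The core normalisation is then carried out with handle slides, guided by Corollary \ref{hscor}. Using an evacuation of the sites between the two ends of a band one brings a non-standard through-strand adjacent to that band (as in Figure \ref{fig:evac}); the normalisations packaged in Lemmata \ref{lem1} and \ref{lem2} (which compute the effect of such evacuations on an inserted sub-datum) and Lemma \ref{lem3} (boundary slides) then transport that strand out of the band region and into the caps. Iterating, and at each stage choosing a strand whose removal strictly lowers a suitable weight so that the process terminates, one is left with the band strands realising $\TT_{\l,\mu}$ and a residue of external curves that are left-boundary-parallel, i.e.\ the $a$ components contributed by $\I_L^{a}$. Setting $a$ equal to this residual count and reassembling via \ref{vjassoc}(2) and \ref{Ilrprop}(3) gives the displayed factorisation, and the four linking-number vanishings are then routine, since every move used is a handle slide or an isotopy and these preserve linking numbers by \ref{hselem}(5) and \ref{ptprops}(4).

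The genuine obstacle I expect is the internal (closed) components of $\Th$: a direct count shows that a product $T_2\#\I_L^{a}(\TT_{\l,\mu})\#T_1$ has no internal component in its curve graph, by \ref{vjuxtSWB} together with the vanishing of the linking numbers, so it cannot literally equal a datum in $Sq_{\tor}$ carrying an essential closed curve. The statement is therefore to be read modulo handle-slide equivalence $\hssim$ (using that $\I_L$ descends to $Sq^{\hs}$, \ref{Ilrprop}(2)), in which case such a curve can be slid off the handle; formalising ``sliding a closed component off the handle'' as an explicit finite composite of maps $H_{(i,\e)}$ that simultaneously leaves the bands in standard position — and verifying termination of the combined evacuation/boundary-slide procedure above — is the delicate point of the proof.
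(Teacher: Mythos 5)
Your proposal rewrites the statement before proving it, and the rewriting rests on a false claim. You assert that ``a direct count shows that a product $T_2\#\I_L^{a}(\TT_{\l,\mu})\#T_1$ has no internal component in its curve graph, by \ref{vjuxtSWB} together with the vanishing of the linking numbers,'' and conclude that the proposition can only hold up to $\hssim$. This is not so: the zero-linking-number condition only excludes components whose vertex set lies entirely in a gluing interface $U_{F_1,F_2}$ (the familiar Temperley--Lieb circles created at the seam); it does not exclude internal components that are formed by band strands of $\I_L^{a}(\TT_{\l,\mu})$ closed off by caps of $T_1$ and cups of $T_2$, possibly using the $a$ left-parallel strands as return paths. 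For a concrete instance, let $\Th\in Sq_{\tor}(0,0)$ be the datum with $f(\{1,3\})=1$, $f(\{2,4\})=0$ and $E=\{\{(1,1),(3,1)\}\}$, i.e.\ a single internal, non-separating curve through one band. Then one checks directly from \ref{vjuxtSWB} that $\Th=\mathbb{V}\#\I_L(\TT_{0,1})\#\mathbb{U}$ exactly, with both linking numbers zero: the cup, the left-parallel strand and the cap contract to the single square arc $\{(1,1),(3,1)\}$. So internal components, including essential ones, are perfectly compatible with the exact equality claimed in \ref{decomp}, and there is no need (and no licence) to weaken the statement to handle-slide equivalence --- indeed the exact form with vanishing linking numbers is what Remark \ref{decompr} and the monoidal-generation statement in \S4 require, since \ref{hscor} already supplies the $\hssim$ part separately.

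This misreading then drives the method astray: your normalisation of the ``middle piece'' uses evacuations, boundary slides and Lemmata \ref{lem1}--\ref{lem3}, i.e.\ handle slides, which by construction only produce equalities up to $\hssim$ and therefore cannot establish the asserted equality of SWB data (note also that the TCD is already $\tor$, resp.\ $\mob$, on both sides, so no handle slides should be needed at all). What the proposition actually calls for is a direct planar construction, in the spirit of the Factorisation Lemma \ref{vjfact} but transverse to it: fix $\mu=f(\{1,3\})$, $\l=f(\{2,4\})$ (which you correctly observe are forced), choose $a$ and explicitly build the crossingless pairings $T_1\in Sq_0(n,\l+\mu+a)$ and $T_2\in Sq_0(\l+\mu+a,m)$ so that contracting the composite graph of $T_2\#\I_L^{a}(\TT_{\l,\mu})\#T_1$ reproduces the given pairing $E$ of bottom points, band-attaching points and top points --- the arcs of $E$ incident to band vertices are routed through the $\TT_{\l,\mu}$ strands, arcs joining two band vertices or realising closed curves are closed off through caps/cups and the $\I_L^{a}$ strands, and the zero-linking conditions are then checked as in the example above. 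Your opening use of \ref{vjfact} to peel off rank-$0$ caps is harmless but does not reach such a middle factor (after stripping caps the middle can still contain band-to-band arcs and internal components), so the gap is not cosmetic. Finally, be aware that this draft of the paper states \ref{decomp} without proof, so your argument has to stand on its own; as written, it proves at best a weakened $\hssim$-version of a statement that is true, and needed, on the nose.
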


\begin{remark}\label{decompr} It follows from Cor. \ref{hscor} and Prop. \ref{decomp}, that using the operations $\#$ and $\I_L$, any SWB datum $\Th$, considered up to handle slide equivalence, can be decomposed into rank 0 SWB data, and data from the two families $\TT_{l,m}$ and $\MM_n$. \end{remark}

\section{Square with Bands Category} In this section we define the square with bands (SWB) category, $\SQ$, using SWB data from \S3. This category will be a linear (small) category, which contains the Temperley-Lieb category as a subcategory. Using \S3.5, we equip $\SQ$ with a monoidal structure (and indeed a pivotal structure), and provide a monoidal generating set for $\SQ(n,m)$.\\

In what follows, let $\K$ be a commutative, unital, ring, and fix $\a,\b,\gamma\in \K$.

\mdef\label{homset} For $n,m\in \N$, let $A(n,m), B(n,m), C(n,m)$ denote the following $\K$-submodules of $\K Sq^{\hs}(n,m)$:
\begin{align*}
A(n,m)&=\K \left\{ [\Th]_{\hs}-\a [\Th\smallsetminus \Gamma ]_{\hs} \ \Big| \  \begin{array}{c}\Th\in Sq(n,m), \text{ and } \Gamma \in \CC(G(\Th)) \text{ an internal,} \\ \text{separating component.} \end{array}\right\}, \\
B(n,m)&=\K \left\{ [\Th]_{\hs}-\b [\Th\smallsetminus \Gamma ]_{\hs} \ \Big| \   \begin{array}{c}\Th\in Sq(n,m), \text{ and } \Gamma \in \CC(G(\Th)) \text{ an internal,}\\ \text{twisted component.}\end{array} \right\},\\
C(n,m)&=\K \left\{ [\Th]_{\hs}-\gamma [\Th\smallsetminus \Gamma ]_{\hs} \ \Big| \  \begin{array}{c} \Th \in Sq(n,m), \text{ and } \Gamma \in \CC(G(\Th)) \text{ an internal,}\\
\text{non-separating, untwisted component.}\end{array} \right\}.
\end{align*}

\begin{theorem}[Square with Bands Category]\label{SWBcat} $\SQ=\SQ(\a,\b,\gamma)$, is a small $\K$-linear category with the following data:
\begin{itemize} 
\item[] \underline{Objects:} $\SQ$ has object set $\SQ:=\N$.
\item[] \underline{Morphisms:} For $n,m\in \SQ$, $\SQ(n,m)$ is the $\K$-module of morphisms given by
\begin{equation*}\SQ(n,m):= \K Sq^{\hs}(n,m)/(A(n,m) + B(n,m) + C(n,m) ). \end{equation*}
For $\Th\in Sq(n,m)$, write $\overline{\Th}$ for the image of $[\Th]_\hs\in \K Sq^{\hs}(n,m)$ under this quotient.
\item[] \underline{Composition:} For $n,m, l \in \SQ$, composition $\circ: \SQ(n,m) \times \SQ(m,l)\to \SQ(n,l)$ is given by linearly extending the formula 
\begin{equation*} (\overline{\Th_1},\overline{\Th_2})\mapsto \overline{\Th_2} \circ \overline{\Th_1} := \a^{L_{\Th_1,\Th_2}} \, \overline{\Th_2 \# \Th_1}, \end{equation*}
for $\Th_1 \in Sq(n,m), \Th_2 \in Sq(m,l)$, where $\Th_2\# \Th_1$ denotes the vertical juxtaposition, and $L_{\Th_1, \Th_2}$ the linking number (\textit{c.f.} \S3.2).
\end{itemize}
Furthermore, $\SQ$ has a contravariant duality $(\_)^*:\SQ\to \SQ$ defined on objects by $n^*=n$, and on morphisms by $\overline{\Th}^*=\overline{\Th^*}$, for $\Th$ an SWB datum, and by extending $\K$-linearly.
\end{theorem}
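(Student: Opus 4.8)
The plan is to verify the three things asserted in Theorem~\ref{SWBcat}: that $\SQ$ is a well-defined $\K$-linear category (i.e.\ composition is well-defined on the quotients, associative, and unital), that it contains the Temperley--Lieb category, and that $(\_)^*$ is a contravariant duality. The bulk of the work is well-definedness and associativity of composition.

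\textbf{Step 1: $\#$ descends to handle-slide classes.} First I would observe that by Proposition~\ref{hselem} part~\ref{hsjuxt} (and its left-hand analogue obtained by applying $(\_)^*$ via part~\ref{hsdual} and Prop.~\ref{vjassoc}(3)), for $\Th_1\hssim\Th_1'$ and $\Th_2\hssim\Th_2'$ one has $\Th_2\#\Th_1\hssim\Th_2'\#\Th_1'$, and moreover $L_{\Th_1,\Th_2}=L_{\Th_1',\Th_2'}$. Combined with the fact that $\#$ is already known to descend through $\isosim$ (Prop.~\ref{hselem} part~\ref{hsiso}), this shows the formula $(\overline{\Th_1},\overline{\Th_2})\mapsto \a^{L_{\Th_1,\Th_2}}\overline{\Th_2\#\Th_1}$ at least gives a well-defined map $\K Sq^{\hs}(n,m)\times \K Sq^{\hs}(m,l)\to \K Sq^{\hs}(n,l)$ after $\K$-bilinear extension.

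\textbf{Step 2: $\#$ respects the relation submodules $A,B,C$.} This is the main obstacle and the heart of the argument. I must show that if $\Th_1\in Sq(n,m)$ has an internal separating (resp.\ twisted, resp.\ non-separating untwisted) component $\Gamma$, then $\a^{L_{\Th_1,\Th_2}}\overline{\Th_2\#\Th_1}-\a\cdot\a^{L_{\Th_1\smallsetminus\Gamma,\Th_2}}\overline{\Th_2\#(\Th_1\smallsetminus\Gamma)}$ lies in $A(n,l)$ (resp.\ $\b\cdot(\dots)\in B$, $\gamma\cdot(\dots)\in C$), and likewise for a component of $\Th_2$. This is exactly Proposition~\ref{vjassoc}(4): for an internal component $\Gamma\subset G(\Th_1)$ the image $\Gamma'=o_{F_1,F_2}(\Gamma^{(1)})$ is an internal component of $G(\Th_2\#\Th_1)$ with $\Th_2\#(\Th_1\smallsetminus\Gamma)=(\Th_2\#\Th_1)\smallsetminus\Gamma'$, with $L_{\Th_1,\Th_2}=L_{\Th_1\smallsetminus\Gamma,\Th_2}$, and with $\Gamma'$ separating/twisted exactly when $\Gamma$ is. So the relation defining $A$ (resp.\ $B$, $C$) for $\Th_2\#\Th_1$ with the component $\Gamma'$ reproduces precisely the image of the corresponding relation for $\Th_1$; the power of $\a$ matches on both sides. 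The case where $\Gamma$ belongs to $\Th_2$ is symmetric, using $(\_)^*$ and Prop.~\ref{vjassoc}(3) to transfer to the $\Th_1$ side, or directly by the left-hand version of Prop.~\ref{vjassoc}(4). Hence $\circ$ is well-defined as a map $\SQ(n,m)\times\SQ(m,l)\to\SQ(n,l)$.

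\textbf{Step 3: associativity and units.} Associativity follows from Prop.~\ref{vjassoc}(2): $\Th_3\#(\Th_2\#\Th_1)=(\Th_3\#\Th_2)\#\Th_1$ and the cocycle identity $L_{\Th_1,\Th_2}+L_{\Th_2\#\Th_1,\Th_3}=L_{\Th_1,\Th_3\#\Th_2}+L_{\Th_2,\Th_3}$, so both $(\overline{\Th_3}\circ\overline{\Th_2})\circ\overline{\Th_1}$ and $\overline{\Th_3}\circ(\overline{\Th_2}\circ\overline{\Th_1})$ equal $\a^{L_{(\Th_1,\Th_2,\Th_3)}}\overline{\Th_3\#\Th_2\#\Th_1}$; extend $\K$-linearly. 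For identities, the object $n$ has identity $\overline{\mathbb{I}_n}$ where $\mathbb{I}_n=\I_L^n(\mathbb{0})\in Sq_0(n,n)$ (\ref{sft}); one checks $\mathbb{I}_n\#\Th=\Th=\Th\#\mathbb{I}_m$ for $\Th\in Sq(m,n)$ directly from Lemma~\ref{grp1} (the composite graph construction and $U_{F,F'}$-contraction acts trivially when one factor is $\mathbb{I}$), with linking numbers $0$, so $\overline{\mathbb{I}_n}$ is a two-sided unit. Finally, for the duality: $(\_)^*$ on objects is the identity; on morphisms it is well-defined on $\SQ(n,m)\to\SQ(m,n)$ because $(\_)^*$ preserves $Sq^{\hs}$-classes (Prop.~\ref{hselem} part~\ref{hsdual}) and sends the generators of $A,B,C$ to generators of $A,B,C$ by Prop.~\ref{delp}(2) together with Prop.~\ref{comp2}(2) (an internal separating/twisted/non-sep.\ untwisted component $\Gamma$ maps to $\Gamma^*=\omega_F(\Gamma)$ with the same properties and $(\Th\smallsetminus\Gamma)^*=\Th^*\smallsetminus\Gamma^*$); it is contravariant by Prop.~\ref{vjassoc}(3), $(\Th_2\#\Th_1)^*=\Th_1^*\#\Th_2^*$ with $L_{\Th_1,\Th_2}=L_{\Th_2^*,\Th_1^*}$, so $(\overline{\Th_2}\circ\overline{\Th_1})^*=\overline{\Th_1}^*\circ\overline{\Th_2}^*$; and $((\_)^*)^2=\id$ since $(\Th^*)^*=\Th$. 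The Temperley--Lieb subcategory is the full subcategory on $\N$ with morphisms spanned by $\overline{\Th}$ for $\Th\in Sq_0(n,m)$ (rank $0$, no bands), closed under $\#$ since rank is additive; I would note this in a remark rather than belabour it. The only genuinely delicate point is Step~2, and it is already packaged in Prop.~\ref{vjassoc}(4), so the proof is essentially an assembly of the cited propositions.
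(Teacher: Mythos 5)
The decisive problem is in your Step 1. You assert that the formula $(\overline{\Th_1},\overline{\Th_2})\mapsto \a^{L_{\Th_1,\Th_2}}\,[\Th_2\#\Th_1]_{\hs}$ is already well defined as a map into the free module $\K Sq^{\hs}(n,l)$, and that $L_{\Th_1,\Th_2}$ depends only on the $\hssim$-classes of the factors. Neither is true. Handle slide equivalence is generated by handle slides \emph{and} by pull-throughs $J_{(i,a)}$; Prop.~\ref{hselem} part~\ref{hsjuxt} controls the former, but the pull-through case is governed by Prop.~\ref{ptprops} part (4), which you never invoke (part~\ref{hsiso} of Prop.~\ref{hselem}, which you cite, says that handle slides descend to isotopy classes of a single datum — it says nothing about compatibility of $\#$ with isotopy). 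In the second case of Prop.~\ref{ptprops}(4), replacing $\Th_1$ by $J_{(i,a)}(\Th_1)$ deletes an internal, separating, untwisted component from the composite and raises the linking number: $\Th_2\# J_{(i,a)}(\Th_1)=(\Th_2\#\Th_1)\smallsetminus\Gamma$ while $L_{J_{(i,a)}(\Th_1),\Th_2}=L_{\Th_1,\Th_2}+1$. Since deleting a component changes $|\CC(G(\cdot))|$, which is preserved by pull-throughs and handle slides (via the bijections $O_{((i,a))}$ and $O_{(i,\e)}$), the two composites lie in different $\hssim$-classes; so the two candidate values are distinct basis elements of $\K Sq^{\hs}(n,l)$ with coefficients differing by a factor of $\a$, and your "well-defined map into $\K Sq^{\hs}(n,l)$" does not exist, nor is $L$ an invariant of the pair of classes.

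The fix is to define composition directly into the quotient $\SQ(n,l)$ and check independence of representatives there: the pull-through discrepancy is absorbed exactly by the relation in $A(n,l)$, namely $\overline{\Th_2\#\Th_1}=\a\,\overline{(\Th_2\#\Th_1)\smallsetminus\Gamma}$, which cancels against the extra power of $\a$ coming from the incremented linking number; the top-factor case then follows by the duality transfer you describe, using Prop.~\ref{ptprops}(2) alongside Prop.~\ref{hselem} part~\ref{hsdual}. This is precisely why the scalar $\a^{L_{\Th_1,\Th_2}}$ appears in the statement, and it is the one place where the quotient by $A$ is needed \emph{before} one can even speak of a composition law — your proposal treats it as an afterthought. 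With that step repaired, the remainder of your assembly is sound: compatibility with the submodules $A,B,C$ via Prop.~\ref{vjassoc}(4), associativity via Prop.~\ref{vjassoc}(2), units $\overline{\mathbb{I}_n}$ via Lemma~\ref{grp1}, and the duality via Prop.~\ref{delp}(2), Prop.~\ref{comp2}(2) and Prop.~\ref{vjassoc}(3) (for its well-definedness you should cite Prop.~\ref{ptprops}(2) in addition to part~\ref{hsdual} of Prop.~\ref{hselem}, since $\hssim$ includes isotopy). Note the paper states this theorem without proof, but Prop.~\ref{ptprops}(4) is visibly the ingredient designed for exactly the step you skipped.
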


We refer to $\SQ$ as the square with bands (SWB) category. We refer to morphisms of the form $\overline{\Th}\in \SQ(n,m)$, for $\Th\in Sq(n,m)$ as homogeneous morphisms.  

\begin{remark} We can define two wide subcategories of $\SQ$, $\SQ_0\subset \SQ^+ \subset \SQ$, as the wide subcategories with the following $\K$-modules of morphisms:
    \begin{align*}
        \SQ^+(n,m)=\K \{ \overline{\Th}\in \SQ(n,m) \ | \ \Th \in Sq^+(n,m)  \},\\
        \SQ_0(n,m)=\K \{ \overline{\Th} \in \SQ(n,m) \ | \ \Th \in Sq_0(n,m) \}. 
    \end{align*}
    Diagrammatically, morphisms in $\SQ^+$ are $\K$-linear combinations of (classes of) SWB diagrams on orientable surfaces, and morphisms in $\SQ_0$ are $\K$-linear combinations of SWB diagrams in the square, \textit{i.e.} Temperley-Lieb diagrams. Since no orientable surface contains twisted curves we have that $\SQ^+=\SQ^+(\a,\gamma)$ (\textit{i.e.} $\SQ^+$ is only defined by these two parameters) and similarly, since the square only contains separating curves, $\SQ_0=\SQ_0(\a)$. Furthermore, one can identify $\SQ_0(\a)$ with the Temperley-Lieb category at loop parameter $\alpha$, see  \cite{TemperleyLieb, JonesIndex}.   

    In addition, define the following submodules of $\SQ(n,m)$:
    \begin{align*}
        \SQ_{\tor}(n,m)=\K \{ \overline{\Th}\in \SQ(n,m) \ | \ \Th \in Sq_{\tor}(n,m)  \}, \\
        \SQ_{\mob}(n,m)=\K \{ \overline{\Th} \in \SQ(n,m) \ | \ \Th \in Sq_{\mob}(n,m) \}. 
    \end{align*}
    It follows from Cor. \ref{hscor} that any homogeneous morphism, can be factored into homogeneous morphisms coming from $\SQ_0$, $\SQ_{\tor}$, and $\SQ_{\mob}$, with at most two factors from the latter.   
\end{remark}

\begin{prop} Let $\Th\in Sq(n,m)$, and consider the homogeneous morphism $\overline{\Th}\in \SQ(n,m)$. There exist unique non-negative integers $a,b,c \in \N$, and a SWB datum $\Th'\in Sq(n,m)$ without internal components, such that $\overline{\Th}=\a^a \, \b^b \, \gamma^c \ \overline{\Th'}$.
\end{prop}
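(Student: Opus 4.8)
Let $\Th\in Sq(n,m)$ and let $\Gamma_1,\dots,\Gamma_k$ be the internal components of the curve graph $G(\Th)$ (\textit{c.f.} \ref{SWBgraph}). By Proposition \ref{delp}\eqref{delp1} the successive deletions $\Th\smallsetminus\Gamma_1\smallsetminus\Gamma_2\smallsetminus\cdots$ are independent of the order chosen, and the resulting datum $\Th':=\Th\smallsetminus\Gamma_1\smallsetminus\cdots\smallsetminus\Gamma_k$ has no internal components; moreover each deletion preserves the twist, internality and separatingness of the remaining components (using Proposition \ref{comp2}\eqref{comp23} for the last). By Proposition \ref{comp2}\eqref{comp21} a twisted component is never separating, so every $\Gamma_j$ lies in exactly one of the three classes: internal separating (necessarily $\tau_{\Gamma_j}=0$), internal twisted ($\tau_{\Gamma_j}=1$, non-separating), or internal non-separating untwisted. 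Write $a,b,c$ for the respective numbers of $\Gamma_j$ of each class. Deleting $\Gamma_1$: by the definition of $A(n,m)$, $B(n,m)$, $C(n,m)$, and according to the class of $\Gamma_1$, one of $[\Th]_\hs-\a[\Th\smallsetminus\Gamma_1]_\hs$, $[\Th]_\hs-\b[\Th\smallsetminus\Gamma_1]_\hs$, $[\Th]_\hs-\gamma[\Th\smallsetminus\Gamma_1]_\hs$ lies in $A+B+C$, so $\overline{\Th}=\lambda_1\,\overline{\Th\smallsetminus\Gamma_1}$ with $\lambda_1\in\{\a,\b,\gamma\}$ matching the class. Iterating (the classes of $\Gamma_2,\dots,\Gamma_k$ are unchanged as components of the partially deleted datum) yields $\overline{\Th}=\a^a\b^b\gamma^c\,\overline{\Th'}$.

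\textbf{The invariants.} For the uniqueness clause the plan is to build an explicit left inverse on $\SQ(n,m)$ that reads off the exponents and the reduced datum. First set $a(\Th),b(\Th),c(\Th)\in\N$ to be the number of internal components of $G(\Th)$ that are, respectively, separating, twisted, and non-separating untwisted, and let $r(\Th)\in Sq_\natural(n,m)$ denote $\Th$ with \emph{all} internal components deleted (well defined by Proposition \ref{delp}\eqref{delp1}), where $Sq_\natural(n,m)\subset Sq(n,m)$ is the subset of data without internal components. I would then check that $a,b,c$ and $r$ descend to handle-slide classes: a handle slide $H_{(i,\e)}$ induces a bijection $O_{(i,\e)}$ on components preserving twist, internality and separatingness (Proposition \ref{hsprop} together with Proposition \ref{hselem}\eqref{hscpt}), so it preserves $a,b,c$; and since $O_{(i,\e)}$ carries the internal components of $G(\Th)$ exactly onto those of $G(\Th_{(i,\e)})$ while $H_{(i,\e)}(\Th\smallsetminus\Gamma)=\Th_{(i,\e)}\smallsetminus O_{(i,\e)}(\Gamma)$ (Proposition \ref{hselem}\eqref{hscpt}), iterating over all internal components gives $H_{(i,\e)}(r(\Th))=r(H_{(i,\e)}(\Th))$. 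The analogous statements for pull-throughs follow from Proposition \ref{pthru} and Proposition \ref{ptprops}, parts (3). Hence $a(\cdot),b(\cdot),c(\cdot):Sq^{\hs}(n,m)\to\N$ and $r:Sq^{\hs}(n,m)\to Sq^{\hs}_\natural(n,m)$ are well defined, and $r$ restricts to the identity on $Sq^{\hs}_\natural(n,m)$ (where also $a=b=c=0$).

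\textbf{The left inverse and uniqueness.} Define a $\K$-linear map $\Phi:\K Sq^{\hs}(n,m)\to\K Sq^{\hs}_\natural(n,m)$ by $\Phi([\Th]_\hs)=\a^{a(\Th)}\b^{b(\Th)}\gamma^{c(\Th)}[r(\Th)]_\hs$. The point is that $\Phi$ annihilates $A(n,m)+B(n,m)+C(n,m)$: for a generator $[\Th]_\hs-\a[\Th\smallsetminus\Gamma]_\hs$ of $A(n,m)$ with $\Gamma$ internal separating, Proposition \ref{delp}\eqref{delp1} (with Proposition \ref{comp2}\eqref{comp23}) shows the internal components of $\Th\smallsetminus\Gamma$ correspond bijectively, preserving class, to those of $\Th$ other than $\Gamma$, so $a(\Th)=a(\Th\smallsetminus\Gamma)+1$, $b(\Th)=b(\Th\smallsetminus\Gamma)$, $c(\Th)=c(\Th\smallsetminus\Gamma)$, and $r(\Th)=r(\Th\smallsetminus\Gamma)$ by order-independence of deletions; hence $\Phi([\Th]_\hs)=\a\,\Phi([\Th\smallsetminus\Gamma]_\hs)$, and the generator maps to $0$. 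The cases of $B(n,m)$ and $C(n,m)$ are identical (using Proposition \ref{comp2}\eqref{comp21} to know twisted components are non-separating). Thus $\Phi$ descends to $\bar\Phi:\SQ(n,m)\to\K Sq^{\hs}_\natural(n,m)$, and by the last sentence of the previous paragraph $\bar\Phi$ restricts to the identity on the image of $\K Sq^{\hs}_\natural(n,m)$ in $\SQ(n,m)$. Now if $\overline{\Th}=\a^a\b^b\gamma^c\,\overline{\Th'}$ with $\Th'\in Sq_\natural(n,m)$, then applying $\bar\Phi$ and using the existence form from the first paragraph gives the identity $\a^{a}\b^{b}\gamma^{c}[\Th']_\hs=\a^{a(\Th)}\b^{b(\Th)}\gamma^{c(\Th)}[r(\Th)]_\hs$ inside the free $\K$-module $\K Sq^{\hs}_\natural(n,m)$; comparing coefficients on the basis $Sq^{\hs}_\natural(n,m)$ pins down $[\Th']_\hs=[r(\Th)]_\hs$ and the exponents $(a,b,c)=(a(\Th),b(\Th),c(\Th))$ (working, if one wishes to be scrupulous about a general coefficient ring, over the universal ring $\Z[\a,\b,\gamma]$ and then specialising, so that the monomials $\a^a\b^b\gamma^c$ are genuinely distinguishable).

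The main obstacle is the bookkeeping in the middle step: verifying that deleting all internal components is genuinely compatible with handle-slide equivalence \emph{and} with the isotopy relation, i.e. assembling Propositions \ref{delp}, \ref{hsprop}, \ref{hselem}, \ref{pthru} and \ref{ptprops} into the well-definedness of $r$ and the ``triangularity'' of $A+B+C$ with respect to the internal-component count; once $\bar\Phi$ exists everything else is a coefficient comparison in a free module.
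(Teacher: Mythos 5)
Because this draft of the paper states the proposition without any proof (the last section only announces results), there is no argument of record to compare yours against; judged on its own terms, your proof is correct and is the natural one. The existence step is fine: by part \ref{comp21} of Prop.\ \ref{comp2} a twisted component is non-separating, so the three generator types in \ref{homset} do partition the internal components, and Prop.\ \ref{delp} part \ref{delp1} together with \ref{comp2} part \ref{comp23} lets you peel them off one at a time with the correct scalar. The uniqueness step via the map $\Phi$ is also the right mechanism, and the bookkeeping you worry about does go through: handle slides and pull-throughs act on components by bijections preserving twist, internality and separatingness (Prop.\ \ref{hselem} part \ref{hscpt}, Prop.\ \ref{ptprops} part (3)), and they commute with deletion of a component on the nose or up to a further pull-through, so $a,b,c$ and $r$ are constant on handle-slide classes and $\Phi$ kills $A+B+C$.

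Two caveats, which are really about how the statement must be read rather than defects in your argument, but which you should make explicit. First, $\Th'$ can only be unique up to handle-slide equivalence (any $\Th''\hssim\Th'$ yields the same $\overline{\Th'}$); what your $\bar\Phi$ actually delivers is uniqueness of the class $[\Th']_{\hs}$, which is the sensible formulation and should be stated as such. Second, as you note only in a closing parenthesis, uniqueness of the exponents $(a,b,c)$ genuinely requires the monomials $\a^{a}\b^{b}\gamma^{c}$ to be pairwise distinct and nonzero in $\K$: for a fixed degenerate specialisation (say $\a=\b=\gamma=1$, or $\a=0$) the literal uniqueness claim fails, since the exponents no longer appear faithfully in the coefficient comparison. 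Your device of working over the universal ring $\Z[\a,\b,\gamma]$ proves the genuinely invariant statement, namely that the normal form $\bigl(a(\Th),b(\Th),c(\Th),[r(\Th)]_{\hs}\bigr)$ is well defined on $\SQ(n,m)$ and recovers any factorisation when the parameters are generic; this hypothesis (or that reformulation) should be promoted from a parenthesis to part of the statement of what you prove.
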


\begin{theorem} There is a tensor product $(\_)\otimes(\_):\SQ\times \SQ \to \SQ$, defined on objects by $n_1\otimes n_2=n_1+n_2$ and uniquely defined on morphisms by the formulae:
\begin{equation*} \id_l\otimes \overline{\Th}=\overline{(\I_L)^l(\Th)}, \qquad \overline{\Th}\otimes \id_l=\overline{(\I_R)^l(\Th)},
\end{equation*}
for $\Th\in Sq(n,m)$, where $\I_L$ and $\I_R$ are as per \ref{Ilr}. In addition, $\SQ$ can be made into a rigid monoidal category with self-dual objects $n^\dagger=n$, and evaluation and coevaluation maps given by:
\begin{equation*}
    \mathit{ev}_n = \overline{\mathbb{V}_n} \in \SQ(2n,0), \qquad   \mathit{coev}_n = \overline{\mathbb{U}_n} \in \SQ(0,2n),\end{equation*}
Let $(\_)^\dagger: \SQ \to \SQ$ denote the associated contravariant functor (not to be confused with $(\_)^*$ from Theorem \ref{SWBcat}). Furthermore, we have the following identities 
$((\_)^\dagger)^\dagger=\text{id}:\SQ \to \SQ$ and $((\_)^\dagger)^*=((\_)^*)^\dagger:\SQ \to \SQ$.
\end{theorem}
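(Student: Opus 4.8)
The plan is to build the tensor product in stages, verify the monoidal axioms, and then establish rigidity and the duality identities.

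\textbf{Construction of $\otimes$ on morphisms.} First I would check that the two prescribed formulae $\id_l\otimes\overline{\Th}=\overline{\I_L^l(\Th)}$ and $\overline{\Th}\otimes\id_l=\overline{\I_R^l(\Th)}$ are well-defined on $\SQ$. Well-definedness on $Sq^{\hs}$ is exactly Proposition \ref{Ilrprop} part \eqref{Ilrdesc}; descent to $\SQ$ (i.e.\ compatibility with the relations $A,B,C$) follows from Proposition \ref{Ilrprop} part (1), which says $o_L$ and $o_R$ preserve twist, internality, and separatingness of components, so deleting an internal separating/twisted/non-separating component commutes with $\I_L,\I_R$ up to the appropriate scalar. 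Next I would define the full bifunctor $\overline{\Th_1}\otimes\overline{\Th_2}$ for general morphisms by $(\overline{\Th_1}\otimes\id_{n_2})\circ(\id_{m_1}\otimes\overline{\Th_2})$ (or the other order) and show the two orders agree; here the interchange law $\I_R^{l}(\Th')\#\I_L^{l}(\Th)$ versus $\I_L^{l}(\Th)\#\I_R^{l}(\Th')$ (with the relevant linking numbers matching, which is Prop.\ \ref{Ilrprop} part (3)) is the key computation. Functoriality in each variable — that $\otimes$ respects composition — reduces to $\I_L(\Th'\#\Th)=\I_L(\Th')\#\I_L(\Th)$ and $\I_R(\Th'\#\Th)=\I_R(\Th')\#\I_R(\Th)$ together with the linking-number identities, all in Prop.\ \ref{Ilrprop} part (3), after multiplying through by $\a^{L}$.

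\textbf{Monoidal axioms.} With $n_1\otimes n_2=n_1+n_2$ the associator and unitors are strict (the identity morphisms), so the pentagon and triangle hold trivially once naturality of the associator is checked; that naturality is again a consequence of Prop.\ \ref{Ilrprop} part (3) applied to iterated $\I_L,\I_R$. The unit object is $0$, and $\id_0\otimes\overline{\Th}=\overline{\I_L^0(\Th)}=\overline{\Th}$ by definition. For rigidity I would take $n^\dagger=n$ with $\mathit{ev}_n=\overline{\mathbb V_n}$ and $\mathit{coev}_n=\overline{\mathbb U_n}$ and verify the two snake (zig-zag) identities $(\mathit{ev}_n\otimes\id_n)\circ(\id_n\otimes\mathit{coev}_n)=\id_n$ and $(\id_n\otimes\mathit{ev}_n)\circ(\mathit{coev}_n\otimes\id_n)=\id_n$. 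Diagrammatically (Figure \ref{fig:IVU}) this is the standard Temperley-Lieb cup/cap straightening: $\mathbb V_n$ and $\mathbb U_n$ are rank $0$ data consisting of $n$ nested boundary-parallel arcs, so the composite is a rank $0$ SWB datum whose curve graph is $\mathbb I_n=\I_L^n(\mathbb 0)$, with all linking numbers $0$ and no internal components, hence equals $\id_n$ in $\SQ$; the verification is a computation with $\#$, $\I_L$, $\I_R$, and the maps $\mathit{Sf}$, using the Factorisation Lemma \ref{vjfact} and Lemma \ref{grp1} to identify the resulting datum. I would phrase this via the defining relation $\mathbb V_n=\mathit{Sf}^n(\mathbb I_n)$ and a short induction on $n$, since $\mathit{Sf}(\Th)=\I_R^{m-1}(\mathbb V)\#\I_L(\Th)$ unwinds the cup/cap structure one strand at a time.

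\textbf{The duality identities.} Once $(\_)^\dagger$ is constructed as the rigidity-induced contravariant functor, $((\_)^\dagger)^\dagger=\id$ follows from $n^{\dagger\dagger}=n$ and the uniqueness of right duals together with the fact that the candidate data satisfy both snake identities symmetrically (i.e.\ $n$ is its own left dual too, via the same $\mathbb V_n,\mathbb U_n$ up to the reflection $(\_)^*$); concretely $\mathbb V_n^*=\mathbb U_n$ by Proposition \ref{LRadd}/the construction in \ref{sft}, so the left and right evaluation data coincide after $(\_)^*$ and $\dagger$ is an involution. For $((\_)^\dagger)^*=((\_)^*)^\dagger$ I would check the two functors agree on objects (both send $n$ to $n$) and on homogeneous generators: unwinding the definition of $(\_)^\dagger$ in terms of $\mathit{ev},\mathit{coev}$ and $\otimes$, both sides applied to $\overline{\Th}$ yield a composite of $\I_L,\I_R,\#$ with the roles of left/right edges and top/bottom swapped, and the key compatibilities $\I_L(\Th^*)=(\I_L(\Th))^*$, $\I_R(\Th^*)=(\I_R(\Th))^*$ (Prop.\ \ref{Ilrprop} part (4)), $(\Th_2\#\Th_1)^*=\Th_1^*\#\Th_2^*$ with $L_{\Th_1,\Th_2}=L_{\Th_2^*,\Th_1^*}$ (Prop.\ \ref{vjassoc} part (3)), and $\mathbb V_n^*=\mathbb U_n$ make the two composites literally equal.

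\textbf{Main obstacle.} I expect the genuine work to be in the rigidity step: proving the snake identities requires carefully tracking the frame combinatorics of $\mathbb V_n\#(\text{stuff})\#\mathbb U_n$ under $\I_L,\I_R$ and showing the linking numbers are all zero and that no internal or fully external components are created (so no $\a,\b,\gamma$ factors appear), which is a somewhat involved bookkeeping exercise best handled by induction on $n$ and an appeal to Lemma \ref{grp1}. Everything else — bifunctoriality, strictness of the associator, the two duality identities — is a formal consequence of the already-proven compatibilities of $\I_L,\I_R,\#,(\_)^*$ with linking numbers and component types in Propositions \ref{vjassoc} and \ref{Ilrprop}.
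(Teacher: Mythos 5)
The paper itself defers all proofs of the \S4 results, so there is no written proof to compare against; judged on its own terms your outline has the right skeleton (well-definedness and one-variable functoriality from Prop.~\ref{Ilrprop}, the duality identities from Prop.~\ref{Ilrprop}(4) and Prop.~\ref{vjassoc}(3), $\mathbb{V}_n^*=\mathbb{U}_n$), but it has a genuine gap exactly at the step the paper's machinery was built for. The interchange law you need in order to define $\overline{\Th_1}\otimes\overline{\Th_2}$ unambiguously is
\[
\I_R^{\,m_2}(\Th_1)\#\I_L^{\,n_1}(\Th_2)\ \hssim\ \I_L^{\,m_1}(\Th_2)\#\I_R^{\,n_2}(\Th_1)
\]
(with matching $\a$-powers), and this does \emph{not} follow from Prop.~\ref{Ilrprop}(3): that proposition only says each of $\I_L$, $\I_R$ separately intertwines with $\#$ and preserves linking numbers, i.e.\ it gives functoriality of $\id_l\otimes(\_)$ and $(\_)\otimes\id_l$ in one slot. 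The two composites above are not even equal as SWB data: their underlying twisted chord data are $(P_1,s_1)\#(P_2,s_2)$ and $(P_2,s_2)\#(P_1,s_1)$ respectively, so one must exhibit an explicit sequence of handle slides (realising Remark~\ref{helpful}) carrying one to the other and track what happens to the curves — in particular to the $\I_R$-strands, which wrap around every band, and to the strands of $\Th_2$ that enter the bands of $\Th_1$. That bookkeeping is precisely the content of Lemmata~\ref{lem1}, \ref{lem2} and \ref{lem3} (sliding an inserted block through, respectively over, a band, picking up $\mathit{Sf}$ and $\I_R$ factors), which the paper announces as the ingredients for the monoidal structure and which your proposal never invokes or replaces. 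Citing Prop.~\ref{Ilrprop}(3) in parentheses for "the relevant linking numbers matching" does not supply this argument.

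Two smaller points. First, you locate the "genuine work" in the snake identities, but those are comparatively routine: $\mathbb{U}_n$, $\mathbb{V}_n$ and their $\I_L$/$\I_R$-paddings are all rank~$0$, so the zig-zag composites live entirely in the Temperley--Lieb part $\SQ_0$, where the straightening is a finite sequence of pull-throughs with linking number $0$ and no internal components; the real difficulty is the interchange law above. Second, for $((\_)^\dagger)^\dagger=\id$, "uniqueness of right duals" only yields a natural isomorphism $n^{\dagger\dagger}\cong n$, not the strict equality of functors claimed in the theorem; you still need the direct check that conjugating a homogeneous $\overline{\Th}$ twice by the specific $\mathbb{U}_n,\mathbb{V}_n$ straightens back to $\overline{\Th}$, which again is a (rank-preserving) zig-zag computation rather than an abstract-nonsense consequence.
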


\begin{remark} The restriction of the tensor product $\otimes$ to the subcategory $\SQ_0$ coincides with the usual ``horizontal juxtaposition" tensor product in the Temperley-Lieb category. The composition of contravariant functors $(\_)^\dagger$ and $(\_)^*$ results in a covariant endofunctor $R:=((\_)^{\dagger})^*:\SQ\to \SQ$. Since both contravariant functors are involutive and commute, it follows that $R^2=\text{id}$, and since $(\_)^\dagger$ is tensor product reversing, while $(\_)^*$ is tensor product preserving, it follows that $R$ is tensor product reversing. The restriction of $R$ to $\SQ_0$ is simply the functor, defined on TL-diagrams by horizontal reflection (\textit{i.e.} reflection across a vertical).   \end{remark}

\begin{remark} Since $(\_)^\dagger$ is involutive, $\SQ$ has a trivial pivotal structure $\text{id}:\text{id}\Rightarrow ((\_)^\dagger)^\dagger$.    
\end{remark}

\begin{prop}  $\SQ$ is monoidally generated by the homogeneous morphisms $\id_0$, $\id_1$, $\overline{\mathbb{U}}$, $\overline{\mathbb{V}}$, $\overline{\TT_{l,m}}$, and $\overline{\MM_n}$, for $l,m,n\in \N$. Furthermore, the morphisms $\overline{\TT_{l,m}}\in \SQ(l+m,l+m)$ are the components of a natural transformation $\overline{\TT}:(\_)\otimes(\_) \Rightarrow (\_)\otimes^{\text{op.}}(\_)$, and the morphisms $\overline{\MM}_n\in \SQ(n,n)$ are the components of a natural transformation $\overline{\MM}:\text{id} \Rightarrow R$.
\end{prop}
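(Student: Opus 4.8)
Each module $\SQ(n,m)$ is spanned over $\K$ by homogeneous morphisms $\overline{\Th}$, so it suffices to place each such morphism in the monoidal subcategory generated by the listed morphisms. By Remark \ref{decompr} (which combines Corollary \ref{hscor} with Proposition \ref{decomp}), any SWB datum $\Th$, considered up to handle slide equivalence, decomposes using only $\#$ and $\I_L$ into rank $0$ SWB data and data of the form $\TT_{l,m}$ and $\MM_n$; moreover the factorisations produced by the Factorisation Lemma \ref{vjfact} and Proposition \ref{decomp} have vanishing linking numbers, so this decomposition translates verbatim into an expression for $\overline{\Th}$ as a composite under $\circ$ (with no scalar correction) of morphisms of the shape $\id_a\otimes\overline{T}$ ($T$ rank $0$), $\id_a\otimes\overline{\TT_{l,m}}$, and $\id_b\otimes\overline{\MM_\nu}$, via the defining formula $\id_a\otimes\overline{\Theta}=\overline{\I_L^{\,a}(\Theta)}$. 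Each $\id_a\otimes(\_)$ is built from $\id_1$ and $\otimes$. Finally, the rank $0$ homogeneous morphisms are exactly those of the subcategory $\SQ_0(\a)$, which is the Temperley--Lieb category, and the latter is monoidally generated by $\id_0$, $\id_1$, the cup $\overline{\mathbb{U}}\in\SQ(0,2)$ and the cap $\overline{\mathbb{V}}\in\SQ(2,0)$ \cite{TemperleyLieb, JonesIndex}. Assembling these observations yields the claimed monoidal generating set.

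\textbf{Naturality of $\overline{\TT}$.} One must verify, for all $f\in\SQ(l,l')$ and $g\in\SQ(m,m')$,
\[ \overline{\TT_{l',m'}}\circ(f\otimes g)=(g\otimes f)\circ\overline{\TT_{l,m}}. \]
By $\K$-bilinearity we may take $f=\overline{\Phi}$, $g=\overline{\Psi}$ homogeneous, and by functoriality of $\otimes$ and the interchange law we may further factor $\Phi$ and $\Psi$ according to the decomposition above; since naturality squares paste both along $\circ$ and, via interchange, in parallel, it is enough to check the identity when each of $f$ and $g$ is one of $\id_0,\id_1,\overline{\mathbb{U}},\overline{\mathbb{V}},\overline{\TT_{*,*}},\overline{\MM_*}$. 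The cases with $f$ or $g$ an identity are structural; in each remaining case the identity is a single local move — ``sliding the torus crossing $\TT$ past the generator interchanges the two strand groups'' — which holds already at the level of $\hssim$. The relevant mechanism is the passage of an inserted sub-datum across an untwisted band of the torus, which is precisely Lemma \ref{lem1} (with Lemma \ref{lem3} covering the case where the sub-datum is a boundary-parallel curve); one then checks that the $\a,\b,\gamma$-scalars produced by deleted internal components and by linking numbers agree on the two sides, using the control afforded by Propositions \ref{vjassoc} and \ref{Ilrprop} over $\tau_\Gamma$, internality, separatingness, and $L$.

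\textbf{Naturality of $\overline{\MM}$.} With $R=((\_)^{\dagger})^*$ one must verify $R(f)\circ\overline{\MM_n}=\overline{\MM_m}\circ f$ for all $f\in\SQ(n,m)$; the reduction to the case where $f$ is a generator is identical. The new feature relative to $\overline{\TT}$ is that the Möbius band carries a half-twist, so the relevant move is the passage of a sub-datum through a twisted band, governed by Lemma \ref{lem2}; the dual $(\mathit{Sf}^{\,\cdots}(\Th_1))^*$ appearing in the conclusion of that lemma is exactly what produces the functor $R$ on the opposite side of the square. The scalar matching is carried out as before, now using in addition the compatibilities $(\I_L(\Th))^*=\I_L(\Th^*)$ and $(\Th_2\#\Th_1)^*=\Th_1^*\#\Th_2^*$ from Propositions \ref{Ilrprop} and \ref{vjassoc} together with the behaviour of $(\_)^*$ on $\SQ$ fixed in Theorem \ref{SWBcat}.

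\textbf{Main obstacle.} The conceptual skeleton — generation via Remark \ref{decompr}, naturality via pasting plus Lemmas \ref{lem1}--\ref{lem3} — is light; the real cost is the case-by-case bookkeeping in the naturality verifications. For each pair of generators one must exhibit an explicit chain of handle slides and isotopies realising the local interchange and then confirm that the scalar factors contributed by internal components removed during the move, and by any non-zero linking numbers, cancel between the two sides of the square. This is routine given the tracking statements of Propositions \ref{vjassoc} and \ref{Ilrprop}, but it is the lengthy part of the argument; a minor additional point is to supply or cite the cup--cap generation of the Temperley--Lieb category invoked in the first step.
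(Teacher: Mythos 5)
The paper states this proposition without proof (the abstract defers full proofs to a later version), so there is no written argument to compare against line by line; what can be said is that your skeleton is exactly the one the paper's machinery is built for. Generation via Remark \ref{decompr} (Corollary \ref{hscor} plus Proposition \ref{decomp}) is the route the paper itself announces at the end of \S 3.5, and you correctly make the point that matters over an arbitrary ring: the factorisations coming from Lemma \ref{vjfact} and Proposition \ref{decomp} have vanishing linking numbers, so $\#$-decompositions become genuine $\circ$-decompositions without needing $\a$ invertible; the remaining rank-$0$ pieces are Temperley--Lieb morphisms generated by $\id_0,\id_1,\overline{\mathbb{U}},\overline{\mathbb{V}}$. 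Likewise, using Lemmas \ref{lem1}--\ref{lem3} (untwisted-band slide for $\overline{\TT}$, twisted-band slide producing the reflection $R$ for $\overline{\MM}$, boundary slide for repositioning) with scalars controlled by Propositions \ref{vjassoc} and \ref{Ilrprop} is evidently the intended mechanism for the naturality claims.

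There is, however, one step that does not work as you state it: the reduction of the naturality squares to the case where \emph{both} $f$ and $g$ are generators. Pasting along $\circ$ is fine, but ``pasting in parallel via interchange'' is not a formal consequence of anything you have: the class of morphisms for which the square commutes is not automatically closed under $\otimes$ within one tensor slot, so passing from the square for a generator $h$ to the square for $\id_a\otimes h\otimes\id_b$ requires cabling-type compatibilities such as $\overline{\TT_{l_1+l_2,m}}=(\overline{\TT_{l_1,m}}\otimes\id_{l_2})\circ(\id_{l_1}\otimes\overline{\TT_{l_2,m}})$, which you neither state nor prove; and the paper's closing remark, expressing $\overline{\MM_{l+m}}$ through $\overline{\MM_l}$, $\overline{\MM_m}$ and $\overline{\TT_{l,m}}$, shows that for $\overline{\MM}$ no naive multiplicativity of this kind is available. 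The repair is to drop the reduction altogether: Lemmas \ref{lem1} and \ref{lem2} are stated for an \emph{arbitrary} inserted sub-datum $\Th_1$ (via the Insertion Lemma \ref{insSWB}), so one can slide an arbitrary homogeneous $f$ (resp.\ $f\otimes g$) through the M\"obius (resp.\ torus) bands directly and obtain $R(f)\circ\overline{\MM_n}=\overline{\MM_m}\circ f$, resp.\ $\overline{\TT_{l',m'}}\circ(f\otimes g)=(g\otimes f)\circ\overline{\TT_{l,m}}$, on homogeneous morphisms, which is all that naturality requires. Two smaller points: Lemma \ref{lem3} is the slide of the inserted datum over a band-attaching interval (it picks up $\I_R^{l+k}$ and a power of $\mathit{Sf}$), not the treatment of a boundary-parallel curve, though this does not affect your architecture; and the scalar bookkeeping ($\a,\b,\gamma$ from deleted internal components, powers of $\a$ from linking numbers) is where essentially all the content of the naturality statements lives, and it is carried out neither in your sketch nor, to be fair, in the paper.
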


\begin{remark} Note that the morphisms $\overline{\TT_{l,m}}$ are not invertible in $\SQ$, so $\overline{\TT}$ does not make $\SQ$ into a braided monoidal category. These morphisms do, however, satisfy the Yang-Baxter equation, \textit{i.e.} for $l,m,n\in \N$ we have
\begin{equation*}
(\id_n\otimes \overline{\TT_{l,m}})\circ (\overline{\TT_{l,n}}\otimes \id_m) \circ (\id_l\otimes \overline{\TT_{m,n}})=
    (\overline{\TT_{m,n}}\otimes \id_{l})\circ (\id_{m}\otimes \overline{\TT_{l,n}}) \circ (\overline{\TT_{l,m}}\otimes \id_{n}).\end{equation*}
There are also mixed relations between the $\TT_{l,m}$ and $\MM_n$, for example, for $l,m\in \N$ we have
\begin{equation*}
    (\id_m\otimes \overline{\MM_l})\circ 
    \overline{\MM_{l+m}} \circ 
    (\id_l\otimes \overline{\MM_m})=(\id_m\otimes \overline{\MM_0})\circ (\overline{\TT_{l,m}}).
\end{equation*}

\end{remark}

\bibliographystyle{alpha}
\bibliography{bib.bib}

\end{document}